\newlength  {\defbaselineskip}
\newcommand {\setlinespacing}   [1] {\setlength{\baselineskip}{#1 \defbaselineskip}}
\frenchspacing     \setlinespacing{1.5}
\newdimen\captionwidth\captionwidth=\hsize
\newcommand {\ie}                   {\emph{i.e.} }
\newtheorem{thm}{Theorem}[section] 
\newtheorem{lem}[thm]{Lemma}
\newtheorem{cor}[thm]{Corollary}
\newtheorem{prop}[thm]{Proposition}
\newtheorem{conj}{Conjecture}
\theoremstyle{definition}
\newtheorem{defn}[thm]{Definition}
\newcommand{\figins}[3] 
{\raisebox{#1pt}{\includegraphics[height=#2 in]{figs/#3}}}
\newcommand{\figwins}[3] 
{\raisebox{#1pt}{\includegraphics[width=#2 in]{figs/#3}}}
\newcommand{\figwhins}[4] 
{\raisebox{#1pt}{\includegraphics[height=#2 in, width=#3 in]{figs/#4}}}
\newcommand{\figs}[2] 
{{\includegraphics[scale =#1]{figs/#2}}}
\newcommand{\cp}[1]{\bC\mbox{P}^{#1} }
\newcommand{\brak}[1]{\langle #1\rangle}
\newcommand{\n}{\noindent}
\newcommand{\Foam}{\mathbf{Foam}}
\newcommand{\foam}{\mathbf{Foam}_{N}}
\newcommand{\lfoam}{\mathbf{Foam}_{/\ell}}
\newcommand{\Qlfoam}{\mathbf{Foam}_{\ell'}}
\newcommand{\PF}{\mathbf{Pre-foam}}
\newcommand{\sln}{\mathfrak{sl}(N)}
\newcommand{\slt}{\mathfrak{sl}(3)}
\newcommand{\V}{\mathbf{Vect}_{\bZ}}
\newcommand{\abcModgr}{\mathbf{\bZ[a,b,c]-{\text Mod}_{\text{gr}}}}
\newcommand{\abcModbg}{\mathbf{\bZ[a,b,c]-{\text Mod}_{\text{bg}}}}
\newcommand{\Modbg}{\mathbf{Mod_{bg}}}
\newcommand{\Link}{\mathbf{Link}}
\newcommand{\kom}{\mathbf{Kom}}
\newcommand{\mf}[1]{\mathbf{MF}_{#1}}
\newcommand{\hmf}[1]{\mathbf{HMF}_{#1}}
\newcommand{\quotient}[2]{#1\raisebox{-1.5pt}{\text{$/#2$}}}
\newcommand{\KL}[1]{\brak{#1}_{KL}}
\newcommand{\KLket}[1]{\ket{#1}_{KL}}
\DeclareMathOperator{\KR}{KR}
\DeclareMathOperator{\HKR}{HKR}
\DeclareMathOperator{\hy}{H}
\DeclareMathOperator{\KH}{HKh}
\DeclareMathOperator{\gr}{gr}
\DeclareMathOperator{\sk}{s_\gamma}
\DeclareMathOperator{\Hom}{Hom}
\DeclareMathOperator{\Ext}{Ext}
\DeclareMathOperator{\End}{End}
\DeclareMathOperator{\Ker}{Ker}
\DeclareMathOperator{\Image}{Im}
\DeclareMathOperator{\id}{Id}
\DeclareMathOperator{\qdim}{qdim}
\DeclareMathOperator{\sgn}{sgn}
\DeclareMathOperator{\Tr}{Tr}
\DeclareMathOperator{\str}{STr}
\newcommand{\bZ}{\mathbb{Z}}
\newcommand{\bQ}{\mathbb{Q}}
\newcommand{\bR}{\mathbb{R}}
\newcommand{\bC}{\mathbb{C}}
\newcommand{\cD}{\mathcal{D}}
\newcommand{\cE}{\mathcal{E}}
\newcommand{\cF}{\mathcal{F}}
\newcommand{\cG}{\mathcal{G}}
\newcommand{\cU}{\mathcal{U}}
\newcommand{\cV}{\mathcal{V}}
\newcommand{\ra}{\rightarrow}
\newcommand{\xra}[1]{\xrightarrow{#1}}
\newcommand{\lra}{\longrightarrow}
\newcommand{\bra}[1]{\langle\,#1\,|}
\newcommand{\ket}[1]{|\,#1\,\rangle}
\newcommand{\kh}[3]{\KH^{{#1},{#2}}({#3},\bC)}
\newcommand{\lk}{\mbox{{\em lk}}}
\newcommand{\uht}[2]{U_{a,b,c}^{{#1}}({#2},\bC)}
\newcommand{\uhtabc}[5]{U_{{#1},{#2},{#3}}^{{#4}}({#5},\bC)}
\newcommand{\uhtgen}[2]{U_{a,b,c}^{{#1}}({#2})}
\newcommand{\qbin}[2]{\genfrac{[}{]}{0pt}{}{#1}{#2}}
\long\def\@makecaption#1#2{%
    \vskip 10pt
    \setbox\@tempboxa\hbox{%
\small{#1: }\ignorespaces #2}%
    \ifdim \wd\@tempboxa >\captionwidth {%
        \rightskip=\@captionmargin\leftskip=\@captionmargin
        \unhbox\@tempboxa\par}%
      \else
        \hbox to\hsize{\hfil\box\@tempboxa\hfil}%
    \fi}
\newdimen\@captionmargin\@captionmargin=2\parindent
\newdimen\captionwidth\captionwidth=\hsize
\begin{document}
%
%

%
%
\begin{titlepage}
\setlength{\textheight}{670pt}

\begin{tabular}{lc}
\raisebox{-35pt}{\includegraphics{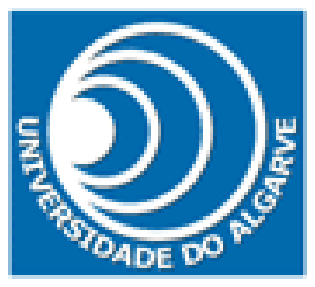}} &
\begin{tabular}{l}
\begin{LARGE}
\textsc{\textbf{Universidade do Algarve}}
\end{LARGE}
\\ \\
\ 
\begin{large} \textsc{\textbf{Faculdade de Ciências e Tecnologia}}
\end{large}
\end{tabular}
\end{tabular}
\bigskip \bigskip \bigskip \bigskip \bigskip \bigskip \bigskip \bigskip \bigskip 
\begin{center}
    \begin{huge}
    \textsc{\textsc{A categorification of the quantum \\[1.2ex] $\sln$-link polynomials using foams}}
    \end{huge}
\end{center}
\bigskip\bigskip
\begin{center}
    \begin{large}
    (Tese para a obtenção do grau de doutor no ramo de
Matemática,\\[1.2ex] especialidade de Geometria e Topologia)
    \end{large}
\end{center}
\bigskip\bigskip \bigskip \bigskip
\bigskip\bigskip \bigskip
\begin{center}
  \begin{Large}
{\bf Pedro Miguel dos Santos Santana Forte Vaz}
  \end{Large}
\end{center}
\bigskip \bigskip \bigskip \bigskip \bigskip \bigskip \bigskip
\begin{center}
\begin{Large}
Tese orientada por Doutor Marco Arien Mackaay
\end{Large}
\end{center}

\bigskip \bigskip \bigskip \bigskip \bigskip \bigskip \bigskip 
\bigskip \bigskip \bigskip \bigskip \bigskip  
\begin{center}
    \begin{Large}
    Faro \\[1.5ex] 2008
    \end{Large}
\end{center}
\end{titlepage}
\newpage
%
%
%
%
%
\pagenumbering{roman}
%
%
%
%
\begin{titlepage}
\vspace*{-14ex}
\begin{center}
\begin{LARGE}
\textsc{\textbf{Universidade do Algarve}}
\end{LARGE}
\end{center}
\medskip
\begin{center}
\begin{large} \textsc{\textbf{Faculdade de Ciências e Tecnologia}}
\end{large}
\end{center}
\bigskip \bigskip \bigskip \bigskip \medskip
\begin{center}
    \begin{huge}
    \textsc{\textsc{A categorification of the quantum \\[1.2ex] $\sln$-link polynomials using foams}}
    \end{huge}
\end{center}
\bigskip
\begin{center}
    \begin{large}
    (Tese para a obtenção do grau de doutor no ramo de Matemática, \\[1.2ex] especialidade de Geometria e Topologia)
    \end{large}
\end{center}
\bigskip

\begin{center}
  \begin{Large}
{\bf Pedro Miguel dos Santos Santana Forte Vaz}
  \end{Large}
\end{center}

\bigskip \bigskip\bigskip
\setlinespacing{1.2}
\n {\bf Orientador:} Doutor  Marco Arien Mackaay, Professor Auxiliar da Faculdade \indent\indent\indent\indent\ \ de Ciências e Tecnologia da Universidade do Algarve 

\bigskip\medskip

\n {\bf Constituição do Júri:}\\[0.8ex]
Presidente: Reitor da Universidade do Algarve    \\[0.8ex]
Vogais: \\
  Doutora Maria Teresa de Lemos Monteiro Fernandes, Professora Catedrática da Faculdade de Ciências da Universidade de Lisboa  \\[0.6ex]
  Doutor José Manuel Vergueiro Monteiro Cidade Mourão, Professor Associado do Instituto Superior Técnico da Universidade Técnica de Lisboa   \\[0.6ex]
  Doutor Roger Francis Picken, Professor Associado do Instituto Superior Técnico da Universidade Técnica de Lisboa \\[0.6ex]     
  Doutor Nenad Manojlovic, Professor Associado da Faculdade de Ciências e Tecnologia da Universidade do Algarve   \\[0.6ex]  
  Doutor Paul Turner, Professor Auxiliar da Heriot-Watt University, Edinburgh, United Kingdom   \\[0.6ex]   
  Doutor Marko Sto\v si\'c, Investigador Auxiliar do Instituto Superior Técnico da Universidade Técnica de Lisboa
\bigskip\medskip
\begin{center}
    \begin{Large}
    Faro \\[1.1ex] 2008
    \end{Large}
\end{center}
\end{titlepage}
%
%
%
%
%
\setlinespacing{1.0} 
\setcounter{page}{2}
%
\newpage\ \newpage
\vspace*{12ex}
\hspace{60ex}{\large Para Adriana}
%
%
%
%
%
\newpage\
%
%
\addcontentsline{toc}{chapter}{Resumo}
\chapter*{Resumo}
\begin{center}
\begin{minipage}{0.9\columnwidth}
Nesta tese definimos e estudamos uma categorifica\c{c}\~ao do polin\'omio $\sln$ de enlaces utilizando foams, para $N\geq 3$. 

Para $N=3$ definimos a homologia-$\slt$ de enlaces universal, utilizando foams, que depende de tr\^es par\^ame\-tros. Mostramos que a teoria \'e functorial a menos de um escalar em rela\c{c}\~ao a cobordismos de enlaces. A teoria \'e integral. Mostramos que o seu produto tensorial com $\bQ$ resulta numa teoria que \'e equivalente \`a homologia-$\slt$ universal racional de Khovanov e Rozansky.

Para $N\geq 4$ constru\'imos uma teoria racional que categorifica o polin\'omio $\sln$ de enlaces utilizando foams. A nossa teoria \'e functorial a menos de um escalar em rela\c{c}\~ao a cobordismos de enlaces. Para avaliar foams fechados \'e utilizada a f\'ormula de Kapustin e Li. Mostramos que para qualquer enlace a nossa homologia \'e isomorfa \`a homologia de Khovanov e Rozansky. Conjecturamos que a nossa teoria \'e integral e calculamos a conjecturada homologia-$\sln$ integral para $(2,m)$-enlaces no toro e mostramos que tem tors\~ao de ordem $N$.
\end{minipage}
\end{center}
\bigskip\bigskip\bigskip\bigskip
\n\textbf{Palavras-chave:}\quad Homologia de Khovanov-Rozansky, polin\'omio $\sln$, enlace, cobordismo singular, f\'ormula de Kapustin-Li
\newpage\ \newpage
%
%
%
\addcontentsline{toc}{chapter}{Abstract}
\chapter*{Abstract}
\begin{center}
\begin{minipage}{0.9\columnwidth}
In this thesis we define and study a categorification of the $\sln$-link polynomial using foams, for $N\geq 3$. 

For $N=3$ we define the universal $\slt$-link homology, using foams, which depends on three parameters and show that it is functorial, up to scalars, with respect to link cobordisms. Our theory is integral. We show that tensoring it with $\bQ$ yields a theory which is equivalent to the rational universal Khovanov-Rozansky $\slt$-link homology.

For $N\geq 4$ we construct a rational theory categorifying the $\sln$-link polynomial using foams. Our theory is functorial, up to scalars, with respect to link cobordisms. To evaluate closed foams we use the Kapustin-Li formula. We show that for any link our homology is isomorphic to the Khovanov-Rozansky homology. We conjecture that the theory is integral and we compute the conjectured integral $\sln$-link homology for the $(2,m)$-torus links and show that it has torsion of order $N$.
\end{minipage}
\end{center}
\bigskip\bigskip\bigskip\bigskip
\n\textbf{Key-words:}\quad Khovanov-Rozansky homology, $\sln$ polynomial, link, singular cobordism, Ka\-pustin-Li formula
%
%
%
%
\setlinespacing{1.5}
%
%
\newpage\
\addcontentsline{toc}{chapter}{Acknowledgments}
\chapter*{Acknowledgments}

First of all I want to thank my supervisor Marco Mackaay from whom I have learned a lot. I owe him a great debt of gratitude. I thank him accepting me as his PhD student, for his involvement in my research, constant interest and encouragement. I shared with him a lot of productive hours. I also thank him for all the valuable advices, of mathematical and non-mathematical nature, that he gave me. This thesis shows only a bit of what he taught me during the last few years. Besides the collaboration with him, much of this thesis consists of the work done in collaboration with Marko Sto\v si\'c. I am glad that I had the opportunity to work with him and I have learned a lot from it. I also thank Paul Turner for his encouragement, support and valuable comments on my research. I am very grateful to Roger Picken for his interest in my research, encouragement and invaluable support.

I would like to thank Mikhail Khovanov for his interest in my work, for the interesting conversations, enlightening exchanges of e-mail and for the many useful comments on my research and on the papers upon which this thesis is based. 

I am grateful to CAMGSD-IST and to the Funda\c{c}\~ao para a Ci\^encia e Tecnologia (IST plurianual funding through the
programme ``Programa Operacional Ci\^{e}ncia, Tecnologia, Ino\-va\c{c}\~{a}o'' (POCTI), cofinanced by the European Community fund FEDER). I would also like to thank Dror Bar-Natan for creating the great symbol font dbnsymb.sty and for sharing it. In this thesis I use a symbol font constructed from it.

Most of all, I thank my whole family and especially my mother in-law Teresinha, my wife Dalila and my daughter Adriana for their encouragement, support and patience during the time this project took place.
\newpage\
%
%
\setlinespacing{1.44}
\tableofcontents
\addcontentsline{toc}{chapter}{List of Figures}
\listoffigures

\newpage
%
%
\setlinespacing{1.5}    \pagenumbering{arabic}
%
%
%
%
\chapter{Introduction}\label{chap:intro}
%
%
%
\setlinespacing{1.48}
In a beautiful paper Mikhail Khovanov~\cite{khovanovsl2} initiated the ``second revolution'' in the world of link polynomials (the first one was initiated by Jones in the mid-eighties~\cite{jones}). Khovanov gene\-ralized the Kauffman state-sum model~\cite{kauffman-jones} (see also~\cite{kauffman-kphysics}) of the Jones polynomial~\cite{jones}, also known as the $\mathfrak{sl}(2)$-link polynomial, in order to construct a homology complex of graded mo\-dules from a link diagram whose graded Euler characteristic is the Jones polynomial of that link. The differentials are grading preserving, so the homology forms a bigraded mo\-dule. Khovanov called it a \emph{categorification} of the Jones polynomial (this terminology was introduced by L.~Crane and I.~Frenkel in~\cite{crane-frenkel}). If two diagrams differ by a Reidemeister move then the corresponding complexes are homotopy equivalent. Therefore the homology is a link invariant and is nowadays known as the \emph{Khovanov homology}. The Khovanov homology is strictly stronger than the Jones polynomial since there exist knots with the same Jones polynomial but different Khovanov homologies~\cite{bar-natancat}. Khovanov homology actually defines a projective functor from the category of links and link cobordisms to the category of bigraded  modules~\cite{jacobsson,khovanovcob,bar-natancob}. This means that a link cobordism (in $\bR^4$) between two links gives rise to a homomorphism between their homologies that is defined up to a sign. Using a slight modification of the original Khovanov homology, Clark, Morrison and Walker~\cite{clark-morrison-walker} have fixed the sign problem in the functoriality of Khovanov homology (see also~\cite{caprau}).

One of the basic ingredients in Khovanov homology is a certain Frobenius algebra of rank two whose graded dimension is the value of the Jones polynomial for the unknot. Using a different Frobenius algebra of rank two E.~S.~Lee~\cite{lee} obtained a link homology which is much simpler than the original Khovanov homology (it gives a module of rank two for eve\-ry knot). It is no longer graded but filtered instead. The filtration in Lee's theory induces a spectral sequence whose $E_2$ page is the Khovanov homology and converges to Lee homo\-logy. This spectral sequence was masterfully used by Rasmussen~\cite{rasmussenslice} to define a numerical link invariant, which detects topologically slice knots that are not smoothly slice. It is known that such knots give rise to exotic smooth structures on $\bR^4$, a result that was available only through gauge theory~\cite{donaldson,gompf-stipsicz}. As an application of his invariant Rasmussen gave a short and purely combinatorial proof of the Milnor conjecture, which was first proved by Kronheiner and Mrowka~\cite{kronheiner-mrowka} using gauge theory.

In~\cite{bar-natancob} Bar-Natan found another rank two Frobenius algebra that could be used to construct a link homology (see also~\cite{turner}). Lee and Bar-Natan's work~\cite{lee} suggested that there could be other Frobenius algebras giving rise to link homologies. This question was solved by Mikhail Khovanov in~\cite{khovanovfrob}. He classified all possible Frobenius systems of dimension two which give rise to link homologies via his construction in~\cite{khovanovsl2} and showed that there is a universal one, given by 
$$\quotient{\bZ[X,a,b]}{(X^2-aX-b)},$$
where $a$ and $b$ are parameters. Working over $\bC$, one can take $a$ and $b$ to be complex numbers, obtaining a filtered theory, and study the corresponding homology with coefficients in $\bC$. We refer to the latter as the $\mathfrak{sl}(2)$-link homologies over $\bC$, because they are all deformations of Khovanov's original link homology. Using the ideas in \cite{khovanovfrob,lee,turner}, it was shown by the author in joint work with Marco Mackaay and Paul Turner~\cite{mackaay-turner-vaz} that there are only two isomorphism classes of $\mathfrak{sl}(2)$-link homologies over $\bC$ and that the corresponding isomorphisms preserve the filtration. Given $a,b\in\bC$, the isomorphism class of the corresponding link homology is completely determined by the number of distinct roots of the polynomial $X^2-aX-b$. All $\mathfrak{sl}(2)$-link homologies are isomorphic to Lee homology for $a^2+4b\neq 0$ and isomorphic to Khovanov homology for $a^2+4b=0$. The original Khovanov $\mathfrak{sl}(2)$-link homology $\KH(L,\bC)$ corresponds to the choice $a=b=0$.

Bar-Natan~\cite{bar-natancob} obtained the universal $\mathfrak{sl}(2)$-link homology in a different way (see also~\cite{naot, naotphd}). 
He shows how Khovanov's original construction of the $\mathfrak{sl}(2)$-link 
homology~\cite{khovanovsl2} can be adapted to define a universal functor 
$\cU$ from the category of links, with link cobordisms modulo 
ambient isotopy as morphisms, to the homotopy category of complexes 
in the category of $(1+1)$-dimensional cobordisms modulo a finite set 
of universal relations. Bar-Natan's approach yields a homology complex in a non-abelian category, so we cannot take its homology in that category. However, 
he introduces the {\em tautological homology construction}, which produces a homology theory in the abelian category of graded $\bZ$-modules. Bar-Natan's approach has two main advantages: it allows for a much simpler proof of functoriality and an efficient calculation of the Khovanov homology for a large number of knots~\cite{bar-natanfast}.

In~\cite{khovanovsl3} Khovanov constructed a topological theory categorifying the $\slt$-link polynomial. His construction uses cobordisms with singularities, called foams, modulo a finite set of relations. In~\cite{KR} Khovanov and Rozansky (KR) categorified the $\sln$-link polynomial for arbitrary $N$, the 1-variable specializations of the 2-variable HOMFLY-PT polynomial~\cite{HOMFLY,PT}. Their construction uses the theory of matrix facto\-rizations, a mathematical tool introduced by Eisenbud in~\cite{eisenbud} (see also~\cite{BGS,knorrer,yoshino}) in the study of maximal Cohen-Macaulay modules over isolated hypersurface singularities and used by Kapustin and Li as boundary conditions for strings in Landau-Ginzburg models~\cite{KL}. It was conjectured in~\cite{KR} that for $N=3$ the approach using matrix factorizations yields the same as Khovanov's approach using foams. Shortly after the appearance of Khovanov and Rozansky's paper~\cite{KR} Gornik generalized the results of Lee to $\sln$~\cite{gornik} and based on Gornik's results Lobb~\cite{lobb} and independently Wu~\cite{wu-filt} generalized Rasmussen's $s$-invariant to $\sln$. We remark that besides the results in low-dimension topology referred to above (some more can be found in~\cite{shumakovitch-slice,wu-benn}), link homology has also interesting relations with physics~\cite{gukov-gauge,GIKV,GSV,GW}, geometry~\cite{cautis-kamnitzer,cautis-kamnitzer2,manolescuslices,manolescusympl,seidel-smith,webster-williamson} and representation theory~\cite{sussan,stroppel-springer,mazorchuk-stroppel}.

One of the main features of Khovanov and KR homology is that it is combinatorially defined, so theoretically it is computable. Nevertheless, direct computations are very hard in practice, in part due to the large ranks of the chain groups involved. There are several techniques that allow computation of some homology groups: exact sequences, spectral sequences and complex simplification techniques (see~\cite{bar-natanfast,morrison-nieh,naotphd,rasmussen-diff,turnerspectral,virorem}). Using these techniques Khovanov homology is known for some classes of knots: alternating knots~\cite{lee}, quasi-alternating knots~\cite{manolescu-oszvath}, $(3,q)$-torus knots~\cite{turnerspectral} and some Pretzel knots~\cite{suzuki},  and the KR homology is known for 2-bridge knots~\cite{rasmussen2-bridge}. There also exist computer programs~\cite{barnatan-knotatlas,bar-natancat,shumakovitchkhoho} that compute Khovanov homology of knots up to 50 crossings. Based on Bar-Natan's pioneering computer program in~\cite{bar-natancat} interesting conjectures were made about the properties of Khovanov homology~\cite{bar-natancat,DGR,khovanovpatterns} some of which have already been proved (see for example ~\cite{lee,stosic-propbraid,stosic-thicktknots}). The most powerful program is Bar-Natan's~\cite{barnatan-knotatlas} which is based on simplification techniques developed in~\cite{bar-natanfast, naotphd} following his topological approach to Khovanov homology. One can therefore hope that a topological categorification of the $\sln$-link polynomial will allow us to compute the $\sln$-link homology for more knots.

\medskip
My PhD project had to goals. The first one was to construct a universal deformed $\slt$-link homology using foams, extending the work of Khovanov in~\cite{khovanovsl3} and to prove the equivalence between this theory and the corresponding universal deformation for $N=3$ of the theory cons\-tructed by Khovanov and Rozansky in~\cite{KR} using matrix factorizations (this equivalence was conjectured by Khovanov and Rozansky in~\cite{KR} for the undeformed case). The second goal was to construct a combinatorial topological definition of KR link homology, extending to all $N>3$ the work of Khovanov~\cite{khovanovsl3} for $N=3$. Khovanov had to modify 
considerably his original setting for the construction of $\mathfrak{sl}(2)$ 
link homology in order to produce his $\slt$ link homology. It required the 
introduction of singular cobordisms with a particular type of singularity, which he called \emph{foams}. The jump from 
$\slt$ to $\sln$, for $N>3$, requires the introduction of a new type of 
singularity. The latter is needed for proving invariance under the third Reidemeister move. The introduction of the new singularities makes it much harder to evaluate closed foams and we do not know how to do it combinatorially. 
Instead we use the Kapustin-Li formula~\cite{KL}, which was introduced by A.~Kapustin and Y.~Li in~\cite{KL} in the context of topological Landau-Ginzburg models with boundaries and adapted to foams by Khovanov\footnote{We thank M Khovanov for suggesting that we try to use the Kapustin-Li formula.} and Rozansky~\cite{KR-LG}. 
The downside is that our construction does not yet allow us to deduce a (fast) algorithm for computing our link homology.
A positive side-effect is that it allows us to show that for any link our homology is isomorphic to KR homology. Furthermore the combinatorics involved in establishing certain identities among foams gets much harder 
for arbitrary $N$. The theory of symmetric polynomials, in particular Schur polynomials, is 
used to handle that problem.

Although we have not completely achieved our final goal, we believe that we have made good progress towards it. Using the techniques developed by the author in joint work with Marco Mackaay in~\cite{mackaay-vaz} and with Marco Mackaay and Marko Sto\v si\'c in~\cite{mackaay-stosic-vaz} (based on Khovanov~\cite{khovanovsl3} and Bar-Natan~\cite{bar-natancob}) we derive a small set of relations on foams which we show to be sufficient to guarantee that our link homology is homotopy invariant  under the Reidemeister moves and functorial, up to scalars, with respect to link cobordisms. By deriving these relations from the Kapustin-Li formula we prove that these relations are consistent. However, in order to get a purely combinatorial construction we would have to show that they are also sufficient for the evaluation of closed foams, or, equivalently, that they generate the kernel of the Kapustin-Li formula. We conjecture that this holds true, but so far our attempts to prove it have failed. It would be very interesting to have a proof of this conjecture, not just because it would show that our method is completely combinatorial, but also because our theory could then be used to prove that other constructions, using different combinatorics, representation theory or symplectic/complex geometry, give functorial link homologies equivalent to KR (see~\cite{mazorchuk-stroppel} for some work towards it). So far we can only conclude that any other way of evaluating closed foams which satisfies the same relations as ours gives rise to a functorial link homology which categorifies the $\sln$ link polynomial. We conjecture that such a link homology is equivalent to the one presented in this thesis and therefore to KR.  Another open question is the integrality of our link homology. All our relations are defined over the integers and this presents some support to our conjecture that our theory is integral.

\subsection*{Outline of the thesis}
The outline of this thesis is as follows:
%
In Chapter~\ref{chap:KR} we describe the Murakami-Ohtsuki-Yamada (MOY) state sum model~\cite{MOY} for the $\sln$-link polynomial. Then we give a review of the theory of matrix factorizations used in this thesis and describe the KR theory introduced in~\cite{KR}. The chapter ends with a description of the universal rational KR $\slt$-link homology.
%
%
In Chapter~\ref{chap:univ3} we define the universal $\slt$-link homology, which depends on 3 parameters, following Khovanov's approach with foams. We work diagrammatically in a category of webs and foams modulo a finite set of relations and prove invariance and functoriality in that category. After applying the tautological functor we can define homology groups. We also state the classification theorem for the $\slt$-link homologies which establishes that there are three isomorphism classes. The first class is the one to which Khovanov's original $\slt$-link homology belongs, the second is the one studied by Gornik in the context of matrix factorizations and the last one is new and can be described in terms of Khovanov's original $\mathfrak{sl}(2)$-link homology. We omit the proof in this thesis (see~\cite{mackaay-vaz}) since the result is beyond the scope of this PhD project.
%
%
In Chapter~\ref{chap:foam3mf} we prove that the two universal rational $\slt$ link homologies which were constructed in Chapter~\ref{chap:univ3} using foams, and in Section~\ref{KR:sec:KR-abc} using matrix factorizations, are naturally isomorphic as projective functors from the category of links and link cobordisms to the category of bigraded vector spaces.
%
%
In Chapter~\ref{chap:foamN} we use foams to give a topological construction of a rational link homology categorifying the $\mathfrak{sl}(N)$ link invariant, for $N\geq 4$. To evaluate closed foams we use the Kapustin-Li formula adapted to foams by Khovanov and Rozansky~\cite{KR-LG}. We show that for any link our homology is isomorphic to the one constructed by Khovanov and Rozansky in~\cite{KR} and described in Section~\ref{KR:sec:KR}.

\medskip

Most of this thesis is based on papers by the author~\cite{mackaay-stosic-vaz, mackaay-vaz2,mackaay-vaz} in collaboration with Marco Mackaay and Marco Sto\v si\'c. Some sections have been rewritten and some modifications have been made. In Chapter~\ref{chap:univ3} we explain the relation with the integral $U(3)$-equivariant cohomology rings of flag varieties in $\bC^3$ for the convenience of the reader. In the second part of the same chapter we just state the main result of the second part of~\cite{mackaay-vaz}. In Chapter~\ref{chap:foamN} some conventions regarding tensor products of matrix factorizations have been changed. The properties of the Kapustin-Li formula stated in~\cite{KR-LG} and used in~\cite{mackaay-stosic-vaz} are rigorously proved here. Chapter~\ref{chap:foamN} also contains an explicit computation of the integral $\sln$ homology of $(2,m)$-torus links, assuming integrality of the theory. The computation  shows that the conjectured integral $\sln$-link homology has torsion.
%
%
%
%
%
\setlinespacing{1.5}
%
%
%
%
%
\chapter{Khovanov-Rozansky homology}\label{chap:KR}
This chapter is introductory and contains the basics of KR theory~\cite{KR} categorifying the $\sln$-link polynomial. We also present its universal deformation in the case of $N=3$. 

We start with the definition of the $\sln$-link polynomial in Section~\ref{KR:sec:slN}. In Section~\ref{KR:sec:matfac} we give a review of the theory of matrix factorizations, which is the main tool used to obtain KR $\sln$-link homology. Matrix factorizations are also needed in Chapter~\ref{chap:foamN} (in a different way) as input for the Kapustin-Li formula. In Section~\ref{KR:sec:KR} we describe KR $\sln$-link homology and its main properties. In Section~\ref{KR:sec:KR-abc} we present the universal rational KR $\slt$-link homology.
%
%

\section{Graphical calculus for the $\sln$ polynomial}\label{KR:sec:slN}

In this section we recall some facts about the graphical calculus for the $\sln$-link polynomial. The HOMFLY-PT~\cite{HOMFLY,PT} polynomial of oriented links in $S^3$ is uniquely defined by the skein relation
$$
a
P(\undercrossing)
-a^{-1}
P(\overcrossing)
=z
P(\orsmoothing),
$$
and its value for the unknot. The $\sln$-link polynomial~\cite{turaev} is the 1-variable polynomial corresponding to the specializations $a=q^N$, $z=q-q^{-1}$ with positive integer $N$. We choose the normalization 
$P_N(\bigcirc)=[N]$, where $[N]=(q^N-q^{-N})/(q-q^{-1})$.

Let $D$ be a diagram of a link $L\in S^3$ with $n_+$ positive crossings and $n_-$ negative crossings. 
\begin{figure}[h!]
$$\begin{array}{cclcccl}
\text{positive: } \ 
\figins{-9}{0.3}{overcrossing} &=&   
\figins{-9}{0.3}{orsmoothing}  -   q  
\figins{-9}{0.3}{dbedge} 
&\quad&
\text{negative: } \ 
\figins{-9}{0.3}{undercrossing} &=& \, 
\figins{-9}{0.3}{dbedge} -  q \
\figins{-9}{0.3}{orsmoothing} \vspace{1ex} 
\\
&& \hspace{1.5ex} 0 \hspace{6.0ex}  1 &&&& \hspace{1.6ex} 0 \hspace{6.6ex} 1
\end{array}$$
\caption{Positive and negative crossings and their 0 and 1-flattening}
\label{KR:fig:flatten}
\end{figure}
Following an approach based on the MOY state sum model constructed in~\cite{MOY} 
we can alternatively define $P_N(D)$ by using flat resolutions of $D$. Each crossing of $D$ can be flattened in two possible ways, 
as shown in Figure~\ref{KR:fig:flatten}, 
where we also show our convention for positive and negative crossings. 
A complete flattening of $D$ is an example of a \emph{web}: a trivalent graph with three  
types of edges: \emph{simple}, \emph{double} and \emph{triple}. Webs can contain closed plane loops (simple, double or triple). Only the simple edges are equipped with an orientation. 
We distinguish the triple edges with a $(*)$, 
as in Figure~\ref{KR:fig:vertices}. Each vertex must satisfy one of the possibilities of Figure~\ref{KR:fig:vertices}. Note that a complete flattening of $D$ never has triple 
edges, but we will need the latter for webs that show up in the proof of invariance under 
the third Reidemeister move.  
\begin{figure}[h!]
$$\xymatrix@R=1.2mm{
\figins{0}{0.32}{vertexout} &
\figins{0}{0.32}{vertexin-star-l} &
\figins{0}{0.32}{vertexin-star-r}  \\
\figins{0}{0.32}{vertexin}  &
\figins{0}{0.32}{vertexout-star-l} &
\figins{0}{0.32}{vertexout-star-r}
}$$
\caption{Vertices}
\label{KR:fig:vertices}
\end{figure}

Simple edges correspond to edges labelled 1, double edges to edges labelled 2 and 
triple edges to edges labelled 3 in~\cite{MOY}, 
where edges carry labels from 1 to $N-1$ and label $j$ is associated to the $j$-th exterior power of the fundamental representation of $\sln$~\cite{turaev}.

The \emph{MOY web moves} 
in Figure~\ref{KR:fig:moy} provide a recursive way of assigning to each web $\Gamma$ that only contains 
simple and double edges a unique polynomial
$P_N(\Gamma)\in\bZ[q,q^{-1}]$ with positive coefficients. There are more general 
web moves, which allow for the evaluation of arbitrary webs, but we do not need them here. 
\begin{figure}[h]
$$\bigcirc=[N],\quad
\figins{-3.1}{0.16}{dble-circ}=\qbin{N}{2}$$

$$
\figins{-8}{0.3}{digon-up} = [2]\ 
\figins{-8}{0.3}{dbedge-up}\ , \qquad
\figins{-8}{0.3}{dbedge-dig} =[N-1]\ 
\figins{-8}{0.3}{edge-up}
$$

$$
\figins{-8}{0.3}{square} =
\figins{-8}{0.3}{twoedges-lr} + [N-2]\ 
\figins{-8}{0.3}{twoedges-ud}
$$

$$
\figins{-21}{0.65}{moy5-1} +
\figins{-21}{0.65}{moy5-21} =
\figins{-21}{0.65}{moy5-2}+
\figins{-21}{0.65}{moy5-11}
$$
\caption{MOY web moves}
\label{KR:fig:moy}
\end{figure}
Consistency of the relations in Figure~\ref{KR:fig:moy} is shown in~\cite{MOY}.

Finally let us define the $\sln$ link polynomial. For any $i$ let $\Gamma_i$ denote a 
complete flattening of $D$. Then 
$$P_N(D)=(-1)^{n_-}q^{(N-1)n_+ - Nn_-}\sum_iq^{|i|}P_N(\Gamma_i),$$
where $|i|$ is the number of 1-flattenings in $\Gamma_i$, the sum being over all possible flattenings of $D$.
%
%
%
%
%
\section{Review of matrix factorizations}\label{KR:sec:matfac}
This section contains a brief review of matrix factorizations and the properties 
that will be used throughout this thesis. All the matrix factorizations in this thesis are 
$\bZ/2\bZ\times\bZ$-graded. Let $R$ be a polynomial ring over $\bQ$ in a finite number of variables. We take the 
$\bZ$-degree of each polynomial to be twice its total degree. This way $R$ is 
$\bZ$-graded. Let $W$ be a homogeneous 
element of $R$ of degree $2m$. A matrix factorization of $W$ over $R$ is given by a 
$\bZ/2\bZ$-graded free $R$-module $M=M_0\oplus M_1$ with two $R$-homomorphisms of 
degree $m$
$$M_0\xra{d_0}M_1\xra{d_1}M_0$$
such that $d_1d_0=W\id_{M_0}$ and $d_0d_1=W\id_{M_1}$. We call $W$ the \emph{potential}. 
The $\bZ$-grading of $R$ induces 
a $\bZ$-grading on $M$. The shift functor $\{k\}$ acts on $M$ as
$$M\{k\}=M_0\{k\}\xra{d_0}M_1\{k\}\xra{d_1}M_0\{k\},$$
where its action on the modules $M_0$, $M_1$ means an upward shift by $k$ units on the $\bZ$-grading.

A homomorphism $f\colon M\to M'$ of matrix factorizations of $W$ is a pair of 
maps of the same degree 
$f_i\colon M_i\to M'_i$ ($i=0,1$) such that the diagram
$$\xymatrix{
M_0 \ar[r]^{d_0}\ar[d]_{f_0} & M_1\ar[r]^{d_1}\ar[d]_{f_1} & M_0\ar[d]_{f_0} \\
M'_0 \ar[r]^{d'_0} & M'_1\ar[r]^{d'_1} & M'_0
}$$
commutes. It is an isomorphism of matrix factorizations if $f_0$ and $f_1$ are isomorphisms of the underlying modules.
Denote the set of homomorphisms of matrix factorizations from $M$ to $M'$ by 
$$\Hom_{\mf{}}(M,M').$$
It has an $R$-module structure with the action of $R$ given by 
$r(f_0,f_1)=(rf_0,rf_1)$ for $r\in R$.
Matrix factorizations over $R$ with homogeneous potential $W$ and homomorphisms 
of matrix factorizations form a graded additive category, which we denote by 
$\mf{R}(W)$. If $W=0$ we simply write $\mf{R}$.

Another description of matrix factorizations, which will be used in Chapter~\ref{chap:foamN}, is obtained by assembling the differentials $d_0$ and $d_1$ into an endomorphism $D$ of the $\bZ/2\bZ$-graded free $R$-module $M=M_0\oplus M_1$ such that
$$
D=
\begin{pmatrix} 
0 & d_1 \\ 
d_0 & 0
\end{pmatrix}
\qquad
\deg_{\bZ/2\bZ}D=1
\qquad
D^2=W\id_{M}.
$$
In this case we call $D$ the \emph{twisted differential}.

The free $R$-module $\Hom_R(M,M')$ of graded $R$-module homomorphisms 
from $M$ to $M'$ is a 2-complex 
$$\xymatrix{
\Hom_R^0(M,M')\ar[r]^d &
\Hom_R^1(M,M')\ar[r]^d &
\Hom_R^0(M,M')
}$$ where
\begin{align*}
\Hom_R^0(M,M') &= \Hom_R(M_0,M'_0)\oplus
\Hom_R(M_1,M'_1) \\
\Hom_R^1(M,M') &= \Hom_R(M_0,M'_1)\oplus
\Hom_R(M_1,M'_0)
\end{align*}
and for $f$ in $\Hom_R^i(M,M')$ the differential acts as
$$df=d_{M'}f-(-1)^ifd_{M}.$$
We define 
$$\Ext(M,M')=
\Ext^0(M,M')\oplus\Ext^1(M,M')
=\Ker{d}/\Image{d},$$
and write $\Ext_{(m)}(M,M')$ for the elements of $\Ext(M,M')$ with $\bZ$-degree $m$. Note that for $f\in\Hom_{\mf{}}(M,M')$ we have $df=0$. We say that two homomorphisms $f,\ g\in\Hom_{\mf{}}(M,M')$ are homotopic if there is an element $h\in\Hom_R^1(M,M')$ such that $f-g=dh$.

Denote by $\Hom_{\hmf{}}(M,M')$ the $R$-module of homotopy classes of homomorphisms of matrix factorizations from $M$ to $M'$ and by $\hmf{R}(W)$ the homotopy category of $\mf{R}(W)$.

We denote by $M\brak{1}$ and $M_\bullet$ the factorizations
$$M_1\xra{-d_1}M_0\xra{-d_0}M_1$$
and
$$
(M_0)^*\xra{-\left(d_1\right)^*}
(M_1)^*\xra{\left(d_0\right)^*}
(M_0)^*
$$ 
respectively. Factorization $M\brak{1}$ has potential $W$ while factorization $M_\bullet$ has potential $-W$. We call $M_\bullet$ the \emph{dual factorization} of $M$.

We have
\begin{align*}
\Ext^0(M,M') &\cong  \Hom_{\hmf{}}(M,M') \\
\Ext^1(M,M') &\cong  \Hom_{\hmf{}}(M,M'\brak{1})
\end{align*}

The tensor product $M\otimes_R M_\bullet$ has potential zero and is therefore a 
2-complex. Denoting by $\hy_{\mf{}}$ the homology of matrix factorizations with 
potential zero we have
$$\Ext(M,M')\cong \hy_{\mf{}}(M'\otimes_R M_\bullet)$$
and, if $M$ is a matrix factorization with  $W=0$, 
$$\Ext(R ,M)\cong \hy_{\mf{}}(M).$$

Let $R=\bQ[x_1,\ldots ,x_k]$ and $W\in R$. The Jacobi algebra of $W$ is defined as
\begin{equation}\label{KR:eq:jacobi}
J_W=\quotient{R}{(\partial_1W,\ldots ,\partial_kW)} ,
\end{equation}
where $\partial_i$ means the partial derivative with respect to $x_i$. Writing the differential as a matrix and differentiating both sides of the equation $D^2=W$ with respect to $x_i$ we get $D(\partial_iD)+(\partial_iD)D=\partial_iW$. We thus see that multiplication by $\partial_iW$ is homotopic to the zero endomorphism and that the homomorphism 
$$R\to\End_{\hmf{}}(M),\quad r\mapsto m(r)$$ 
factors through the Jacobi algebra of $W$.

Let $f,g\in\End(M)$. We define the \emph{supercommutator} of $f$ and $g$ as
$$
[f,g]_s=fg-(-1)^{\deg_{\bZ/2\bZ}(f)\deg_{\bZ/2\bZ}(g)}gf.
$$
The \emph{supertrace} of $f$ is defined as
$$\str(f)=\Tr\bigl( (-1)^{\gr}f \bigr)$$
where the \emph{grading operator} $(-1)^{\gr}\in\End(M_0\oplus M_1)$ is given by 
$$(m_0,m_1)\mapsto(m_0,-m_1),\quad m_0\in M_0,\ m_1\in M_1.$$
If $f$ and $g$ are homogeneous with respect to the $\bZ/2\bZ$-grading we have that
$$\str(fg)=(-1)^{\deg_{\bZ/2\bZ}(f)\deg_{\bZ/2\bZ}(g)}\str(gf),$$
and
$$\str\bigl([f,g]_s\bigr)=0.$$

\n There is a canonical isomorphism of $\bZ/2\bZ$-graded $R$-modules
$$\End(M)\cong M\otimes_R M_\bullet.$$
Choose a basis $\{\ket{i} \}$ of $M$ and define a dual basis $\{\bra{j}\}$ of $M_\bullet$ by $\brak{j\vert i}=\delta_{i,j}$, where $\delta$ is the \emph{Kronecker} symbol. 
There is a natural pairing map $M\otimes M_\bullet\to R$ called the \emph{super-contraction} that is given on basis elements $\ket{i}\bra{j}$ by 
$$\ket{i}\bra{j}\mapsto (-1)^{\deg_{\bZ/2\bZ}(\ket{i})\deg_{\bZ/2\bZ}(\bra{j})}\brak{j\vert i}=\delta_{i,j}.$$
The super-contraction induces a map $\End(M)\to R$ which coincides with the supertrace. When $M$ and $M_\bullet$ are factors in a tensor product $(M\otimes_RN)\otimes_R(M_\bullet\otimes_RN_\bullet)$ the super-contraction of $M$ with $M_\bullet$ induces a map $\str_{M}\colon\End(M\otimes_RN)\to\End(N)$ called the \emph{partial super-trace} (w.r.t. $M$).

%
%
%
\subsection{Koszul Factorizations}
For $a$, $b$ homogeneous elements of $R$, an \emph{elementary Koszul factorization} $\{a,b\}$ over $R$ with potential $ab$ is a factorization of the form
$$R\xra{a}R\bigl\{{\scriptstyle\frac{1}{2}}\bigl(\deg_\bZ b -\deg_\bZ a \bigr)\bigr\}\xra{b}R.$$
When we need to emphasize the ring $R$ we write this factorization as $\{a,b\}_R$.
The tensor product of matrix factorizations $M_i$ with potentials $W_i$ is a matrix factorization with potential $\sum_iW_i$. We restrict to the case where all the $W_i$ are homogeneous of the same degree. Throughout this thesis we use tensor products of elementary Koszul factorizations $\{a_j,b_j\}$ to build bigger matrix factorizations, which we write in the form of a \emph{Koszul matrix} as
$$
\begin{Bmatrix}
a_1 \ , & b_1 \\
 \vdots & \vdots \\
a_k \ , & b_k
\end{Bmatrix}
$$
We denote by $\{\mathbf{a},\mathbf{b}\}$ the Koszul matrix which has columns 
$(a_1,\ldots,a_k)$ and $(b_1,\ldots ,b_k)$. If $\sum\limits_{i=1}^ka_ib_i=0$ then $\{\mathbf{a},\mathbf{b}\}$ is a 2-complex whose homology is an $R/(a_1,\ldots,a_k,b_1,\ldots ,b_k)$-module, since multiplication by $a_i$ and $b_i$ are null-homotopic endomorphisms of $\{\mathbf{a},\mathbf{b}\}$.

Note that the action of the shift $\brak{1}$ on $\{\mathbf{a},\mathbf{b}\}$ 
is equivalent to switching terms in one line of $\{\mathbf{a},\mathbf{b}\}$:
$$\{\mathbf{a},\mathbf{b}\}\brak{1}\cong
\begin{Bmatrix}
 \vdots\  & \vdots \\
a_{i-1}\ ,& b_{i-1} \\
  -b_i\ , &  -a_i   \\
a_{i+1}\ ,& b_{i+1} \\
 \vdots\  & \vdots
\end{Bmatrix}
\bigl\{ {\scriptstyle\frac{1}{2}}\bigl(\deg_\bZ b_i -\deg_\bZ a_i \bigr) \bigr\}.$$
If we choose a different row to switch terms we get a factorization which is 
isomorphic to this one. We also have that
$$\{\mathbf{a},\mathbf{b}\}_\bullet
\cong
\{\mathbf{a},-\mathbf{b}\}\brak{k}\{ s_k \}
,$$
where
$$s_k=\sum_{i=1}^{k}\deg_\bZ a_i -\frac{k}{2}\deg_\bZ W.$$

Let $R=\bQ[x_1,\ldots,x_k]$ and $R'=\bQ[x_2,\ldots,x_k]$. Suppose that 
$W=\sum_i a_ib_i\in R'$ and $x_1-b_i\in R'$, for a certain $1\leq i\leq k$. 
Let $c=x_1-b_i$ and $\{\mathbf{\hat a}^i,\mathbf{\hat b}^i\}$ be the matrix factorization obtained 
from 
$\{\mathbf{a},\mathbf{b}\}$ by deleting the $i$-th row and substituting $x_1$ 
by $c$.   
\begin{lem}[excluding variables]
\label{KR:lem:exvar}
The matrix factorizations $\{\mathbf{a},\mathbf{b}\}$ and 
$\{\mathbf{\hat a}^i,\mathbf{\hat b}^i\}$ are homotopy equivalent. 
\end{lem}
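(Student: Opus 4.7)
The plan is to exploit the fact that $b_i = x_1 - c$ is linear in $x_1$, which allows contracting the $i$-th Koszul row against the variable $x_1$. This is a standard ``excluding variables'' argument in the theory of matrix factorizations, essentially Gauss--Jordan elimination applied to the twisted differential.

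First, by reordering the rows of the Koszul matrix (which yields an isomorphic factorization, as remarked above), I may assume $i = 1$. Setting $y = x_1 - c$, so that $R = R'[y]$ and $b_1 = y$, the factorization decomposes as a tensor product
$$\{\mathbf{a}, \mathbf{b}\} \cong \{a_1, y\} \otimes_R M'',$$
where $M'' := \bigotimes_{j \ge 2} \{a_j, b_j\}$ is a matrix factorization over $R$ with potential $W - a_1 y$ (note $W \in R'$ by hypothesis). Reducing $M''$ modulo $y$ yields the factorization $\overline{M''} := M'' \otimes_R R/(y)$ over $R' \cong R/(y)$; by construction $\overline{M''}$ coincides with $\{\hat{\mathbf{a}}^1, \hat{\mathbf{b}}^1\}$, viewed as an $R$-module factorization on which $y$ acts as zero.

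Second, I would construct explicit $R$-linear chain maps $\phi\colon \{\mathbf{a}, \mathbf{b}\} \to \overline{M''}$ and $\psi\colon \overline{M''} \to \{\mathbf{a}, \mathbf{b}\}$ as follows: the map $\phi$ projects onto the $M''$-summand of the tensor product (i.e., the part coming from the degree-$0$ component of $\{a_1, y\}$ in the $\bZ/2\bZ$-grading) and then reduces modulo $y$; the map $\psi$ is the composition of an $R'$-linear section of $M'' \twoheadrightarrow \overline{M''}$ with the inclusion of $M''$ into $M$ as the same summand. The identity $\phi\psi = \id_{\overline{M''}}$ is then immediate, while $\psi\phi \simeq \id_M$ is obtained from the null-homotopy of multiplication by $y = b_1$ on $M$, which exists as noted after the definition of the Koszul matrix.

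The main obstacle is the sign and $\bZ$-grading bookkeeping for tensor products of matrix factorizations, needed both to verify that $\phi$ and $\psi$ are genuine chain maps of degree zero and to produce the explicit homotopy $h$ with $\psi\phi - \id_M = d_M h + h\, d_M$; once the conventions are fixed, these identities follow from a short direct computation.
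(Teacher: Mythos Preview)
The paper does not prove this lemma; it defers entirely to \cite{KR}. Your strategy is the standard one and your $\phi$ is correct, but the map $\psi$ as you describe it is not a morphism of factorizations, and this is not a matter of sign or grading conventions. Writing $M=e_0M''\oplus e_1M''$ with differential $D(e_0\otimes m)=a_1e_1\otimes m+e_0\otimes D''m$ and $D(e_1\otimes m)=y\,e_0\otimes m-e_1\otimes D''m$, your map $\psi(n)=e_0\otimes s(n)$ gives
\[
D\psi(n)-\psi D_N(n)=a_1\,e_1\otimes s(n)+e_0\otimes\bigl(D''s(n)-s(D_N n)\bigr),
\]
and the $e_1$-component $a_1\,e_1\otimes s(n)$ is nonzero for every choice of section $s$. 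Since $\psi$ fails to intertwine the differentials, the assertion $\psi\phi\simeq\id_M$ is not yet meaningful.

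The fix requires an $e_1$-correction: since $D''s-sD_N$ vanishes modulo $y$ one may write $D''s-sD_N=y\tau$ for a unique $R'$-linear $\tau\colon N\to M''$, and then $\psi(n)=e_0\otimes s(n)-e_1\otimes\tau(n)$ \emph{is} a chain map, as a short computation using $(D'')^2=W-a_1y$ confirms. The homotopy witnessing $\psi\phi\simeq\id_M$ is then $H(e_0\otimes m)=e_1\otimes(m-s\bar m)/y$ and $H(e_1\otimes m)=0$; note that $H$ is only $R'$-linear, so the homotopy equivalence lives over $R'$, which is another point your sketch leaves implicit. None of this is deep, but it is more than the bookkeeping you anticipate.
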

\n In~\cite{KR} one can find the proof of this lemma and its generalization 
with several variables. 

The following lemma contains three particular cases of Proposition~3 in~\cite{KR} 
(see also~\cite{KR2}):
\begin{lem}[Row operations]
We have the following isomorphisms of matrix factorizations 
$$
\begin{Bmatrix}
a_i \ , & b_i \\
a_j \ , & b_j \\
\end{Bmatrix} 
\stackrel{[i,j]_\lambda}{\cong}
\begin{Bmatrix}
a_i-\lambda a_j \ , & b_i  \\
a_j             \ , & b_j+\lambda b_i
\end{Bmatrix}
,\qquad
\begin{Bmatrix}
a_i \ , & b_i \\
a_j \ , & b_j \\
\end{Bmatrix} 
\stackrel{[i,j]'_\lambda}{\cong}
\begin{Bmatrix}
a_i+\lambda b_j \ , & b_i \\
a_j-\lambda b_i \ , & b_j
\end{Bmatrix}
$$ 
for $\lambda\in R$. If $\lambda$ is invertible in $R$, we also have
$$\bigl\{ a_i\ ,\ b_j \bigr\}\stackrel{[i]_\lambda}{\cong}
\bigl\{\lambda a_i\ ,\ \lambda^{-1}b_i \bigr\}.$$
\end{lem}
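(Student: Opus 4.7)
The plan is to verify each isomorphism by writing both sides explicitly as small 2-periodic complexes (via the exterior algebra picture of Koszul factorizations) and producing a compatible degree-preserving change of basis.

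Concretely, model each elementary factorization $\{a,b\}$ as the free graded module $R\oplus R\theta$ (with $\theta$ odd of the appropriate $\bZ$-degree) equipped with twisted differential $D = a\theta + b\,\partial_\theta$; then the tensor product $\{\mathbf a,\mathbf b\}$ becomes the exterior algebra $R\langle\theta_1,\dots,\theta_k\rangle$ with $D=\sum_i(a_i\theta_i+b_i\partial_{\theta_i})$ and $D^2=\sum_i a_ib_i$. In this language all three statements become identities of differentials after a linear change of odd generators, so it is enough in each case to exhibit the change of variables and check compatibility with $D$ on the generators $1,\theta_i,\theta_j,\theta_i\theta_j$ (a short computation using $\sum a_ib_i=$ potential).

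For the first isomorphism $[i,j]_\lambda$, I would take the change of odd basis
\[
\theta_i'\mapsto \theta_i,\qquad \theta_j'\mapsto \theta_j+\lambda\theta_i,
\]
extended multiplicatively (so $\theta_i'\theta_j'\mapsto \theta_i\theta_j$ and $1'\mapsto 1$). This is visibly an invertible change of basis preserving the $\bZ$-grading, and a direct check on the four generators shows that it intertwines the twisted differentials when the entries on the left are $(a_i,b_i),(a_j,b_j)$ and on the right $(a_i-\lambda a_j,b_i),(a_j,b_j+\lambda b_i)$. For the second isomorphism $[i,j]'_\lambda$, the analogous move on the odd generators does not suffice; instead I would modify by an even ``twist'', i.e.\ take
\[
1'\mapsto 1-\lambda\,\theta_i\theta_j,\qquad \theta_i'\mapsto \theta_i,\qquad \theta_j'\mapsto \theta_j,\qquad \theta_i'\theta_j'\mapsto \theta_i\theta_j,
\]
and check again on generators that the two differentials match; the degree hypotheses $\deg a_i=\deg b_j$, $\deg a_j=\deg b_i$ (forced by homogeneity of the new Koszul matrix) make this map degree-preserving. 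The third isomorphism $[i]_\lambda$ is obtained simply by rescaling the odd generator $\theta\mapsto \lambda^{-1}\theta$, which is an isomorphism exactly because $\lambda$ is a unit.

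Invertibility is automatic in each case (triangular changes of basis, or a unit scalar). The only step that requires care is sign bookkeeping in the tensor-product differential $D_{M\otimes N}(m\otimes n)=D_Mm\otimes n+(-1)^{|m|}m\otimes D_Nn$, which is what forces the minus sign in $D(\theta_i\theta_j)=b_i\theta_j-b_j\theta_i$ and thereby dictates the signs $+\lambda b_j$ vs.\ $-\lambda b_i$ appearing in $[i,j]'_\lambda$; this is the one place where one must be attentive, but it is a finite verification on four basis vectors. Everything else reduces to the computation $\sum_{\ell}a_\ell b_\ell=$ potential, which holds on both sides by construction.
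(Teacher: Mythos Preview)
Your proof is correct and is essentially the paper's proof rewritten in the exterior-algebra (Clifford) model of Koszul factorizations. The paper simply records the pairs of $2\times2$ matrices
\[
[i,j]_\lambda=\Bigl(I,\begin{pmatrix}1&-\lambda\\0&1\end{pmatrix}\Bigr),\qquad
[i,j]'_\lambda=\Bigl(\begin{pmatrix}1&0\\-\lambda&1\end{pmatrix},I\Bigr),\qquad
[i]_\lambda=(1,\lambda),
\]
and leaves the intertwining check to the reader. Your change of odd generators $\theta_j'\mapsto\theta_j+\lambda\theta_i$ (with $1',\theta_i',\theta_i'\theta_j'$ fixed) is exactly the inverse of the paper's $[i,j]_\lambda$ written in the basis $(1,\theta_i\theta_j;\theta_i,\theta_j)$; your even twist $1'\mapsto 1-\lambda\theta_i\theta_j$ (with the odd part fixed) is exactly the paper's $[i,j]'_\lambda$; and your rescaling $\theta\mapsto\lambda^{-1}\theta$ is the inverse of the paper's $(1,\lambda)$. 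So the content is identical --- you have supplied the conceptual picture (linear change of odd generators, resp.\ an even ``Clifford twist'') behind the bare matrices the paper writes down, and your sign bookkeeping via $D(\theta_i\theta_j)=b_i\theta_j-b_j\theta_i$ is precisely what produces the $+\lambda b_j$ versus $-\lambda b_i$ pattern in $[i,j]'_\lambda$.
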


\begin{proof}
It is straightforward to check that the pairs of matrices
$$
\left[i,j\right]_\lambda = \Biggl(
\begin{pmatrix}
1 & 0 \\ 
0 & 1
\end{pmatrix},\ 
\begin{pmatrix}
1 & -\lambda \\
0 & 1
\end{pmatrix}\Biggr)
,\quad
\left[i,j\right]'_\lambda =\Biggl(
\begin{pmatrix}
       1 & 0 \\ 
-\lambda & 1
\end{pmatrix},\ 
\begin{pmatrix}
1 & 0 \\
0 & 1
\end{pmatrix}\Biggr)
\quad \text{and}\quad
\left[i\right]_\lambda =
(1,\ \lambda )
$$
define isomorphisms of matrix factorizations.
\end{proof}

Recall that a sequence $(a_1,a_2,\ldots ,a_k)$ is called \emph{regular} in $R$ 
if $a_j$ is not a zero divisor in $R/(a_1,a_2,\ldots , a_{j-1})$, for 
$j=1,\ldots,k$.
The proof of the following lemma can be found in~\cite{KR2}.
\begin{lem}\label{KR:lem:regseq-iso}
Let $\mathbf{b}=(b_1,b_2, \ldots ,b_k)$, $\mathbf{a}=(a_1,a_2, \ldots ,a_k)$ and $\mathbf{a'}=(a'_1,a'_2, \ldots ,a'_k)$ be sequences in $R$.
If $\mathbf{b}$ is regular and $\sum_ia_ib_i=\sum_ia'_ib_i$ then the factorizations 
$$
\{\mathbf{a}\ , \mathbf{b} \}\ 
\text{ and }\
\{\mathbf{a'}\ , \mathbf{b}\}
$$ are isomorphic.
\end{lem}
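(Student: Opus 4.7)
The plan is to reduce the lemma to the exactness of the Koszul complex in degree one for a regular sequence, and then realize the resulting identity concretely by a chain of the row operations $[i,j]'_\lambda$ supplied by the preceding lemma.

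First, set $c_i = a_i - a'_i$. The hypothesis $\sum_i a_i b_i = \sum_i a'_i b_i$ translates into $\sum_i c_i b_i = 0$, so $(c_1,\dots,c_k)$ is a $1$-cycle of the Koszul complex $K_\bullet(\mathbf{b})$ with differential $d_1(e_i)=b_i$. Since $\mathbf{b}$ is regular, $K_\bullet(\mathbf{b})$ is acyclic in positive degrees, so this cycle is a boundary. Unpacking $\image d_2$, we obtain elements $\lambda_{ij}\in R$ with $\lambda_{ij}=-\lambda_{ji}$ (and hence $\lambda_{ii}=0$, using that we work over $\bQ$) such that
$$c_i=\sum_j \lambda_{ij}b_j\qquad \text{for all } i,$$
and each $\lambda_{ij}$ is homogeneous of the unique $\bZ$-degree making the identity homogeneous.

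Next, compose the row operations $[i,j]'_{-\lambda_{ij}}$ for all pairs $i<j$. Each such operation is an isomorphism of matrix factorizations by the preceding lemma, it fixes the column $\mathbf{b}$, and it modifies the $a$-column by $a_i\mapsto a_i-\lambda_{ij}b_j$ and $a_j\mapsto a_j+\lambda_{ij}b_i$ while leaving the other entries of the $a$-column unchanged. Because the shifts depend only on the $b$'s (and not on the current $a$'s), the contributions from distinct operations add, and antisymmetry of $(\lambda_{ij})$ collapses the cumulative shift of row $r$ to
$$-\sum_{j>r}\lambda_{rj}b_j+\sum_{i<r}\lambda_{ir}b_i=-\sum_{j}\lambda_{rj}b_j=-c_r,$$
so $a_r$ is carried precisely to $a_r-c_r=a'_r$. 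The composite of these operations is therefore the desired isomorphism $\{\mathbf{a},\mathbf{b}\}\cong\{\mathbf{a}',\mathbf{b}\}$.

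The one genuinely non-formal ingredient is the vanishing $H_1(K_\bullet(\mathbf{b}))=0$ for a regular sequence; this is the step I expect to be the main obstacle, though it is a classical fact that I would either cite directly or, if a self-contained proof is wanted, obtain by induction on $k$, factoring out $b_k$ using its regularity modulo $(b_1,\dots,b_{k-1})$. Once that input is in hand, everything else is bookkeeping: the grading conventions are forced on $\lambda_{ij}$, and the telescoping above is a straightforward consequence of antisymmetry.
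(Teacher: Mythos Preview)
Your argument is correct. The paper does not give its own proof of this lemma; it simply refers the reader to~\cite{KR2}. So there is no ``paper's proof'' to compare against in detail, but your approach is precisely the standard one and almost certainly what is intended in the cited reference: use acyclicity of the Koszul complex on a regular sequence to write the difference $a_i-a'_i$ as an antisymmetric combination $\sum_j \lambda_{ij}b_j$, then realize the isomorphism as a product of the row operations $[i,j]'_{-\lambda_{ij}}$ from the preceding lemma. The telescoping computation is clean, and your observation that the modifications to the $a$-column depend only on $\mathbf{b}$ (so the order of composition is immaterial for the final Koszul matrix) is exactly what makes the bookkeeping work. One tiny remark: you do not really need to invoke characteristic zero to get $\lambda_{ii}=0$, since in the usual description of $\image d_2$ one takes $\lambda_{ij}$ indexed by $i<j$ and extends by antisymmetry, with $\lambda_{ii}=0$ by convention; but this is cosmetic.
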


A factorization $M$ with potential $W$ is said to be \emph{contractible} if it is isomorphic to a direct sum of factorizations of the form
$$R\xra{1}R\{{\scriptstyle\frac{1}{2}}\deg_\bZ W \}\xra{W}R\quad 
\text{and}\quad
R\xra{W}R\{-{\scriptstyle\frac{1}{2}}\deg_\bZ W   \}\xra{1}R.$$

%
%
\section{KR homology}\label{KR:sec:KR}

To define the KR theory of~\cite{KR} we use the webs of Section~\ref{KR:sec:slN} with a slight modification: every double edge has exactly two simple edges entering one endpoint and two leaving the other.
We also decorate the simple edges with marks such that each simple edge has at least one mark, as the example in Figure~\ref{KR:fig:marks}. We allow 
open webs which have simple edges with only one endpoint glued to the rest of the 
graph. Free ends always count as marks.
\begin{figure}[h!]
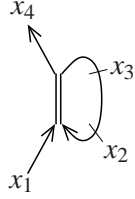

\labellist
\small\hair 2pt
\pinlabel $x_1$ at -2 -5
\pinlabel $x_2$ at 57 14
\pinlabel $x_3$ at 61 66
\pinlabel $x_4$ at -1 104
\endlabellist
\centering
\figs{0.6}{webmarks}
\caption{A web with marks}
\label{KR:fig:marks}
\end{figure}

Suppose there are $k$ marks on a web $\Gamma$ and let $\mathbf{x}$ denote the set 
$\{x_1,x_2,\ldots ,x_k\}$. Denote by $R$ the polynomial ring $\bQ[\mathbf{x}]$. 
Recall that we have defined a $\bZ$-grading on $R$ which from now on will be referred to as the $q$-grading, because it is related to the powers of $q$ that appear in the polynomial $P_N$ of Section~\ref{KR:sec:slN}.

The polynomial 
$p(x)=x^{N+1}$
is the building block for the potentials in KR theory.
For a general web $\Gamma$ we define the potential as
\begin{equation}\label{KR:eq:KRpot}
W=\sum_{i}s_ip(x_i),
\end{equation}
where $i$ runs over all free ends of $\Gamma$ and $s_i=1$ if the corresponding arc is oriented outward and $s_i=-1$ in the opposite case.

As in~\cite{KR2} we denote by $\hat\Gamma$ the matrix factorization associated to 
a web $\Gamma$. To an oriented arc with marks $x$ and $y$, as in 
Figure~\ref{KR:fig:arc-r}, we assign the potential
$$W=p(x)-p(y)=x^{N+1}-y^{N+1}$$
\begin{figure}[ht!]
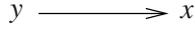

\labellist
\small\hair 2pt
\pinlabel $y$ at -15 5
\pinlabel $x$ at 112 6
\endlabellist
\centering
\figs{0.5}{arc-r}
\caption{An oriented arc}
\label{KR:fig:arc-r}
\end{figure}

\n and the \emph{arc factorization} $\hatarc$, which is given by the Koszul factorization
$$\hatarc=\{\pi_{xy}\ ,\ x-y\}=
R\xra{\pi_{xy}}R\{-2\}\xra{x-y}R
$$
where
$$\pi_{xy}=\frac{x^{N+1}-y^{N+1}}{x-y}=\sum_{j=0}^{N}x^jy^{N-j}.$$

\n To the double edge in Figure~\ref{KR:fig:fatedge} we associate the 
potential $W=p(x_i)+p(x_j)-p(x_k)-p(x_l)$.
\begin{figure}[ht!]
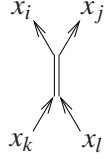

\labellist
\small\hair 2pt
\pinlabel $x_i$ at -9 118
\pinlabel $x_j$ at 59 116
\pinlabel $x_k$ at -9 -9
\pinlabel $x_l$ at 59 -10
\endlabellist
\centering
\figs{0.4}{fatedge-db}
\caption{A double edge}
\label{KR:fig:fatedge}
\end{figure}

\n Let $g$ be the unique two-variable polynomial such that $g(x+y,xy)=p(x)+p(y)$ and
\begin{align*}
u_{ijkl} &= \frac{g(x_i+x_j,x_ix_j) - g(x_k+x_l,x_ix_j)}{x_i+x_j-x_k-x_l} \\
v_{ijkl} &= \frac{g(x_k+x_l,x_ix_j)-g(x_k+x_l,x_kx_l)}{x_ix_j-x_kx_l}.
\end{align*}
Note that $u_{ijkl}$ and $v_{ijkl}$ are polynomials in $x_i$, $x_j$, $x_k$ and $x_l$.
The \emph{dumbell factorization} $\hatDoubleEdge$ is then defined as 
the tensor product of the factorizations
$$
R\{-1\} \xra{u_{ijkl}} R\{-N\}\xra{x_i+x_j-x_k-x_l} R\{-1\}
$$
and
$$
R \xra{v_{ijkl}}  R\{3-N\}\xra{x_ix_j-x_kx_l} R.
$$
We can write the dumbell factorization as the Koszul matrix 
$$\hatDoubleEdge=
\begin{Bmatrix}
u_{ijkl} \ , & x_i+x_j-x_k-x_l \\
v_{ijkl} \ , & x_ix_j-x_kx_l \\
\end{Bmatrix} 
\{-1\}.$$

The arc and dumbell factorizations are the building blocks to build matrix factorizations for every web. Any web can be built from arcs and dumbells using the operations of \emph{disjoint union}, \emph{identifying free ends} and \emph{mark addition}/\emph{mark removal}. Suppose we have a disjoint union of webs $\Gamma_1\sqcup\Gamma_2$. If $W_1$ and $W_2$ are the potentials of $\hat\Gamma_1$ and $\hat\Gamma_2$ respectively then the factorization assigned to $\Gamma_1\sqcup\Gamma_2$ must have potential $W_1+W_2$, since the set of endpoints of $\Gamma_1\sqcup\Gamma_2$ is the union of the sets of endpoints of $\Gamma_1$ and $\Gamma_2$. Therefore we define 
$$\widehat{\Gamma_1\sqcup\ \Gamma}_2=\hat\Gamma_1\otimes_\bQ\hat\Gamma_2.$$

Now suppose that $\Gamma$ has two free ends marked $x_i$ and $x_j$ and that one of them is oriented outward while the other is oriented inward. Suppose also that they can be glued to each other without crossing the rest of the web. Then we can identify these free ends forming a new diagram. This corresponds to taking the quotient $\hat\Gamma/(x_i-x_j)$. If the web $\Gamma$ is a disjoint union $\Gamma_1\sqcup\Gamma_2$ and if the free ends marked $x_{i_1}$ and $x_{i_2}$ belong to $\Gamma_1$ and $\Gamma_2$ respectively, then we can first take the tensor product $\hat\Gamma_1\otimes_\bQ\hat\Gamma_2$ and then identify the free ends marked $x_{i_1}$ and $x_{i_2}$. It is straightforward that the resulting matrix factorization corresponds to the tensor product
$$\hat\Gamma_1\otimes_{\bQ[y_i]}\hat\Gamma_2,$$
where $y_i$ acts as $x_{i_1}$ on $\hat\Gamma_1$ and $x_{i_2}$ on $\hat\Gamma_2$.

Finally suppose that only the internal marks of $\Gamma$ and $\Gamma'$ are different, that is, one can be obtained from the other by a different placing of marks while the marks at the free ends are kept. Then they have the same potential and $\hat\Gamma$ and $\hat\Gamma'$ are canonically isomorphic in $\hmf{R}(W)$ (see~\cite{KR}).

Let $E$ and $T$ denote the set of arcs between marks and the set of double edges, respectively, of a general web $\Gamma$. The matrix factorization $\hat\Gamma$ is built from the arc and the dumbell 
factorizations as
$$\hat\Gamma=\bigotimes\limits_{e\in E}\hatarc\mspace{-3.5mu}_e\otimes\bigotimes
\limits_{t\in T}\hatDoubleEdge_t.$$
The tensor products are taken over suitable rings, so that as a module $\hat\Gamma$ is free and of finite rank over $R$. It is a matrix factorization with potential given by Equation~\eqref{KR:eq:KRpot}.
 If $\Gamma$ is a closed web then $\hat\Gamma$ is a 2-complex, whose homology is nontrivial in only one of its $\bZ/2\bZ$-degrees~\cite{KR}.

\subsection{The maps $\chi_0$ and $\chi_1$}\label{KR:ssec:diffs}
Let $\hatorsmooth$ and $\hatDoubleEdge$ denote the factorizations corresponding to the webs in Figure~\ref{KR:fig:maps-chi}.

\begin{figure}[ht!]
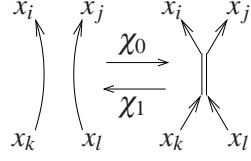

\labellist
\small\hair 2pt
\pinlabel $x_i$ at -9 117
\pinlabel $x_j$ at 57 115
\pinlabel $x_k$ at -9 -10
\pinlabel $x_l$ at 57 -10
\pinlabel $x_i$ at 133 117
\pinlabel $x_j$ at 197 115
\pinlabel $x_k$ at 133 -10
\pinlabel $x_l$ at 197 -10
\pinlabel $\chi_0$ at 95 84
\pinlabel $\chi_1$ at 95 24
\endlabellist
\centering
\figs{0.4}{maps-chi-db}
\caption{Maps $\chi_0$ and $\chi_1$}
\label{KR:fig:maps-chi}
\end{figure}

\n The factorization $\hatorsmooth$ is given in matrix notation by 
$$
\begin{pmatrix}R \\ R\{2-N\} \end{pmatrix}
\stackrel{P_0}{\lra}
\begin{pmatrix}R\{1-N\} \\ R\{1-N\} \end{pmatrix}
\stackrel{P_1}{\lra}
\begin{pmatrix}R \\ R\{2-N\} \end{pmatrix}
$$
with
$$
P_0=\begin{pmatrix}\pi_{ik} & x_j-x_l \\ \pi_{jl} & -x_i+x_k\end{pmatrix},\qquad
P_1=\begin{pmatrix}x_i-x_k & x_j-x_l \\ \pi_{jl} & -\pi_{ik}\end{pmatrix},
$$
and the factorization $\hatDoubleEdge$ is given by 
$$
\begin{pmatrix}R\{-1\} \\ R\{3-2N\} \end{pmatrix}
\stackrel{Q_0}{\lra}
\begin{pmatrix}R\{-N\} \\ R\{2-N\} \end{pmatrix}
\stackrel{Q_1}{\lra}
\begin{pmatrix}R\{-1\} \\ R\{3-2N\} \end{pmatrix}
$$
with
$$
Q_0=\begin{pmatrix}u_{ijkl} & x_ix_j-x_kx_l \\ v_{ijkl} & -x_i-x_j+x_k+x_l
\end{pmatrix},\qquad
Q_1=\begin{pmatrix}x_i+x_j-x_k-x_l & x_ix_j-x_kx_l \\ v_{ijkl} & -u_{ijkl}
\end{pmatrix}.
$$

\n Define the homomorphisms $\chi_0\colon \hatorsmooth\to\hatDoubleEdge$ and $\chi_1\colon\hatDoubleEdge\to\hatorsmooth$ by the pairs of matrices:
$$
\chi_0 = \Biggl(
\begin{pmatrix}x_k-x_j & 0 \\ \alpha & 1 \end{pmatrix},
\begin{pmatrix}x_k & -x_j \\ -1 & 1 \end{pmatrix}
\Biggr)\quad \text{and}\qquad
\chi_1 = \Biggl(
\begin{pmatrix} 1 & 0 \\ -\alpha & x_k-x_j \end{pmatrix},
\begin{pmatrix} 1 & x_j \\ 1 & x_k \end{pmatrix}
\Biggr)
$$
where 
\begin{equation}\label{KR:eq:adiff}
\alpha=-v_{ijkl}+\frac{u_{ijkl}+x_iv_{ijkl}-\pi_{jl}}{x_i-x_k}.
\end{equation}

\n The maps $\chi_0$ and $\chi_1$ have degree 1. A straightforward calculation 
shows that $\chi_0$ and $\chi_1$ are homomorphisms of matrix factorizations, and that
$$\chi_0\chi_1=m(x_k-x_j)\id(\hatDoubleEdge)
\qquad
\chi_1\chi_0=m(x_k-x_j)\id(\hatorsmooth),
$$
where $m(x_*)$ is multiplication by $x_*$.

\subsection{Web cobordisms}\label{KR:ssec:cob-mf}
In this subsection we show which homomorphisms of matrix factorizations we 
associate to the elementary singular web-cobordisms. 
We do not know if these can be put together in a well-defined way for 
arbitrary singular web-cobordisms. 

The elementary web-cobordisms are the \emph{zip}, the \emph{unzip} 
(see Figure~\ref{KR:fig:zipunzip}) 
\begin{figure}[ht!]
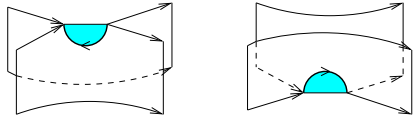

\centering
\figins{0}{0.6}{ssaddle} \qquad
\figins{0}{0.6}{ssaddle_ud}
\caption{The zip and the unzip cobordisms}
\label{KR:fig:zipunzip}
\end{figure}
and the elementary cobordisms in Figure~\ref{KR:fig:cobs}. 
To the zip and the unzip we associate the maps $\chi_0$ and $\chi_1$, 
respectively, as defined in Subsection~\ref{KR:ssec:diffs}. For each 
elementary cobordism in Figure~\ref{KR:fig:cobs} we define a 
homomorphism of matrix factorizations as below.

\begin{figure}[ht!]
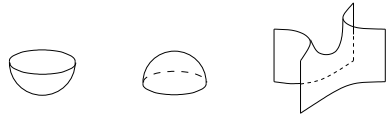

\centering
\figins{0}{0.25}{cup}\qquad
\figins{0}{0.25}{cap}\qquad
\figins{-7}{0.6}{saddle}
\caption{Elementary cobordisms}
\label{KR:fig:cobs}
\end{figure}

\n First note that there are isomorphisms
\begin{align}
\hat\emptyset &\cong \bQ\to 0\to\bQ \label{KR:eq:isoe}\\
\hatunknot &\cong  0\to\quotient{\bQ[x]}{(x^N)}\{1-N\}\to 0.\label{KR:eq:isou}
\end{align}

\n The \emph{unit}  map 
$$\imath\colon\bQ\brak{1} \to \hatunknot$$
is the homomorphism of matrix factorizations induced by the 
map (denoted by the same symbol)
$$
\imath\colon\bQ\to\quotient{\bQ[x]}{(x^N)}\{-N+1\}, \quad  1\mapsto 1
$$
using the isomorphisms~\eqref{KR:eq:isoe} and~\eqref{KR:eq:isou}.

Let $\xi$ be a nonzero rational number. The \emph{trace} map 
$$\varepsilon\colon\hatunknot\to \bQ\brak{1}$$
is the homomorphism of matrix factorizations induced by the 
map (denoted by the same symbol)
$$
\varepsilon\colon\quotient{\bQ[x]}{(x^N)}\{-N+1\}\to\bQ,\quad  x^k\mapsto 
\begin{cases}
\xi, & k=N-1 \\
 0 , & k\neq N-1 \end{cases}
$$
using the isomorphisms~\eqref{KR:eq:isoe} and~\eqref{KR:eq:isou}.

Let $\hattwoedgesop$ and $\hathtwoedgesop$ be the factorizations corresponding to the webs in Figure~\ref{KR:fig:saddle}.
\begin{figure}[ht!]
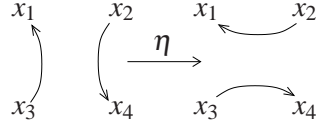

\labellist
\small\hair 2pt
\pinlabel $x_1$ at -4 89
\pinlabel $x_2$ at 89 89
\pinlabel $x_3$ at -4 -6
\pinlabel $x_4$ at 89 -6
\pinlabel $x_1$ at 170 89
\pinlabel $x_2$ at 264 89
\pinlabel $x_3$ at 170 -6
\pinlabel $x_4$ at 264 -6
\pinlabel $\eta$ at 128 56
\endlabellist
\centering
\figs{0.4}{saddle-mf}
\caption{Saddle point homomorphism}
\label{KR:fig:saddle}
\end{figure}

\n The matrix factorization $\hattwoedgesop$ is given by 
$$
\begin{pmatrix}R \\ R\{2-2N\} \end{pmatrix}
\xra{\begin{pmatrix}
\pi_{13} & x_4-x_2\\
\pi_{24} & x_3-x_1 
\end{pmatrix}}
\begin{pmatrix}R\{1-N\} \\ R\{1-N\} \end{pmatrix}
\xra{\begin{pmatrix}
x_1-x_3  & x_4-x_2\\
\pi_{24} & -\pi_{13}
\end{pmatrix}}
\begin{pmatrix}R \\R\{2-2N\} \end{pmatrix}
$$
and 
$\hathtwoedgesop\brak{1}$ is given by 
$$
\begin{pmatrix}R\{1-N\} \\ R\{1-N\}  \end{pmatrix}
\xra{\begin{pmatrix}
  x_2-x_1  & x_3-x_4 \\
 -\pi_{34} & \pi_{12}
\end{pmatrix}}
\begin{pmatrix}R \\ R\{2-2N\} \end{pmatrix}
\xra{\begin{pmatrix}
 -\pi_{12} & x_3-x_4\\
 -\pi_{34} & x_1-x_2 
\end{pmatrix}}
\begin{pmatrix}R\{1-N\} \\ R\{1-N\} \end{pmatrix}
$$

\n To the saddle cobordism between the webs $\twoedgesop$ and $\htwoedgesop$ we associate the homomorphism of matrix factorizations $\eta\colon\hattwoedgesop\to \hathtwoedgesop\brak{1}$ described by the pair of matrices
$$\eta_0=
\begin{pmatrix}
 e_{123}+e_{124} & 1 \\
-e_{134}-e_{234} & 1 
\end{pmatrix},
\qquad
\eta_1=
\begin{pmatrix}
 -1 & 1 \\
-e_{123}-e_{234} & -e_{134}-e_{123} 
\end{pmatrix}
$$
where 
$$
e_{ijk}=
\frac{(x_k-x_j)p(x_i)+(x_i-x_k)p(x_j)+(x_j-x_i)p(x_k)}{2(x_i-x_j)(x_j-x_k)(x_k-x_i)}
.
$$

\n The homomorphism $\eta$ has degree $N-1$. In general $\eta$ is only defined up to a sign (see~\cite{KR}). 

%
%
%
%
\subsection{MOY web moves}\label{KR:ssec:MOY}

One of the main features of the KR theory is the 
categorification of the MOY web moves. The categorified MOY
moves are described by the homotopy equivalences 
below.
\begin{lem}\label{KR:lem:KR-moy}
We have the following direct sum decompositions:
\begin{align}
\figins{-13}{0.5}{hatdigon1-db}\
&\cong\
\figins{-13}{0.5}{hatdr1-db}\{-1\}
\oplus
\figins{-13}{0.5}{hatdr1-db}\{1\}
\tag{MOY1}
\\[3ex]
\figins{-13}{0.5}{hatdigon2-db}\
&\cong\
\sum\limits_{i=0}^{N-2}\
\figins{-13}{0.5}{dr2-KR}\brak{1}\{2-N+2i\}
\tag{MOY2}
\\[3ex]
\figins{-11}{0.455}{hatsquare1-db}\
&\cong\
\figins{-11}{0.45}{sqr1b}
\oplus\left(
\bigoplus\limits_{i=0}^{N-3}\
\figins{-11}{0.45}{sqr1a}\ 
\brak{1}\{3-N+2i\}
\right)
\tag{MOY3}
\end{align}

\begin{equation}
\hspace{-6ex}
\figins{-19}{0.7}{hatsquare2a-db}\
\oplus
\figins{-19}{0.7}{hatsqr2bb-db}
\cong
\figins{-19}{0.7}{hatsquare2b-db}\
\oplus
\figins{-19}{0.7}{hatsqr2aa-db}
\tag{MOY4}\label{KR:eq:moy4}
\end{equation}
\end{lem}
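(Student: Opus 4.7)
The plan is to prove each decomposition by realizing both sides as explicit Koszul matrices over a common polynomial ring $R$ in the external marks, and then exhibiting the required isomorphism via the Row operations lemma combined with repeated applications of Lemma~\ref{KR:lem:exvar}. In every case I would begin by placing marks on every internal edge, writing $\hat\Gamma$ as the iterated tensor product of arc and dumbell factorizations as prescribed in Section~\ref{KR:sec:KR}, and then eliminating the internal marks to obtain a presentation of $\hat\Gamma$ as a Koszul matrix over $R=\bQ[x_1,\ldots,x_k]$, where $k$ is the number of external marks. The $q$-grading shifts on the right-hand side of each decomposition will be tracked by summing the shifts that appear in the building blocks together with those produced by the variable eliminations.

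For (MOY1), after excluding the two internal marks the digon reduces to a Koszul matrix whose underlying $R$-module is free of graded rank $[2]=q+q^{-1}$ times the rank of the single-arc factorization; reading off the two $q$-degrees produces the claimed summands shifted by $\{-1\}$ and $\{+1\}$. For (MOY2), the same elimination procedure yields a factorization whose Koszul form splits, after a row operation that diagonalises the $u_{ijkl}$-type entry modulo $x_i+x_j-x_k-x_l$, into $N-1$ copies of the edge factorization with consecutive odd shifts $\{2-N+2i\}$ for $0\le i\le N-2$, matching the $q$-expansion $[N-1]=q^{-(N-2)}+q^{-(N-4)}+\cdots+q^{N-2}$.

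For (MOY3), the square reduces to a Koszul matrix in four external variables whose quotient ring contains an $(N-2)$-dimensional graded subspace, corresponding to the Schur-polynomial decomposition behind the classical MOY3 identity, plus a residual one-dimensional summand. The residual summand is identified on the nose with the ``horizontal smoothing'' factorization, while the $(N-2)$-dimensional subspace produces $N-2$ copies of the ``vertical smoothing'' factorization with the stated odd shifts; checking that the resulting inclusion and projection maps intertwine the twisted differentials amounts to polynomial identities implicit in the definitions of $u_{ijkl}$ and $v_{ijkl}$, verifiable directly using Lemma~\ref{KR:lem:regseq-iso} since the relevant sequence $(x_i+x_j-x_k-x_l,\,x_ix_j-x_kx_l)$ is regular in $R$.

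Finally, for (MOY4) I would not attempt to split either side into smaller pieces; instead, both sides must be brought to a common Koszul form in the eight external variables via a judicious change of row labels and repeated applications of the Row operations lemma. The main obstacle lies here: neither of the two-term sums admits a canonical refinement, so the isomorphism has to be constructed as a single matrix block that mixes the two summands on each side, and its verification requires careful bookkeeping of the signs and shifts introduced at each row operation. Once this common form is produced, the equivalence (MOY4) follows directly from the matching of Koszul data, without needing to invoke (MOY1)--(MOY3).
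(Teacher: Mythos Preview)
Your general strategy for (MOY1)--(MOY3) --- writing everything as Koszul matrices over the ring in the external marks, then using row operations and Lemma~\ref{KR:lem:exvar} to exclude internal variables --- is the standard Khovanov--Rozansky approach and is consistent with how the paper treats these decompositions (the paper does not reprove them here, but simply cites~\cite{KR}).

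For (MOY4), however, your plan rests on a false premise. You write that ``neither of the two-term sums admits a canonical refinement'' and therefore propose to construct a single block isomorphism mixing the summands. In fact each square factorization \emph{does} split canonically: immediately after stating the lemma the paper records the two refinements
\[
\figins{-20}{0.7}{hatsquare2a-db}\;\cong\;\figins{-20}{0.7}{hatsqr2bb-db}\oplus\figins{-20}{0.7}{hattrpl}
\qquad\text{and}\qquad
\figins{-20}{0.7}{hatsquare2b-db}\;\cong\;\figins{-20}{0.7}{hatsqr2aa-db}\oplus\figins{-20}{0.7}{hattrpl},
\]
where the common extra summand is the triple-edge factorization $\hat\Upsilon$ defined right afterwards. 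Once you have these two decompositions (proved in~\cite{KR} by the same exclude-variables-and-row-operate technique you use for (MOY1)--(MOY3)), relation (MOY4) is immediate: add the appropriate piece to each side and cancel the common $\hat\Upsilon$. Your proposed direct block-matrix argument, even if it could be made to work, is therefore both harder and less informative than the approach the paper indicates; the refined splittings are what one actually needs later (e.g.\ for Reidemeister~III invariance), so bypassing them would leave a gap in the overall development.
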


\medskip

The last relation is a consequence of two relations involving a triple edge
$$
\figins{-20}{0.7}{hatsquare2a-db}
\cong 
\figins{-20}{0.7}{hatsqr2bb-db}\oplus
\figins{-20}{0.7}{hattrpl}
\qquad , \qquad
\figins{-20}{0.7}{hatsquare2b-db}
\cong 
\figins{-20}{0.7}{hatsqr2aa-db}\oplus
\figins{-20}{0.7}{hattrpl}.
$$ 

\medskip

The factorization assigned to the triple edge in Figure~\ref{KR:fig:trp-KR} has
potential
$$W=p(x_1)+p(x_2)+p(x_3)-p(x_4)-p(x_5)-p(x_6).$$
\begin{figure}[ht!]
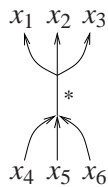

\labellist
\small\hair 2pt
\pinlabel $x_1$ at   0 199
\pinlabel $x_2$ at  51 199
\pinlabel $x_3$ at 100 199
\pinlabel $x_4$ at   0 -19
\pinlabel $x_5$ at  51 -19
\pinlabel $x_6$ at 100 -19
\endlabellist
\centering
\figs{0.28}{trpledge}
\caption{Triple edge factorization}
\label{KR:fig:trp-KR}
\end{figure}

\n Let $h$ be the unique three-variable polynomial such that
$$h(x+y+z,xy+xz+yz,xyz)=p(x)+p(y)+p(z)$$
and let
\begin{gather*}
e_1=x_1+x_2+x_3,
\qquad 
e_2=x_1x_2+x_1x_3+x_2x_3,
\quad
e_3=x_1x_2x_3, \\
s_1=x_4+x_5+x_6,
\qquad 
s_2=x_4x_5+x_4x_6+x_5x_6,
\quad
s_3=x_4x_5x_6.
\end{gather*}
Define
\begin{align*}
h_1 &= \frac{h(e_1,e_2,e_3)-h(s_1,e_2,e_3)}{e_1-s_1}, \\
h_2 &= \frac{h(s_1,e_2,e_3)-h(s_1,s_2,e_3)}{e_2-s_2}, \\
h_3 &= \frac{h(s_1,s_2,e_3)-h(s_1,s_2,s_3)}{e_3-s_3},
\end{align*}
so that we have $W=h_1(e_1-s_1)+h_2(e_2-s_2)+h_3(e_3-s_3)$. The matrix factorization 
$\hat\Upsilon$ corresponding to the triple edge is defined by the Koszul 
matrix
$$\hat\Upsilon=
\begin{Bmatrix}
h_1\ ,& e_1-s_1 \\
h_2\ ,& e_2-s_2 \\
h_3\ ,& e_3-s_3
\end{Bmatrix}_R\{-3\},$$
where $R=\bQ[x_1,\ldots ,x_6]$. 
The matrix factorization $\hat\Upsilon$ is the tensor product of 
the matrix factorizations
$$R\xra{h_i}R\{2i-1-N\}\xra{e_i-s_i}R,\qquad i=1,2,3,$$
shifted down by 3.

\subsection{The KR complex}\label{KR:ssec:cmplx}

Next we define a complex of matrix factorizations for each link diagram. Let $D$ be a diagram of a link $L$. Assigning marks to the free ends of each $\overcrossing$ and $\undercrossing$ we can define complexes of matrix factorizations by
\begin{align*}
\KR(\undercrossing) &= 
0\to\underline{\raisebox{1 pt}{$\hatorsmooth$}}\{1-N\}\xra{\chi_0}\raisebox{1 pt}{$\hatDoubleEdge$}\{-N\}\to 0 \\
\KR(\overcrossing) &= 
0\to\raisebox{1 pt}{$\hatDoubleEdge$}\{N\}\xra{\chi_1}\underline{\raisebox{1 pt}{$\hatorsmooth$}}\{N-1\}\to 0
\end{align*}
where underlined terms correspond to homological degree 0. The complex for a general link diagram $D$ can be obtained from the complexes $\KR(\undercrossing)$ and $\KR(\overcrossing)$ as follows. Put at least one  mark on every arc of $D$ and for each crossing $D_c$ form the complex $\KR(D_c)=\KR(\overcrossing)$ or $\KR(D_c)=\KR(\undercrossing)$, according to whether it is a positive or a negative crossing. The complex $\KR(D)$ is given by the tensor product of $\KR(D_c)$ over all crossings and $\hatarc$ over all arcs between marks. The tensor products are taken over appropriate rings, so that $\KR(D)$ is a finite rank free module over $R$.

An equivalent construction consists of flattening each crossing using the 0 and 1-flattening as in Figure~\ref{KR:fig:flatten}. Let $D$ be a link diagram with $c$ crossings. A complete flattening of $D$ is completely defined by an array $(i_1,\ldots,i_c)$ of 0's and 1's, where we are assuming some ordering of the crossings. The set of complete flattenings of $D$ is in 1-1 correspondence with the vertices of the hypercube $\{0,1\}^c$.  Let $\Gamma_{i_1,\ldots,i_c}$ be a complete flattening of $D$ and let $|i|$ be the number of 1-flattenings in it, that is, the number of 1's in $(i_1,\ldots,i_c)$. We call $|i|$ the \emph{height} of $\Gamma_{i_1,\ldots,i_c}$. To form the complex we start by placing each $\hat{\Gamma}_{i_1,\ldots,i_c}$, shifted by $\{|i|\}$, in the corresponding vertex of $\{0,1\}^c$. Each edge of this decorated hypercube connects two complete flattenings that differ only in the flattening of one crossing of $D$. Let $j$ be that crossing and let $s(j)$ be the number of 1's in $(i_1,\ldots,i_{j-1})$. Orient that edge from the 0- to the 1-flattening and decorate it with the map $(-1)^{s(j)}\chi_i$ where $i=0$ if it is a positive crossing or with $i=1$ if it is a negative crossing. Therefore, every (oriented) edge of the hypercube increases the height by 1 and preserves the $q$-grading. This way every square of the hypercube anti-commutes and we can form a complex of matrix factorizations
$$
\KR^{k,*}(D)=\bigoplus\limits_{|i|=k}\hat{\Gamma}_{i_1,\ldots,i_c}[-c_-]\{(N-1)c_+-Nc_-+|i|\}
$$
where $[\ \ ]$ denotes an upward shift in the homological grading and the horizontal differential $\KR^{k,j}(D)\to\KR^{k+1,j}(D)$ is given by taking the direct sum over all edges of the hypercube starting in $\KR^{k,*}(D)$ and ending in $\KR^{k+1,*}(D)$ using the maps above.

Khovanov and Rozansky prove in~\cite{KR} that if a link diagram $D'$ is obtained from a link diagram $D$ through a sequence of Reidemeister moves then $\KR(D')$ is homotopy equivalent to $\KR(D)$. Therefore, up to homotopy the KR complex of a link does not depend on the link diagram that is used, which justifies the notation $\KR(L)$. The $\bQ$-vector space $\KR(L)$ is $\bZ\times\bZ/2\bZ\times\bZ$-graded, but its homology groups w.r.t. the $\bZ/2\bZ$-grading are nontrivial only in the $\bZ/2\bZ$-degree which is equal to the number of components of $L$ modulo 2. 
This means that taking first the homology of $\KR(L)$ w.r.t. the vertical differential  and then the homology w.r.t. the horizontal differential yields a bigraded $\bQ$-vector space, denoted $\HKR(L)$ and called the
\emph{Khovanov-Rozansky homology} of $L$.
If we want to emphasize a specific value for $N$ we write $\HKR_N(L)$.
We have
$$\HKR(\unknot)\cong
\bigl(\quotient{\bQ[x]}{(x^N)}\bigr)
\brak{1}\{-N+1\}.$$ 

Denote by $\HKR^{i,j}(L)$ the summand of $\HKR(L)$ in bidegree $(i,j)$, where $i$ is the homological grading and $j$ the $q$-grading. From the MOY relations of Lemma~\ref{KR:lem:KR-moy} and from the fact that the differentials in $\KR(L)$ are grading preserving it follows that the graded Euler characteristic of $\HKR(L)$ equals the $\sln$-link polynomial,
$$
P_N(L)=\sum_{i,j\in\bZ}(-1)^iq^j\dim_\bQ\bigl(\HKR^{i,j}_N(L)\bigr).
$$

A second important property of $\HKR$ is that it defines a projective functor from the category $\Link$ of oriented links in $S^3$ and ambient isotopy classes of oriented link cobordisms properly embedded in $S^3\times[0,1]$ to the category $\V$ of $\bZ$-graded rational vector spaces and $\bZ$-graded linear maps. We omit the details in this thesis.

A remark should be made about terminology. KR homology (as the original Khovanov homology) is the homology of a cohomological complex (the differential increases the homological grading) that behaves as homology on cobordisms, that is, it is a covariant functor from $\Link$ to $\V$. That is the reason why we use the term KR homology instead of KR cohomology.

\section{The universal rational KR $\slt$-link homology}
\label{KR:sec:KR-abc}

The theory of Khovanov and Rozansky can be generalized to give the universal rational KR homology for all $N>0$ (see~\cite{gornik,rasmussen-diff,wu-filt}). In this section we describe the universal rational KR homology for $N=3$, which is similar to the one presented in the previous section, but defined over $\bQ[a,b,c,\mathbf{x}]$, where $a$, $b$, $c$ are formal parameters. If $a=b=c=0$ we get back the original KR theory for $N=3$. 
Throughout this section we denote by $R$ the polynomial ring $\bQ[a,b,c,\mathbf{x}]$. We define a $q$-grading on $R$ declaring that 
$$q(1)=0,\quad q(x_i)=2,\,\,\,\text{for all}\,\, i,\quad q(a)=2,\quad q(b)=4,
\quad q(c)=6.$$

The basic polynomial to form the potentials in the universal rational KR theory for $N=3$ is 
\begin{equation}\label{KR:eq:upoly}
p(x)=x^4 -\frac{4a}{3} x^3 - 2b x^2 - 4c x.
\end{equation}

\n The \emph{arc factorization} $\hatarc$ in Figure~\ref{KR:fig:arc-r} has potential
$$W=p(x)-p(y)=x^4-y^4 -\frac{4a}{3} \bigl(x^3-y^3 \bigr) - 2b\bigl(x^2-y^2\bigr) -
4c\bigl(x-y\bigr)
$$
and is defined by the Koszul factorization
$$\hatarc=\{\pi_{xy}\ ,\ x-y\}=
R\xra{\pi_{xy}}R\{-2\}\xra{x-y}R
$$
where
$$\pi_{xy}=\frac{p(x)-p(y)}{x-y}=\frac{x^4-y^4}{x-y}-\frac{4a}{3}\frac{x^3-y^3}{x-y}-2b\frac{x^2-y^2}{x-y}-4c.$$

The \emph{dumbell factorization} $\hatDoubleEdge$ in Figure~\ref{KR:fig:fatedge} has potential $W=p(x_i)+p(x_j)-p(x_k)-p(x_l)$
and is defined by the Koszul matrix
$$\hatDoubleEdge=
\begin{Bmatrix}
u_{ijkl} \ , & x_i+x_j-x_k-x_l \\
v_{ijkl} \ , & x_ix_j-x_kx_l \\
\end{Bmatrix}
\{-1\}.$$

\n Here the polynomials $u_{ijkl}$ and $v_{ijkl}$ are obtained as in Section~\ref{KR:sec:KR} using the polynomial $p$ of Equation~\eqref{KR:eq:upoly}. Explicitly,
\begin{align*}
u_{ijkl} &= \frac{(x_i+x_j)^4-(x_k+x_l)^4}{x_i+x_j-x_k-x_l}-(2b+4x_ix_j)(x_i+x_j+x_k+x_l) \\
  & \quad -\frac{4a}{3}\biggl(\frac{(x_i+x_j)^3-(x_k+x_l)^3}{x_i+x_j-x_k-x_l}-3x_ix_j\biggr)-4c,\\
v_{ijkl} &= 2(x_ix_j+x_kx_l)-4(x_k+x_l)^2 + 4a (x_k+x_l) + 4b.
\end{align*}

The matrix factorization assigned to a bigger web is built exactly as in Section~\ref{KR:sec:KR}, with the ring $\bQ[a,b,c]$ playing the role played by $\bQ$ there.

\subsubsection*{The maps $\chi_0$ and $\chi_1$}
The maps $\chi_0$ and $\chi_1$ of Subsection~\ref{KR:ssec:diffs} are defined by the pairs of matrices:
$$
\chi_0 = \Biggl(
2\begin{pmatrix}-x_k+x_j & 0 \\ -\alpha & -1 \end{pmatrix},
2\begin{pmatrix}-x_k & x_j \\ 1 & -1 \end{pmatrix}
\Biggr)\quad \text{and}\qquad
\chi_1 = \Biggl(
\begin{pmatrix} 1 & 0 \\ -\alpha & x_k-x_j \end{pmatrix},
\begin{pmatrix} 1 & x_j \\ 1 & x_k \end{pmatrix}
\Biggr)
$$
with $\alpha$ as in Equation~\eqref{KR:eq:adiff}. A straightforward calculation 
shows that
$$\chi_0\chi_1=m\bigl(2(x_j-x_k)\bigr)\id(\hatDoubleEdge)
\qquad
\chi_1\chi_0=m\bigl(2(x_j-x_k)\bigr)\id(\hatorsmooth).
$$
Note that we are using a sign convention that is different from the one 
in Subsection~\ref{KR:ssec:diffs} and that the map $\chi_0$ contains an extra factor of 2 compared to the one in Subsection~\ref{KR:ssec:diffs}. These choices are necessary for consistency with the relations of Chapter~\ref{chap:univ3} and do not change the isomorphism class of the homology obtained.

There is another description of the maps $\chi_0$ and $\chi_1$ when the webs in Figure~\ref{KR:fig:maps-chi} are closed, which is due to Rasmussen~\cite{rasmussen-diff} in a more general setting. In this case both $\hatorsmooth$ and $\hatDoubleEdge$ have potential zero. Acting with a row operation on $\hatorsmooth$ we get
$$\hatorsmooth \cong 
\begin{Bmatrix}
\pi_{ik}, & x_i+x_j-x_k-x_l \\
\pi_{jl}-\pi_{ik}, & x_j-x_l
\end{Bmatrix}
.$$
Excluding variable $x_k$ from $\hatorsmooth$ and from $\hatDoubleEdge$ yields
$$\hatorsmooth\cong\bigl\{ \pi_{jl}-\pi_{ik},\ x_j-x_l\bigr\}_R,\qquad
\hatDoubleEdge\cong\bigl\{ v_{ijkl},\ (x_i-x_l)(x_j-x_l)\bigr\}_R,$$
with $R=\bQ[x_i,x_j,x_k,x_l]/(x_k+x_i+x_j-x_l)$. It is straightforward to check that $\chi_0$ and $\chi_1$ correspond to  the maps $(-2(x_i-x_l), -2)$ and $(1, x_i-x_l)$ respectively.
This description will be useful in Section~\ref{mf3:sec:iso}.

\subsubsection*{Web cobordisms}\label{KR:ssec:cob-KR-abc}
In the universal rational KR $\slt$-link homology we also have homomorphisms of matrix factorizations which we
associate to the elementary singular web-cobordisms as in Subsection~\ref{KR:ssec:cob-mf}. As in Subsection~\ref{KR:ssec:cob-mf} to the zip and the unzip we associate the maps $\chi_0$ and $\chi_1$, 
respectively.

The \emph{unit} and \emph{trace }maps
$$\imath\colon\bQ[a,b,c]\brak{1} \to \hatunknot$$
$$\varepsilon\colon\hatunknot\to \bQ[a,b,c]\brak{1}$$
are given as in Subsection~\ref{KR:ssec:cob-mf}. They are the homomorphisms of matrix factorizations induced by the 
maps (denoted by the same symbols)
$$\imath\colon\bQ[a,b,c]\to\quotient{\bQ[a,b,c][x]}{(x^3-ax^2-bx-c)}\{-2\},\quad  1\mapsto 1$$
and
$$
\varepsilon\colon\quotient{\bQ[a,b,c][x]}{(x^3-ax^2-bx-c)}\{-2\}\to\bQ[a,b,c], \quad  x^k\mapsto \begin{cases}
-\frac{1}{4}, & k=2 \\
\ \ 0 , & k<2 \end{cases}
$$
using the isomorphisms
\begin{align*}
\hat\emptyset &\cong \bQ[a,b,c]\to 0\to\bQ[a,b,c] \\
\hatunknot &\cong  0\to\quotient{\bQ[a,b,c][x]}{(x^3-ax^2-bx-c)}\{-2\}\to 0.
\end{align*}

The saddle homomorphism $\eta\colon\hattwoedgesop\to \hathtwoedgesop\brak{1}$ is given as in Subsection~\ref{KR:ssec:cob-mf} using the polynomial $p$ of Equation~\eqref{KR:eq:upoly}. In the notation of Subsection~\ref{KR:ssec:cob-mf} $\eta$ is described by the pair of matrices
$$\eta_0=
\begin{pmatrix}
 e_{123}+e_{124} & 1 \\
-e_{134}-e_{234} & 1 
\end{pmatrix},
\qquad
\eta_1=
\begin{pmatrix}
 -1 & 1 \\
-e_{123}-e_{234} & -e_{134}-e_{123} 
\end{pmatrix}
$$
where 
\begin{align*}
e_{ijk} &=
\frac{(x_k-x_j)p(x_i)+(x_i-x_k)p(x_j)+(x_j-x_i)p(x_k)}{2(x_i-x_j)(x_j-x_k)(x_k-x_i)}
\\
  &= \frac{1}{2}\bigl(x_i^2+x_j^2+x_k^2+x_ix_j+x_ix_k+x_jx_k\bigr)-\frac{2a}{3}\bigl(x_i+x_j+x_k\bigr)-b.
\end{align*}
The homomorphism $\eta$ has degree 2. Recall that in general the homomorphism $\eta$ is defined only up to a sign (see Subsection~\ref{KR:ssec:cob-mf}). The following two lemmas can be proved by explicit computation of the corresponding homomorphisms.
\begin{lem}
The composite homomorphisms
$$
\figins{-10.5}{0.4}{arc-mf}
\xra{\id\otimes\imath}
\figins{-10.5}{0.4}{arc-circ-mf}
\xra{\eta}
\figins{-10.5}{0.4}{arc-mf}
\qquad\text{ and }\qquad
\figins{-10.5}{0.4}{arc-mf}
\xra{\eta}
\figins{-10.5}{0.4}{arc-circ-mf}
\xra{\id\otimes\varepsilon}
\figins{-10.5}{0.4}{arc-mf}
$$
correspond to the identity endomorphism of the arc factorization $\hatarc$.
\end{lem}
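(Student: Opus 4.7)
Both statements are verified by an explicit computation at the level of matrix pairs; I outline the strategy for the first composite, since the second is handled symmetrically and in fact forces the normalization constant $\xi=-1/4$ chosen for $\varepsilon$.

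The first step is to set up a common picture in which $\eta$ can be applied. Label the arc with marks $y$ (incoming) and $x$ (outgoing), and place a single mark $z$ on the cup created by $\imath$. Then $\hatarc\otimes\hatunknot$ is a matrix factorization over $\bQ[a,b,c,x,y,z]$ with potential $p(x)-p(y)$, which coincides with the potential of $\hatarc$. To apply $\eta$, I interpret the arc-plus-small-circle diagram as a flattening of $\hattwoedgesop$ in which the two top free endpoints are identified (producing the closure of the cup) and the bottom endpoints retain the labels $y,x$. This fixes concrete labels $x_1=x_2=z$, $x_3=y$, $x_4=x$ in the formulas for $\eta_0,\eta_1$ given in the excerpt.

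Next I would write $\id\otimes\imath$ explicitly. Using the isomorphism $\hatunknot\cong\bQ[a,b,c][z]/(z^3-az^2-bz-c)\{-2\}$ in the appropriate $\bZ/2\bZ$-degree, the map $\imath$ sends $1\mapsto 1$, so after tensoring with $\id_{\hatarc}$ it becomes a concrete pair of matrices embedding $\hatarc$ into $\hatarc\otimes\hatunknot$. Instantiating $(\eta_0,\eta_1)$ with the variable labels of the previous step, I multiply the two matrix pairs to obtain the composite. The resulting endomorphism is then reduced by applying row operations on matrix factorizations together with Lemma~\ref{KR:lem:exvar} to eliminate the auxiliary variable $z$, using that the cup closure forces $z$ to become dependent. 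After this reduction, the composite is an endomorphism of $\hatarc$, which by the Koszul form $\{\pi_{xy},x-y\}$ is determined up to homotopy by a pair of polynomials in $\bQ[a,b,c,x,y]$; the claim is that this pair equals $(1,1)$.

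The main obstacle is bookkeeping: keeping track of the $\bZ/2\bZ$-grading shifts, the sign conventions from Subsection~\ref{KR:ssec:diffs} and~\ref{KR:ssec:cob-mf}, and the specific forms of the polynomials $e_{ijk}$ after specialization. The computation works because the defining identity
\[
e_{ijk}=\frac{(x_k-x_j)p(x_i)+(x_i-x_k)p(x_j)+(x_j-x_i)p(x_k)}{2(x_i-x_j)(x_j-x_k)(x_k-x_i)}
\]
collapses to an explicit symmetric polynomial when two of its arguments are identified, and this collapse, combined with the regular-sequence simplifications from Lemma~\ref{KR:lem:regseq-iso}, produces the pair $(1,1)$ with no residual scalar. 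For the second composite, the same computation with $\varepsilon$ in place of $\imath$ yields a pair $(\xi\cdot(-4),\xi\cdot(-4))=(1,1)$ exactly when $\xi=-1/4$, which is the justification for that choice in the definition of the trace map.
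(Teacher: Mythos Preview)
Your approach is the same as the paper's: the paper gives no argument beyond the sentence ``can be proved by explicit computation of the corresponding homomorphisms,'' and an explicit matrix computation is exactly what you propose. Your observation that the second composite fixes the normalization $\xi=-\tfrac14$ is also correct and is indeed the reason for that choice.

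There is, however, a genuine error in your labelling that you would discover as soon as you tried to carry the computation out. With the identification $x_1=x_2=z$ applied to $\hattwoedgesop$ (the two vertical strands $(x_1,x_3)$ and $(x_2,x_4)$), the source of $\eta$ becomes a single arc from $x_3$ to $x_4$ passing through $z$, not an arc together with a circle; it is the \emph{target} $\hathtwoedgesop$ (the horizontal strands $(x_1,x_2)$ and $(x_3,x_4)$) that becomes arc$\,\sqcup\,$circle under this identification. So your specialization actually models the second composite $\eta$ followed by $\id\otimes\varepsilon$, not the first. For the first composite you must instead close one of the vertical strands onto itself, e.g.\ take $x_2=x_4=z$ (or symmetrically $x_1=x_3=z$); then the source of $\eta$ is arc$\,\sqcup\,$circle with free ends $x_1,x_3$, and the target is a single arc, as required. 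Once the labels are corrected the rest of your plan---instantiating $e_{ijk}$ with two equal arguments, eliminating the internal variable $z$, and reading off the pair $(1,1)$ in the rank-one Koszul factorization $\{\pi_{xy},x-y\}$---goes through.
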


\begin{lem}
Homomorphism $\eta^2\colon\hattwoedgesop\to \hattwoedgesop$ is homotopic to multiplication by
$$
4\bigl(
-(x_1^2+x_1x_2+x_2^2)
+a(x_1+x_2)
+b\bigr).
$$
\end{lem}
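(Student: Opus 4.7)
The plan is to verify the homotopy by an explicit matrix computation, reducing the bookkeeping by using that $\hattwoedgesop$ and $\hathtwoedgesop$ are tensor products of arc factorizations. Concretely, $\hattwoedgesop\cong\hatarc_{13}\otimes\hatarc_{24}$ and $\hathtwoedgesop\cong\hatarc_{12}\otimes\hatarc_{34}$, and both saddles making up $\eta^2$ are given by the same matrix pair $(\eta_0,\eta_1)$ of Subsection~\ref{KR:ssec:cob-mf}, with the obvious matching of the four boundary variables $x_1,\dots,x_4$.

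First I would apply the excluding-variables lemma (Lemma~\ref{KR:lem:exvar}) to eliminate $x_3$ and $x_4$ from $\hattwoedgesop$, using the linear entries $x_1-x_3$ and $x_2-x_4$ in the Koszul description $\{(\pi_{13},\pi_{24}),(x_1-x_3,x_2-x_4)\}$. After this reduction $\hattwoedgesop$ is homotopy equivalent to the trivial factorization on $\bQ[a,b,c,x_1,x_2]$ (zero differential, zero potential), so $\eta^2$ descends to multiplication by a single element of $\bQ[a,b,c,x_1,x_2]$. Computing this element is then the concrete task: I would transport the two factors of $\eta$ through the equivalence, multiply the resulting $2\times 2$ matrices, and read off the induced scalar after performing the substitutions $x_3\mapsto x_1$, $x_4\mapsto x_2$ coming from the exclusion.

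Equivalently, and without invoking Lemma~\ref{KR:lem:exvar}, one can work directly on $\hattwoedgesop$ and exploit that multiplication by the differential entries $x_1+x_2-x_3-x_4$ and $x_1x_2-x_3x_4$ is null-homotopic on $\hattwoedgesop$; modulo these null-homotopic operators one may substitute $x_3+x_4\mapsto x_1+x_2$ and $x_3x_4\mapsto x_1x_2$ in every entry of the matrix product. Under this substitution the off-diagonal entries of $\eta^2$ vanish up to null-homotopic terms, while the two diagonal entries collapse to the same symmetric polynomial in $x_1,x_2$. Expanding $e_{ijk}$ via the explicit formula of Subsection~\ref{KR:ssec:cob-mf}, the factor of $4$ arises from the two factors of $2$ in $\eta_0$ and the various sign cancellations in the sums $e_{123}+e_{124}$ and $e_{134}+e_{234}$.

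The main obstacle is simply the volume of symbolic manipulation: each $e_{ijk}$ is a four-term polynomial and the product of the two saddle matrices produces lengthy degree-$4$ expressions in four variables, so keeping signs straight and correctly identifying which off-diagonal combinations lie in the null-homotopic ideal is where care is most needed. Once that is done, a direct expansion shows that both diagonal entries equal $4\bigl(-(x_1^2+x_1x_2+x_2^2)+a(x_1+x_2)+b\bigr)$, as claimed.
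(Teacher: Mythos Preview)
Your approach is correct and is exactly what the paper does: the paper simply states that the lemma ``can be proved by explicit computation of the corresponding homomorphisms'' and gives no further detail, so composing the two saddle matrices and reducing modulo null-homotopies is the intended argument.

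Two small slips to fix. First, in your alternative approach the null-homotopic multiplications on $\hattwoedgesop$ are by $x_1-x_3$ and $x_2-x_4$ (the actual Koszul entries you wrote down in the first paragraph), not by the symmetric combinations $x_1+x_2-x_3-x_4$ and $x_1x_2-x_3x_4$; the latter lie in the ideal generated by the former, but working only modulo the symmetric combinations is too weak to force the off-diagonal entries to vanish, since the $\eta_1$ entries $e_{123}+e_{234}$ and $e_{134}+e_{123}$ are not symmetric in $x_3,x_4$. Use the substitution $x_3\mapsto x_1$, $x_4\mapsto x_2$ directly. Second, there are no explicit factors of $2$ in the matrices $(\eta_0,\eta_1)$ in the universal $N=3$ case (you may be thinking of $\chi_0$); the factor of $4$ emerges from the matrix product of the $e_{ijk}$ terms after the substitution, not from prefactors.
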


\subsubsection*{MOY web moves}
In this case the MOY direct sum decompositions of Subsection~\ref{KR:ssec:MOY} are still satisfied:
\begin{lem}\label{KR:lem:KR-abc-moy}
The universal rational KR theory satisfies the MOY direct sum decompositions of Lemma~\ref{KR:lem:KR-moy}.
\end{lem}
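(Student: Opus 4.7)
The plan is to proceed in parallel with the proof of Lemma~\ref{KR:lem:KR-moy} from~\cite{KR}, verifying that each step survives the deformation to the universal theory. The key structural observation is that the arc factorization $\hatarc = \{\pi_{xy},\, x-y\}$ and the dumbbell factorization $\hatDoubleEdge$ retain exactly the same Koszul-matrix shape in the universal theory; only the specific polynomials $\pi_{xy}$, $u_{ijkl}$, $v_{ijkl}$ are deformed, and they continue to satisfy the identity that the sum of row products equals the (now deformed) potential.

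The argument proceeds move by move. For each of MOY1, MOY2, MOY3, and the triple-edge refinement of MOY4, the proof in~\cite{KR} applies row operations, the exclude-variable lemma (Lemma~\ref{KR:lem:exvar}) and the regular-sequence isomorphism (Lemma~\ref{KR:lem:regseq-iso}) to transform the Koszul matrix of the left-hand side into a direct sum of shifted Koszul matrices matching the right-hand side. None of these tools depends on the explicit form of $p(x)$: the row operations are valid over any polynomial ring; the exclude-variable lemma only needs $x_1 - b_i \in R'$, a condition unaffected by the deformation since the second entries of the relevant Koszul rows ($x-y$, $x_i + x_j - x_k - x_l$, $x_i x_j - x_k x_l$, etc.) are deformation-independent; and the sequences that need to be regular in Lemma~\ref{KR:lem:regseq-iso} consist of differences of variables, which remain regular after adjoining the formal parameters $a$, $b$, $c$ by flatness of $\bQ[a,b,c]$ over $\bQ$.

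A further ingredient needed for MOY4 is the triple-edge factorization $\hat\Upsilon$, built from a three-variable polynomial $h$ satisfying $h(x+y+z,\,xy+xz+yz,\,xyz) = p(x)+p(y)+p(z)$. For the deformed potential $p(x) = x^4 - \tfrac{4a}{3}x^3 - 2bx^2 - 4cx$ such an $h$ still exists and is unique by the fundamental theorem of symmetric polynomials applied over $\bQ[a,b,c]$, so $\hat\Upsilon$ is well-defined and structurally identical to its undeformed counterpart. The triple-edge decomposition that underlies MOY4 then follows by the same Koszul-matrix manipulations as in~\cite{KR}.

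The main obstacle, and where most of the routine work lies, is the triple-edge refinement of MOY4: one must explicitly compute the relevant tensor products of deformed Koszul factorizations and verify that the row operations used in the undeformed case still produce the claimed direct sum decomposition. Conceptually nothing new is required, but the computation now involves the deformation parameters in addition to the external variables, and the polynomials $u_{ijkl}$, $v_{ijkl}$, $h_1$, $h_2$, $h_3$ become significantly more complicated. Modulo this bookkeeping, the entire lemma reduces to the observation that the proofs in~\cite{KR} are essentially formal consequences of the Koszul-matrix calculus and therefore transfer verbatim to the deformed setting.
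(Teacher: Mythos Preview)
Your proposal is correct and aligns with the paper's treatment: the paper does not give an explicit proof of this lemma but simply states it, noting only that the triple-edge factorization $\hat\Upsilon$ is constructed exactly as in the undeformed case using the deformed polynomial $p(x)$. Your argument---that the Koszul-matrix manipulations, row operations, variable exclusion, and regular-sequence isomorphisms from~\cite{KR} are insensitive to the specific form of $p(x)$---is precisely the implicit reasoning behind the paper's assertion, spelled out in more detail than the paper itself provides.
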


\n Recall that the decomposition~\eqref{KR:eq:moy4} involves a triple edge as in Figure~\ref{KR:fig:trp-KR}. The factorization $\hat\Upsilon$ assigned to the triple edge in Figure~\ref{KR:fig:trp-KR} is obtained the same way as in Subsection~\ref{KR:ssec:MOY} using the polynomial $p(x)$ in Equation~\eqref{KR:eq:upoly}

\medskip

For a link $L$, we denote by $\KR_{a,b,c}(L)$ the universal rational 
KR $\slt$ complex and by $\HKR_{a,b,c}(L)$ the universal rational 
KR $\slt$ homo\-logy. The complex $\KR_{a,b,c}(L)$ is built exactly as in Section~\ref{KR:sec:KR}, over the ring $\bQ[a,b,c]$ instead of $\bQ$.
We have
$$\HKR_{a,b,c}(\unknot)\cong
\bigl(\quotient{\bQ[x,a,b,c]}{(x^3-ax^2-bx-c)}\bigr)
\brak{1}\{-2\}.$$

\n The universal rational KR $\slt$-link homology is still functorial up to $\bQ$-scalars with respect to link cobordisms.


\chapter{The universal $\slt$-link homology}\label{chap:univ3}
%
%
%
%
In this chapter we construct the universal 
$\slt$-link homology over $\bZ[a,b,c]$. For this universal construction 
we rely heavily on Bar-Natan's~\cite{bar-natancob} work on the universal 
$\mathfrak{sl}(2)$-link homology and Khovanov's~\cite{khovanovsl3} work on his original 
$\slt$-link homology. Our construction is given in Section~\ref{u3:sec:univhom} where we impose a finite set of relations 
on the category of foams, analogous to 
Khovanov's~\cite{khovanovsl3} relations for his $\slt$-link 
homology. These relations enable us to 
cons\-truct a link homology complex which is 
homotopy invariant under the Reidemeister moves and functorial, up to a sign, 
with respect to link cobordisms.
To compute the homology corresponding to our complex 
we use the tautological homology construction like 
Khovanov did in~\cite{khovanovsl3}. We denote this 
universal $\slt$-homology by $U_{a,b,c}(L)$, 
which by the previous results is an invariant of the link $L$. In Section~\ref{u3:sec:isos} we state the classification theorem for the $\slt$-link homologies, as proved by the author in joint work with Marco Mackaay in~\cite{mackaay-vaz}. There are three isomorphism classes in $U_{a,b,c,}(L)$, depending on the number of distinct roots of the polynomial $f(X)=X^3-aX^2-bX-c$. One of them is isomorphic 
to Khovanov's original $\slt$-link homology. The second is isomorphic to Gornik's $\slt$-link homology~\cite{gornik}, and the third one is new and is related to the $\mathfrak{sl}(2)$-Khovanov homology of sublinks $L'\subseteq L$.


\section{The universal $\slt$-link homology}\label{u3:sec:univhom}

Let $L$ be an oriented link in $S^3$ and $D$ a diagram of $L$. In~\cite{khovanovsl3} Khovanov constructed a homological link invariant associated to $\slt$. His construction is based on Kuperberg's graphical calculus for $\slt$~\cite{kupG2,kup}. The construction starts by resolving each crossing of $D$ in two different ways, as in Figure~\ref{u3:fig:resolutions}.
\begin{figure}[ht!]
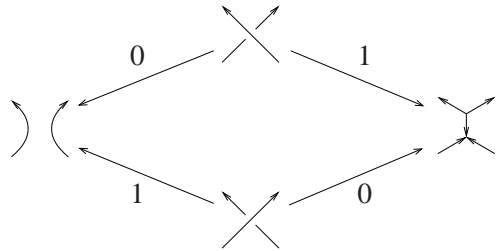

\medskip
\labellist
\small\hair 2pt
\pinlabel $1$ at 215 95
\pinlabel $0$ at 215 330
\pinlabel $1$ at 600 330
\pinlabel $0$ at 600 95
\endlabellist
\centering
\figs{0.22}{resolutions}
\caption{0 and 1 resolutions of crossings}
\label{u3:fig:resolutions}
\end{figure}
\n A diagram $\Gamma$ obtained by resolving all crossings of $D$ is an example of an \emph{$\slt$-web}. 
An $\slt$-web is a trivalent planar graph where near each vertex all the edges are simultaneously oriented ``in'' or ``out'' 
(see Figure~\ref{u3:fig:in-out}). We also allow $\slt$-webs without vertices, which are 
oriented loops. 
Note that by definition our $\slt$-webs are closed; there are no vertices with fewer than 3 edges.  

\begin{figure}[ht!]
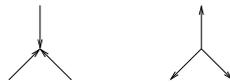

\centering
\figins{0}{0.4}{vertexin-sl3}\qquad\quad
\figins{0}{0.4}{vertexout-sl3}
\caption{``In'' and ``out'' orientations near a vertex}
\label{u3:fig:in-out}
\end{figure}

The \emph{Kuperberg moves} in Figure~\ref{u3:fig:kuper} provide a recursive way of assigning a unique polynomial $P_3$ to every $\slt$-web.
\begin{figure}[ht!]
\begin{gather*}
P_3(\unknot\Gamma) = P_3(\unknot) P_3(\Gamma) 
\\[1.2ex]
P_3(\figins{-2.6}{0.15}{hdigonweb}) 
=
q^{-1}P_3(\figwhins{2}{0.025}{0.3}{hthickedge})
+ qP_3(\figwhins{2}{0.025}{0.3}{hthickedge})
%
\\[1.2ex]
P_3\left(\figins{-8.0}{0.3}{squareweb}\right)
= 
P_3\left(\figins{-8.0}{0.3}{vedgesweb}\right)
+
P_3\left(\figins{-8.0}{0.3}{hedgesweb}\right) 
\end{gather*}
\caption{Kuperberg moves}
\label{u3:fig:kuper}
\end{figure}
Consistency and sufficiency of the rules in Figure~\ref{u3:fig:kuper} is shown in~\cite{kupG2}. Kuperberg's calculus is previous to the appearance of MOY's calculus~\cite{MOY}. Taking $N=3$, erasing the triple edges and replacing every double edge with a simple edge oriented such that every vertex satisfies one of the possibilities of Figure~\ref{u3:fig:in-out} we obtain Kuperberg's calculus from the MOY calculus of Section~\ref{KR:sec:slN}.

A \emph{foam} is a cobordism with singular arcs between two $\slt$-webs. A singular arc in a foam $f$ is the set of points of $f$ that have a neighborhood homeomorphic to the letter Y times an interval (see the examples in Figure~\ref{u3:fig:ssaddle}). Note that all singular arcs are disjoint and that a closed singular arc is a circle. 
\begin{figure}[ht!]
\medskip
\centering
\figins{0}{0.5}{ssaddle-sl3}\qquad\qquad
\figins{0}{0.5}{ssaddle_ud-sl3}
\caption[Basic foams from $\orsmoothing$ to $\unorsmoothing$ and from $\unorsmoothing$ to $\orsmoothing$]{Basic foams from $\orsmoothing$ to $\unorsmoothing$ (left) and from $\unorsmoothing$ to $\orsmoothing$ (right)}
\label{u3:fig:ssaddle}
\end{figure}
Interpreted as morphisms, we read foams from bottom to top by convention, and the orientation of the singular arcs is by convention as in Figure~\ref{u3:fig:ssaddle}. We call the foams in Figure~\ref{u3:fig:ssaddle} the \emph{zip} and the \emph{unzip}. Foams can have dots that can move freely on the facet to which they belong but are not allowed to cross singular arcs. Let $\bZ[a,b,c]$ be the ring of 
polynomials in $a,b,c$ with integer coefficients.

\begin{defn}
$\Foam$ is the category whose objects are formal direct sums of (closed) $\slt$-webs and whose morphisms are direct sums of $\bZ[a,b,c]$-linear 
combinations of isotopy classes of foams.
\end{defn}

\n $\Foam$ is an additive category and, as we will show, each foam can be given 
a degree in such a way that $\Foam$ becomes a graded additive category, taking also 
$a,b$ and $c$ to have degrees $2,4$ and $6$ respectively. 
For further details about this category see~\cite{khovanovsl3}.

From all different flattenings of all the crossings in $D$ we form a hypercube of flattenings. It has an $\slt$-web in each vertex and 
to an edge between two vertices, given by $\slt$-webs that differ only inside 
a disk $\cD$ around one of the crossings of $D$, we associate the foam that is 
the identity everywhere except inside the cylinder $\cD\times I$, where it 
has a zip or an unzip, with a sign given by the rule in Subsection~\ref{KR:ssec:cmplx}. 
\begin{figure}[ht!]
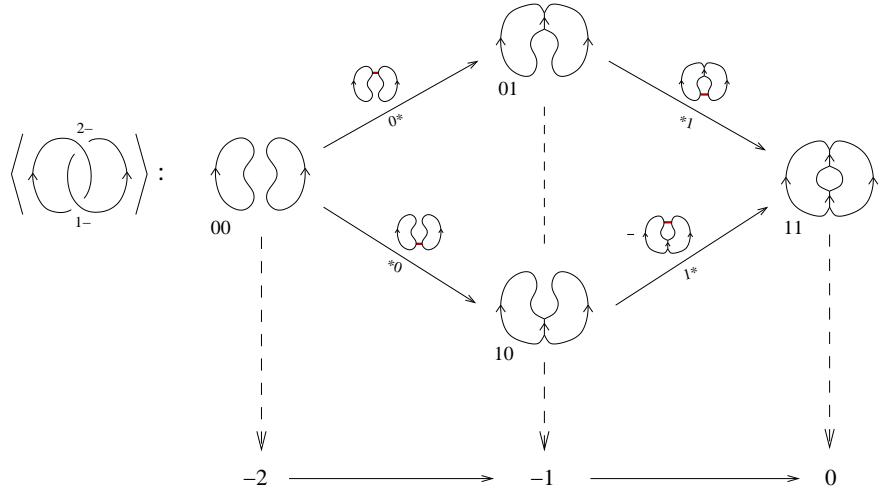

\quad\qquad\figwins{0}{4.5}{hopflink-sl3}
\caption[The complex of the Hopf link]{The complex of the Hopf link. The homological grading is indicated in the bottom}
\label{u3:fig:hopflink}
\end{figure}
This way we obtain a chain 
complex of $\slt$-web diagrams analogous to the one in~\cite{bar-natancob} which we call 
$\brak{D}$, with ``column vectors'' of $\slt$-webs as ``chain objects'' and 
``matrices of foams'' as ``differentials'' (see Figure~\ref{u3:fig:hopflink} for an example, where we use the short-hand notation $\figins{-2}{0.14}{shortzip}$ for the zip).  We borrow some of the notation 
from~\cite{bar-natancob} and denote by $\kom(\Foam)$ the category of 
complexes in $\Foam$.

In Subsections~\ref{u3:ssec:loc_rel}-\ref{u3:ssec:funct} we first impose a set of local relations on 
$\Foam$. We call this set of relations $\ell$ and denote by 
$\lfoam$ the category $\Foam$ divided by $\ell$. 
We prove that these relations guarantee the invariance of 
$\brak{D}$ under the Reidemeister moves up to homotopy in $\kom(\lfoam)$ 
in a pictorial way, which is analogous to Bar-Natan's proof in~\cite{bar-natancob}. 
Note that the category $\kom(\lfoam)$ is analogous to Bar-Natan's category 
$Kob(\emptyset)=Kom(Mat(Cob^3_{/l}(\emptyset)))$. 
Next we show that up to signs 
$\brak{\;}$ is functorial under 
oriented link cobordisms, i.e. defines a functor from $\Link$ to 
$\kom_{/\pm h}(\lfoam)$.
Here $\kom_{/\pm h}(\lfoam)$ is the 
homotopy category of $\kom(\lfoam)$ modded out by $\pm 1$. In Subsection~\ref{u3:ssec:univhom} we define a tautological functor between 
$\lfoam$ and $\abcModgr$, the category of graded 
$\bZ[a,b,c]$-modules. This induces a homology functor 
$$U_{a,b,c}\colon \Link\to \abcModbg,$$
where $\abcModbg$ is the category of bigraded $\bZ[a,b,c]$-modules.

The principal ideas in this section, as well as most homotopies, are motivated by 
the ones in Khovanov's paper~\cite{khovanovsl3} and 
Bar-Natan's paper~\cite{bar-natancob}.

\subsection{Universal local relations}\label{u3:ssec:loc_rel}
In order to construct the universal theory we divide the category $\Foam$ by the local relations  $\ell=(3D, CN, S, \Theta)$ below.

\begin{gather}
\figins{-7}{0.25}{plan3dot}=
a\figins{-7}{0.25}{plan2dot}+
b\figins{-7}{0.25}{plan1dot}+
c\figins{-7}{0.25}{plan0dot}
\tag{3D}\label{u3:eq:3D}
\\[1.5ex] 
-\figwhins{-18}{0.55}{0.25}{cylinder}=
\figwhins{-18}{0.55}{0.25}{cneck01}+
\figwhins{-18}{0.55}{0.25}{cneck02}+
\figwhins{-18}{0.55}{0.25}{cneck03}
-a
\left( 
\figwhins{-18}{0.55}{0.25}{cnecka1}+
\figwhins{-18}{0.55}{0.25}{cnecka2}
\right)
-b
\figwhins{-18}{0.55}{0.25}{cneckb}
\tag{CN}\label{u3:eq:CN}
\\[1.5ex]
\figins{-9}{0.35}{sundot}=
\figins{-9}{0.35}{sdot}=0,\quad
\figins{-9}{0.35}{sddot}=-1
\tag{S}\label{u3:eq:S}
\end{gather}

\medskip
\n Note that the foams in the $S$-relations are spheres and not theta-foams, which we discuss next. 

Recall from~\cite{khovanovsl3} that theta-foams are obtained by gluing three oriented disks along their boundaries (their orientations must coincide), as shown in Figure~\ref{u3:fig:theta}. 
\begin{figure}[ht!]
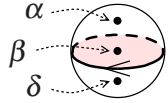

\medskip
\labellist
\small\hair 2pt
\pinlabel $\alpha$ at 1 33
\pinlabel $\beta$ at -5 17
\pinlabel $\delta$ at 1 5
\endlabellist
\centering
\figs{1.0}{thetafoam}
\caption{A theta-foam}
\label{u3:fig:theta}
\end{figure}
Note the orientation of the singular circle. Let $\alpha$, $\beta$, $\gamma$ denote the number of dots on each facet.
The ($\Theta$) \emph{relation} says that for $\alpha$, $\beta$ or $\gamma\leq 2$
\begin{equation}
\theta(\alpha,\beta,\delta)=
\begin{cases}
1 & (\alpha,\beta,\delta)=(1,2,0)\text{ or a cyclic permutation} \\ 
-1 & (\alpha,\beta,\delta)=(2,1,0)\text{ or a cyclic permutation} \\ 
0 & \text{else}
\end{cases}
\tag{$\Theta$}\label{u3:eq:theta}
\end{equation}
Reversing the orientation of the singular circle reverses the sign of $\theta(\alpha,\beta,\gamma)$. 
Note that when we have three or more dots on a facet of a foam we can use the~\eqref{u3:eq:3D} relation to reduce to the case where it has less than three dots. 

\medskip

The relations $\ell$ above are related to the integral $U(3)$-equivariant cohomology rings of the partial flag varieties of $\bC^3$, which are homogeneous spaces of $\slt$. The~\eqref{u3:eq:3D} relation can be interpreted in the $U(3)$-equivariant cohomology of $\cp{2}$
$$
\hy_{U(3)}(\cp{2},\bZ)=\quotient{\bZ[a,b,c,X]}{(X^3-aX^2-bX-c)}
$$
where a dot corresponds to the generator $X$, which has degree 2 (note that $\bZ[a,b,c]$ is the integral $U(3)$-equivariant cohomology ring of a point).
The~\eqref{u3:eq:S} relation is related to the choice of a trace on $\hy_{U(3)}(\cp{2})$
$$
\varepsilon(X^i)=
\begin{cases}
0 & \text{if }\ i<2 \\
-1 & \text{if }\ i=2
\end{cases}
$$
(for $i>2$ use~\eqref{u3:eq:3D})
This endows $\hy_{U(3)}(\cp{2},\bZ)$ with the structure of a commutative Frobenius algebra.
Denote by $\widehat{X^j}$ the dual of $X^j$ in $\hy_{U(3)}(\cp{2},\bZ)$, \ie $\varepsilon(\hat{X^j}X^i)=\delta_{ij}$. Then for any $z\in\hy_{U(3)}(\cp{2},\bZ)$ we have
$$
z=\sum_{i=0}^{2}\widehat{X^i}\varepsilon(zX^i)
$$
and this gives an interpretation of the~\eqref{u3:eq:CN} relation.

The~\eqref{u3:eq:theta} relation is interpreted in the integral $U(3)$-equivariant cohomology ring of the full flag variety $Fl_3$ of $\bC^3$,
$$
Fl_3=\{(V_1,V_2)|\ V_1\subset V_2\subset \bC^3,\ \dim V_i=i\},
$$
which is
$$
\hy_{U(3)}(FL_3,\bZ)=\quotient{\bZ[a,b,c][X_1,X_2,X_3]}{I}
$$
where each $X_i$ has degree 2 and $I$ is the ideal generated by
$$
X_1+X_2+X_3-a ,\quad X_1X_2+X_1X_3+X_2X_3+b ,\quad X_1X_2X_3-c.
$$
The (non-degenerate) trace on $\hy_{U(3)}(FL_3\,bZ)$ defined by
$$
\varepsilon(X_1X_2^2)=1
$$
satisfies
$$
\varepsilon(X_1^\alpha X_2^\beta X_3^\delta)=\theta(\alpha,\beta,\delta),
$$
where the dots on the facets of the theta foam correspond to the generators $X_1$, $X_2$, $X_3$.

\medskip

A closed foam $f$ can be viewed as a morphism from the empty $\slt$-web to itself which by the relations  (3D, CN, S, $\Theta$) is an element of $\bZ[a,b,c]$. It can be checked, as in~\cite{khovanovsl3}, that this set of relations is consistent and determines uniquely the evaluation of every closed foam 
$f$, denoted $C(f)$. Define a $q$-grading on $\bZ[a,b,c]$ by $q(1)=0$, $q(a)=2$, $q(b)=4$ and $q(c)=6$. As in~\cite{khovanovsl3} we define the \emph{q-grading} of a foam $f$ with $d$ dots by $$q(f)=-2\chi(f)+ \chi(\partial f)+2d,$$ where $\chi$ denotes the Euler characteristic.

\begin{defn}
$\lfoam$ is the quotient of the category $\Foam$ by the local relations $\ell$. 
For $\slt$-webs $\Gamma$, $\Gamma'$ and for families $f_i\in\Hom_{\Foam}(\Gamma,\Gamma')$ and 
$c_i\in\bZ[a,b,c]$ we impose $\sum_ic_if_i=0$ if and only if 
$\sum_ic_iC(g'f_ig)=0$ holds, for all 
$g\in\Hom_{\Foam}(\emptyset,\Gamma)$ and 
$g'\in\Hom_{\Foam}(\Gamma',\emptyset)$.
\end{defn}

\begin{lem}\label{u3:lem:identities}
We have the following relations in $\lfoam$:

\begin{gather}
\figwins{-17}{0.65}{4cyls1}+
\figwins{-17}{0.65}{4cyls2}=
\figwins{-17}{0.65}{4cyls3}+
\figwins{-17}{0.65}{4cyls4}
\tag{4C}\label{u3:eq:4C}
\\[1.8ex]
\figwhins{-17}{0.55}{0.25}{sdisk}=
\figwhins{-17}{0.55}{0.25}{cnecka1}-
\figwhins{-17}{0.55}{0.25}{cnecka2}
\tag{RD}\label{u3:eq:RD}
\\[1.8ex]
\figins{-20}{0.6}{digonfid-sl3}=
\figins{-26}{0.75}{digonf1}-
\figins{-26}{0.75}{digonf2}
\tag{DR} \label{u3:eq:DR}
\\[1.8ex]
\figins{-28}{0.8}{square_id-sl3}=
-\ \figins{-28}{0.8}{square_rem1-sl3}
-\figins{-28}{0.8}{square_rem2-sl3}
\tag{SqR}\label{u3:eq:SqR}
\end{gather}
\end{lem}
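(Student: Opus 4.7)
The plan is to prove each of the four identities in $\lfoam$ by a uniform pictorial strategy: apply the cylinder neck-cutting relation~\eqref{u3:eq:CN} to a suitable cylindrical region on one side of each identity, then simplify the resulting closed components using the remaining defining relations~\eqref{u3:eq:S}, \eqref{u3:eq:theta}, and~\eqref{u3:eq:3D}. The consistency of $\ell$ asserts that an equality in $\Hom_{\lfoam}(\Gamma,\Gamma')$ can equivalently be tested by closing off with arbitrary morphisms $g\in\Hom_{\Foam}(\emptyset,\Gamma)$ and $g'\in\Hom_{\Foam}(\Gamma',\emptyset)$ and checking that the resulting closed foams evaluate to the same element of $\bZ[a,b,c]$; this duality, with trace and bilinear form coming from~\eqref{u3:eq:S} and~\eqref{u3:eq:3D} on the Frobenius algebra $\hy_{U(3)}(\cp{2},\bZ)=\bZ[a,b,c,X]/(X^3-aX^2-bX-c)$, serves as an algebraic fallback whenever the pictorial manipulation becomes unwieldy.

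For~\eqref{u3:eq:4C}, I would apply~\eqref{u3:eq:CN} to one of the four cylinders on each side of the identity. Each side expands into three principal cap--cup terms with complementary dot decorations, plus $a$- and $b$-correction terms; after reorganising by planar isotopy, the two expansions coincide term by term, this being essentially the symmetry of comultiplication in the Frobenius structure. For~\eqref{u3:eq:RD}, the left-hand side contains a closed disk with a central dot which I would open up by applying~\eqref{u3:eq:CN} to the cylinder it bounds; the resulting pieces are either spheres whose contributions are determined by~\eqref{u3:eq:S}, or sphere--foam combinations whose surviving contributions, after correcting by~\eqref{u3:eq:3D}, leave precisely the two terms on the right. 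For~\eqref{u3:eq:DR}, applying~\eqref{u3:eq:CN} to the horizontal cylinder interior to the digon produces three principal terms, each containing a theta-foam whose evaluation via~\eqref{u3:eq:theta} is non-zero in precisely two cases, yielding the displayed two-term expression. For~\eqref{u3:eq:SqR}, applying~\eqref{u3:eq:CN} across one of the horizontal rungs of the identity square foam reduces the diagram to a sum of pieces containing theta-foams and spheres, and~\eqref{u3:eq:theta}, \eqref{u3:eq:S}, and~\eqref{u3:eq:3D} collapse the expansion to the two claimed summands.

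The main obstacle will be the careful bookkeeping of signs, arising from the orientation conventions on singular circles in~\eqref{u3:eq:theta}, together with the $a$- and $b$-correction terms produced by the dot-substitution pieces of~\eqref{u3:eq:CN}. In particular, for~\eqref{u3:eq:SqR} one must check that these correction terms cancel in pairs so that no $a$- or $b$-multiple survives on the right-hand side; this is where the symmetry in the choice of cutting cylinder is essential. Should the pictorial route become delicate, the closure criterion reduces each identity to an equality of elements of $\hy_{U(3)}(\cp{2},\bZ)$, or of their traces against theta-foam closures interpreted in $\hy_{U(3)}(Fl_3,\bZ)$ as described in the paragraph on~\eqref{u3:eq:theta}, which is then immediate by direct computation in those Frobenius algebras.
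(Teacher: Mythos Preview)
Your approach is essentially the same as the paper's, which is extremely terse: it says that \eqref{u3:eq:4C} and \eqref{u3:eq:RD} ``are immediate and follow from \eqref{u3:eq:CN} and \eqref{u3:eq:theta}'', and that \eqref{u3:eq:DR} and \eqref{u3:eq:SqR} ``are proved as in [Khovanov's $\slt$ paper]''. Your neck-cutting-then-evaluate strategy is exactly what underlies both of these references.

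One small correction: your description of the left-hand side of \eqref{u3:eq:RD} as ``a closed disk with a central dot'' is not accurate. The foam \texttt{sdisk} is an undotted cylinder with an interior membrane disk attached along a singular circle (so that capping it produces a theta-foam, not a dotted sphere). This is why the paper invokes \eqref{u3:eq:theta} rather than just \eqref{u3:eq:S} for \eqref{u3:eq:RD}: after cutting the tube with \eqref{u3:eq:CN}, the pieces containing the singular disk close up to theta-foams, whose evaluations via \eqref{u3:eq:theta} single out the two surviving dotted cup-cap terms on the right. Once you adjust this reading of the picture, your argument goes through unchanged.
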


\medskip

\begin{proof}
Relations~\eqref{u3:eq:4C} and~\eqref{u3:eq:RD} are immediate and follow from~\eqref{u3:eq:CN} 
and~\eqref{u3:eq:theta}. Relations~\eqref{u3:eq:DR} and~\eqref{u3:eq:SqR} are proved as 
in~\cite{khovanovsl3} (see also Lemma~\ref{u3:lem:KhK})
\end{proof}

In Figure~\ref{u3:fig:pdots} we also have a set of useful identities which establish 
the way we can exchange dots between facets. These identities can be used for the simplification of foams and are an immediate consequence of the relations in $\ell$. Note that these are the relations defining $\hy_{U(3)}(FL_3,\bZ)$, if we label the facets 1, 2, 3 around the singular arc and if a dot on facet $i$ corresponds to $X_i$.
\begin{figure}[ht!]
\begin{align*}
\raisebox{-0.32in}{\includegraphics[height=0.6in]{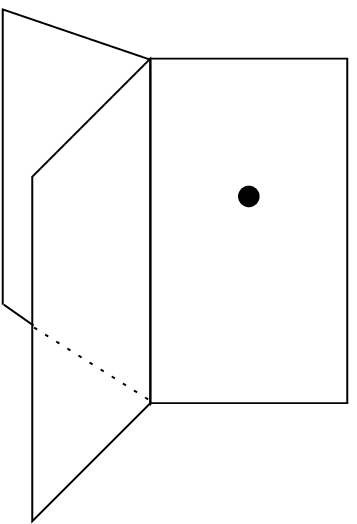}}
\,+\,
\raisebox{-0.32in}{\includegraphics[height=0.6in]{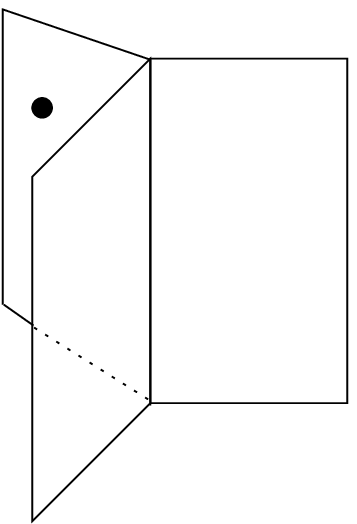}}
\,+\,
\raisebox{-0.32in}{\includegraphics[height=0.6in]{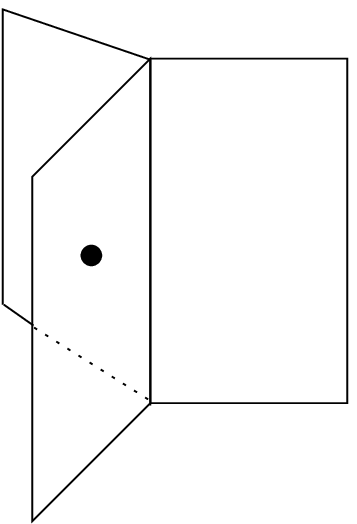}}
\, 
&=
\hspace{1.6ex} a\
\raisebox{-0.32in}{\includegraphics[height=0.6in]{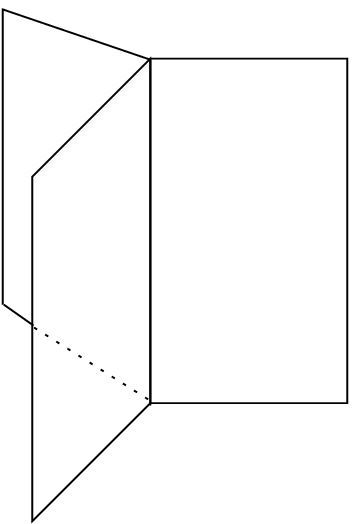}} 
\\[1.5ex]
\raisebox{-0.32in}{\includegraphics[height=0.6in]{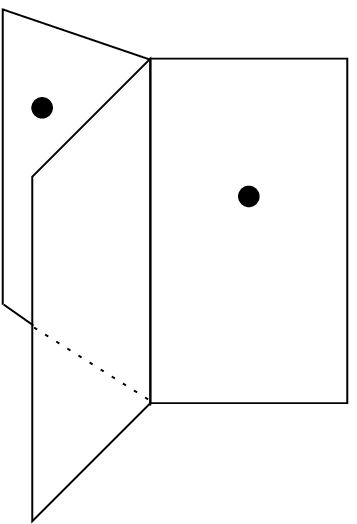}}
\,+\,
\raisebox{-0.32in}{\includegraphics[height=0.6in]{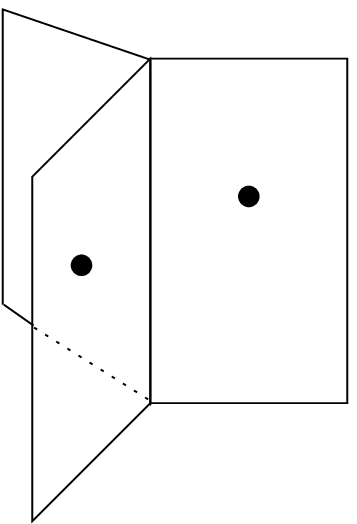}}
\,+\,
\raisebox{-0.32in}{\includegraphics[height=0.6in]{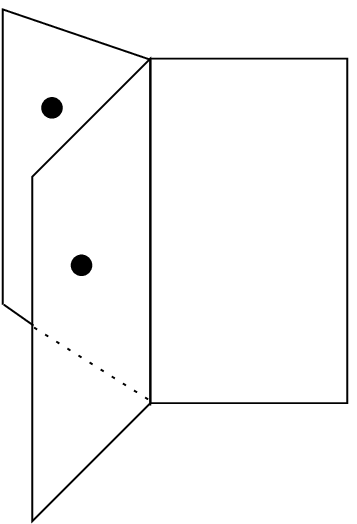}}
\,
&=
-b\
\raisebox{-0.32in}{\includegraphics[height=0.6in]{figs/pdots000}} 
\\[1.5ex]
\raisebox{-0.32in}{\includegraphics[height=0.6in]{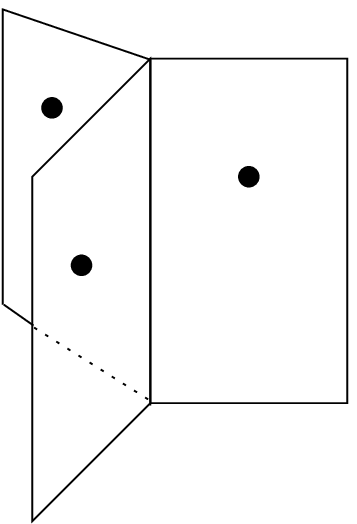}}
\,
&=
\hspace{1.6ex} c\
\raisebox{-0.32in}{\includegraphics[height=0.6in]{figs/pdots000}}
\end{align*}
\caption[Exchanging dots between facets]{Exchanging dots between facets. The relations are the same regardless of the orientation on the singular arcs.}
\label{u3:fig:pdots}
\end{figure}

\subsection{Invariance under the Reidemeister moves}\label{u3:ssec:thm-inv}

In this subsection we prove invariance of $\brak{\;}$ under the Reidemeister moves. The main result of this section is the following:

\begin{thm}
$\brak{D}$ is invariant under the Reidemeister moves up to homotopy, in other words it is an invariant in $\kom_{/h}(\lfoam)$.
\label{u3:thm-inv}
\end{thm}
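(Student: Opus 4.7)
The plan is to follow the pictorial Bar-Natan/Khovanov strategy: for each Reidemeister move I will exhibit an explicit homotopy equivalence between the two sides of the move, constructed from foams, and verify the homotopy identities inside $\lfoam$ using only the local relations $\ell=(3D,CN,S,\Theta)$ together with the derived identities \eqref{u3:eq:4C}, \eqref{u3:eq:RD}, \eqref{u3:eq:DR} and \eqref{u3:eq:SqR} of Lemma~\ref{u3:lem:identities}. Because $\brak{\,\cdot\,}$ is defined locally on a diagram, it is enough to treat the two sides of each move as complexes supported on a disk, the identity elsewhere, and to check the equivalence there.

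First I would dispose of Reidemeister I. The complex $\brak{D_{R1}}$ of a positive kink is a length-one complex $\Gamma_0\to\Gamma_1$ where $\Gamma_0$ contains a digon and $\Gamma_1$ is obtained from the straight strand by inserting a small circle. Using \eqref{u3:eq:DR} I can decompose $\Gamma_0$ into two copies of the straight strand, shifted appropriately, and then define a strong deformation retract onto the copy of the straight strand in $\Gamma_1$ that has no dots; the remaining pieces will form an acyclic subcomplex whose contracting homotopy is an explicit foam built from a cap/cup and a single dot, together with the $S$ and $\Theta$ relations used to verify the homotopy identities. The negative kink is handled symmetrically.

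Next I would handle Reidemeister IIa and IIb. Here the complex has four terms arranged in a square, indexed by the two crossings, with differentials given by zips and unzips with the appropriate signs. After using~\eqref{u3:eq:DR} to split the digon web that sits in the $(0,1)$-summand into two copies of two vertical strands, I would identify one of these copies with the $(0,0)$-summand via the zip and the other with the $(1,1)$-summand via the zip/unzip. A Gaussian-elimination-style simplification then cancels two contractible pairs, leaving a complex concentrated in one homological degree which equals $\brak{\,\|\,}$. The homotopy identities again reduce to instances of \eqref{u3:eq:S}, \eqref{u3:eq:theta} and \eqref{u3:eq:DR}.

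The hard part will be Reidemeister III. The standard device is to first rewrite one side of the complex in a ``nicer'' form using the R2 homotopy equivalences already established, so that both sides of R3 become the same up to an evident reshuffling. Concretely, I would resolve only the crossing that does not move, write the R3 complex as the mapping cone of a chain map between two R2-type subcomplexes, and then invoke the R2 equivalence together with \eqref{u3:eq:SqR} to identify the two resulting mapping cones. The essential algebraic input at this step is~\eqref{u3:eq:SqR}, which plays the role of a categorified MOY square-removal and lets the two ``square'' webs appearing on each side of R3 be exchanged for a sum of two planar webs whose contributions match. I expect this R3 check, and in particular the bookkeeping of signs and of the dotted correction terms coming from~\eqref{u3:eq:SqR} and~\eqref{u3:eq:DR}, to be the main obstacle; everything else is a routine translation of Bar-Natan's arguments from the $\mathfrak{sl}(2)$ to the $\slt$ setting, using the local relations $\ell$ in place of his $(1+1)$-dimensional relations.
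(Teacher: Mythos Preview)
Your overall strategy matches the paper's: for each Reidemeister move one writes down explicit chain maps and homotopies built from foams and verifies the identities using the local relations $\ell$ and the derived identities of Lemma~\ref{u3:lem:identities}. The paper's proof is exactly of this pictorial Bar-Natan/Khovanov type.

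Two points of comparison are worth flagging. First, your treatment of Reidemeister~II lumps IIa and IIb together and describes only the IIa mechanism (split the digon via~\eqref{u3:eq:DR}, Gaussian-eliminate). The paper handles IIa and IIb separately, and IIb is substantially harder: the intermediate webs are not digons but configurations requiring~\eqref{u3:eq:RD},~\eqref{u3:eq:4C} and~\eqref{u3:eq:SqR} in addition to~\eqref{u3:eq:DR}. Your sketch omits these, and without~\eqref{u3:eq:SqR} the IIb check does not close. This is not a fatal gap in the approach, but it is a gap in your accounting of which relations are needed where.

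Second, for Reidemeister~III the paper does not use the mapping-cone-plus-R2 reduction you describe. Instead it exhibits a third complex $\brak{Q}$ and gives an explicit homotopy equivalence $\brak{D}\simeq\brak{Q}$; the equivalence $\brak{D'}\simeq\brak{Q}$ then follows by applying a horizontal-axis symmetry to every diagram and foam. This is cleaner than threading R2 equivalences through a cone, since one only has to draw one set of homotopies and invoke symmetry for the other. Your identification of~\eqref{u3:eq:SqR} as the essential ingredient is correct in spirit, though in the paper's argument it enters implicitly through the construction of $\brak{Q}$ rather than via an explicit square-removal step.
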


\begin{proof}
To prove invariance under the Reidemeister moves we work diagrammatically.

\n\emph{Reidemeister I.} 
Consider diagrams $D$ and $D'$ that differ only in a circular region as in the 
figure below.
$$D=\figins{-13}{0.4}{lkink}\qquad
D'=\figins{-13}{0.4}{ReidI-1}\,$$

\begin{figure}[ht!]
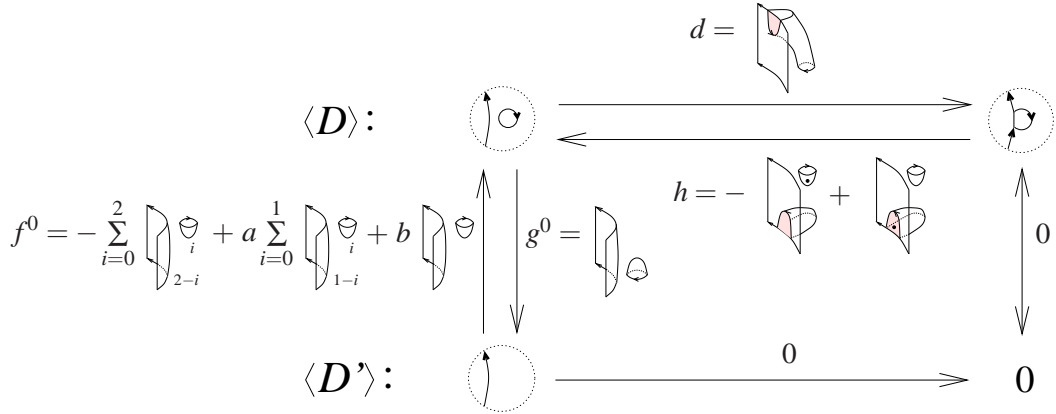

\labellist
\small\hair 2pt
\pinlabel $f^0=-\sum\limits_{i=0}^2$ at -125 305
\pinlabel $_{_{2-i}}$ at 70 225
\pinlabel $_{_{i}}$ at 80 280
\pinlabel $+\ a\sum\limits_{i=0}^1$ at 180 305
\pinlabel $_{_{1-i}}$ at 340 225
\pinlabel $_{_{i}}$ at 350 280
\pinlabel $+\ b$ at 410 305
\pinlabel $g^0=$ at 690 305
\pinlabel $h=-$ at 955 375
\pinlabel $+$ at 1175 370
\pinlabel $d=$ at 960 655
\pinlabel $0$ at 1090 100
\pinlabel $0$ at 1520 305
\endlabellist
\centering
\qquad\figs{0.23}{ReidIa-sl3}
\caption{Invariance under \emph{Reidemeister I}} \label{u3:fig:RI_Invariance}
\end{figure} 

\n We give the homotopy between complexes $\brak{D}$ and $\brak{D'}$ in Figure~\ref{u3:fig:RI_Invariance}~\footnote{We thank Christian 
Blanchet for spotting a mistake in a previous version of this diagram.}.
By relation~\eqref{u3:eq:S} we have $g^0f^0=Id(\edge)$.
To see that $df^0=0$ holds, one can use the dot exchange relations in 
Figure~\ref{u3:fig:pdots}.
The equality $dh=id(\digon)$ follows from~\eqref{u3:eq:DR} 
(note the orientations on the singular circles). To show that 
$f^0g^0+hd=Id_{\brak{D}^0}$, apply~\eqref{u3:eq:RD} to $hd$ and then cancel all 
terms which appear 
twice with opposite signs. What is left is the sum of $6$ terms which is 
equal to $Id_{\brak{D}^0}$ by~\eqref{u3:eq:CN}. 
Therefore $\brak{D'}$  is homotopy-equivalent to $\brak{D}$.

\bigskip
\n\emph{Reidemeister IIa.}
Consider diagrams $D$ and $D'$ that differ in a circular region, as
in the figure below.
$$D=\figins{-13}{0.4}{DReidIIa}\qquad
D'=\figins{-13}{0.4}{twoedges}$$

\n We leave to the reader the task of checking that the diagram in 
Figure~\ref{u3:fig:RIIaInvariance} defines a homotopy between the 
complexes $\brak{D}$ and $\brak{D'}$: 
\begin{itemize}
\item $g$ and $f$ are morphisms of complexes (use only isotopies); 
\item $g^1f^1=Id_{\brak{D'}^1}$  (use~\eqref{u3:eq:CN} and~\ref{u3:eq:theta}); 
\item $f^0g^0+hd=Id_{\brak{D}^0}$ and $f^2g^2+dh=Id_{\brak{D}^2}$  (use isotopies);
\item $f^1g^1+dh+hd=Id_{\brak{D}^1}$  (use~\eqref{u3:eq:DR}).
\end{itemize}
\begin{figure}[h!]
\labellist
\small\hair 2pt
\pinlabel $g$ at 0 220
\pinlabel $f$ at 52 220
\pinlabel $0$ at 125 210
\pinlabel $0$ at 783 210
\endlabellist
\centering
\includegraphics[width=5.5in]{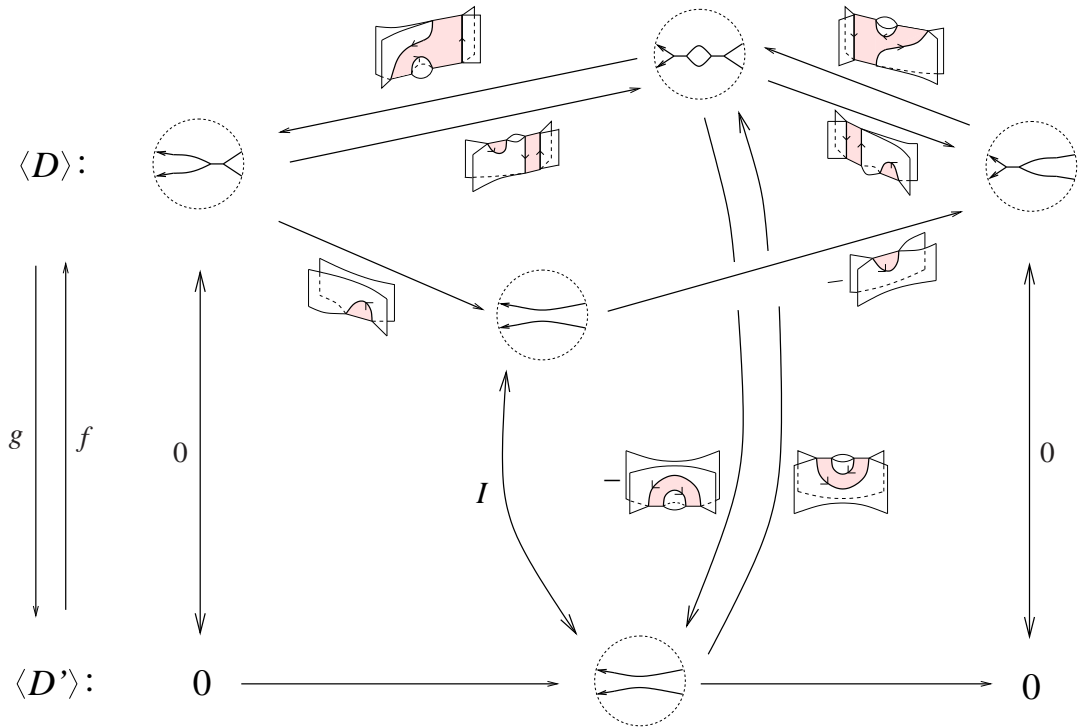}
\caption{Invariance under \emph{Reidemeister IIa} } \label{u3:fig:RIIaInvariance}
\end{figure}

\bigskip
\n\emph{Reidemeister IIb.}
Consider diagrams $D$ and $D'$ that differ only in a circular region, as 
in the figure below.
$$D=\figins{-13}{0.4}{DReidIIb}\qquad
D'=\figins{-13}{0.4}{twoedgesop}$$
Again, checking that the diagram in Figure~\ref{u3:fig:RIIbInvariance} defines a 
homotopy between the complexes $\brak{D}$ and $\brak{D'}$ is left to the reader:
\begin{itemize}
\item $g$ and $f$ are morphisms of complexes (use only isotopies); 
\item $g^1f^1=Id_{\brak{D'}^1}$  (use~\eqref{u3:eq:RD} and~\eqref{u3:eq:S}); 
\item $f^0g^0+hd=Id_{\brak{D}^0}$ and $f^2g^2+dh=Id_{\brak{D}^2}$ (use~\eqref{u3:eq:RD} and~\eqref{u3:eq:DR});
\item $f^1g^1+dh+hd=Id_{\brak{D}^1}$ (use~\eqref{u3:eq:DR},~\eqref{u3:eq:RD},~\eqref{u3:eq:4C} and~\eqref{u3:eq:SqR}).
\end{itemize}
\begin{figure}[ht!]
\labellist
\small\hair 2pt
\pinlabel $g$ at -5 240
\pinlabel $f$ at 36 240
\pinlabel $0$ at 77 220
\pinlabel $0$ at 603 220
\endlabellist
\centering
\includegraphics[width=5.4in]{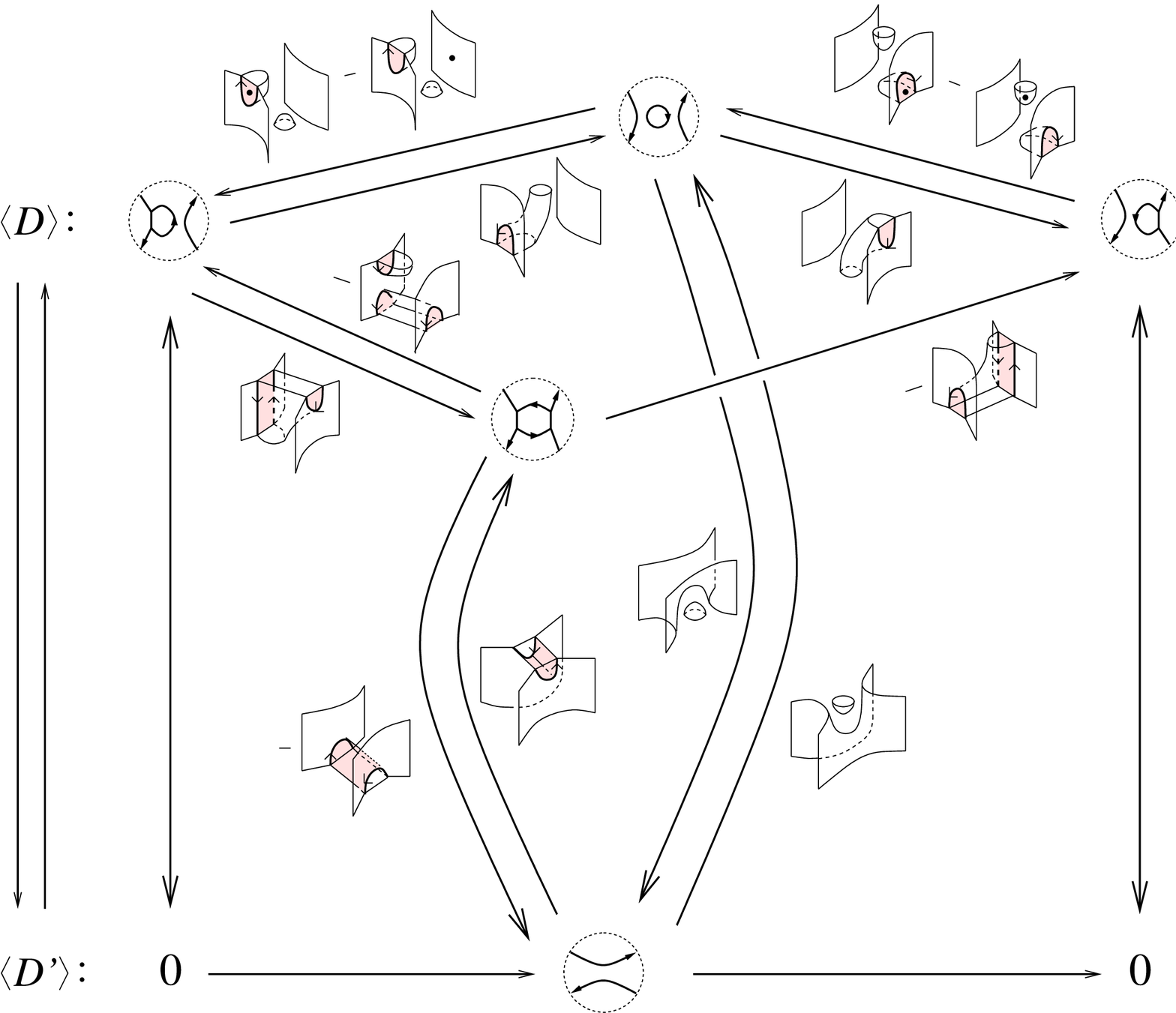}
\caption{Invariance under \emph{Reidemeister IIb}} \label{u3:fig:RIIbInvariance}
\end{figure}

\bigskip\newpage
\n\emph{Reidemeister III.}
Consider diagrams $D$ and $D'$ that differ only in a circular region, as
in the figure below.
$$D=\figins{-13}{0.4}{D1ReidIII}\qquad
D'=\figins{-13}{0.4}{D2ReidIII}$$
We prove that $\brak{D'}$ is homotopy equivalent to $\brak{D}$ by showing that both complexes are homotopy equivalent to a third complex denoted $\brak{Q}$ (the bottom complex in Figure~\ref{u3:fig:RIIIInvariance}).
\begin{figure}[ht!]
\centering
\figwins{0}{4.4}{ReidIII-sl3}
\caption[Invariance under \emph{Reidemeister III}]{First step of invariance under \emph{Reidemeister III}. A circle attached to the tail of an arrow indicates that the corresponding morphism has a minus sign.}
\label{u3:fig:RIIIInvariance}
\end{figure}

\n Figure~\ref{u3:fig:RIIIInvariance} shows that $\brak{D}$ is homotopy equivalent to $\brak{Q}$. By applying a symmetry relative to a horizontal axis crossing each diagram in $\brak{D}$ we obtain a homotopy equivalence between $\brak{D'}$ and $\brak{Q}$. It follows that $\brak{D}$ is homotopy equivalent to $\brak{D'}$.
\end{proof}

By Theorem~\ref{u3:thm-inv} we can use any diagram of $L$ to obtain 
the invariant in $\kom_{/h}(\lfoam)$ and this justifies the 
notation $\brak{L}$.

\subsection{Functoriality}\label{u3:ssec:funct}

The construction and the results of the previous sections can be 
extended to the category of tangles, following Bar-Natan's approach 
in~\cite{bar-natancob}. One can then prove the functoriality of $\brak{\;}$ 
as Bar-Natan does. Although we will not give the details of this proof 
some remarks are in order. In the first place, 
we have to consider tangle diagrams in a disk and foams inside a cylinder. 
To ensure additivity of the $q$-grading under lateral composition 
we need to add an extra term to the $q$-grading formula. For a foam between open $\slt$-webs 
with $|b|$ vertical boundary components 
and $d$ dots we have
$$q(f)=-2\chi(f)+ \chi(\partial f)+2d +|b|.$$
Lemma~8.6 in~\cite{bar-natancob} is fundamental in Bar-Natan's proof of the functoriality of the 
universal $\mathfrak{sl}(2)$-link homology. The analogue for $\slt$ follows from the following two lemmas.
\begin{lem}\label{u3:lem:closedfoam}
Let $f$ be a closed foam. If $q(f)<0$, then $f=0$ holds. If $q(f)=0$, then the evaluation of 
$f$ gives an integer. 
\end{lem}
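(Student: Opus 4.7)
The plan is to promote the grading from the category $\lfoam$ to the evaluation map and then use the fact that $\bZ[a,b,c]$ has no negative-degree elements and only integer degree-zero elements. First I would observe that for a closed foam $\partial f=\emptyset$, so the formula collapses to $q(f)=-2\chi(f)+2d$, where $d$ is the total number of dots on $f$.

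Next I would verify that each of the defining local relations $(3D), (CN), (S), (\Theta)$ is homogeneous with respect to the $q$-grading on $\lfoam$ (where a dot has degree $2$ and $a,b,c$ have degrees $2,4,6$). For instance, in $(3D)$ the three sides of $X^3=aX^2+bX+c$ each raise the $q$-degree of the surrounding foam by $6$; in $(S)$ a sphere with $k$ dots has $q=-4+2k$, consistent with the evaluations $0,0,-1$ for $k=0,1,2$; in $(\Theta)$ a theta-foam has $\chi=3$, hence a theta with $(\alpha,\beta,\delta)$ dots has $q=-6+2(\alpha+\beta+\delta)$, matching $\pm 1\in\bZ$ precisely for the cases with $\alpha+\beta+\delta=3$ and $0$ otherwise; and in $(CN)$ each summand is arranged so that the extra dots and the powers of $a,b$ compensate for the Euler-characteristic change of $+2$ when the neck is cut. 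Together these checks say that the evaluation map
\[
C\colon \End_{\lfoam}(\emptyset)\longrightarrow \bZ[a,b,c]
\]
is a homomorphism of graded rings. In particular, for every closed foam $f$, the element $C(f)\in\bZ[a,b,c]$ is homogeneous of $q$-degree equal to $q(f)$.

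The lemma then follows at once: the ring $\bZ[a,b,c]$ is concentrated in non-negative even $q$-degrees because its generators $a,b,c$ all have positive degree, and its degree-zero component is exactly $\bZ$. Hence $q(f)<0$ forces $C(f)=0$ (that is, $f=0$ in $\lfoam$), while $q(f)=0$ forces $C(f)\in\bZ$.

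The only step that requires real work is the homogeneity of $(CN)$, because one must carefully match the change in $\chi$ against the varying dot counts and powers of $a,b$ in the six summands; the other three relations are immediate from the degree conventions. Once $(CN)$ is checked, the result is automatic, with no need to analyse the evaluation algorithm itself thanks to the already-established consistency of $\ell$.
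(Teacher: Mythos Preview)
Your argument is correct and is a genuinely different, more conceptual route than the paper's. The paper proceeds algorithmically: it invokes the derived relations (CN) and (RD) to simplify an arbitrary closed foam into a $\bZ[a,b,c]$-linear combination of disjoint spheres and theta-foams, observes that those two moves preserve $q$-degree, and then reads off the conclusion from the explicit values in (S) and $(\Theta)$. Your argument bypasses the reduction algorithm entirely: once you have checked that each of the \emph{defining} relations $(3D)$, $(CN)$, $(S)$, $(\Theta)$ is homogeneous, the quotient category $\lfoam$ inherits a well-defined $q$-grading, so the already-established identification $\End_{\lfoam}(\emptyset)\cong\bZ[a,b,c]$ is automatically graded and the claim drops out from the fact that $\bZ[a,b,c]$ is concentrated in non-negative degrees with degree-zero part $\bZ$. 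The advantage of your approach is that it never needs the derived relation (RD) or any description of how the evaluation actually proceeds; the paper's approach, on the other hand, is more self-contained in that it makes the evaluation mechanism visible and does not appeal to the grading on the target ring. One small remark: your sentence about ``homomorphism of graded rings'' would be cleaner stated as ``the identification $\End_{\lfoam}(\emptyset)\cong\bZ[a,b,c]$ respects the gradings,'' since the map in question is really an isomorphism rather than a mere homomorphism.
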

\begin{proof}
Using~\eqref{u3:eq:CN} and~\eqref{u3:eq:RD} we can turn any closed foam into a $\bZ[a,b,c]$-linear 
combination of a disjoint union of spheres and theta foams. Since the grading remains 
unchanged by~\eqref{u3:eq:CN} and~\eqref{u3:eq:RD} it suffices to check the claims for spheres and theta 
foams, which is immediate from the~\eqref{u3:eq:S} and~\eqref{u3:eq:theta}-relations.
\end{proof} 

\begin{lem}
For a crossingless tangle diagram $T$ we have that
$\Hom_{\lfoam}(T,T)$ is zero in negative degrees and $\bZ$ in degree zero.
\end{lem}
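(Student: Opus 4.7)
The plan is to reduce every endomorphism of $T$ to a canonical normal form using the local relations of $\ell$, and then to read the degree constraints directly off the grading formula. Since $T$ is crossingless, it is a disjoint union of $k$ arcs and $\ell$ circles, and in particular contains no trivalent vertices. Hence for any foam $f\colon T\to T$, no singular arc of $f$ can meet $T$ at the top or bottom (where there are no trivalent vertices) nor the vertical boundary strips (which are smooth intervals), so every singular arc of $f$ is a closed circle in the interior.

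First I would use~\eqref{u3:eq:RD} (together with~\eqref{u3:eq:DR} and~\eqref{u3:eq:SqR} as auxiliary identities) to eliminate all singular circles, rewriting $f$ as a $\bZ[a,b,c]$-linear combination of smooth cobordisms from $T$ to $T$, possibly carrying dots. Next I would apply neck-cutting, encoded by~\eqref{u3:eq:CN}, to cut each handle and each tube connecting facets, expressing $f$ as a $\bZ[a,b,c]$-linear combination of dotted identity foams $\mathrm{id}_T$ with disjoint closed components. By Lemma~\ref{u3:lem:closedfoam} each closed component evaluates to a scalar in $\bZ[a,b,c]$ via~\eqref{u3:eq:S},~\eqref{u3:eq:theta} and~\eqref{u3:eq:3D}, and finally~\eqref{u3:eq:3D} itself allows me to reduce the number of dots on each facet to at most two. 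The outcome is a canonical normal form: a $\bZ[a,b,c]$-linear combination of dotted identity foams $\mathrm{id}_T$ with dot-multiplicities $(d_1,\ldots,d_{k+\ell})\in\{0,1,2\}^{k+\ell}$ on its facets.

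The degree count is then immediate. For the identity foam one has $\chi(\mathrm{id}_T)=k$, $\chi(\partial\,\mathrm{id}_T)=0$ and $|b|=2k$, so that $q(\mathrm{id}_T)=-2k+0+2k=0$. Each dot contributes $+2$, and a scalar $a^ib^jc^k$ contributes $2i+4j+6k\geq 0$ with equality iff $i=j=k=0$. Therefore every term in the normal form has non-negative $q$-degree, and the only term of degree zero is the undotted $\mathrm{id}_T$ multiplied by an integer. This shows that $\Hom_{\lfoam}(T,T)$ vanishes in negative degrees and is contained in $\bZ\cdot\mathrm{id}_T$ in degree zero.

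The main obstacle will be the converse inclusion, namely showing that $\mathrm{id}_T$ is itself non-zero in $\lfoam$ and that the normal-form basis above is $\bZ[a,b,c]$-linearly independent, so that no hidden relations in $\ell$ collapse the degree-zero part to a proper subgroup of $\bZ$. This amounts to the consistency of the evaluation $C$ on the universal Frobenius system $\bZ[a,b,c][X]/(X^3-aX^2-bX-c)$, which follows from the unique evaluation of closed foams determined by the relations (as indicated after~\eqref{u3:eq:theta} and proved in~\cite{khovanovsl3}); pairing $\mathrm{id}_T$ with caps and cups built from $\imath$ and $\varepsilon$ then detects a non-zero integer in $\Hom_{\lfoam}(\emptyset,\emptyset)=\bZ[a,b,c]$, securing the claim.
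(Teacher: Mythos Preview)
Your approach matches the paper's: observe that the singular graph of any $f\colon T\to T$ consists of circles (since $T$ has no trivalent vertices), reduce using the local relations to dotted vertical disks together with closed pieces, and invoke Lemma~\ref{u3:lem:closedfoam} for the latter. Two points deserve tightening.

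First, your ordering of the reductions introduces a gap. Relation~\eqref{u3:eq:RD} only removes a singular circle when one of the three sheets meeting along it is a disk; if every sheet at some circle carries a handle or a tube, \eqref{u3:eq:RD} is unavailable and you cannot ``eliminate all singular circles'' before cutting necks. The paper sidesteps this by applying~\eqref{u3:eq:CN} and~\eqref{u3:eq:RD} together to split $f$ into vertical disks and \emph{closed foams} (which may still contain singular circles), then handing the closed pieces directly to Lemma~\ref{u3:lem:closedfoam}; it never tries to make $f$ smooth. Swapping or interleaving your first two steps fixes this.

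Second, relations~\eqref{u3:eq:DR} and~\eqref{u3:eq:SqR} are identities between foams whose boundary webs contain a digon or a square. Since $T$ has no trivalent vertices, these configurations never occur and those relations play no role here; drop them.

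Your final paragraph on the non-vanishing of $\mathrm{id}_T$ is a point the paper leaves implicit, and your treatment of it via closing up and evaluating is fine.
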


\begin{proof}
The set of singular points in every foam $f$ from $T$ to itself consists of a disjoint union 
of circles. Using~\eqref{u3:eq:CN} and~\eqref{u3:eq:RD} we can reduce $f$ to a $\bZ[a,b,c]$-linear 
combination of disjoint unions of vertical disks and closed foams. Note that the $q$-degree of 
a dotted disc is always non-negative. Therefore, if $q(f)<0$, then the closed foams have to 
have negative $q$-degree as well and $f=0$ has to hold by Lemma~\ref{u3:lem:closedfoam}. 
If $q(f)=0$, then $f$ is equal to a $\bZ$-linear combination of undotted discs and closed foams 
of $q$-degree zero, so Lemma~\ref{u3:lem:closedfoam} shows that $f$ is an integer multiple of the 
identity foam on $T$.  
\end{proof}

The proofs of the analogues of lemmas~8.7-8.9 and theorem~5 in~\cite{bar-natancob} follow the 
same reasoning but use the homotopies of our Subsection~\ref{u3:ssec:thm-inv}. We illustrate this 
by showing that $\brak{\ }$ respects the movie move MM13 
(actually it is the mirror of MM13 in~\cite{bar-natancob}):
$$
\raisebox{-0.17in}{\includegraphics[height=0.4in]{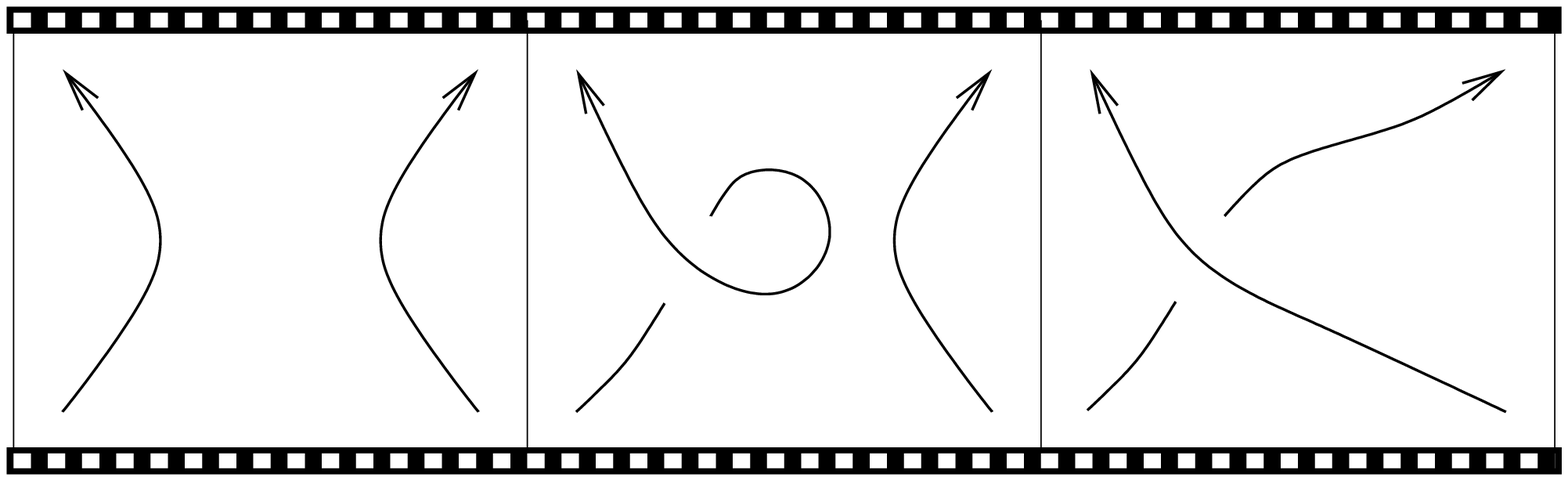}}\sim
\raisebox{-0.17in}{\includegraphics[height=0.4in]{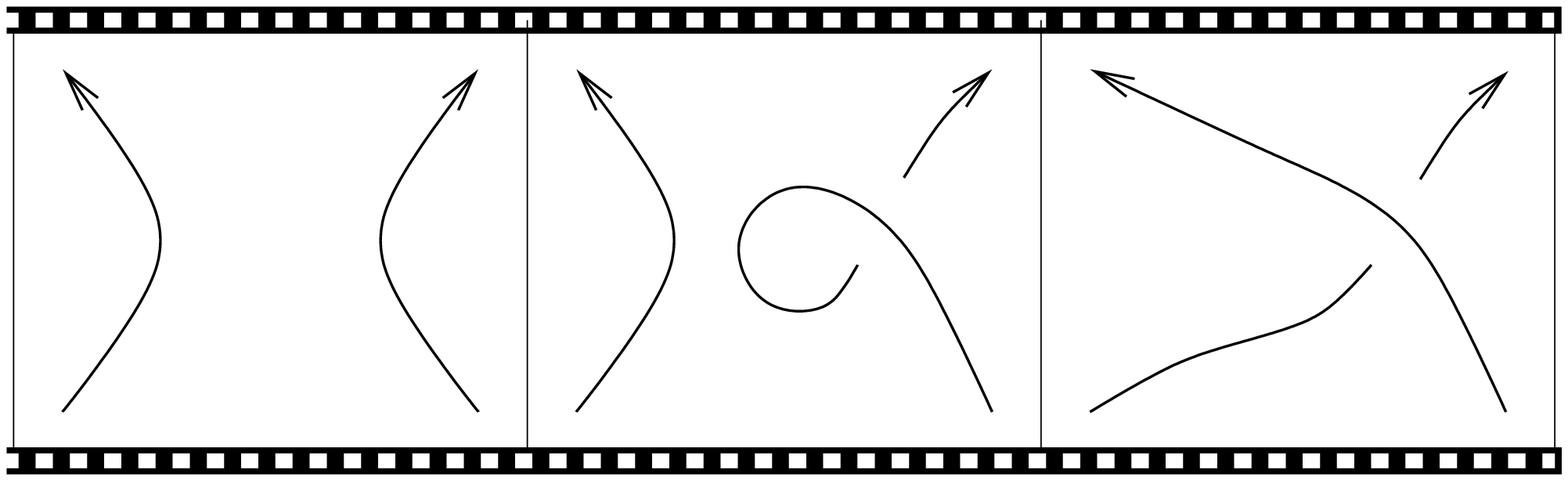}}\ .$$
Going from left to right in homological degree 0 we find the composition 
$$\brak{\raisebox{-0.05in}{\includegraphics[height=0.17in]{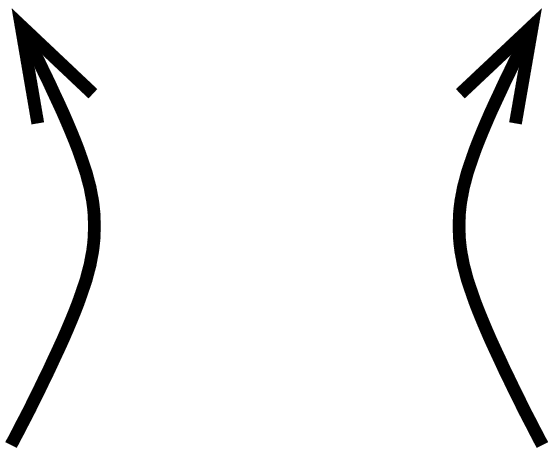}}}
\to
\brak{\raisebox{-0.05in}{\includegraphics[height=0.17in]{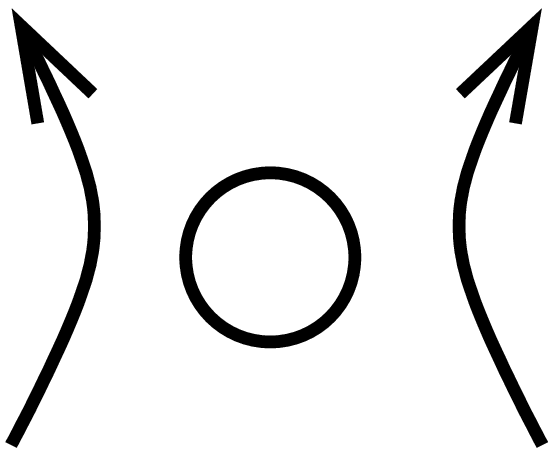}}}
\to
\brak{\raisebox{-0.05in}{\includegraphics[height=0.17in]{figs/mm13w1}}}
$$ in both movies.
The map for the movie on the left-hand side consists of the cobordism $f^0$ of 
Figure~\ref{u3:fig:RI_Invariance} between the left strand and the circle followed by a saddle 
cobordism between the circle and the right strand. For the movie on the right-hand side we 
have $f^0$ between the right strand and the circle followed by a saddle cobordism between the 
circle and the left strand. Both sides are equal to
$$\raisebox{-0.3in}{\includegraphics[height=0.6in]{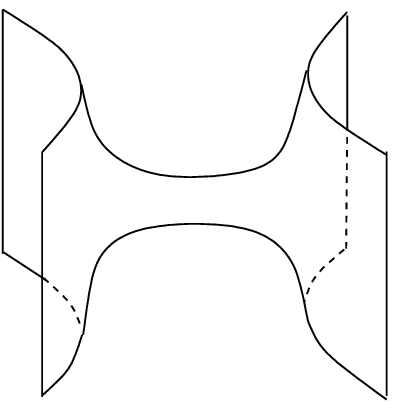}}.$$
Going from right to left and using the cobordism $g^0$ of Figure~\ref{u3:fig:RI_Invariance} we 
obtain the identity cobordism in both movies.

Without giving more details of this generalization, we state the main result. 
Denote by $\kom_{/\pm h}(\lfoam)$ the 
category $\kom_{/h}(\lfoam)$ modded out by $\pm 1$. Then

\begin{prop}
\label{u3:prop:func}
$\brak{\;}$ defines a functor $\Link\to \kom_{/\pm h}(\lfoam)$.
\end{prop}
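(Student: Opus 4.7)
The plan is to imitate Bar-Natan's strategy from Section~8 of~\cite{bar-natancob}, extended to the $\slt$ setting via the technical lemmas just established. First I would upgrade $\brak{\;}$ from a map on link diagrams to a map on tangle diagrams, associating to every elementary string of diagrams a morphism in $\kom(\lfoam)$: to a Reidemeister move one of the homotopy equivalences produced explicitly in Subsection~\ref{u3:ssec:thm-inv}, to a Morse $0$-handle (birth) the unit $\imath$ applied at the appropriate component, to a $2$-handle (death) the trace $\varepsilon$, and to a $1$-handle (saddle) the obvious saddle foam applied termwise. One checks degree-by-degree that each of these is indeed a chain map in $\kom(\lfoam)$; for Reidemeister moves this is exactly the content of Theorem~\ref{u3:thm-inv}, and for the Morse moves it is an immediate isotopy check since the saddle/cup/cap foam commutes with the zip and unzip differentials up to planar isotopy.

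Next I would invoke the Carter--Saito movie-move theorem: any two generic movie presentations of the same oriented cobordism in $S^3\times[0,1]$ are related by a finite list of local movie moves MM1--MM15. The functoriality statement up to sign then reduces to showing that for each movie move, the two compositions of elementary chain maps assigned to its two sides coincide as elements of $\Hom_{\kom_{/h}(\lfoam)}$ up to a global factor of $\pm 1$. This is where the degree lemmas do the heavy lifting. By the two lemmas just proved (the closed-foam vanishing in negative $q$-degree and the computation $\Hom_{\lfoam}(T,T)=\bZ$ in degree zero for crossingless $T$), for any two chain maps $\varphi,\psi\colon\brak{T_1}\to\brak{T_2}$ of $q$-degree zero in the relevant homological slot, the difference $\varphi-\psi$ lives in a $\Hom$-group that is trivial or equal to $\bZ$, so after cancelling the non-identity part one concludes $\varphi=\pm\psi$.

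Concretely I would then go through the movie moves in groups. MM1--MM5 are local Reidemeister identities which follow from the homotopies of Subsection~\ref{u3:ssec:thm-inv} being two-sided inverses up to homotopy. MM6--MM10 are ``Reidemeister moves commute with distant features'', which follow from functoriality of the tensor product of foams. MM11--MM15 are the truly mixed Reidemeister/Morse interactions, and for these one argues exactly as in the sample verification of (the mirror of) MM13 already sketched in the excerpt: express both sides as explicit foams, simplify using \eqref{u3:eq:CN},~\eqref{u3:eq:RD},~\eqref{u3:eq:DR} and~\eqref{u3:eq:SqR} together with the dot-exchange identities of Figure~\ref{u3:fig:pdots}, and observe that the difference has $q$-degree forcing it to be a scalar multiple of the identity by the $\Hom$-space lemma; the scalar is then $\pm 1$ because it has to be so after projecting to the original Khovanov $\slt$-theory at $a=b=c=0$.

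The main obstacle is the same one Bar-Natan faced: the bookkeeping for the mixed movie moves MM10--MM15 is tedious, and the sign indeterminacy genuinely occurs and cannot be removed within this framework (it is precisely why the target is $\kom_{/\pm h}(\lfoam)$ rather than $\kom_{/h}(\lfoam)$). The cleanest way to organise the argument is to do the verifications schematically, always reducing the difference of the two sides to an element of $\Hom$ of a fixed crossingless tangle to itself in $q$-degree $0$, and then using the second lemma to conclude equality up to a sign. I would relegate these case-checks to an appendix-style list, following the template of~\cite{bar-natancob} line by line, and state only the qualitative conclusion in the body: the assignment is well defined on ambient isotopy classes of link cobordisms up to an overall sign, which is exactly the content of Proposition~\ref{u3:prop:func}.
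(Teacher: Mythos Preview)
Your proposal is essentially correct and mirrors the paper's approach: the paper does not give a detailed proof of this proposition either, but simply states that the proofs of the analogues of Bar-Natan's Lemmas~8.7--8.9 and Theorem~5 go through using the homotopies of Subsection~\ref{u3:ssec:thm-inv}, and illustrates this with the single explicit verification of (the mirror of) MM13 that you already saw. Your more detailed sketch of the Bar-Natan strategy is exactly what is intended.

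One small correction to your reasoning for the sign. Knowing that the degree-zero $\Hom$ is $\bZ$ tells you only that each side of a movie move is some integer multiple of a generator; it does not by itself give $\varphi=\pm\psi$. The actual argument (Bar-Natan's Lemmas~8.7--8.9) is that for each movie move one arranges both sides to be \emph{invertible} up to homotopy---typically one side is literally the identity morphism and the other is a composite of the explicit homotopy equivalences from the Reidemeister proofs---so each side is a unit in $\bZ$, i.e.\ $\pm 1$. Your projection-to-$a=b=c=0$ argument is not needed and is not what the paper (or Bar-Natan) uses.
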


\subsection{Universal homology}\label{u3:ssec:univhom}

Following Khovanov~\cite{khovanovsl3}, we define a functor $C\colon 
\lfoam\to \abcModgr$, which extends in a straightforward manner to the category $\kom(\lfoam)$.

\begin{defn}
For a closed $\slt$-web $\Gamma$, define $C(\Gamma)=\Hom_{\lfoam}(\emptyset,\Gamma)$. 
From the $q$-grading formula for foams, it follows that $C(\Gamma)$ is graded. 
For a foam $f$ between $\slt$-webs $\Gamma$ and $\Gamma'$ we define the $\bZ[a,b,c]$-linear map 
$$C(f)\colon\Hom_{\lfoam}(\emptyset,\Gamma) \to 
\Hom_{\lfoam}(\emptyset,\Gamma')$$ given by composition, whose degree equals $q(f)$. 
\end{defn}
\n Note that, if we have a disjoint union of $\slt$-webs $\Gamma$ and $\Gamma'$, then 
$C(\Gamma \sqcup \Gamma')\cong C(\Gamma)\otimes C(\Gamma')$. Here, as in the sequel, 
the tensor product is taken over $\bZ[a,b,c]$.

The following relations are a categorified version of Kuperberg's skein relations~\cite{kup} and were used and proved by Khovanov in~\cite{khovanovsl3} to relate his 
$\slt$-link homology to the quantum $\slt$-link invariant. 

\begin{lem}\label{u3:lem:KhK}
\emph{(Khovanov-Kuperberg relations~\cite{khovanovsl3, kup})} We have the following decompositions under 
the functor $C$:
\begin{align}
C(\unknot\Gamma) &\cong C(\unknot)\otimes C(\Gamma) 
\tag{\emph{Circle Removal}}
\\[1.2ex]
C(\figins{-2.6}{0.15}{hdigonweb}) 
 &\cong 
C(\raisebox{2pt}{\includegraphics[height=0.025in,width=0.3in]{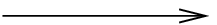}})\{-1\}
\oplus C(\raisebox{2pt}{\includegraphics[height=0.025in,width=0.3in]{figs/hthickedge}})\{1\} 
\tag{\emph{Digon Removal}}
\\[1.2ex]
C\left(\raisebox{-8.0pt}{\includegraphics[height=0.3in]{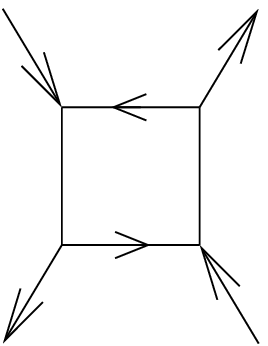}}\right)
 &\cong 
C\left(\raisebox{-8.0pt}{\includegraphics[height=0.3in]{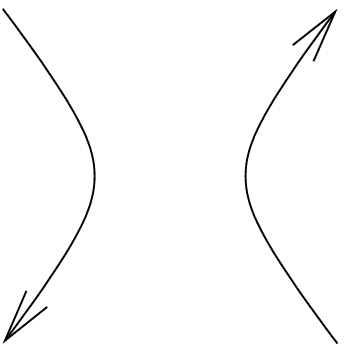}}\right)
\oplus
C\left(\raisebox{-8.0pt}{\includegraphics[height=0.3in]{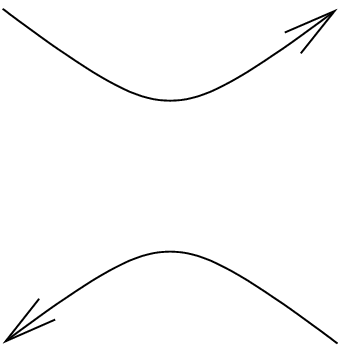}}\right) 
\tag{\emph{Square Removal}}
\end{align}
where $\{j\}$ denotes a positive shift in the $q$-grading by $j$.
\end{lem}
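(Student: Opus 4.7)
The plan is to prove the three decompositions in order, by exhibiting explicit foams realising the direct sum projections/inclusions and then checking the defining identities using the local relations in $\ell$, exactly in the spirit of Khovanov's argument in~\cite{khovanovsl3}. Since $C$ is additive and factors through $\lfoam$, and since it was already noted that $C(\Gamma\sqcup\Gamma')\cong C(\Gamma)\otimes C(\Gamma')$, Circle Removal is immediate from the very definition of $C$ applied to the disjoint union of $\Gamma$ with an unknotted circle. So the real content is in Digon Removal and Square Removal.

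For Digon Removal, I would produce foams
\[
\alpha_0,\alpha_1\colon\ \figwhins{2}{0.025}{0.3}{hthickedge}\ \longrightarrow\ \figins{-2.6}{0.15}{hdigonweb},
\qquad
\beta_0,\beta_1\colon\ \figins{-2.6}{0.15}{hdigonweb}\ \longrightarrow\ \figwhins{2}{0.025}{0.3}{hthickedge},
\]
where $\alpha_i$ is the ``zip-then-cap'' digon-creation foam carrying $i$ dots on the newly created disk (so that the $q$-degrees $q(\alpha_0)=-1$ and $q(\alpha_1)=+1$ match the shifts $\{-1\}$ and $\{+1\}$), and $\beta_i$ is the mirror ``unzip'' foam carrying $1-i$ dots. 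I would then check the two sets of identities
\[
\beta_i\alpha_j \;=\; \delta_{i,j}\,\mathrm{id}_{\mathrm{edge}},
\qquad
\alpha_0\beta_0+\alpha_1\beta_1 \;=\; \mathrm{id}_{\mathrm{digon}}.
\]
The first follows by evaluating the resulting closed theta-foam component using $(\Theta)$; the second is exactly the (DR) relation proved in Lemma~\ref{u3:lem:identities}. Bilinearity of $\Hom$ in $\lfoam$ and functoriality of $C$ then upgrade these equations into mutually inverse direct-sum isomorphisms of graded $\bZ[a,b,c]$-modules.

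For Square Removal, I would proceed analogously, constructing two pairs of foams between the square web and each of its two resolutions, built from the elementary zip/unzip foams applied at the two ``parallel'' or two ``perpendicular'' pairs of edges of the square. Again one checks that the compositions in one direction reduce, via $(\Theta)$, $(CN)$ and the (RD), (DR) identities, to $\mathrm{id}$ on each summand (with the cross terms vanishing), and that the sum of the two compositions in the other direction gives $\mathrm{id}$ on the square. This last identity is precisely the (SqR) relation derived in Lemma~\ref{u3:lem:identities}. Applying $C$ to the resulting pair of mutually inverse foams yields the claimed direct-sum decomposition.

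The only real obstacle is bookkeeping: checking that the signs, the dot placements, and the $q$-degree shifts match up so that the $\alpha_i,\beta_j$ for the digon, and the analogous foams for the square, have the right degrees and satisfy $\beta_i\alpha_j=\delta_{ij}\mathrm{id}$ on the nose. This is a finite, explicit computation in $\lfoam$ using only $(3D)$, $(CN)$, $(S)$, $(\Theta)$, (RD), (DR) and (SqR); once the foams are drawn the verifications are each a couple of applications of these relations. Because the identities used have already been established earlier in the excerpt, no new conceptual ingredient is needed.
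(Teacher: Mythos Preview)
Your proposal is correct and follows essentially the same approach as the paper. The paper also treats Circle Removal as immediate from the definition, and for Digon and Square Removal exhibits explicit foam maps (called $\varphi_i,\psi_i$ and $\rho_i,\tau_i$) whose orthogonality $\varphi_i\psi_j=\delta_{ij}$, $\rho_i\tau_j=\delta_{ij}$ is checked via~\eqref{u3:eq:CN} and~\eqref{u3:eq:theta}, while the identity decompositions are precisely~\eqref{u3:eq:DR} and~\eqref{u3:eq:SqR}; the only bookkeeping you would need to pin down is the single minus sign the paper places on one of the cup/cap foams.
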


\begin{proof}
\emph{Circle removal} is immediate from the definition of $C(\Gamma)$. \emph{Digon removal} and \emph{square removal} are proved as in~\cite{khovanovsl3}. Notice that \emph{Digon removal} and \emph{square removal} are related to the local relations~\eqref{u3:eq:DR} and~\eqref{u3:eq:SqR} of page~\pageref{u3:eq:RD}.

To prove \emph{digon removal} we define grading-preserving maps 
\begin{gather*}
\varphi_0\colon C(\figins{-2.6}{0.15}{hdigonweb})\{1\} 
\to C(\raisebox{2pt}{\includegraphics[height=0.025in,width=0.3in]{figs/hthickedge}})
\qquad
\varphi_1\colon C(\figins{-2.6}{0.15}{hdigonweb})\{-1\} 
\to C(\raisebox{2pt}{\includegraphics[height=0.025in,width=0.3in]{figs/hthickedge}})
\\[1.5ex]
\psi_0\colon
C(\raisebox{2pt}{\includegraphics[height=0.025in,width=0.3in]{figs/hthickedge}}) \to
C(\figins{-2.6}{0.15}{hdigonweb})\{1\} 
\qquad
\psi_1\colon C(\raisebox{2pt}{\includegraphics[height=0.025in,width=0.3in]{figs/hthickedge}}) \to
C(\figins{-2.6}{0.15}{hdigonweb})\{-1\}
\end{gather*}
as
$$
\varphi_0= C\left(\figins{-10}{0.3}{scap-sl3}\right),\ \ \ 
\varphi_1= C\left(\figins{-10}{0.3}{scapd-sl3} \right),\ \ \
\psi_0= C\left(\figins{-6}{0.3}{scupd-sl3}\right),\ \ \
\psi_1=- C\left( \figins{-6}{0.3}{scup-sl3}\right).
$$
From the~\eqref{u3:eq:CN} and~\eqref{u3:eq:theta} relations we have that $\varphi_i\psi_j=\delta_{i,j}$ (for $i,j=0,1$) and from the~\eqref{u3:eq:DR} relation it follows that $\psi_0\varphi_0+\psi_1\varphi_1$ is the identity map in $C(\figins{-2.6}{0.15}{hdigonweb})$.

To prove \emph{square removal} we define grading-preserving maps 
\begin{gather*}
\rho_0\colon C\left(\raisebox{-8.0pt}{\includegraphics[height=0.3in]{figs/squareweb}}\right)\to C\left(\raisebox{-8.0pt}{\includegraphics[height=0.3in]{figs/vedgesweb}}\right)
\qquad 
\tau_0\colon C\left(\raisebox{-8.0pt}{\includegraphics[height=0.3in]{figs/vedgesweb}}\right)\to C\left(\raisebox{-8.0pt}{\includegraphics[height=0.3in]{figs/squareweb}}\right)
\\[1.8ex]
\rho_1\colon C\left(\raisebox{-8.0pt}{\includegraphics[height=0.3in]{figs/squareweb}}\right)\to C\left(\raisebox{-8.0pt}{\includegraphics[height=0.3in]{figs/hedgesweb}}\right)
\qquad 
\tau_1\colon C\left(\raisebox{-8.0pt}{\includegraphics[height=0.3in]{figs/hedgesweb}}\right)\to C\left(\raisebox{-8.0pt}{\includegraphics[height=0.3in]{figs/squareweb}}\right)
\end{gather*}
as
\begin{gather*}
\rho_0=  C\left(\figins{-22}{0.7}{hcylhalf-bot-sl3}\right)
\qquad
\tau_0=  -C\left(\figins{-22}{0.7}{hcylhalf-top-sl3}\right)
\\[1.8ex]
\rho_1 = C\left(\figins{-22}{0.7}{hdcylhalf-bot-sl3}\right)
\qquad
\tau_1 = -C\left(\figins{-22}{0.7}{hdcylhalf-top-sl3}\right)
\end{gather*}
It is straightforward to check that $\rho_i\tau_j=\delta_{i,j}$ using the~\eqref{u3:eq:CN} and the~\eqref{u3:eq:theta} relations. From the~\eqref{u3:eq:SqR} relation it 
follows that 
$\tau_0\rho_0+\tau_1\rho_1=\id\left(
C\left(\raisebox{-8.0pt}{\includegraphics[height=0.3in]{figs/squareweb}}\right)
\right)$.
\end{proof}

\n Let $\uhtgen \ D$ denote the bigraded homology of $C\brak{D}$ and 
$\abcModbg$ the category of 
bigraded $\bZ[a,b,c]$-modules. Proposition~\ref{u3:prop:func} implies

\begin{prop}\label{u3:prop:inv_vec} 
$U_{a,b,c}\colon\Link\to \abcModbg$ is a functor, defined up to a sign.
\end{prop}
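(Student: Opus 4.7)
The plan is to deduce Proposition~\ref{u3:prop:inv_vec} as a formal consequence of Proposition~\ref{u3:prop:func} by composing the link-diagram functor $\brak{\;}$ with the tautological functor $C$ and then taking homology.

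First I would extend $C\colon\lfoam\to\abcModgr$ to a functor $\kom(\lfoam)\to\kom(\abcModgr)$ in the obvious term-by-term way: a complex $(\Gamma^i,d^i)$ of $\slt$-webs is sent to the complex $(C(\Gamma^i),C(d^i))$ of graded $\bZ[a,b,c]$-modules, and a chain map is sent to the corresponding map of complexes. The fact that $C$ is an additive functor (it is defined on morphisms by composition of foams, which is $\bZ[a,b,c]$-bilinear) immediately implies that this extension sends chain maps to chain maps and chain homotopies to chain homotopies. Consequently it descends to a functor $\kom_{/h}(\lfoam)\to\kom_{/h}(\abcModgr)$, and by further quotienting by $\pm 1$ on morphisms, to a functor $\kom_{/\pm h}(\lfoam)\to\kom_{/\pm h}(\abcModgr)$.

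Next I would take homology. Since homology is a well-defined functor from $\kom_{/h}(\abcModgr)$ to the category of graded $\bZ[a,b,c]$-modules (homotopic maps induce equal maps on homology), and the construction preserves the homological grading coming from the hypercube of resolutions and the internal $q$-grading coming from the foam grading formula, we obtain a bigraded $\bZ[a,b,c]$-module $\uhtgen D$ for every link diagram $D$, together with $\bZ[a,b,c]$-linear maps of bidegree $(0,d)$ for every link cobordism of foam-degree $d$. Composing this with the functor of Proposition~\ref{u3:prop:func} yields a functor $\Link\to\abcModbg$, defined up to the overall sign inherited from $\kom_{/\pm h}(\lfoam)$.

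The only thing to check is that the bigrading is well-defined and preserved, which follows from the fact that the differentials in $\brak{D}$ are $q$-graded foams of degree $0$ (by the conventions in the hypercube construction) and from the grading on $C$ being exactly $q(f)$. I do not anticipate serious obstacles here, since everything reduces to the combination of the previously established results: Proposition~\ref{u3:prop:func} takes care of invariance and functoriality on diagrams, and the tautological construction $C$ is a standard additive functor that passes to homology. The only subtlety worth mentioning is the sign indeterminacy, which propagates faithfully from $\kom_{/\pm h}(\lfoam)$ into $\abcModbg$ and accounts for the phrase \emph{defined up to a sign} in the statement.
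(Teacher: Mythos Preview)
Your proposal is correct and is exactly the argument the paper has in mind: the paper states the proposition as an immediate consequence of Proposition~\ref{u3:prop:func}, and your expansion---extend $C$ term-by-term to $\kom(\lfoam)$, observe it descends modulo homotopy and $\pm 1$, then take homology---is precisely the implicit reasoning behind that one-line deduction.
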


\n This justifies the notation $C(L)$ for $C\brak{D}$ and $\uhtgen \ L$ for $\uhtgen \ D$.


\section{Isomorphism classes}\label{u3:sec:isos}

In this section we give the isomorphism classes of the $\slt$-link homologies, obtained by the author in joint work with Marco Mackaay in~\cite{mackaay-vaz}. We just state the main result of Section 3 of~\cite{mackaay-vaz}.  
Working over $\bC$ and taking $a,b,c$ to be complex numbers rather than formal parameters we obtain a filtered theory. Using the same construction as in the first part of this chapter we can define $\uht * L$, which is 
the universal $\slt$-homology with coefficients in $\bC$ (we use one $*$ as superscript to emphasize that it is a singly graded theory). 

We could work over $\bQ$ just as well and obtain the same results, 
except that in the proofs we would first have to pass to quadratic 
or cubic field extensions of $\bQ$ to guarantee the existence of the 
roots of $f(X)$ in the field of coefficients of the homology. 
The arguments presented for $\uht * L$ remain valid over those quadratic 
or cubic extensions. The 
universal coefficient theorem then shows that our results hold true 
for the homology defined over $\bQ$.   

There are three isomorphism classes in $\uht * L$. The first class is the one to which Khovanov's original $\slt$-link homology belongs, the second is the one studied by Gornik in the context of matrix factorizations and the last one is new, although in~\cite{DGR} and~\cite{GW} the authors 
make conjectures 
which are compatible with our results, and can be described in terms of Khovanov's original $\mathfrak{sl}(2)$-link homology.

\begin{thm}
There are three isomorphism classes of 
$\uht * L$. For 
a given choice of $a,b,c\in\bC$, the isomorphism class of $\uht * L$ 
is determined by the number of distinct roots of $f(X)=X^3-aX^2-bX-c$: 
\begin{enumerate}
\item\label{u3:item1} If $f(X)$ has one root (with multiplicity three)    
then  $\uht * L$ is isomorphic to Khovanov's original $\slt$-link homology, 
which in our notation is to $\uhtabc 0 0 0 * L$.

\item\label{u3:item2} If $f(X)$ has three distinct roots $\uht * L$ is isomorphic to Gornik $\slt$-link homology, which in our notation is $\uhtabc 0 0 1 * L$.

\item\label{u3:item3} If $f(X)$ has two distinct roots then
$$\uht i L\cong \bigoplus_{L'\subseteq L} \kh {i-j(L')} * {L'},$$
where $j(L')=2\lk(L',L\backslash L')$. This isomorphism does not take into account the internal grading of the 
Khovanov homology.  
\end{enumerate}
\end{thm}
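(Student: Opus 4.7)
The plan is to show that the isomorphism class of $\uht * L$ depends only on the multiplicity type of the roots of $f(X)$, and then to identify each class with one of the canonical models in the statement. The strategy mirrors the classification of $\mathfrak{sl}(2)$-link homologies in~\cite{mackaay-turner-vaz} and leans on the equivariant-cohomology interpretation of the foam relations given in Subsection~\ref{u3:ssec:loc_rel}: a dot on a facet represents the generator $X$ of $\hy_{U(3)}(\cp{2},\bZ)=\bZ[a,b,c,X]/(f(X))$, and the evaluation of closed foams is governed entirely by $f(X)$.

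The first step is a reduction by translation. Given any root $\alpha\in\bC$ of $f$, the substitution $X\mapsto X+\alpha$ applied to every dot on every facet formally replaces each dot by a dot plus $\alpha$ times an undotted copy. A direct check on the defining relations~\eqref{u3:eq:3D},~\eqref{u3:eq:CN},~\eqref{u3:eq:S} and~\eqref{u3:eq:theta} shows that this substitution carries $\lfoam$ with parameters $(a,b,c)$ onto $\lfoam$ with parameters $(a',b',c')$, where $f(X+\alpha)=X^3-a'X^2-b'X-c'$, inducing a filtered isomorphism $\uht * L\cong U_{a',b',c'}^*(L)$ that is functorial in $L$. In particular we may assume $c'=0$, i.e.\ $f(X)=X(X^2-a'X-b')$, so the number of distinct roots is the only remaining invariant. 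If $f$ has a triple root then after translation $a'=b'=c'=0$, so $\uht * L=\uhtabc 0 0 0 * L$ is literally Khovanov's original $\slt$-homology, proving~\ref{u3:item1}.

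For~\ref{u3:item2} and~\ref{u3:item3} the ring $\bC[X]/(f(X))$ is no longer local, so it decomposes as a product via orthogonal idempotents $e_i=\prod_{j\neq i}(X-\alpha_j)/(\alpha_i-\alpha_j)$, one for each distinct root. Placing $e_i$ on each circle or facet defines endomorphisms of $C(\Gamma)$ for every web $\Gamma$. The key technical step — and the main obstacle — is to verify that these idempotents are compatible, up to invertible scalars, with the zip, unzip, saddle and unit/trace morphisms of Subsection~\ref{u3:ssec:cob-mf} (or rather the Kuperberg-type Digon and Square removal maps appearing in the proof of Lemma~\ref{u3:lem:KhK}). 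This is a Frobenius-algebra computation inside $\bC[X]/(f(X))$ using the~\eqref{u3:eq:CN} relation; once established, the complex $C\brak{D}$ splits as a direct sum over colourings of the connected components of $L$ by distinct roots. In Case~\ref{u3:item2}, each idempotent contributes a rank-one local factor so the theory has total rank $3^{|L|}$ and one identifies it with Gornik's homology $\uhtabc 0 0 1 * L$ by matching the generators produced by the three idempotents with Gornik's canonical generators.

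For Case~\ref{u3:item3}, with double root $\alpha$ and simple root $\beta$, the two idempotents split the complex into summands indexed by subsets $L'\subseteq L$ of components coloured by $\alpha$. On the $\alpha$-coloured components the local algebra is $\bC[X]/((X-\alpha)^2)$, which after a further translation is exactly Khovanov's original $\mathfrak{sl}(2)$ Frobenius algebra, so the $\alpha$-coloured part of the complex is isomorphic to Khovanov's $\mathfrak{sl}(2)$-complex of $L'$; meanwhile the $\beta$-coloured part on $L\setminus L'$ contributes a trivial one-dimensional factor per component. The last and most delicate point is tracking the global homological shift of each summand: every crossing between an $\alpha$- and a $\beta$-coloured strand, after projection to the $e_\alpha\otimes e_\beta$ part and simplification via~\eqref{u3:eq:DR}, contributes a shift equal to the sign of that crossing, and summing over all mixed crossings gives exactly $j(L')=2\lk(L',L\setminus L')$. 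Assembling the pieces yields the stated isomorphism $\uht i L\cong\bigoplus_{L'\subseteq L}\kh{i-j(L')} * {L'}$.
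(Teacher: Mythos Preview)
The paper does not contain a proof of this theorem. In Section~\ref{u3:sec:isos} (and again in the Introduction, discussing Chapter~\ref{chap:univ3}) the author explicitly writes: ``We omit the proof in this thesis (see~\cite{mackaay-vaz}) since the result is beyond the scope of this PhD project.'' The theorem is only \emph{stated} here, with the proof deferred entirely to the joint paper~\cite{mackaay-vaz}. So there is no proof in the paper against which to compare your proposal.

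That said, your sketch follows the expected strategy of~\cite{mackaay-vaz}, which in turn is modelled on the $\mathfrak{sl}(2)$ classification of~\cite{mackaay-turner-vaz}: reduce by a root-translation to $c=0$, then decompose using orthogonal idempotents of $\bC[X]/(f(X))$. This is the right overall shape. However, your outline is genuinely incomplete at the point where you claim the idempotents split the complex. In the $\slt$ foam category the objects are trivalent webs, not just circles, and the foam relations involve theta-foams governed by $\hy(Fl_3)$ rather than only $\hy(\cp{2})$. Verifying that the idempotent projections commute with the zip/unzip maps and that the pieces of the complex at trivalent vertices simplify correctly (in case~\ref{u3:item3}, down to an honest $\mathfrak{sl}(2)$ complex on $L'$) is the real work in~\cite{mackaay-vaz}, and your phrase ``a Frobenius-algebra computation inside $\bC[X]/(f(X))$'' understates what is needed. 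The homological shift computation via mixed crossings is correct in spirit.
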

\n We do not know whether in~\ref{u3:item1} and~\ref{u3:item2} these isomorphisms preserve the filtration. In~\ref{u3:item3} the isomorphism does certainly not preserve the filtration, as can be easily seen by computing some simple examples.

%
%
%
%
\newpage\
\chapter{The foam and the matrix factorization $\slt$ link homologies are equivalent}\label{chap:foam3mf}
%
%
%
%
In this chapter we prove that the universal rational $\slt$ link homologies which were constructed in Chapter~\ref{chap:univ3}, using foams, and in Section~\ref{KR:sec:KR-abc}, using matrix factorizations, are naturally isomorphic as projective functors from the category of links and link cobordisms to the category of bigraded vector spaces. For $a=b=c=0$ this was conjectured to be true by Khovanov and Rozansky in~\cite{KR}.

One of the main difficulties one encounters when trying to relate both theories mentioned above is that the foam approach uses $\slt$-webs (see Section~\ref{u3:sec:univhom}) whereas the KR theory uses webs (see Section~\ref{KR:sec:slN}). In Khovanov and Rozansky's setup in \cite{KR} there is a unique way to associate a matrix factorization to each web. In general there are several webs that one can associate to an $\slt$-web, so there is no obvious choice of a KR matrix factorization to associate to an $\slt$-web. However, we show that the KR-matrix factorizations for all webs associated to a fixed $\slt$-web are homotopy equivalent and that between two of them there is a canonical choice of homotopy equivalence in a certain sense. This allows us to associate an equivalence class of KR-matrix factorizations to each $\slt$-web. After that it is relatively straightforward to show the equivalence between the foam and the KR $\slt$-link homologies.

\medskip

In Section~\ref{mf3:sec:foam} we change some conventions in the category $\lfoam$ of Chapter~\ref{chap:univ3}. 
Section~\ref{mf3:sec:sl3-mf} is the core of the chapter. 
In this section we show how to associate equivalence classes of 
matrix factorizations to $\slt$-webs and use them to construct a 
link homology that is equiva\-lent to Khovanov and Rozansky's. 
In Section~\ref{mf3:sec:iso} we establish the equivalence between the foam 
$\slt$-link homology and the one presented in Section~\ref{mf3:sec:sl3-mf}.

\section{New normalization in $\lfoam$}\label{mf3:sec:foam}

This section contains the modifications in the definition of $\lfoam$ that are necessary 
to relate it to Khovanov and Rozansky's universal $\slt$ link homology using matrix 
factorizations.

The modified relations\footnote{We thank Scott Morrison 
for spotting a mistake in the coefficients in a preprint containing the results of this chapter.} are defined over $\bQ$ and are
\begin{gather}
\raisebox{-7pt}{
\includegraphics[height=0.25in]{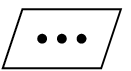}}=
a\raisebox{-7pt}{
\includegraphics[height=0.25in]{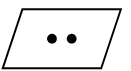}}+
b\raisebox{-7pt}{
\includegraphics[height=0.25in]{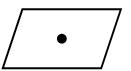}}+
c\raisebox{-7pt}{
\includegraphics[height=0.25in]{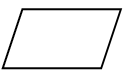}}
\tag{3D}\label{mf3:eq:3D}
\\[1.5ex] 
\figwhins{-18}{0.55}{0.28}{cylinder}=
4\left(
-\figwhins{-18}{0.55}{0.28}{cneck01}
-\figwhins{-18}{0.55}{0.28}{cneck02}
-\figwhins{-18}{0.55}{0.28}{cneck03}
+a
\left( 
\figwhins{-18}{0.55}{0.28}{cnecka1}+
\figwhins{-18}{0.55}{0.28}{cnecka2}
\right)
+b
\figwhins{-18}{0.55}{0.28}{cneckb}
\right)
\tag{CN}\label{mf3:eq:CN}
\\[1.5ex]
\figwins{-9}{0.35}{sundot}=
\figwins{-9}{0.35}{sdot}=0,\quad
\figwins{-9}{0.35}{sddot}=-\frac{1}{4}
\tag{S}\label{mf3:eq:S}
\end{gather}

\medskip

For $\alpha, \beta, \delta\leq 2$ we put (see Figure~\ref{u3:fig:theta})
\begin{equation}
\theta(\alpha,\beta,\delta)=
\begin{cases}
\ \ \frac{1}{8} & (\alpha,\beta,\delta)=(1,2,0)\text{ or a cyclic permutation} \\ 
-\frac{1}{8} & (\alpha,\beta,\delta)=(2,1,0)\text{ or a cyclic permutation} \\ 
\ \ 0 & \text{else}
\end{cases}
\tag{$\Theta$}\label{mf3:eq:theta}
\end{equation}

\medskip

Using the modified relations $\ell'$ one can prove the modified identities~\eqref{mf3:eq:RD},~\eqref{mf3:eq:DR}.
\begin{gather}
\figwhins{-17}{0.55}{0.28}{sdisk}=
2\left(
\figwhins{-17}{0.55}{0.28}{cnecka1}-
\figwhins{-17}{0.55}{0.28}{cnecka2}
\right)
\tag{RD}\label{mf3:eq:RD}
\\[1.5ex]
\figins{-20}{0.6}{digonfid-sl3}=
2\left(
\figins{-26}{0.75}{digonf1}-
\figins{-26}{0.75}{digonf2}
\right)
\tag{DR}\label{mf3:eq:DR}
\\[1.5ex]\displaybreak
\figins{-28}{0.8}{square_id-sl3}=
-\ \figins{-28}{0.8}{square_rem1-sl3}
-\figins{-28}{0.8}{square_rem2-sl3}
\tag{SqR}\label{mf3:eq:SqR}
\end{gather}

The Khovanov-Kuperberg relations of Lemma~\ref{u3:lem:KhK} remain the same with this new norma\-lization.

We now show that the theory obtained with this new normalization is equivalent to the one in Chapter~\ref{chap:univ3} after tensoring the latter with $\bQ$. To distinguish the various constructions let $\Qlfoam$ and $U^\bQ_{a,b,c}$ denote the category $\lfoam$ and the universal homology with the new normalization. Both are defined over $\bQ[a,b,c]$. Let also $\Modbg$ denote the category of bigraded $\bQ[a,b,c]$-modules.

\begin{lem}
The categories $\Qlfoam$ and $\lfoam\otimes_\bZ\bQ$ are isomorphic. 
\end{lem}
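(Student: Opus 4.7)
Both categories share the same objects (formal direct sums of closed $\slt$-webs) and the same set of generating morphisms (foams), with $\bQ[a,b,c]$-coefficients; only the local relations differ. My plan is to construct an identity-on-objects functor
$$F\colon \lfoam \otimes_\bZ \bQ \longrightarrow \Qlfoam$$
by rescaling each foam by a topologically natural factor, and then to exhibit an inverse.

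For a foam $f\colon \Gamma_1 \to \Gamma_2$ I set
$$F(f) = 2^{\chi(f) - \frac{1}{2}\chi(\partial f)}\,f,$$
extended $\bQ[a,b,c]$-linearly. The first task is to check that $F$ is a functor. This reduces to additivity of the exponent under composition: if $g$ and $f$ are composable through a web $\Gamma$, then $\chi(g\circ f)=\chi(g)+\chi(f)-\chi(\Gamma)$ and $\chi(\partial(g\circ f))=\chi(\partial g)+\chi(\partial f)-2\chi(\Gamma)$, so the $\chi(\Gamma)$-contributions cancel and $F(g\circ f)=F(g)\circ F(f)$.

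The second task is to check that $F$ sends the ideal generated by $\ell \otimes \bQ$ into the ideal generated by $\ell'$. Since each relation in $\ell$ is local, every foam appearing in it has the same boundary, so the factor $2^{-\frac{1}{2}\chi(\partial f)}$ is constant across the relation and cancels. What remains is the effective rescaling $f \mapsto 2^{\chi(f)}f$, and a direct case-by-case inspection then matches $\ell$ with $\ell'$: the sphere ($\chi=2$) rescales by $4$, turning the old ``sphere with two dots equals $-1$'' into ``sphere with two dots equals $-\tfrac{1}{4}$''; the theta foam ($\chi=3$) rescales by $8$, turning $\theta(1,2,0)=\pm 1$ into $\pm\tfrac{1}{8}$; in the (CN) relation the cylinder ($\chi=0$) is unchanged while each two-disk ``cneck'' foam ($\chi=2$) rescales by $4$, producing exactly the overall factor of $4$ on the right-hand side of the new (CN); and (3D) is preserved verbatim because all its foams are disks of equal $\chi$. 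The inverse functor is defined by the reciprocal scaling $F^{-1}(f)=2^{-\chi(f)+\frac{1}{2}\chi(\partial f)}\,f$ and sends $\ell'$ back into $\ell\otimes\bQ$, so $F$ is an isomorphism of categories.

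The main technical subtlety to address is that the exponent $\chi(f)-\frac{1}{2}\chi(\partial f)$ can be a half-integer whenever $\chi(\partial f)$ is odd, in which case the individual scalar $2^{\chi(f)-\frac{1}{2}\chi(\partial f)}$ involves $\sqrt{2}$. I will handle this by first constructing $F$ as a $\bQ(\sqrt{2})$-linear functor between the natural $\bQ(\sqrt{2})$-extensions of both categories; the verification above then shows that the induced isomorphism of $\bQ(\sqrt{2})$-linear quotients restricts to a $\bQ$-linear isomorphism of $\lfoam\otimes_\bZ\bQ$ and $\Qlfoam$, since the defining relations $\ell\otimes\bQ$ and $\ell'$ both lie in the $\bQ$-form of the morphism spaces.
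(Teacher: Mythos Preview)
Your global formula $F(f) = 2^{\chi(f)-\frac{1}{2}\chi(\partial f)}\, f$ is a genuinely different route from the paper's. The paper instead defines its functor $\Xi\colon\Qlfoam\to\lfoam\otimes_\bZ\bQ$ by prescribing a rational scalar on each elementary generating foam (cup, cap, zip, unzip, saddle) separately and then extending multiplicatively. Your Euler-characteristic formula has the pleasant feature of being manifestly an isotopy invariant of the foam, so well-definedness on isotopy classes is automatic and only the relations need to be checked; your relation-by-relation verification that $\ell$ is carried to $\ell'$ is correct and is really the heart of the matter, since it encodes the identity $C'(u)=2^{-\chi(u)}C(u)$ for closed foam evaluations.

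The gap is in your final paragraph. Your functor $F$ genuinely does \emph{not} restrict to the $\bQ$-forms. Take the theta web (two trivalent vertices joined by three edges): it has Euler characteristic $-1$, so for any foam $f\colon\emptyset\to(\text{theta web})$ the scalar $2^{\chi(f)+1/2}$ is an irrational multiple of $\sqrt{2}$, and $F(f)$ lands in $\sqrt{2}\cdot\Hom_{\Qlfoam}(\emptyset,\text{theta web})$ rather than in $\Hom_{\Qlfoam}(\emptyset,\text{theta web})$ itself. The observation that the defining relations $\ell$ and $\ell'$ have rational coefficients does not repair this: you need the \emph{functor}, not the relations, to preserve the $\bQ$-form, and it does not. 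Merely knowing that two $\bQ$-linear categories become isomorphic after extending scalars to $\bQ(\sqrt{2})$ is not enough to conclude they are isomorphic over $\bQ$.

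The fix is small. Replace the symmetric exponent $\chi(f)-\tfrac{1}{2}\chi(\partial f)$ by the asymmetric one $\chi(f)-\chi(\Gamma')$, where $\Gamma'$ is the target of $f$. This exponent is always an integer, so $\tilde F(f)=2^{\chi(f)-\chi(\Gamma')}f$ is honestly $\bQ$-linear. It is still a functor: $\chi(\Gamma\times I)=\chi(\Gamma)$ gives $\tilde F(\id_\Gamma)=\id_\Gamma$, and for $f\colon\Gamma\to\Gamma'$, $g\colon\Gamma'\to\Gamma''$ one has $\chi(gf)=\chi(g)+\chi(f)-\chi(\Gamma')$, so the exponents add correctly. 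On each relation in $\ell$ the extra factor $2^{-\chi(\Gamma')}$ is a nonzero constant across the relation and cancels, so your case-by-case relation check carries over verbatim, and the inverse functor is given by the reciprocal scaling.
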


\begin{proof}
We define a functor $\Xi\colon\Qlfoam\to\lfoam\otimes_\bZ\bQ$. On objects $\Xi$ is the identity. To define $\Xi$ on foams we note that the morphisms in both categories are generated by cups, caps, zip, unzips and saddle point cobordisms. We put
\begin{gather*}
\figins{-6}{0.2}{cup}\mapsto\frac{1}{2}\figins{-6}{0.2}{cup}
\qquad\quad
\figins{-4}{0.2}{cap}\mapsto\frac{1}{2}\figins{-4}{0.2}{cap}
\\[1.5ex]
\figins{-17}{0.5}{ssaddle_ud-sl3}
\mapsto
\figins{-17}{0.5}{ssaddle_ud-sl3}
\qquad\quad
\figins{-17}{0.5}{ssaddle-sl3}
\mapsto\frac{1}{2}
\figins{-17}{0.5}{ssaddle-sl3}
\\[1.5ex]
\figins{-18}{0.6}{saddle}
\mapsto\frac{1}{2}
\figins{-18}{0.6}{saddle}
\end{gather*}
It is straightforward to check that $\Xi$ is a well defined isomorphism of categories.
\end{proof}

Since $\Xi$ commutes with the differentials of both constructions we have
\begin{cor}
The projective functors $U^\bQ_{a,b,c}$ and $U_{a,b,c}\otimes_\bZ\bQ$ from $\Link$ to $\Modbg$ are na\-turally isomorphic.
\end{cor}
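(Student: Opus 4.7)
The plan is to leverage the isomorphism of categories $\Xi\colon \Qlfoam \to \lfoam\otimes_\bZ\bQ$ established in the preceding lemma. First I would extend $\Xi$ termwise to an isomorphism of the associated categories of complexes $\kom(\Qlfoam) \to \kom(\lfoam\otimes_\bZ\bQ)$, and verify that $\Xi$ sends the complex $\brak{D}$ built from the new normalization to the complex $\brak{D}\otimes_\bZ\bQ$ built from the old one. The key observation, already recorded in the corollary's setup, is that the differentials in both hypercubes of resolutions are assembled from zips and unzips with signs determined by the ordering of crossings, and $\Xi$ was defined precisely so as to intertwine these generators. Since $\Xi$ acts as the identity on objects ($\slt$-webs) and is a well-defined functor, the differentials commute with $\Xi$ strictly, so we obtain an isomorphism of chain complexes, not merely a quasi-isomorphism.

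Next, apply the tautological functors to the two sides. The functor $C^\bQ=\Hom_{\Qlfoam}(\emptyset,-)$ and the functor $C\otimes_\bZ\bQ$ obtained from $C=\Hom_{\lfoam}(\emptyset,-)$ are related by post-composition with $\Xi$: for each closed $\slt$-web $\Gamma$, the map
\[
\Xi_\Gamma\colon\Hom_{\Qlfoam}(\emptyset,\Gamma)\longrightarrow\Hom_{\lfoam}(\emptyset,\Gamma)\otimes_\bZ\bQ
\]
is a grading-preserving $\bQ[a,b,c]$-linear isomorphism, natural in $\Gamma$. Applying this levelwise to the chain complex $C\brak{D}$ and passing to homology yields a bigraded $\bQ[a,b,c]$-module isomorphism $U^\bQ_{a,b,c}(L)\cong U_{a,b,c}(L)\otimes_\bZ\bQ$ for every link $L$.

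Finally, for naturality with respect to link cobordisms, I would observe that the chain maps associated to an elementary cobordism (cup, cap, saddle, and the Reidemeister-move homotopies of Subsection~\ref{u3:ssec:thm-inv}) are built from exactly the same generating foams in both theories, up to the explicit rescaling prescribed by $\Xi$. Since $\Xi$ is a functor, it is compatible with vertical and horizontal composition of foams, so the maps induced on $U^\bQ_{a,b,c}$ and on $U_{a,b,c}\otimes_\bZ\bQ$ by a given link cobordism agree up to a nonzero rational scalar that is absorbed by the projective ambiguity of Proposition~\ref{u3:prop:func}. The square
\[
\xymatrix{
U^\bQ_{a,b,c}(L_1) \ar[r] \ar[d]_{\cong} & U^\bQ_{a,b,c}(L_2) \ar[d]^{\cong} \\
U_{a,b,c}(L_1)\otimes_\bZ\bQ \ar[r] & U_{a,b,c}(L_2)\otimes_\bZ\bQ
}
\]
therefore commutes in $\Modbg$ modulo the sign ambiguity, yielding the desired natural isomorphism of projective functors.

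The only subtle point will be the bookkeeping of rescalings: one must check that the scalars introduced by $\Xi$ on cups, caps, zips, unzips and saddles assemble consistently across each edge of the hypercube of resolutions and across each elementary piece of a movie presentation of a link cobordism. This is already guaranteed by the statement that $\Xi$ commutes with the differentials, together with the fact that the link-cobordism maps in both constructions are defined by the same recipe in terms of these generators, so no new verification beyond the functoriality of $\Xi$ is needed.
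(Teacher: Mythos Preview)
Your approach is correct and is exactly the one the paper uses: the paper's entire argument is the single sentence ``Since $\Xi$ commutes with the differentials of both constructions we have'' preceding the corollary, and your proposal simply unpacks what that sentence means. Your more careful treatment of naturality under link cobordisms and the bookkeeping of rescalings is a welcome expansion of details the paper leaves implicit.
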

%
%
\section{A matrix factorization theory for $\slt$-webs}\label{mf3:sec:sl3-mf}

As mentioned in the introduction, the main problem in comparing the foam and 
the matrix factorization $\slt$ 
link homologies is that one has to deal with two different sorts of webs. Therefore it apparently is not clear 
which KR matrix factorization to associate to an $\slt$-web. However, in Proposition 
\ref{mf3:prop:ident-vert} we show that this ambiguity is not problematic. Our proof 
of this result is rather roundabout and requires a matrix 
factorization for each vertex. In this way we associate a matrix factorization 
to each $\slt$-web. For each choice of web associated to a given $\slt$-web the KR-matrix 
factorization is a quotient of ours, obtained by identifying the vertex variables 
pairwise according to the double edges. We then show that for a given $\slt$-web two such 
quotients are always homotopy equivalent. This is the 
main ingredient which allows us to establish the equivalence between the foam and 
the matrix factorization $\slt$ link homologies.        

Recall that an $\slt$-web is an oriented trivalent graph where near each vertex
all edges are simultaneously oriented toward it or away from it. We call the former vertices of 
($-$)-\emph{type} and the latter vertices of ($+$)-\emph{type}. To associate 
matrix factorizations to $\slt$-webs we impose that 
each edge have at least one mark.

\begin{figure}[ht!]
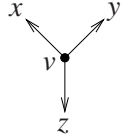

\labellist
\small\hair 2pt
\pinlabel $x$ at -8 113
\pinlabel $y$ at 96 111
\pinlabel $z$ at 42 -12
\pinlabel $v$ at 27  55
\endlabellist
\centering
\figs{0.35}{3vertex-out}
\caption{A vertex of ($+$)-type}
\label{mf3:fig:3vert-out}
\end{figure}
%
%
%
%
\subsection{The 3-vertex}\label{mf3:ssec:triv}

Let $R$ denote the ring $\bQ[a,b,c][\mathbf{x}]$, where $\mathbf{x}$ is an array of variables indexed by the marks of an $\slt$-web $\Gamma$.
We take the basic polynomial $p(x)$ as in Equation~\eqref{KR:eq:upoly}.

Consider the 3-vertex of ($+$)-type in Figure~\ref{mf3:fig:3vert-out}, with emanating 
edges marked $x,y,z$. The (symmetric) polynomial
$$p(x)+p(y)+p(z)=x^4+y^4+z^4-\frac{4a}{3}\bigl(x^3+y^3+z^3 \bigr)-2b\bigl(x^2+y^2+z^2\bigr)-4c\bigl(x+y+z\bigr)$$
can be written as a polynomial in the elementary symmetric polynomials
$$p_v(x+y+z,xy+xz+yz,xyz)=p_v(e_1,e_2,e_3).$$

Using the methods of Section~\ref{KR:sec:KR-abc} we can obtain a matrix factorization of 
$p_v$, but if we tensor together two of these, then we obtain a matrix factorization 
which is not homotopy equivalent to the dumbell matrix factorization. This can be seen 
quite easily, since the new Koszul matrix has 6 rows and 
only one extra variable. This extra variable can be excluded at the expense of 1 
row, but then we get a Koszul matrix with 5 rows, whereas the dumbell Koszul matrix 
has only 2. To solve this problem we introduce a 
set of three new variables for each vertex\footnote{Khovanov had already observed 
this problem for the undeformed case and suggested to us 
the introduction of one vertex variable in that case.}.
Introduce the \emph{vertex variables} $v_1$, $v_2$, $v_3$ with $q(v_i)=2i$ and define 
the \emph{vertex ring}  
$$R_v=\bQ[a,b,c][x,y,z,v_1,v_2,v_3].$$

\n We define the potential as $$W_v=p_v(e_1,e_2,e_3)-p_v(v_1,v_2,v_3).$$
We have
\begin{align*}
W_v &= \frac{p_v(e_1,e_2,e_3)-p_v(v_1,e_2,e_3)}{e_1-v_1}(e_1-v_1) \\
    &\quad +\frac{p_v(v_1,e_2,e_3)-p_v(v_1,v_2,e_3)}{e_2-v_2}(e_2-v_2) \\
    &\quad +\frac{p_v(v_1,v_2,e_3)-p_v(v_1,v_2,v_3)}{e_3-v_3}(e_3-v_3) \\
    &= g_1(e_1-v_1) + g_2(e_2-v_2) + g_3(e_3-v_3),
\end{align*}
where the polynomials $g_i$ ($i=1,2,3$) have the explicit form
\begin{align}\label{mf3:eqn:gfactor1}
g_1 &= \frac{e_1^4-v_1^4}{e_1-v_1}-4e_2(e_1+v_1)+4e_3-
        \frac{4a}{3}\left(\frac{e_1^3-v_1^3}{e_1-v_1}-3e_2\right) -2b(e_1+v_1)-4c \\
g_2 &= 2(e_2+v_2)-4v_1^2+4av_1 + 4b \\
g_3 &= 4(v_1-a).\label{mf3:eqn:gfactor3}
\end{align}
We define the 3-vertex factorization $\hatYGraph_{v_+}$ as the tensor product of the 
factorizations
$$R_v\xra{g_i} R_v\{2i-4\}\xra{e_i-v_i} R_v,
\qquad (i=1,2,3)$$
shifted by $-3/2$ in the $q$-grading and by $1/2$ in the $\bZ/2\bZ$-grading, which 
we write in the form of the Koszul matrix
$$\hatYGraph_{v_+}=
\begin{Bmatrix}
 g_1\ , & e_1-v_1  \\
 g_2\ , & e_2-v_2  \\
 g_3\ , & e_3-v_3
\end{Bmatrix}_{R_v}
\{-3/2\}\brak{1/2}
.$$

If $\YGraph_v$ is a 3-vertex of $-$-type with incoming edges marked $x,y,z$ 
we define
$$\hatYGraph_{v_-}=
\begin{Bmatrix}
 g_1\ , & v_1-e_1 \\
 g_2\ , & v_2-e_2 \\
 g_3\ , & v_3-e_3
\end{Bmatrix}_{R_v}
\{-3/2\}\brak{1/2},$$
with $g_1$, $g_2$, $g_3$ as above.

\begin{lem}
We have the following homotopy equivalences in $\End_{\mf{}}{(\hatYGraph_{v_\pm})}$:
$$m(x+y+z)\cong m(a),\quad 
m(xy+xz+yz)\cong m(-b),\quad 
m(xyz)\cong m(c).$$
\end{lem}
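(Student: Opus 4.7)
The plan is to exploit two standard null-homotopies on any Koszul factorization $\{\mathbf{a},\mathbf{b}\}$: multiplication by any entry $a_i$ or $b_i$ is null-homotopic, and multiplication by any partial derivative $\partial_k W$ of the potential is null-homotopic. Applying the first observation to the rows of $\hatYGraph_{v_+}$ (and likewise $\hatYGraph_{v_-}$) immediately gives that $m(e_i-v_i)\simeq 0$, so
\[
m(x+y+z)\simeq m(v_1),\quad m(xy+xz+yz)\simeq m(v_2),\quad m(xyz)\simeq m(v_3).
\]
Thus it suffices to show $m(v_1)\simeq m(a)$, $m(v_2)\simeq m(-b)$ and $m(v_3)\simeq m(c)$ in $\End_{\hmf{}}(\hatYGraph_{v_\pm})$.

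For this, I would apply the Jacobi relation with respect to the vertex variables. Writing $p(x)+p(y)+p(z)$ in terms of the elementary symmetric polynomials via Newton's identities gives
\[
p_v(v_1,v_2,v_3)=v_1^4-4v_1^2v_2+2v_2^2+4v_1v_3-\tfrac{4a}{3}\bigl(v_1^3-3v_1v_2+3v_3\bigr)-2b\bigl(v_1^2-2v_2\bigr)-4cv_1,
\]
so that, since $W_v=p_v(e_1,e_2,e_3)-p_v(v_1,v_2,v_3)$, differentiating with respect to the $v_i$ yields
\begin{align*}
\partial_{v_3}W_v &= -4(v_1-a),\\
\partial_{v_2}W_v &= -4\bigl(-v_1^2+v_2+av_1+b\bigr),\\
\partial_{v_1}W_v &= -4\bigl(v_1^3-2v_1v_2+v_3-av_1^2+av_2-bv_1-c\bigr).
\end{align*}
Each of these is null-homotopic, so, working in $\End_{\hmf{}}(\hatYGraph_{v_\pm})$ and dividing by $-4$, we read off in turn $m(v_1)\simeq m(a)$; then $m(v_2)\simeq m(v_1^2-av_1-b)\simeq m(a^2-a^2-b)=m(-b)$; and finally
\[
m(v_3)\simeq m\bigl(-v_1^3+2v_1v_2+av_1^2-av_2+bv_1+c\bigr)\simeq m(-a^3-2ab+a^3+ab+ab+c)=m(c),
\]
where each intermediate step uses that multiplication by a null-homotopic endomorphism remains null-homotopic after multiplication by any element of $R_v$, so the chain of substitutions is legitimate in the homotopy category.

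The argument for $\hatYGraph_{v_-}$ is identical: reversing the sign in the second column of the Koszul matrix changes $\{g_i,e_i-v_i\}$ to $\{g_i,v_i-e_i\}$ but leaves the potential and hence the partial derivatives $\partial_{v_i}W_v$ unchanged, while $m(e_i-v_i)$ is still null-homotopic. The only step to double-check is purely computational — the translation between the symmetric function $p_v(e_1,e_2,e_3)$ and its Newton-identity expression — so I do not expect a serious obstacle; the main conceptual point is simply that the null-homotopies coming from the Koszul rows together with those coming from the Jacobi relations are exactly enough to force the vertex variables $v_1,v_2,v_3$ to coincide up to homotopy with $a,-b,c$, respectively.
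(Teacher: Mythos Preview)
Your proof is correct and follows the same route the paper gestures at: the paper simply asserts that $r\mapsto m(r)$ factors through the Jacobi algebra of $W_v$ and identifies that quotient as $\bQ[a,b,c,x,y,z]/(e_1-a,\,e_2+b,\,e_3-c)$. Your version is in fact the more honest one: the Jacobi ideal with respect to $x,y,z,v_1,v_2,v_3$ by itself does not contain $e_1-a$ (e.g.\ for $a=b=c=0$ it reduces to $(x^3,y^3,z^3,v_1,v_2,v_3)$, which misses $x+y+z$), so one really needs the Koszul right-column null-homotopies $m(e_i-v_i)\simeq 0$ in addition to the $\partial_{v_i}W_v$ relations, exactly as you use them.
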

\proof
For a matrix factorization $\hat{M}$ over $R$ with potential $W$ the homomorphism 
$$R\ra\End_{\mf{}}(\hat{M}),\quad r\mapsto m(r)$$ 
factors through the Jacobi algebra of $W$ and up to shifts, the Jacobi algebra of 
$W_v$ is
$$J_{W_v}\cong\bQ[a,b,c,x,y,z]_{/ 
\{
x+y+z=a,\ 
xy+xz+yz=-b,\ 
xyz=c
\}}.\rlap{\hspace{1.292in}\qedsymbol}$$ 

%
%
%
%
\subsection{Vertex composition}
The elementary $\slt$-webs considered so far can be combined to produce bigger $\slt$-webs. Let $E$ and $V$ denote the set of arcs between marks and the set of vertices of a general $\slt$-web $\Gamma_v$. Denote by $\partial E$ the set of free ends of $\Gamma_v$ and by $(v_{i_1},v_{i_2},v_{i_3})$ the vertex variables corresponding to the vertex $v_i$. We have
\begin{equation*}\label{mf3:eq:bigmf}
\hat\Gamma_v =\bigotimes\limits_{e\in E}\hatarc\mspace{-3.5mu}_e\otimes\bigotimes_{v\in V}\hatYGraph_v.
\end{equation*}
Factorization $\hatarc\mspace{-3.5mu}_e$ is the arc factorization introduced in 
Subsection~\ref{KR:sec:KR-abc}. The tensor product is taken over suitable rings , so that $\hat\Gamma_v$ is a finite rank free $R$-module.

This is a matrix factorization with potential 
$$W=\sum\limits_{i\in\partial E(\Gamma)}s_i p(x_{i})+\sum\limits_{v_j\in V(\Gamma)}s_j^vp_v(v_{j_1},v_{j_2},v_{j_3})=W_\mathbf{x}+W_\mathbf{v}$$ where $s_i=1$ if the corresponding arc is oriented to it or $s_i=-1$ in the opposite case and $s_j^v=1$ if $v_j$ is of positive type and $s_j^v=-1$ in the opposite case.

From now on we only consider open $\slt$-webs in which the number of free ends oriented inwards equals the number of free ends oriented outwards. This implies that the number of vertices of ($+$)-type equals the number of vertices of ($-$)-type.

Let $R$ and $R_\mathbf{v}$ denote the rings $\bQ[a,b,c][\mathbf{x}]$ and 
$R[\mathbf{v}]$ 
respectively. Given two vertices, $v_i$ and $v_j$, of opposite type, we 
can take the quotient by the ideal generated by $v_i-v_j$. 
The potential becomes independent of $v_i$ and $v_j$, because 
they appeared with opposite signs, and we can 
exclude the common vertex variables corresponding to $v_i$ and $v_j$ as in 
Lemma~\ref{KR:lem:exvar}. This is possible because in all our examples the 
Koszul matrices have linear terms which involve vertex and edge variables.  
The matrix factorization which we obtain in this way we represent graphically 
by a virtual edge, as in 
Figure~\ref{mf3:fig:virtualedge}. 
\begin{figure}[h!]
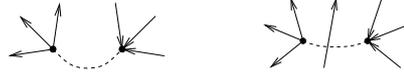

$$
\figins{0}{0.35}{vrt_edge}
\qquad\quad
\figins{0}{0.38}{vrt_edge2}
$$
\caption{Virtual edges}
\label{mf3:fig:virtualedge}
\end{figure}
A virtual edge can cross other virtual edges and ordinary edges and does not have 
any mark.
If we pair every positive vertex in $\Gamma_v$ to a negative one, the above 
procedure leads to a \emph{complete identification} of the vertices of $\Gamma_v$ 
and a corresponding matrix factorization $\zeta(\hat\Gamma_v)$. 
A different complete identification yields a different matrix factorization 
$\zeta'(\hat\Gamma_v)$.

\begin{prop}\label{mf3:prop:ident-vert}
Let $\Gamma_v$ be a closed $\slt$-web. Then $\zeta(\hat\Gamma_v)$ and 
$\zeta'(\hat\Gamma_v)$ are isomorphic, up to a shift in the $\bZ/2\bZ$-grading.
\end{prop}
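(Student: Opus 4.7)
The plan is to reduce to the case where $\zeta$ and $\zeta'$ differ by a single transposition of vertex pairings, and then to exhibit an explicit isomorphism using the Koszul-matrix manipulations from Section~\ref{KR:sec:matfac}.

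First I would fix an ordering of the positive vertices $p_1,\dots,p_n$ and of the negative vertices $q_1,\dots,q_n$ of $\Gamma_v$. A complete identification is then encoded by a permutation $\sigma\in S_n$, matching $p_i$ with $q_{\sigma(i)}$. Since any two elements of $S_n$ are related by a sequence of transpositions, a straightforward induction on the number of transpositions reduces the proposition to the case where $\zeta$ and $\zeta'$ differ by a single transposition: $\zeta$ pairs $p$ with $q$ and $r$ with $s$, while $\zeta'$ pairs $p$ with $s$ and $r$ with $q$, and all remaining pairs are common.

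Since the factorizations attached to the remaining vertices and edges appear identically in $\zeta(\hat\Gamma_v)$ and $\zeta'(\hat\Gamma_v)$, one can restrict attention to the local tensor product
$$
N \;=\; \hatYGraph_{v_+}(p)\otimes\hatYGraph_{v_-}(q)\otimes\hatYGraph_{v_+}(r)\otimes\hatYGraph_{v_-}(s),
$$
and compare what the two identifications do to $N$. After performing the appropriate quotient and excluding the vertex variables via Lemma~\ref{KR:lem:exvar}, each construction produces a Koszul matrix over the common ring of edge variables near $p,q,r,s$. For $k=1,2,3$, the second columns contain the entries $e_k^p-e_k^q,\ e_k^r-e_k^s$ in the $\zeta$-case and $e_k^p-e_k^s,\ e_k^r-e_k^q$ in the $\zeta'$-case; the first columns are obtained from the polynomials $g_1,g_2,g_3$ of Subsection~\ref{mf3:ssec:triv} by successive substitution of the excluded vertex variables.

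The key identity
$$
(e_k^p-e_k^q)+(e_k^r-e_k^s) \;=\; (e_k^p-e_k^s)+(e_k^r-e_k^q)
$$
guarantees that the total potentials agree, and combined with the row-operation lemma together with Lemma~\ref{KR:lem:regseq-iso}---applied to the sequence formed by the $e_k^{\cdot}$, which is regular once the vertex variables are treated as formally independent prior to exclusion---produces the sought isomorphism of Koszul factorizations. Reordering the Koszul factors to bring the two matrices into a common form requires an odd total number of row swaps; since switching the two entries of a row of a Koszul matrix realizes precisely the shift $\brak{1}$, this accounts for the $\bZ/2\bZ$-shift in the statement. The main obstacle I anticipate is the bookkeeping in the first columns of the Koszul matrices: because the entries obtained from $g_1,g_2,g_3$ involve iterated substitutions of the excluded vertex variables, they look quite different in the two cases, and one must verify both that the relevant second-column sequence is regular (a check that reduces to an algebraic-independence statement over $\bQ[a,b,c]$) and that after suitable row operations the first columns can be equated, so that Lemma~\ref{KR:lem:regseq-iso} applies and the parity of row swaps yields the correct $\bZ/2\bZ$-shift.
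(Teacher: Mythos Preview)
Your reduction to a single transposition and the parity argument for the well-definedness of the $\bZ/2\bZ$-shift are exactly what the paper does. The divergence is in how you treat the transposition case itself, which in the paper is the content of Lemma~\ref{mf3:lem:swapedges}.

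The paper does \emph{not} use Lemma~\ref{KR:lem:regseq-iso} here. Instead it exploits the explicit form of the vertex polynomials: since $g_3=4(v_1-a)$ is linear, one can exclude an \emph{edge} variable from each $(-)$-type vertex (not just the vertex variables), and one then observes that the $g_1$-rows of the two Koszul matrices literally coincide after the swap. This reduces everything to a comparison of two $2$-row Koszul matrices coming from the $g_2$-terms, for which an explicit degree-$1$ isomorphism $\psi$ is written down as a pair of $2\times 2$ matrices. The shift $\brak{1}$ is visible because $\psi$ has $\bZ/2\bZ$-degree~$1$, not because of a count of row swaps.

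Your proposed route via Lemma~\ref{KR:lem:regseq-iso} has two soft spots. First, the two Koszul matrices after exclusion have \emph{different} second columns (one has $e_k^p-e_k^q,\,e_k^r-e_k^s$, the other $e_k^p-e_k^s,\,e_k^r-e_k^q$), so that lemma does not apply as stated; you would need to exhibit row operations that bring the second columns into agreement, and you have not done this. Second, your explanation of the $\brak{1}$-shift conflates two different operations: swapping the two \emph{rows} of a Koszul matrix is a degree-$0$ isomorphism (reordering tensor factors), whereas swapping the two \emph{entries within a row} realizes $\brak{1}$ but moves a first-column polynomial into the second column, which is not what your argument needs. As it stands, the mechanism producing the odd shift is not actually pinned down. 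Filling this in essentially forces you into the explicit computation the paper carries out.
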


\n To prove Proposition~\ref{mf3:prop:ident-vert} we need some technical results.
\begin{lem}\label{mf3:lem:KR-3edge}
Consider the $\slt$-web $\Gamma_v$ and the web $\Upsilon$ below.
$$\xymatrix@R=1mm{
\figins{0}{0.7}{virt3edge} &
\figins{0}{0.7}{trpledge} \\
\Gamma_v & \Upsilon
}.$$
Then $\zeta(\hat\Gamma_v\brak{1})$ is the factorization of the triple edge 
$\hat\Upsilon$ of KR for $N=3$.
\end{lem}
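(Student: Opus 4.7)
The plan is to unfold $\zeta(\hat\Gamma_v)$ as an explicit six-row Koszul factorization, reduce it via row operations and variable exclusions to a three-row Koszul factorization with the same linear column as $\hat\Upsilon$, and then conclude via Lemma~\ref{KR:lem:regseq-iso}. The grading shifts should arrange themselves so that the extra $\brak{1}$ in the statement precisely cancels the one coming from the product of the two $\brak{1/2}$ shifts in the vertex factorizations.

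Fix conventions so that $\Gamma_v$ has a ($+$)-type vertex with external marks $x_1,x_2,x_3$ and a ($-$)-type vertex with external marks $x_4,x_5,x_6$, joined by three virtual edges; let $e_i$ and $s_i$ denote the $i$-th elementary symmetric polynomials in $(x_1,x_2,x_3)$ and $(x_4,x_5,x_6)$ respectively. The three virtual edges amount to identifying the two triples of vertex variables into a single triple $\mathbf{v}=(v_1,v_2,v_3)$, so, using the definitions in Subsection~\ref{mf3:ssec:triv},
$$
\zeta(\hat\Gamma_v) \;\cong\;
\begin{Bmatrix}
g_1(e,\mathbf{v}) , & e_1 - v_1 \\
g_2(e,\mathbf{v}) , & e_2 - v_2 \\
g_3(e,\mathbf{v}) , & e_3 - v_3 \\
g_1(s,\mathbf{v}) , & v_1 - s_1 \\
g_2(s,\mathbf{v}) , & v_2 - s_2 \\
g_3(s,\mathbf{v}) , & v_3 - s_3
\end{Bmatrix}\{-3\}\brak{1},
$$
where the $g_i$ are the polynomials of~\eqref{mf3:eqn:gfactor1}--\eqref{mf3:eqn:gfactor3} and the outer $\brak{1}$ comes from the two half-shifts of the vertex factorizations.

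For $i=1,2,3$ I would apply the row operation $[i,i+3]_1$ from the Row Operations lemma, which replaces row $i+3$ by $(g_i(s,\mathbf{v}),\, e_i - s_i)$ and row $i$ by $(g_i(e,\mathbf{v})-g_i(s,\mathbf{v}),\, e_i - v_i)$; the $v_i$ in the linear column of row $i+3$ has now been cancelled by the corresponding $v_i$ in row $i$. Since the total potential $p_v(\mathbf{e})-p_v(\mathbf{s})$ is manifestly independent of $\mathbf{v}$, Lemma~\ref{KR:lem:exvar} applies three times (possibly after a harmless sign flip $[i]_{-1}$ on each of the top three rows so that the linear factor has the form $v_i+(\text{stuff})$ required by the lemma): each $v_i$ is excluded using row $i$, which drops that row and substitutes $v_i=e_i$ in the remaining entries. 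What is left is the three-row Koszul factorization
$$
\zeta(\hat\Gamma_v\brak{1}) \;\cong\;
\begin{Bmatrix}
g_1(s,\mathbf{e}) , & e_1 - s_1 \\
g_2(s,\mathbf{e}) , & e_2 - s_2 \\
g_3(s,\mathbf{e}) , & e_3 - s_3
\end{Bmatrix}\{-3\},
$$
where the $\brak{1}$ from the statement has cancelled the $\brak{1}$ coming from the vertex factorizations, and the $\{-3\}$ is preserved throughout.

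To identify this with $\hat\Upsilon = \{(h_i,\, e_i - s_i)\}\{-3\}$ I would invoke Lemma~\ref{KR:lem:regseq-iso}: both factorizations have the same linear column and factor the same potential $W_\Upsilon=p_v(\mathbf{e})-p_v(\mathbf{s})$, hence they are isomorphic provided $(e_1-s_1,\,e_2-s_2,\,e_3-s_3)$ is a regular sequence in $\bQ[a,b,c,x_1,\ldots,x_6]$. Regularity reduces (by flatness in $a,b,c$) to the fact that $(e_1(x)-e_1(y),\,e_2(x)-e_2(y),\,e_3(x)-e_3(y))$ is regular in $\bQ[x_1,x_2,x_3,y_1,y_2,y_3]$, which holds because $\bQ[x,y]$ is free of finite rank over the polynomial subring $\bQ[e_i(x),e_i(y)]$ in which these differences form a regular system of parameters. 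The main obstacle is bookkeeping: keeping straight the sign conventions in $\hatYGraph_{v_+}$ versus $\hatYGraph_{v_-}$ so that the row operations really cancel the vertex variables, and verifying that every intermediate step preserves the grading shifts required for the final identification with $\hat\Upsilon$.
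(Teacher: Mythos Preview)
Your proof is correct and follows the same overall strategy as the paper, whose proof consists of the single word ``Immediate.'' However, you take a slightly longer route than necessary. After identifying the vertex variables $\mathbf{v}_+=\mathbf{v}_-=\mathbf{v}$, the $(-)$-type rows already have right entries $v_i-s_i$, so you can apply Lemma~\ref{KR:lem:exvar} directly to those rows (no preliminary row operation or sign flip needed), substituting $v_i=s_i$ throughout. The surviving $(+)$-type rows then read $\bigl(g_i^+\rvert_{\mathbf{v}=\mathbf{s}},\,e_i-s_i\bigr)$, and comparing the definition of the $g_i$ in~\eqref{mf3:eqn:gfactor1}--\eqref{mf3:eqn:gfactor3} with the definition of the $h_i$ in Subsection~\ref{KR:ssec:MOY} (recalling $p_v=h$) shows that $g_i^+\rvert_{\mathbf{v}=\mathbf{s}}=h_i$ \emph{on the nose}. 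So you land exactly on $\hat\Upsilon$ without invoking Lemma~\ref{KR:lem:regseq-iso} or checking regularity of $(e_i-s_i)$. Your detour through row operations substitutes $v_i=e_i$ instead, producing the ``mirror'' polynomials $g_i^-\rvert_{\mathbf{v}=\mathbf{e}}$, which then forces the appeal to Lemma~\ref{KR:lem:regseq-iso}; this works but is not the shortest path.

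One small terminological point: a complete identification here is a single virtual edge joining the two vertices (identifying the full triple $\mathbf{v}_+$ with $\mathbf{v}_-$), not three separate virtual edges. Your computation is unaffected by this.
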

\begin{proof} Immediate. 
\end{proof}

\begin{figure}[ht!]
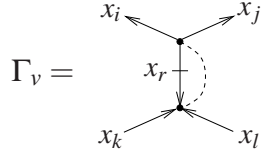

\raisebox{20pt}{$\Gamma_v =$\quad\ }
\labellist
\small\hair 2pt
\pinlabel $x_i$ at -12 130
\pinlabel $x_j$ at 136 129
\pinlabel $x_k$ at -12 -6
\pinlabel $x_l$ at 136 -6
\pinlabel $x_r$ at 36 66 
\endlabellist
\centering
\figs{0.35}{dbvertex}
\caption{A double edge}
\label{mf3:fig:thick-KR}
\end{figure}
\begin{lem}\label{mf3:lem:fatedge}
Let $\Gamma_v$ be the $\slt$-web in Figure~\ref{mf3:fig:thick-KR}.
Then $\zeta(\hat\Gamma_v)$ is isomorphic to the factorization assigned to the 
double edge of KR for $N=3$.
\end{lem}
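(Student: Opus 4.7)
My strategy is to compute $\zeta(\hat\Gamma_v)$ explicitly as a Koszul factorization, reduce it via row operations and variable eliminations to a two-row Koszul matrix with the same second column $(A,B)$ as $\hatDoubleEdge$, and then apply Lemma~\ref{KR:lem:regseq-iso} on the regular sequence $(A,B)$, where $A=x_i+x_j-x_k-x_l$ and $B=x_ix_j-x_kx_l$. I first orient the web so that the top $3$-vertex is of $(+)$-type with outgoing edges $x_i,x_j,x_r$ and the bottom one of $(-)$-type with incoming edges $x_r,x_k,x_l$. Then $\hat\Gamma_v=\hatYGraph_{v_+}\otimes\hatYGraph_{v_-}$ is a six-row Koszul matrix over the appropriate polynomial ring.

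After identifying $v_{+i}=v_{-i}=:v_i$, the three top-vertex rows $\{g_i^+,\,e_i^{ijr}-v_i\}$ satisfy the hypothesis of the excluding-variables Lemma~\ref{KR:lem:exvar} (the $b$-entries are linear in $v_i$ with unit coefficient), so $v_1,v_2,v_3$ can be eliminated successively, yielding the substitution $v_i=e_i^{ijr}$ in the bottom-vertex rows. The key identities
\[
e_1^{ijr}-e_1^{rkl}=A,\qquad e_2^{ijr}-e_2^{rkl}=B+x_rA,\qquad e_3^{ijr}-e_3^{rkl}=x_rB
\]
transform the three remaining rows into $\{G_1,A\}$, $\{G_2,B+x_rA\}$, $\{G_3,x_rB\}$, where $G_i=g_i^-|_{v=e^{ijr}}$ and in particular $G_3=4(x_i+x_j+x_r-a)$ is linear in $x_r$.

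Next, I apply the row operations $[1,2]_{-x_r}$ and $[2,3]_{-x_r}$ to clear the $x_r$-terms in the second column, producing $\{G_1+x_rG_2,\,A\}$, $\{G_2+x_rG_3,\,B\}$, $\{G_3,\,0\}$. A shift $\brak{1}$ of the third row followed by a rescaling $[3]_{-4}$ brings it to the form $\{0,\,x_r-(a-x_i-x_j)\}$, to which Lemma~\ref{KR:lem:exvar} applies: excluding $x_r$ by the substitution $x_r=a-x_i-x_j$ deletes the third row and, because $G_3$ vanishes at this value, leaves the two-row Koszul matrix
\[
\bigl\{G_1'+(a-x_i-x_j)G_2',\,A\bigr\},\qquad \bigl\{G_2',\,B\bigr\}
\]
over $\bQ[a,b,c][x_i,x_j,x_k,x_l]$, with potential $p(x_i)+p(x_j)-p(x_k)-p(x_l)$.

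Comparing now with $\hatDoubleEdge=\{u_{ijkl},A\}\otimes\{v_{ijkl},B\}\{-1\}$, both Koszul matrices have the same second column $(A,B)$ and the same potential; since $(A,B)$ is a regular sequence in $\bQ[a,b,c][x_i,x_j,x_k,x_l]$, Lemma~\ref{KR:lem:regseq-iso} yields the desired isomorphism. A final bookkeeping of the shifts --- $\{-3/2\}\brak{1/2}$ from each vertex factorization, the $\brak{1}$ and $q$-shift from swapping the third row, the scaling $[3]_{-4}$, and the $\{-1\}$ of $\hatDoubleEdge$ --- matches the overall grading. The main technical obstacle is precisely this shift bookkeeping, together with verifying that $(A,B)$ remains a regular sequence after the eliminations; the three identities for $e_i^{ijr}-e_i^{rkl}$ are the heart of the reduction and explain why the internal mark $x_r$ can be cleanly integrated out.
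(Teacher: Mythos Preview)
Your proof is correct and follows essentially the same approach as the paper: set up the six-row Koszul matrix, exclude the three vertex variables, manipulate the remaining three rows by row operations and a $\brak{1}$-shift until $x_r$ can be excluded, and then invoke Lemma~\ref{KR:lem:regseq-iso} on the regular sequence $(A,B)$. The only differences are cosmetic: the paper eliminates the vertex variables from the $\hat\Gamma_-$ rows (substituting $v_i=e_i^{rkl}$ into $g_i^+$) rather than from the $\hat\Gamma_+$ rows as you do, it performs the $\brak{1}$-shift on the third row before the row operation rather than after, and consequently ends up excluding $x_r$ via $x_r=a-x_k-x_l$ instead of $x_r=a-x_i-x_j$; these are symmetric choices that lead to the same conclusion.
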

\begin{proof}
Let $\hat\Gamma_+$ and $\hat\Gamma_-$ be the Koszul factorizations for the upper and
lower vertex in $\Gamma_v$ respectively and $v^\pm_i$ denote the corresponding 
sets of vertex variables.  
We have
$$
\hat\Gamma_+ =
\begin{Bmatrix}
g^+_1\ ,& x_i+x_j+x_r-v^+_1 \\
g^+_2\ ,& x_ix_j+x_r(x_i+x_j)-v^+_2 \\
g^+_3\ ,& x_ix_jx_r-v^+_3
\end{Bmatrix}_{R_{v_+}}\{-3/2\}\brak{1/2}$$
and
$$
\hat\Gamma_- =
\begin{Bmatrix}
g^-_1\ ,& v^-_1 - x_k-x_l-x_r \\
g^-_2\ ,& v^-_2 - x_kx_l-x_r(x_k+x_l) \\
g^-_3\ ,& v^-_3 - x_kx_lx_r
\end{Bmatrix}_{R_{v_-}}\{-3/2\}\brak{1/2}.$$
The explicit form of the polynomials $g^\pm_i$ is given in 
Equations~(\ref{mf3:eqn:gfactor1}-\ref{mf3:eqn:gfactor3}).
Taking the tensor product of $\hat\Gamma_+$ and $\hat\Gamma_-$, identifying vertices $v_+$ and $v_-$ and excluding the vertex variables yields
$$
\zeta(\hat\Gamma_v) = 
(\hat\Gamma_+\otimes\hat\Gamma_-)_{/ \mathbf{v}_+ - \mathbf{v}_-}\cong
\begin{Bmatrix}
g_1\ ,& x_i+x_j-x_k-x_l \\
g_2\ ,& x_ix_j - x_kx_l + x_r(x_i+x_j-x_k-x_l) \\
g_3\ ,& x_r(x_ix_j-x_kx_l) 
\end{Bmatrix}_R\{-3\}\brak{1},
$$
where
$$
g_i=
\left. g_i^+ 
\right|_{\{v_1^+=x_k+x_l+x_r,\ v_2^+=x_kx_l+x_r(x_k+x_l),\ v_3^+=x_rx_kx_l\} }.$$

\n This is a factorization over the ring 
$$R=\bQ[a,b,c][x_i,x_j,x_k,x_l,x_r,\mathbf{v}]_{/I}\cong \bQ[a,b,c][x_i,x_j,x_k,x_l,x_r]$$
where $I$ is the ideal generated by
$$\{
v_1 - x_r - x_k - x_l,\
v_2 - x_kx_l - x_r(x_k+x_l),\
v_3 - x_kx_lx_r
\}.$$
Using $g_3=4(x_r+x_k+x_l-a)$ and acting with the shift functor $\brak{1}$ on the third row one can write
$$
\zeta(\hat\Gamma_v) \cong
\begin{Bmatrix}
g_1\ ,& x_i+x_j-x_k-x_l \\
g_2\ ,& x_ix_j - x_kx_l + x_r(x_i+x_j-x_k-x_l) \\
-x_r(x_ix_j-x_kx_l)\ ,&  -4(x_r+x_k+x_l-a)
\end{Bmatrix}_R\{-1\}
$$
which is isomorphic, by a row operation, to the factorization
$$
\begin{Bmatrix}
g_1 + x_rg_2 \ ,& x_i+x_j-x_k-x_l \\
g_2\ ,& x_ix_j - x_kx_l \\
-x_r(x_ix_j-x_kx_l)\ ,&  -4(x_r+x_k+x_l-a)
\end{Bmatrix}_R\{-1\}.
$$
Excluding the variable $x_r$ from the third row gives
$$
\zeta(\hat\Gamma_v) \cong
\begin{Bmatrix}
g_1 + (a-x_k-x_l) g_2\ ,& x_i+x_j-x_k-x_l \\
g_2 \ ,& x_ix_j - x_kx_l
\end{Bmatrix}_{R'}\{-1\}
$$
where 
$$
R'=\bQ[a,b,c][x_i,x_j,x_l,x_r]_{/x_r-a+x_k+x_l} 
\cong
\bQ[a,b,c][x_i,x_j,x_k,x_l].$$ 
The claim follows from Lemma~\ref{KR:lem:regseq-iso}, since both are factorizations 
over $R'$ with the same potential and the same second column, the terms in which 
form a regular sequence in $R'$. As a matter of fact, using a row operation 
one can write
 $$
\zeta(\hat\Gamma_v) \cong
\begin{Bmatrix}
g_1 + (a-x_k-x_l) g_2+2(a-x_k-x_l)(x_ix_j-x_kx_l) ,& x_i+x_j-x_k-x_l \\
g_2 + 2(a-x_k-x_l)(x_i+x_j-x_k-x_l) ,& x_ix_j - x_kx_l
\end{Bmatrix}_{R'}\{-1\}
$$
and check that the polynomials in the first column are exactly the polynomials $u_{ijkl}$ and $v_{ijkl}$ corresponding to the dumbell factorization of Section~\ref{KR:sec:KR-abc}. 
\end{proof}

\begin{lem}\label{mf3:lem:swapedges}
Let $\Gamma_v$ be a closed $\slt$-web and $\zeta$ and $\zeta'$ two complete identifications 
that only differ in the region depicted 
in Figure~\ref{mf3:fig:swap}, where $T$ is a part of the diagram whose orientation is not 
important. Then there is an isomorphism $\zeta(\hat\Gamma_v)\cong
\zeta'(\hat\Gamma_v)\brak{1}$.
\begin{figure}[ht!]
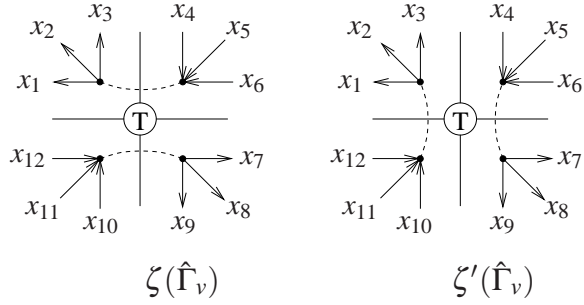

\bigskip
\labellist
\small\hair 2pt
\pinlabel $x_1$ at -22 136
\pinlabel $x_2$ at -8 194
\pinlabel $x_3$ at 54 212
\pinlabel $x_4$ at 142 212
\pinlabel $x_5$ at 201 194
\pinlabel $x_6$ at 215 136
\pinlabel $x_7$ at 215 54
\pinlabel $x_8$ at 201 0
\pinlabel $x_9$ at 142 -16
\pinlabel $x_{10}$ at 54 -16
\pinlabel $x_{11}$ at -8 0
\pinlabel $x_{12}$ at -24 56
\pinlabel $x_1$ at 320 136
\pinlabel $x_2$ at 330 194
\pinlabel $x_3$ at 388 212
\pinlabel $x_4$ at 482 212
\pinlabel $x_5$ at 540 194
\pinlabel $x_6$ at 554 136
\pinlabel $x_7$ at 554 54
\pinlabel $x_8$ at 540 0
\pinlabel $x_9$ at 482 -16
\pinlabel $x_{10}$ at 388 -16
\pinlabel $x_{11}$ at 330 0
\pinlabel $x_{12}$ at 318 56
\endlabellist
\centering
\figs{0.35}{four3vertT1}
\qquad\qquad
\figs{0.35}{four3vertT2}
\put(-154,-30){$\zeta(\hat\Gamma_v)$}\put(-39,-30){$\zeta'(\hat\Gamma_v)$}
\caption{Swapping virtual edges}
\label{mf3:fig:swap}
\end{figure}
\end{lem}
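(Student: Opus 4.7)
The plan is to localize. Since $\zeta$ and $\zeta'$ agree outside the depicted region, the tensor factors of $\zeta(\hat\Gamma_v)$ and $\zeta'(\hat\Gamma_v)$ coming from the complement of this region coincide, so it suffices to compare the local pieces arising from the four 3-vertices inside the region together with the external marks $x_1,\ldots,x_{12}$ and the internal arc system $T$. Two of these vertices are of $+$-type and two of $-$-type, and $\zeta$, $\zeta'$ pair them into virtual edges in two different ways.

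I would then write both local factorizations as Koszul matrices as in Subsection~\ref{mf3:ssec:triv}. Each vertex contributes three rows $\{g_i,\pm(e_i-v_i)\}$ together with an overall shift $\{-3/2\}\brak{1/2}$, and each virtual edge amounts to identifying the two sets of vertex variables and excluding them using Lemma~\ref{KR:lem:exvar}. After excising all six vertex variables, both $\zeta(\hat\Gamma_v)$ and $\zeta'(\hat\Gamma_v)$ become Koszul matrices over the same polynomial ring in the edge variables, carrying the same potential. Using the row operations $[i,j]_\lambda$, $[i,j]'_\lambda$ from Section~\ref{KR:sec:matfac} one can bring both matrices into a common form whose second column consists of differences of elementary symmetric polynomials in the local edge variables, i.e.\ a regular sequence in the ring. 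Lemma~\ref{KR:lem:regseq-iso} then produces the desired isomorphism between the two factorizations.

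The $\brak{1}$ shift should emerge from careful tracking of the $\bZ/2\bZ$-parity of the identifications. Both $\zeta$ and $\zeta'$ inherit the same total shift $4\cdot\brak{1/2}=\brak{0}$ from the four vertex factorizations, so the remaining source of $\bZ/2\bZ$-grading difference is the parity of the rearrangement needed to match the two Koszul matrices after exclusion; the swap of which $+$-vertex is paired with which $-$-vertex should require an odd number of row-switch isomorphisms of the type $\{\mathbf a,\mathbf b\}\brak{1}\cong\{\ldots,(-b_i,-a_i),\ldots\}$, producing exactly one net $\brak{1}$. The main obstacle will be the bookkeeping: one must verify that this parity is uniform across all configurations of the internal arc system $T$ and all orientations of the four edges touching the swap region. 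A clean way to handle this is to reduce to the minimal case in which $T$ is a trivial pass-through of two arcs (so that the calculation reduces to a small, explicit $6\times 2$ Koszul matrix computation) and to check that any more elaborate $T$ is obtained from this minimal case by tensoring with arc factorizations that contribute identically to both sides of the claimed isomorphism.
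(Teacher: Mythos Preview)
Your overall strategy --- localize to the four-vertex region, write both sides as Koszul matrices, and exclude the vertex variables --- matches the paper exactly, and your observation that $T$ sits in a common tensor factor $\hat M$ is correct (so there is nothing to check ``across configurations of $T$'': it simply comes along for the ride).

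The gap is in the endgame. After excluding variables the paper reduces, as you anticipate, to two small Koszul matrices $\hat K$ and $\hat K'$, but their right-hand columns are \emph{not} the same and cannot be made equal by the row operations $[i,j]_\lambda$, $[i,j]'_\lambda$: one row of $\hat K$ has right entry $\alpha_{1,2,3}-\beta_{5,6}$ while the corresponding row of $\hat K'$ has $\alpha_{7,8,9}-\beta_{5,6}$, and these live in independent sets of variables. So Lemma~\ref{KR:lem:regseq-iso} does not apply, and even if it did, it would produce an isomorphism \emph{without} any $\brak{1}$, which is not what is being claimed. Your parity heuristic (``the swap should require an odd number of row-switches'') is where the actual content of the lemma hides, and it is not established by your argument: nothing you wrote forces that number to be odd rather than even.

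What the paper does instead is compute. After reducing to $\hat K$ and $\hat K'$ it writes both as explicit $2\times 2$ matrix factorizations, applies the shift $\brak{1}$ to $\hat K'$, and exhibits an explicit isomorphism $\psi=(f_0,f_1)$ given by a pair of constant invertible matrices over $\bQ$ (entries $\pm 1,\pm\tfrac12$). One then checks by hand that $\psi$ intertwines the differentials; the inverse is $(f_1,f_0)$. This both produces the isomorphism and pins down the $\brak{1}$. An equivalent way to see it (also in the paper) is to perform one more round of row operations until $\hat K\cong\{-4(\alpha_{7,8,9}-\beta_{5,6}),\ \alpha_{1,2,3}-\beta_{5,6}\}\otimes\hat K_2$ and $\hat K'\cong\{-4(\alpha_{1,2,3}-\beta_{5,6}),\ \alpha_{7,8,9}-\beta_{5,6}\}\otimes\hat K_2$ share a common factor $\hat K_2$; the remaining rank-one factors are then related by a single row-switch plus a scaling, which is exactly one $\brak{1}$. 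Either way, the shift comes out of an explicit computation, not from an abstract parity count.
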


\begin{proof}
Denoting by $\hat{M}$ the tensor product of $\hat T$ with the factorization 
corresponding to the part of the diagram not depicted in Figure~\ref{mf3:fig:swap} 
we have
$$\zeta(\hat\Gamma_v) \cong \hat M\otimes
\begin{Bmatrix}
g_1\ , & x_1+x_2+x_3-x_4-x_5-x_6 \\
g_2\ , & x_1x_2+(x_1+x_2)x_3-x_5x_6-x_4(x_5+x_6)\\
g_3\ , & x_1x_2x_3-x_4x_5x_6 \\
g_1'\ , & x_7+x_8+x_9-x_{10}-x_{11}-x_{12} \\
g_2'\ , & x_8x_9+x_7(x_8+x_9)-x_{10}x_{11}-(x_{10}+x_{11})x_{12} \\
g_3'\ , & x_7x_8x_9-x_{10}x_{11}x_{12}
\end{Bmatrix}\{-6\}$$
with polynomials $g_i$ and $g_i'$ ($i=1,2,3$) given by 
Equations~(\ref{mf3:eqn:gfactor1}-\ref{mf3:eqn:gfactor3}). Similarly
$$\zeta'(\hat\Gamma_v) \cong \hat M\otimes
\begin{Bmatrix}
h_1\ , & x_7+x_8+x_9-x_4-x_5-x_6 \\
h_2\ , & x_8x_9+x_7(x_8+x_9)-x_5x_6-x_4(x_5+x_6) \\
h_3\ , & x_7x_8x_9-x_4x_5x_6  \\
h_1'\ , & x_1+x_2+x_3-x_{10}-x_{11}-x_{12} \\
h_2'\ , & x_1x_2+(x_1+x_2)x_3-x_{10}x_{11}-(x_{10}+x_{11})x_{12}\\
h_3'\ , & x_1x_2x_3-x_{10}x_{11}x_{12}
\end{Bmatrix}\{-6\}$$
where the polynomials $h_i$ and $h_i'$ ($i=1,2,3$) are as above. The factorizations 
$\zeta(\hat\Gamma_v)$ and $\zeta'(\hat\Gamma_v)$ have potential zero.
Using the explicit form   
$$g_3=h_3=4(x_4+x_5+x_6-a),\qquad g_3'=h_3'=4(x_{10}+x_{11}+x_{12}-a)$$
we exclude the variables $x_4$ and $x_{12}$ from the third and sixth rows in 
$\zeta(\hat\Gamma_v)$ and $\zeta'(\hat\Gamma_v)$. This operation transforms 
the factorization 
$\hat M$ into the factorization $\hat M'$, which again is a tensor factor 
which $\zeta(\hat\Gamma_v)$ and 
$\zeta'(\hat\Gamma_v)$ have in common. Ignoring common overall shifts we obtain
$$\zeta(\hat\Gamma_v) \cong \hat M'\otimes
\begin{Bmatrix}
g_1\ , & x_1+x_2+x_3-a \\
g_2\ , & x_1x_2+(x_1+x_2)x_3-x_5x_6-(x_5+x_6)(a-x_5-x_6)\\
g_1'\ , & x_7+x_8+x_9-a \\
g_2'\ , & x_8x_9+x_7(x_8+x_9)-x_{10}x_{11}-(x_{10}+x_{11})(a-x_{10}-x_{11})
\end{Bmatrix}$$
and
$$\zeta'(\hat\Gamma_v) \cong \hat M'\otimes
\begin{Bmatrix}
h_1\ , & x_7+x_8+x_9-a \\
h_2\ , & x_8x_9+x_7(x_8+x_9)-x_5x_6-(x_5+x_6)(a-x_5-x_6) \\
h_1'\ , & x_1+x_2+x_3-a \\
h_2'\ , & x_1x_2+(x_1+x_2)x_3-x_{10}x_{11}-(x_{10}+x_{11})(a-x_{10}-x_{11})
\end{Bmatrix}.$$

\n Using Equation~\ref{mf3:eqn:gfactor1} we see that $g_1=h_1'$ and $g_1'=h_1$ and therefore, 
absorbing in $\hat M'$ the corresponding Koszul factorizations, we can write
$$\zeta(\hat\Gamma_v) \cong \hat M''\otimes\hat{K}
\quad\text{and}\quad 
\zeta'(\hat\Gamma_v) \cong \hat M''\otimes\hat{K}'$$
where
$$\hat{K}=
\begin{Bmatrix}
g_2\ , & x_1x_2+(x_1+x_2)x_3-x_5x_6-(x_5+x_6)(a-x_5-x_6)\\
g_2'\ , & x_8x_9+x_7(x_8+x_9)-x_{10}x_{11}-(x_{10}+x_{11})(a-x_{10}-x_{11})
\end{Bmatrix}$$
and
$$\hat{K}'=
\begin{Bmatrix}
h_2\ , & x_8x_9+x_7(x_8+x_9)-x_5x_6-(x_5+x_6)(a-x_5-x_6) \\
h_2'\ , & x_1x_2+(x_1+x_2)x_3-x_{10}x_{11}-(x_{10}+x_{11})(a-x_{10}-x_{11})
\end{Bmatrix}.$$

\n To simplify notation define the polynomials $\alpha_{i,j,k}$ and $\beta_{i,j}$ by
$$
\alpha_{i,j,k} = x_ix_j+(x_i+x_j)x_k, \qquad
\beta_{i,j}    = x_ix_j+(x_i+x_j)(a-x_i-x_j).
$$
In terms of $\alpha_{i,j,k}$ and $\beta_{i,j}$ we have
\begin{equation}\label{mf3:eq:swapK1}
\hat{K}=
\begin{Bmatrix}
2(\alpha_{1,2,3}+\beta_{5,6}) +4b\ , & \alpha_{1,2,3}-\beta_{5,6} \\
2(\alpha_{7,8,9}+\beta_{10,11})+4b\ , & \alpha_{7,8,9}-\beta_{10,11}
\end{Bmatrix}
\end{equation}
and
\begin{equation}\label{mf3:eq:swapK2}
\hat{K}'=
\begin{Bmatrix}
2(\alpha_{7,8,9}+\beta_{5,6})+4b\ , & \alpha_{7,8,9}-\beta_{5,6} \\
2(\alpha_{1,2,3}+\beta_{10,11})+4b\ , & \alpha_{1,2,3}-\beta_{10,11})
\end{Bmatrix}.
\end{equation}

Factorizations $\hat{K}$ and $\hat{K}'\brak{1}$ can now be written in matrix form as 
$$\hat{K}=
\begin{pmatrix}R \\ R\end{pmatrix}
\xra{P}
\begin{pmatrix}R \\ R\end{pmatrix}\xra{Q}\begin{pmatrix}R \\ R\end{pmatrix}
\quad\text{and}\quad
\hat{K}'\brak{1}=
\begin{pmatrix}R \\ R\end{pmatrix}
\xra{P'}
\begin{pmatrix}R \\ R\end{pmatrix}\xra{Q'}\begin{pmatrix}R \\ R\end{pmatrix},$$
where
\begin{align*}
P &=
\begin{pmatrix}2(\alpha_{1,2,3}+\beta_{5,6})+4b & \alpha_{7,8,9}-\beta_{10,11} \\
2(\alpha_{7,8,9}+\beta_{10,11})+4b & -\alpha_{1,2,3}+\beta_{5,6}\end{pmatrix}
\\[1.2ex]
Q &=
\begin{pmatrix}\alpha_{1,2,3}-\beta_{5,6} & \alpha_{7,8,9}-\beta_{10,11} \\
2(\alpha_{7,8,9}+\beta_{10,11})+4b & -2(\alpha_{1,2,3}+\beta_{5,6})-4b\end{pmatrix}
\intertext{and}
P' &=
\begin{pmatrix}-\alpha_{7,8,9}+\beta_{5,6} & -\alpha_{1,2,3}+\beta_{10,11} \\
-2(\alpha_{1,2,3}+\beta_{10,11})-4b & 2(\alpha_{7,8,9}+\beta_{5,6})+4b\end{pmatrix}
\\[1.2ex]
Q' &=
\begin{pmatrix}
-2(\alpha_{7,8,9}+\beta_{5,6})-4b & -\alpha_{1,2,3}+\beta_{10,11} \\
-2(\alpha_{1,2,3}+\beta_{10,11})-4b & \alpha_{7,8,9}-\beta_{5,6}\end{pmatrix}
\end{align*}

\n Define a homomorphism $\psi=(f_0,f_1)$ from $\hat{K}$ to $\hat{K}'\brak{1}$ by the pair of matrices
$$\Biggl(
\begin{pmatrix}
1 & -1/2 \\ -1 & -1/2
\end{pmatrix},
\begin{pmatrix}
1/2 & -1/2 \\ -1 & -1
\end{pmatrix}
\Biggr).$$
It is immediate that $\psi$ is an isomorphism with inverse $(f_1,f_0)$.
It follows that $1_{\hat{M}''}\otimes\psi$ defines an isomorphism between $\zeta(\hat{\Gamma}_v)$ and $\zeta'(\hat{\Gamma}_v)\brak{1}$.
\end{proof}

Although having $\psi$ in this form will be crucial in the proof of Proposition~\ref{mf3:prop:ident-vert} an alternative description will be useful in Section~\ref{mf3:sec:iso}. Note that we can reduce $\hat K$ and $\hat K'$ in 
Equations~\eqref{mf3:eq:swapK1} and~\eqref{mf3:eq:swapK2} further by using the row 
operations $[1,2]_1\circ[1,2]'_{-2}$. We obtain 
$$\hat{K}\cong
\begin{Bmatrix}
-4(\alpha_{7,8,9}-\beta_{5,6}), & \alpha_{1,2,3}-\beta_{5,6} \\
2(\alpha_{7,8,9}+\beta_{10,11}+\alpha_{1,2,3}-\beta_{5,6}), & \alpha_{1,2,3}+\alpha_{7,8,9}-\beta_{5,6}-\beta_{10,11} 
\end{Bmatrix}$$
and
$$\hat{K}'\cong
\begin{Bmatrix}
-4(\alpha_{1,2,3}-\beta_{5,6}), & \alpha_{7,8,9}-\beta_{5,6} \\
2(\alpha_{7,8,9}+\beta_{10,11}+\alpha_{1,2,3}-\beta_{5,6}), & \alpha_{1,2,3}+\alpha_{7,8,9}-\beta_{5,6}-\beta_{10,11} 
\end{Bmatrix}
.$$
Since the second lines in $\hat{K}$ and $\hat{K}'$ are equal we can 
write 
$$\hat{K}\cong
\{
-4(\alpha_{7,8,9}-\beta_{5,6}),\ \alpha_{1,2,3}-\beta_{5,6}\}\otimes \hat{K}_2$$
and
$$\hat{K}'\cong
\{
-4(\alpha_{1,2,3}-\beta_{5,6}),\ \alpha_{7,8,9}-\beta_{5,6}\} \otimes\hat{K}_2.$$
An isomorphism $\psi'$ between $\hat{K}$ and $\hat{K'}\brak{1}$ can now be 
given as the tensor product between $\bigl( -m(2),\ -m(1/2)\bigr)$ and the 
identity homomorphism of $\hat{K}_2$. 

\begin{cor}\label{mf3:cor:swap}
The homomorphisms $\psi$ and $\psi'$ are equivalent.
\end{cor}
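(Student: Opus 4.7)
The plan is to show that, once transported to a common presentation, $\psi$ and $\psi'$ represent the same homotopy class of morphisms from $\hat K$ to $\hat K'\brak{1}$.

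First I would make the two constructions comparable. The morphism $\psi'$ is defined on the reduced Koszul matrices obtained from $\hat K$ and $\hat K'$ by the explicit row operations $[1,2]_1\circ[1,2]'_{-2}$, whose underlying pairs of matrices are recorded in the Row operations Lemma. Composing $\psi'$ with these explicit isomorphisms and their inverses produces a morphism $\tilde\psi'\colon\hat K\to\hat K'\brak{1}$ in the \emph{original} presentations used to define $\psi$. After this step $\psi$ and $\tilde\psi'$ are two endomorphism-class representatives living in the same graded $R$-module $\Hom_{\mf{}}(\hat K,\hat K'\brak{1})$, and equivalence of $\psi$ and $\psi'$ is the statement $\psi\simeq\tilde\psi'$ in $\Hom_{\hmf{}}$.

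Next I would use a dimension count on morphism spaces to reduce the comparison to a single scalar. Both $\hat K$ and $\hat K'$ are Koszul factorizations with potential zero built from the regular sequence $(\alpha_{1,2,3}-\beta_{5,6},\,\alpha_{7,8,9}-\beta_{10,11})$ (respectively its reordering), so by the standard description of $\Ext$ for Koszul complexes (Section~\ref{KR:sec:matfac}), the homotopy classes of morphisms of the relevant $\bZ$-degree and $\bZ/2\bZ$-parity between $\hat K$ and $\hat K'\brak{1}$ form a free module of rank one over the common Jacobi ring, whose degree-zero part is $\bQ$. Since $\psi$ is an isomorphism, its class is a generator; and since $\tilde\psi'$ is also an isomorphism of the same bidegree, it must be a scalar multiple of $\psi$ modulo homotopy.

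Finally I would pin down the scalar by evaluating both morphisms on a single basis element. Applying $\psi_0$ and (the degree-zero piece of) $\tilde\psi'_0$ to the generator $(1,0)$ of the even part of $\hat K$ and reducing modulo $\Image(d)$ yields two vectors in the even part of $\hat K'\brak{1}$; using the alternative tensor description of $\psi'$ as $\bigl(-m(2),-m(1/2)\bigr)\otimes\id_{\hat K_2}$ one reads off the coefficient directly, and a short calculation with the matrices $[1,2]_1$ and $[1,2]'_{-2}$ shows that this coefficient coincides with the $(1,1)$-entry of $f_0$ in the definition of $\psi$. Hence the scalar equals $1$ and $\psi\simeq\tilde\psi'$, which is the desired equivalence.

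The main obstacle I expect is the bookkeeping of the row-operation isomorphisms and the shift $\brak{1}$: one has to be careful that the parity-changing isomorphism from $[1,2]_1\circ[1,2]'_{-2}$ is composed on the correct side so that $\tilde\psi'$ really lands in $\hat K'\brak{1}$ rather than in $\hat K'\brak{2}=\hat K'$, and that the scaling $\bigl(-m(2),-m(1/2)\bigr)$ implicit in $\psi'$ is attributed to the right factor. Once this is done consistently, the remainder is a routine verification.
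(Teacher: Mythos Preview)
Your approach is genuinely different from the paper's, and it has a real gap.

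The paper does not use a dimension count at all. It simply conjugates $\psi$ by the explicit row-operation isomorphisms $[1,2]_1\circ[1,2]'_{-2}$ (and their inverses on the other side), together with an extra isomorphism $T$ that accounts for the two different conventions for realizing the shift $\brak{1}$ on a Koszul matrix. One then multiplies out the resulting $2\times 2$ matrices and checks that the answer is literally $\bigl(-m(2),\,-m(1/2)\bigr)\otimes\id_{\hat K_2}$, which is $\psi'$. So the paper obtains equality on the nose, not just up to homotopy, and the whole argument is a two-line matrix computation.

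Your strategy, by contrast, rests on the claim that $\Hom_{\hmf{}}(\hat K,\hat K'\brak{1})$ is one-dimensional in $q$-degree zero. This is where the gap lies. First, $\hat K$ and $\hat K'$ are not factorizations with potential zero (only the full $\zeta(\hat\Gamma_v)$ is; $\hat K$ is a tensor factor over a ring that still carries all the edge variables of the ambient web). Second, the right columns of $\hat K$ and $\hat K'$ are not reorderings of one another: $\hat K$ has $(\alpha_{1,2,3}-\beta_{5,6},\,\alpha_{7,8,9}-\beta_{10,11})$ while $\hat K'$ has $(\alpha_{7,8,9}-\beta_{5,6},\,\alpha_{1,2,3}-\beta_{10,11})$. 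So the two are genuinely different Koszul factorizations, and the rank-one claim is not a standard fact you can cite from Section~\ref{KR:sec:matfac}; you would have to compute $\hy_{\mf{}}\bigl(\hat K'\otimes(\hat K)_\bullet\bigr)$ over the relevant polynomial ring, which is at least as much work as the direct comparison. Finally, you correctly flag the isomorphism $T$ (the two conventions for $\brak{1}$) as a potential obstacle, but you do not actually incorporate it; in the paper this isomorphism is nontrivial and essential to getting the signs and factors of $2$ to line up.

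In short: the direct matrix computation the paper performs is both shorter and more robust than your proposed route, and your dimension argument would itself require a computation of comparable difficulty to justify.
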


\begin{proof}
The first thing to note is that we obtained the homomorphism 
$\psi$ by first writing the differential $(d_0,d_1)$ in $\hat{K}'$ as 
$2\times 2$ matrices and then its shift $\hat{K}'\brak{1}$ using 
$(-d_1,-d_0)$, but in the computation of $\psi'$ we switched the terms 
and changed the signs in the first line of the Koszul matrix corresponding
 to $\hat{K}'$. The two factorizations obtained are isomorphic by a 
non-trivial isomorphism, which is given by
 $$T=\Biggl(
\begin{pmatrix}
-1 & 0 \\ 0 & 1
\end{pmatrix},\
\begin{pmatrix}
-1 & 0 \\ 0 & 1
\end{pmatrix}
\Biggr).$$
Bearing in mind that $\psi$ and $\psi'$ have $\bZ/2\bZ$-degree 1 and using
$$
[1,2]_\lambda = 
\Biggl(\begin{pmatrix}1 &0 \\ 0 & 1\end{pmatrix},\
\begin{pmatrix}1 & -\lambda \\ 0 & 1\end{pmatrix}\Biggr),
\qquad
[1,2]'_{\lambda} = 
\Biggl(\begin{pmatrix}1 &0 \\ -\lambda & 1\end{pmatrix},\
\begin{pmatrix}1 & 0 \\ 0 & 1\end{pmatrix}\Biggr),
$$
it is straightforward to check that the composite homomorphism
$T[1,2]_{1}[1,2]'_{-2}\psi[1,2]'_{2}[1,2]_{-1}$ is
$$\Biggr(
\begin{pmatrix}
-2 & 0 \\ 0 & -1/2
\end{pmatrix},\
\begin{pmatrix}
-1/2 & 0 \\ 0 & -2
\end{pmatrix}
\Biggr)$$
which is the tensor product of $\bigl( -m(2),\ -m(1/2)\bigr)$ and the 
identity homomorphism of $\hat{K}_2$.
\end{proof}

\begin{proof}[Proof of Proposition~\ref{mf3:prop:ident-vert}]
We claim that $\zeta'(\hat\Gamma_v)\cong\zeta(\hat\Gamma_v)\brak{k}$ with $k$ a nonnegative integer. We transform $\zeta'(\hat\Gamma_v)$ into $\zeta(\hat\Gamma_v)\brak{k}$ by repeated application of Lemma~\ref{mf3:lem:swapedges} as follows. Choose a pair of vertices connected by a virtual edge in $\zeta(\hat\Gamma_v)$. Do nothing if the same pair is connected by a virtual edge in $\zeta'(\hat\Gamma_v)$ and use Lemma~\ref{mf3:lem:swapedges} to connect them in the opposite case. Iterating this procedure we can turn $\zeta'(\hat\Gamma_v)$ into $\zeta(\hat\Gamma_v)$ with a shift in the $\bZ/2\bZ$-grading by ($k\mod 2$) where $k$ is the number of times we applied Lemma~\ref{mf3:lem:swapedges}. 

It remains to show that the shift in the $\bZ/2\bZ$-grading is independent of 
the choices one makes. To do so we label the vertices of $\Gamma_v$ of 
($+$)-type and ($-$)-type by $(v_1^+,\ldots,v_k^+)$ and 
$(v_1^-,\ldots,v_k^-)$ respectively. Any complete identification of vertices in  
$\Gamma_v$ is completely determined by an ordered set 
$J_\zeta=(v_{\sigma(1)}^-,\ldots ,v_{\sigma(k)}^-)$,  
with the convention that $v_j^+$ is connected through a virtual edge to 
$v_{\sigma(j)}^-$ for $1\leq j\leq k$. Complete identifications of the vertices in 
$\Gamma_v$ are therefore in one-to-one correspondence with the elements of 
the symmetric group on $k$ letters $S_k$. Any transformation of $\zeta'(\hat\Gamma)$ into 
$\zeta(\hat\Gamma)$ by repeated application of Lemma~\ref{mf3:lem:swapedges} 
corresponds to a sequence of elementary transpositions whose composite is equal to 
the quotient of the permutations corresponding to $J_{\zeta'}$ and $J_\zeta$. 
We conclude that the shift in the $\bZ/2\bZ$-grading is well-defined, because 
any decomposition of a given permutation into elementary transpositions 
has a fixed parity.
\end{proof}

In Section~\ref{mf3:sec:iso} we want to associate an equivalence class of 
matrix factorizations to each closed web and an equivalence class of 
homomorphism to each foam. In order to do that consistently, we 
have to show that the isomorphisms in the proof of 
Proposition~\ref{mf3:prop:ident-vert} are canonical in a certain sense 
(see Corollary~\ref{mf3:cor:caniso}).

Choose an ordering of the vertices of 
$\Gamma_v$ such that $v^+_i$ is paired with $v^-_i$ for all $i$ and let 
$\zeta$ be the corresponding vertex identification. Use the linear entries in the 
Koszul matrix of $\zeta(\hat{\Gamma}_v)$ to exclude one variable corresponding to an 
edge entering in each vertex of $(-)$-type, as in the proof of 
Lemma~\ref{mf3:lem:swapedges}, so that the resulting Koszul factorization 
has the form $\zeta(\hat{\Gamma}_v)=\hat{K}_{lin}\otimes\hat{K}_{quad}$ where 
$\hat{K}_{lin}$ (resp. $\hat{K}_{quad}$) consists of the lines in 
$\zeta(\hat{\Gamma}_v)$ having linear (resp. quadratic) terms as its right entries. 
From the proof of Lemma~\ref{mf3:lem:swapedges} we see that changing a pair of 
virtual edges leaves $\hat{K}_{lin}$ unchanged. 

Let $\sigma_i$ be the element of $S_k$ corresponding to the elementary transposition, 
which sends the complete identification $(v^-_1,\ldots ,v^-_i,v^-_{i+1},\ldots,v^-_k)$ to $(v^-_1,\ldots ,v^-_{i+1},v^-_i,\ldots,v^-_k)$, 
and let $$\Psi_i=1_{/i}\otimes \psi$$ be the corresponding isomorphism of matrix 
factorizations from the proof of Lemma~\ref{mf3:lem:swapedges}. The homomorphism 
$\psi$ only acts 
on the $i$-th and $(i+1)$-th lines 
in $\hat{K}_{quad}$ and $1_{/i}$ is the identity morphism on the remaining lines. 
For the composition $\sigma_i\sigma_j$ we have the composite homomorphism 
$\Psi_i\Psi_j$.

\begin{lem}\label{mf3:lem:repsym}
The assignment $\sigma_i\mapsto \Psi_i$ defines a representation of $S_k$ on 
$\zeta(\hat{\Gamma})_0\oplus\zeta(\hat{\Gamma})_1$.
\end{lem}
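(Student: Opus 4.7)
The plan is to verify the three defining relations of the Coxeter presentation of $S_k$: namely $\Psi_i^2 = \id$, the far commutation $\Psi_i\Psi_j = \Psi_j\Psi_i$ when $|i-j|\geq 2$, and the braid relation $\Psi_i\Psi_{i+1}\Psi_i = \Psi_{i+1}\Psi_i\Psi_{i+1}$. Since any element of $S_k$ can be written as a product of simple transpositions, and the composition $\Psi_{i_1}\cdots\Psi_{i_r}$ is well defined as an endomorphism of the underlying $\bZ/2\bZ$-graded module $\zeta(\hat\Gamma)_0 \oplus \zeta(\hat\Gamma)_1$ (with all cumulative shifts $\brak{r}$ collapsed modulo $2$), checking these three families of relations will ensure the assignment descends to a representation of $S_k$.

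For $\Psi_i^2 = \id$ I would use $\Psi_i = 1_{/i}\otimes\psi$ to reduce to the claim $\psi^2 = \id$ on the two-line tensor factor $\hat K$. This follows from a direct computation on the explicit $2\times 2$ matrices $f_0, f_1$ defining $\psi$ in the proof of Lemma~\ref{mf3:lem:swapedges}: one verifies $f_1 f_0 = I$ and $f_0 f_1 = I$, which is precisely the statement that $\psi^{-1} = (f_1, f_0)$ noted there. The far commutation relation is then automatic: for $|i-j|\geq 2$ the homomorphisms $\Psi_i$ and $\Psi_j$ act on disjoint pairs of lines in the Koszul tensor decomposition of $\hat K_{\mathrm{quad}}$ and as the identity elsewhere, so they commute as endomorphisms of a tensor product.

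The substantive step is the braid relation. Both sides correspond to the same permutation (the reversal of the three consecutive labels $i, i+1, i+2$), so they yield homomorphisms between the same pair of Koszul factorizations, and the identity can be tested locally on the three-line tensor factor of $\hat K_{\mathrm{quad}}$. Writing each $\Psi_i$ in that factor as the tensor product of the $2\times 2$ blocks defining $\psi$ (in the appropriate two-line slot) with the identity on the remaining line, one composes both triples of explicit matrices and checks they agree. The main obstacle is exactly this braid computation, which is mechanical but somewhat lengthy. A potentially cleaner route is to first replace $\psi$ by the equivalent homomorphism $\psi'$ of Corollary~\ref{mf3:cor:swap}: in that description $\psi$ is simply the rescaling $(-m(2), -m(1/2))$ on one Koszul factor tensored with the identity on the other, so each $\Psi_i$ reduces to a diagonal scaling, and the braid identity becomes an immediate commutation of diagonal operators (modulo conjugation by the row-operation isomorphisms relating $\psi$ and $\psi'$).
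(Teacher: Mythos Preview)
Your main approach—verify the Coxeter relations $\Psi_i^2=\id$, far commutation, and the braid relation, reducing the last to a Yang–Baxter identity for $\psi$ on the three-line tensor factor—is essentially the paper's proof. The paper writes $\psi$ as a single $4\times 4$ matrix on $\hat K_0\oplus\hat K_1$, then writes out $\psi\otimes 1$ and $1\otimes\psi$ as explicit $8\times 8$ matrices (taking into account the Koszul sign rule $(f\otimes g)\ket{v\otimes w}=(-1)^{\deg g\cdot\deg v}\ket{fv\otimes gw}$, since $\psi$ has $\bZ/2\bZ$-degree $1$) and checks the identity by matrix multiplication.

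Your proposed ``cleaner route'' via $\psi'$ does not go through as stated, however. The diagonal description of $\psi'$ in Corollary~\ref{mf3:cor:swap} is obtained only \emph{after} conjugating by the row operations $[1,2]_1\circ[1,2]'_{-2}$ on the specific pair of lines $(i,i+1)$; the analogous conjugation needed for $\Psi_{i+1}$ involves row operations on the pair $(i+1,i+2)$ instead. These two conjugating isomorphisms do not commute with one another, so the braid relation for the diagonal operators does not automatically transfer back to the original $\psi$'s. Moreover, even in its diagonal form $\psi'$ is not an endomorphism: it is a map between two \emph{different} rank-one Koszul factors (the right-hand entries of the two lines get swapped), so the intuition ``diagonal operators commute'' does not apply. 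In short, the conjugations you propose to sweep under the rug are exactly where the nontrivial content of the braid relation lives, and you are left with the direct $8\times 8$ computation. When you carry it out, be careful with the Koszul signs in $1\otimes\psi$: since $\psi$ is odd, these introduce sign flips relative to the naive block-diagonal form.
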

\begin{proof}
Let $\hat{K}$ be the Koszul factorization corresponding to the lines $i$ and $i+1$ in $\zeta(\hat{\Gamma}_v)$ and let $\ket{00}$, $\ket{11}$, $\ket{01}$ and $\ket{10}$ be the standard basis vectors of $\hat{K}_0\oplus\hat{K}_1$.
The homomorphism $\psi$ found in the proof of Lemma~\ref{mf3:lem:swapedges} can be written 
as only one matrix acting on $\hat{K}_0\oplus\hat{K}_1$:
$$\psi=\begin{pmatrix}
0 & 0 & \frac{1}{2} & -\frac{1}{2}  \\[1ex]
0 & 0 & -1 & -1  \\[1ex]
1 & -\frac{1}{2} & 0 & 0 \\[1ex]
-1 & -\frac{1}{2} & 0 & 0
\end{pmatrix}.$$
We have that $\psi^2$ is the identity matrix and therefore it follows that $\Psi_i^2$ 
is the identity homomorphism on $\zeta(\hat{\Gamma}_v)$. It is also immediate that 
$\Psi_i\Psi_j=\Psi_j\Psi_i$ for $|i-j|>1$. To complete the proof we need to show that 
$\Psi_i\Psi_{i+1}\Psi_i=\Psi_{i+1}\Psi_i\Psi_{i+1}$, which we do by explicit 
computation of the corresponding matrices. Let $\hat{K}'$ be the Koszul matrix 
consisting of the three lines $i$, $i+1$ and $i+2$ in $\hat{K}_{quad}$. To show 
that $\Psi_i\Psi_{i+1}\Psi_i=\Psi_{i+1}\Psi_i\Psi_{i+1}$ is equivalent to showing 
that $\psi$ satisfies the Yang-Baxter equation
\begin{equation}\label{mf3:eq:YB}
(\psi\otimes 1)(1\otimes\psi)(\psi\otimes 1)=(1\otimes\psi)(\psi\otimes 1)
(1\otimes\psi),
\end{equation}
with $1\otimes\psi$ and $\psi\otimes 1$ acting on $\hat{K}'$. Note that, in general, 
the tensor product of two homomorphisms of matrix factorizations $f$ and $g$ is 
defined by 
$$(f\otimes g)\ket{v\otimes w}=(-1)^{\deg_{\bZ/2\bZ}(g)\deg_{\bZ/2\bZ}(v)}\ket{fv\otimes gw}.$$ Let $\ket{000}$, 
$\ket{011}$, $\ket{101}$, $\ket{110}$, $\ket{001}$, $\ket{010}$, $\ket{100}$ and 
$\ket{111}$ be the standard basis vectors of $\hat{K}'_0\oplus\hat{K}'_1$. 
With respect to this basis the homomorphisms $\psi\otimes 1$ and $1\otimes\psi$ have 
the form of block matrices
$$\psi\otimes 1=
\left(
\begin{array}{c|c}
 0  & 
\begin{array}{cccc}
0 & \scriptstyle{\frac{1}{2}} & -\scriptstyle{\frac{1}{2}} & 0  \\[1ex] 
1 & 0     & 0 & -\scriptstyle{\frac{1}{2}}   \\[1ex]
-1 & 0     & 0 & -\scriptstyle{\frac{1}{2}}  \\[1ex]
 0  & -1 & -1 & 0  
\end{array}
\\ \hline
\begin{array}{cccc}
0  & \scriptstyle{\frac{1}{2}} & -\scriptstyle{\frac{1}{2}} & 0 \\[1ex] 
1 & 0     & 0 & -\scriptstyle{\frac{1}{2}}  \\[1ex]
-1 & 0     & 0 & -\scriptstyle{\frac{1}{2}}  \\[1ex]
0  & -1 & -1 & 0
\end{array} & 0
 \end{array}\right)$$
and
$$1\otimes\psi=
\left(
\begin{array}{c|c}
 0  & 
\begin{array}{cccc}
\scriptstyle{\frac{1}{2}} & -\scriptstyle{\frac{1}{2}} & 0 & 0  \\[1ex] 
-1 & -1     & 0 & 0   \\[1ex]
0 & 0     & -1 & \scriptstyle{\frac{1}{2}}  \\[1ex]
 0  & 0 & 1 & \scriptstyle{\frac{1}{2}} 
\end{array}
\\ \hline
\begin{array}{cccc}
1  & -\scriptstyle{\frac{1}{2}} & 0 & 0 \\[1ex] 
-1 & -\scriptstyle{\frac{1}{2}}     & 0 & 0  \\[1ex]
0 & 0     & -\scriptstyle{\frac{1}{2}} & \scriptstyle{\frac{1}{2}}   \\[1ex]
0  & 0 & 1 & 1
\end{array} & 0
 \end{array}\right)
.$$ 
\medskip

\n By a simple exercise in matrix multiplication we find that both sides 
in Equation~\eqref{mf3:eq:YB} are equal and it follows that 
$\Psi_i\Psi_{i+1}\Psi_i=\Psi_{i+1}\Psi_i\Psi_{i+1}$.
\end{proof}

\begin{cor}
\label{mf3:cor:caniso}
The isomorphism $\zeta'(\hat\Gamma_v)\cong\zeta(\hat\Gamma_v)\brak{k}$ 
in Proposition~\ref{mf3:prop:ident-vert} is uniquely determined by $\zeta'$ and $\zeta$. 
\end{cor}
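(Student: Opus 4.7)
The plan is to deduce the corollary from Lemma~\ref{mf3:lem:repsym} and the well-known presentation of $S_k$ by elementary transpositions. As noted in the proof of Proposition~\ref{mf3:prop:ident-vert}, the two complete identifications $\zeta$ and $\zeta'$ determine a unique permutation $\sigma = \sigma_{\zeta,\zeta'}\in S_k$, namely the quotient of the two bijections from $(+)$-vertices to $(-)$-vertices. Any concrete realization of the isomorphism $\zeta'(\hat\Gamma_v)\cong \zeta(\hat\Gamma_v)\brak{k}$ constructed via repeated application of Lemma~\ref{mf3:lem:swapedges} corresponds to an expression $\sigma = \sigma_{i_n}\cdots\sigma_{i_1}$ of $\sigma$ as a product of elementary transpositions, and the resulting isomorphism is the composite $\Psi_{i_n}\cdots\Psi_{i_1}$.

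First I would recall that the symmetric group $S_k$ is presented by the generators $\sigma_1,\ldots,\sigma_{k-1}$ subject to the relations $\sigma_i^2=1$, $\sigma_i\sigma_j=\sigma_j\sigma_i$ for $|i-j|>1$, and the braid relation $\sigma_i\sigma_{i+1}\sigma_i=\sigma_{i+1}\sigma_i\sigma_{i+1}$. By Lemma~\ref{mf3:lem:repsym}, the operators $\Psi_i$ satisfy exactly the same relations on $\zeta(\hat{\Gamma}_v)_0\oplus \zeta(\hat{\Gamma}_v)_1$, so the assignment $\sigma_i\mapsto \Psi_i$ extends to a well-defined map $S_k\to\Aut(\zeta(\hat{\Gamma}_v))$ (where we allow a $\brak{1}$-shift on each factor).

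Consequently, if $\sigma_{i_n}\cdots\sigma_{i_1} = \sigma_{j_m}\cdots\sigma_{j_1}$ in $S_k$, then the two corresponding expressions are related by a finite sequence of applications of the Coxeter relations, each of which produces a homotopy (indeed equality) between the corresponding composites of $\Psi$'s. Therefore the composite $\Psi_{i_n}\cdots\Psi_{i_1}$ depends only on the element $\sigma\in S_k$ and not on the chosen decomposition. Since $\sigma$ itself is determined by $\zeta$ and $\zeta'$, the isomorphism $\zeta'(\hat\Gamma_v)\cong \zeta(\hat\Gamma_v)\brak{k}$ is uniquely determined by $\zeta$ and $\zeta'$, as claimed. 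The only subtlety to check, which is essentially covered by the proof of Lemma~\ref{mf3:lem:repsym}, is that in each application the $\Psi_i$ acts only on the two rows of $\hat{K}_{quad}$ being swapped and commutes with the identity on the remaining tensor factors, so that the Coxeter relations verified locally in Lemma~\ref{mf3:lem:repsym} indeed imply the corresponding identities for the global operators on $\zeta(\hat{\Gamma}_v)$.
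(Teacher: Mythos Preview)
Your proof is correct and follows essentially the same approach as the paper: both identify the permutation $\sigma$ determined by $\zeta$ and $\zeta'$, note that the isomorphism is built as a composite of $\Psi_i$'s along a word for $\sigma$ in elementary transpositions, and invoke Lemma~\ref{mf3:lem:repsym} (the Coxeter relations for the $\Psi_i$) to conclude independence of the chosen word. The paper's version is simply more terse.
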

\begin{proof} Let $\sigma$ be the permutation that relates $\zeta'$ and $\zeta$. 
Recall that in the proof of Proposition~\ref{mf3:prop:ident-vert} we 
defined an isomorphism $\Psi_{\sigma}\colon\zeta'(\hat\Gamma_v)\to 
\zeta(\hat\Gamma_v)\brak{k}$ by writing $\sigma$ as a product of transpositions. 
The choice of these transpositions is not unique in general. However, 
Lemma~\ref{mf3:lem:repsym} shows that $\Psi_{\sigma}$ only depends on $\sigma$.     
\end{proof}

From now on we write $\hat\Gamma$ for the equivalence class of $\hat\Gamma_v$ under 
complete vertex identification. Graphically we represent the vertices of 
$\hat\Gamma$ as 
in Figure~\ref{mf3:fig:g-equiv}.
\begin{figure}[ht!]
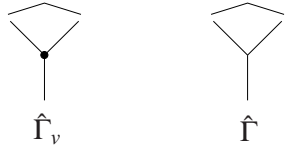

\labellist
\small\hair 2pt
\pinlabel $\hat\Gamma_v$ at 50 -34
\pinlabel $\hat\Gamma$ at 300 -34
\endlabellist
\centering
\figs{0.3}{hat3vertex}\qquad\qquad
\figs{0.3}{hat3vertex-z}
\bigskip
\caption{A vertex and its equivalence class under vertex identification}
\label{mf3:fig:g-equiv}
\end{figure}
\n We need to neglect the $\bZ/2\bZ$ grading, which we do by imposing 
that $\hat\Gamma$, for any closed $\slt$-web $\Gamma$, have only homology in degree zero, 
applying a shift 
if necessary.
  
Let $\Gamma$ and $\Lambda$ be arbitrary webs. We have to define the 
morphisms between $\hat\Gamma$ and $\hat\Lambda$. Let 
$\zeta(\hat\Gamma_v)$ and $\zeta'(\hat\Gamma_v)$ be representatives of $\hat\Gamma$ 
and $\zeta(\hat\Lambda_v)$ and $\zeta'(\hat\Lambda_v)$ be representatives of 
$\hat\Lambda$. Let 
$$f\in 
\Hom_{\mf{}}\bigl(\zeta(\hat\Gamma_v),\zeta(\hat\Lambda_v)\bigr)$$ 
and $$g\in \Hom_{\mf{}}\bigl(\zeta'(\hat\Gamma_v),\zeta'(\hat\Lambda_v)\bigr)$$ 
be two homomorphisms. 
We say that $f$ and $g$ are equivalent, denoted by $f\sim g$, if and only if 
there exists a commuting square
$$\xymatrix@C=15mm{
\zeta(\hat\Gamma_v) 
\ar[d]^f\ar[r]^(0.45){\cong}
& \zeta'(\hat\Gamma_v) 
\ar[d]^g \\
\zeta(\hat\Lambda_v) 
\ar[r]^(0.45){\cong}
& 
\zeta'(\hat\Lambda_v) 
}$$ 
with the horizontal isomorphisms being of the form as discussed in 
Proposition~\ref{mf3:prop:ident-vert}. 
The composition rule for these equivalence classes of homomorphisms, which relies 
on the choice of representatives within each class, is well-defined by 
Corollary~\ref{mf3:cor:caniso}. Note that we can take well-defined linear combinations of 
equivalence classes of homomorphisms by taking linear combinations of their 
representatives, as long as the latter have all the same source and 
the same target. By Corollary~\ref{mf3:cor:caniso}, homotopy equivalences are 
also well-defined on equivalence classes. We take 
$$\Hom(\hat\Gamma,\hat\Lambda)$$
to be the set of equivalence classes of homomorphisms of matrix factorizations 
between $\hat\Gamma$ and $\hat\Lambda$ modulo homotopy equivalence.

\begin{defn} 
$\widehat{\Qlfoam}$ is the additive category whose objects are equivalence classes of matrix factorizations associated to webs. Two such matrix factorizations are equivalent if the underlying webs are complete vertex identifications of the same $\slt$-web. The morphisms in $\widehat{\Qlfoam}$ are equivalence classes of homomorphisms of matrix factorizations modulo homotopy as defined above.
\end{defn}

Note that we can define the homology of ${\hat\Gamma}$, for any 
closed $\slt$-web $\Gamma$. This group is well-defined up to isomorphism and we denote it 
by $\hat\hy(\Gamma)$.

Next we show how to define a link homology using the objects and morphisms in 
$\widehat{\Qlfoam}$. For any link $L$, first take the universal rational KR complex $\KR_{a,b,c}(L)$. The $i$-th chain group $\KR_{a,b,c}^i(L)$ is 
given by the direct sum of cohomology groups of the form $\hy(\Gamma_v)$, where 
$\Gamma_v$ is a total flattening of $L$. By the remark above it makes sense 
to consider $\widehat{\KR}_{a,b,c}^i(L)$, for each $i$. The differential 
$d^i\colon \KR_{a,b,c}^i(L)\to\KR_{a,b,c}^{i+1}(L)$ induces a map 
$$\hat d^i\colon\widehat{\KR}_{a,b,c}^i(L)\to\widehat{\KR}_{a,b,c}^{i+1}(L),$$
for each $i$. The latter map is well-defined and therefore the homology 
$$\widehat{\HKR}^i_{a,b,c}(L)$$ is well-defined, for each $i$. 

Let $u\colon L\to L'$ be a link cobordism. Khovanov and Rozansky~\cite{KR} constructed 
a cochain map which induces a homomorphism 
$$\HKR_{a,b,c}(u)\colon\HKR_{a,b,c}(L)\to
\HKR_{a,b,c}(L').$$ The latter is only defined up to a $\bQ$-scalar. The induced 
map 
$$\widehat{\HKR}_{a,b,c}(u)\colon\widehat{\HKR}_{a,b,c}(L)\to
\widehat{\HKR}_{a,b,c}(L')$$ is also well-defined up to a $\bQ$-scalar. The 
following result follows immediately:

\begin{lem} $\HKR_{a,b,c}$ and $\widehat{\HKR}_{a,b,c}$ are naturally isomorphic 
as projective functors from $\Link$ to $\Modbg$. 
\end{lem}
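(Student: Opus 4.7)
The plan is to construct an explicit natural isomorphism $\eta\colon\HKR_{a,b,c}\to\widehat{\HKR}_{a,b,c}$ of projective functors. For a link diagram $D$ of $L$, the chain group $\KR_{a,b,c}^i(D)$ is a direct sum of homology groups $\hy(\hat\Gamma_v)$, one for each total flattening $\Gamma_v$ of $D$. In each such flattening the vertex identifications at the double edges are canonically determined by the crossings, so there is a distinguished representative $\hat\Gamma_v$ in each equivalence class $\hat\Gamma$. The map $\eta_L$ would be defined on each chain level by sending $\hat\Gamma_v$ to its equivalence class $\hat\Gamma$ and assembling over all flattenings.

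The key steps are then: (i) verify that $\eta_L$ is an isomorphism at each chain level, which holds because $\widehat{\KR}_{a,b,c}^i(D)$ is defined as the equivalence class (under complete vertex identification) of $\KR_{a,b,c}^i(D)$, and by Proposition~\ref{mf3:prop:ident-vert} together with the $\bZ/2\bZ$-normalization built into the definition of $\hat\hy(\Gamma)$, every representative of $\hat\Gamma$ has the same homology; (ii) verify that $\eta_L$ is a chain map, which holds because the hat differential $\hat d^i$ is by definition the equivalence class of $d^i$, so $\hat d^i\circ\eta_L=\eta_L\circ d^i$; (iii) conclude that $\eta_L$ induces an isomorphism $\HKR^i_{a,b,c}(L)\to\widehat{\HKR}^i_{a,b,c}(L)$ of bigraded $\bQ[a,b,c]$-modules for every $i$.

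For naturality with respect to link cobordisms $u\colon L\to L'$: Khovanov and Rozansky's construction produces a cochain map $\KR_{a,b,c}(u)\colon\KR_{a,b,c}(L)\to\KR_{a,b,c}(L')$, defined up to a $\bQ$-scalar. Passing to equivalence classes, this cochain map descends to a representative of the equivalence class $\widehat{\KR}_{a,b,c}(u)$. Hence the naturality square commutes at the chain level (up to scalar) by construction, and therefore the induced square at the level of homology also commutes up to scalar. This is exactly what it means for $\eta$ to be a natural isomorphism of projective functors $\Link\to\Modbg$.

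The main obstacle is really conceptual rather than computational: one must ensure that the equivalence relation defining $\widehat{\HKR}_{a,b,c}$ does not introduce indeterminacy at the level of morphisms that would obstruct naturality. This is precisely where Corollary~\ref{mf3:cor:caniso} is used, since it guarantees that the isomorphisms between different complete vertex identifications of the same $\slt$-web are canonical, so that composition of equivalence classes of homomorphisms is well-defined and the chain-level construction of $\eta$ passes unambiguously to homology.
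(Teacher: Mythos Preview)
Your proposal is correct and is exactly the argument the paper has in mind; the paper simply states that the result ``follows immediately'' from the construction of $\widehat{\KR}_{a,b,c}$ (the chain groups are the equivalence classes of the KR chain groups, the differentials and cobordism maps are the induced ones), and you have spelled out this tautology in detail, correctly invoking Proposition~\ref{mf3:prop:ident-vert} and Corollary~\ref{mf3:cor:caniso} to justify that passing to equivalence classes loses no information.
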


\n In the next section we will show that $U^\bQ_{a,b,c}$ and $\widehat{\HKR}_{a,b,c}$ 
are naturally isomorphic as projective functors. 

By Lemma~\ref{KR:lem:KR-abc-moy} we also get the following

\begin{lem}\label{mf3:lem:KhK-mf}
We have the Khovanov-Kuperberg decompositions in $\widehat{\Qlfoam}$:
\begin{gather}
\widehat{\unknot\Gamma} 
\cong
\hatunknot\otimes_{\bQ[a,b,c]}
\hat\Gamma 
\tag{\emph{Disjoint Union}}
\\[1.2ex]
\figins{-9.5}{0.4}{digon-mf}   
\cong 
\figins{-9.5}{0.4}{arc-mf}\{-1\}\oplus 
\figins{-9.5}{0.4}{arc-mf}\{1\}
\tag{\emph{Digon Removal}}
\\[1.2ex]
\figins{-8.5}{0.4}{square1-mf}
\cong
\figins{-8.5}{0.405}{sqr1a}\oplus
\figins{-8.5}{0.4}{sqr1b}
\tag{\emph{Square Removal}}
\end{gather}
\end{lem}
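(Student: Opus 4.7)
The plan is to derive all three decompositions from the corresponding MOY decompositions of Lemma~\ref{KR:lem:KR-abc-moy}, using the vertex identification machinery developed earlier in Section~\ref{mf3:sec:sl3-mf}. The key observation is that the category $\widehat{\Qlfoam}$ is built so that its objects are equivalence classes of KR-style matrix factorizations under complete vertex identification, and its morphisms are the corresponding equivalence classes of homomorphisms; so any decomposition that can be established for one canonical representative and shown to be natural with respect to the equivalences should lift to $\widehat{\Qlfoam}$.

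For the Disjoint Union relation, I would note that for an $\slt$-web $\Gamma$ drawn disjointly from an unknot, any complete vertex identification of $\Gamma$ together with the trivial identification on the unknot gives a representative of $\widehat{\unknot \Gamma}$ of the form $\hat{\Gamma}_v \otimes_{\bQ[a,b,c]} \hatunknot$, since there are no shared edge or vertex variables. Passing to equivalence classes yields the stated isomorphism.

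For Digon Removal and Square Removal, I would first choose for each $\slt$-web appearing in the statement a complete vertex identification in which each pair of adjacent trivalent vertices of opposite type forming a ``thick edge'' is identified through a virtual edge. By Lemma~\ref{mf3:lem:fatedge} (and, where needed, Lemma~\ref{mf3:lem:KR-3edge} for triple edges), the resulting matrix factorization is isomorphic to the KR matrix factorization attached to the corresponding web with double edges. Lemma~\ref{KR:lem:KR-abc-moy} then supplies the required direct sum decompositions at the level of these specific representatives: the digon decomposition comes from MOY1 of Lemma~\ref{KR:lem:KR-moy}, and the square decomposition from MOY3. Each of these decompositions is realized by explicit homomorphisms of matrix factorizations (caps, cups and ``theta-cylinders'' on the KR side), which thus give homomorphisms in $\widehat{\Qlfoam}$ between the chosen representatives.

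It remains to check that these isomorphisms are well-defined on equivalence classes; this is where Corollary~\ref{mf3:cor:caniso} does the essential work. Given any other complete vertex identification of the $\slt$-webs involved, the canonical isomorphisms of Proposition~\ref{mf3:prop:ident-vert} relate the alternative representatives to our chosen ones, and because these canonical isomorphisms are uniquely determined they fit into commuting squares with the direct sum decompositions. Therefore the decompositions descend to $\widehat{\Qlfoam}$. The main obstacle I expect is precisely this last compatibility step: one must verify that the chosen KR-level splitting homomorphisms are equivalent in the sense of $\widehat{\Qlfoam}$ to the ones obtained by changing the virtual-edge matching, which ultimately reduces to the representation-theoretic statement of Lemma~\ref{mf3:lem:repsym} together with the fact that both sides of each decomposition involve the same configuration of vertices, so the permutation relating any two identifications acts identically on the source and target.
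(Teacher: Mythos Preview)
Your approach is correct, and in fact the paper says so explicitly just before the proof: it notes that the lemma follows from Lemma~\ref{KR:lem:KR-abc-moy} together with Lemma~\ref{mf3:lem:fatedge}, which is exactly your argument. However, the paper then chooses to give a different, hands-on proof because the explicit maps it constructs are needed later.

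The paper's proof builds the splitting maps directly from the elementary web-cobordism homomorphisms $\imath$, $\varepsilon$, $\chi_0$, $\chi_1$. For \emph{Digon Removal} it defines $\alpha$ and $\beta$ as composites of $\imath$ (or $\varepsilon$) with a zip (or unzip), then sets $\alpha_0=2\alpha$, $\alpha_1=2\alpha\circ m(-x_2)$, $\beta_0=-\beta\circ m(x_3)$, $\beta_1=-\beta$, checks $\beta_j\alpha_i=\delta_{ij}$, and finishes with a graded-dimension count. For \emph{Square Removal} it similarly defines $\psi_0,\psi_1,\varphi_0,\varphi_1$ as composites of $\chi$-maps with $\imath$ and $\varepsilon$, verifies the orthogonality relations (the cross terms vanish because the relevant $\Ext$-group is zero in $q$-degree $0$), and again concludes via dimensions.

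The payoff of the paper's route shows up in Section~\ref{mf3:sec:iso}: the proof of Lemma~\ref{mf3:lem:widehat} uses that ``all maps in the proof of Lemma~\ref{mf3:lem:KhK-mf} are induced by cobordisms with a particular slicing'' to conclude that $\widehat{\Ext}(\emptyset,\Gamma)$ is generated by maps of the form $\hat u$ for singular cobordisms $u$. Your abstract deduction from the MOY relations gives the decomposition but does not, on its own, exhibit the splittings as coming from foams, which is what is needed downstream. Your concern about compatibility with vertex identifications is not an issue; by construction of $\widehat{\Qlfoam}$, any homomorphism between fixed representatives determines a well-defined morphism in the category, so once you have the splitting for one identification there is nothing further to check.
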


\n Although Lemma~\ref{mf3:lem:KhK-mf} follows from Lemma~\ref{KR:lem:KR-abc-moy} and  
Lemma~\ref{mf3:lem:fatedge}, an explicit proof will be useful in the sequel.
\begin{proof}
\emph{Disjoint union} is a direct consequence of the definitions. To prove
\emph{Digon removal} define the grading-preserving homomorphisms
$$
\alpha\colon\figins{-9.5}{0.4}{arc-mf}\{-1\}   
\to
\figins{-9.5}{0.4}{digon-mf}
\qquad\qquad
\beta\colon\figins{-9.5}{0.4}{digon-mf}   
\to
\figins{-9.5}{0.4}{arc-mf}\{1\}
$$
by Figure~\ref{mf3:fig:dr-mf}.
\begin{figure}[h!]
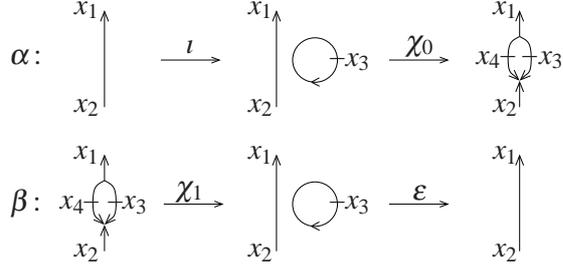

\labellist
\small\hair 2pt
\pinlabel $\alpha\colon$ at -58 188
\pinlabel $x_1$ at -4 234
\pinlabel $x_2$ at -4 140
\pinlabel $\imath$ at 94 198
\pinlabel $x_1$ at 162 234
\pinlabel $x_2$ at 162 140
\pinlabel -- at 235 184
\pinlabel $x_3$ at 255 184
\pinlabel $x_1$ at 396 234
\pinlabel $x_2$ at 396 140
\pinlabel -- at 398 186
\pinlabel $x_4$ at 380 186
\pinlabel -- at 419 186
\pinlabel $x_3$ at 440 186
\pinlabel $\chi_0$ at 314 199
\pinlabel $\beta\colon$ at -58 48
\pinlabel $x_1$ at -4 94
\pinlabel $x_2$ at -4 0
\pinlabel -- at 0 46
\pinlabel $x_4$ at -18 46
\pinlabel -- at 23 46
\pinlabel $x_3$ at 42 46
\pinlabel $\chi_1$ at 94 59
\pinlabel $x_1$ at 162 94
\pinlabel $x_2$ at 162 0
\pinlabel -- at 235 46
\pinlabel $x_3$ at 255 46
\pinlabel $x_1$ at 396 94
\pinlabel $x_2$ at 396 0
\pinlabel $\epsilon$ at 314 58
\endlabellist
\centering
\figs{0.4}{dr-alphabeta}
\caption{Homomorphisms $\alpha$ and $\beta$}
\label{mf3:fig:dr-mf}
\end{figure}

\n If we choose to create the circle on the other side of the arc in 
$\alpha$ we obtain a homomorphism homotopic to $\alpha$ and the same holds for 
$\beta$. Define the homomorphisms 
$$
\alpha_0\colon
\figins{-9.5}{0.4}{arc-mf}\{-1\}   
\to
\figins{-9.5}{0.4}{digon-mf}
\qquad\qquad
\alpha_1\colon
\figins{-9.5}{0.4}{arc-mf}\{1\}   
\to
\figins{-9.5}{0.4}{digon-mf}
$$
by $\alpha_0=2\alpha$ and $\alpha_1=2\alpha\circ m(-x_2)$. Note that 
the homomorphism $\alpha_1$ is homotopic to the homomorphism 
$2\alpha\circ m(x_1+x_3-a)$.
Similarly define
$$
\beta_0\colon\figins{-9.5}{0.4}{digon-mf}   
\to
\figins{-9.5}{0.4}{arc-mf}\{-1\}
\qquad\qquad
\beta_1\colon\figins{-9.5}{0.4}{digon-mf}   
\to
\figins{-9.5}{0.4}{arc-mf}\{1\}
$$
by $\beta_0=-\beta\circ m(x_3)$ and $\beta_1=-\beta$. A simple calculation shows that $\beta_j\alpha_i=\delta_{ij}\id(\hatarc$).
Since the cohomologies of the factorizations $\hatdigon$ and 
$\hatarc\{-1\}\oplus\hatarc\{1\}$ have the 
same graded dimension (see~\cite{KR}) we have that
 $\alpha_0+\alpha_1$ and $\beta_0+\beta_1$ are homotopy inverses of each other and 
that $\alpha_0\beta_0+\alpha_1\beta_1$ is homotopic to the identity in 
$\End(\hatdigon)$.
To prove (\emph{Square removal}) define grading preserving homomorphisms
$$
\xymatrix@R=3mm{
\psi_0\colon\figins{-9.5}{0.4}{square1-mf}\lra
\figins{-9.5}{0.405}{sqr1a},
&
\psi_1\colon\figins{-9.5}{0.4}{square1-mf}\lra
\figins{-9.5}{0.4}{sqr1b},
\\
\varphi_0\colon\figins{-9.5}{0.4}{sqr1a}\lra
\figins{-9.5}{0.405}{square1-mf},
&
\varphi_1\colon\figins{-9.5}{0.4}{sqr1b}\lra
\figins{-9.5}{0.405}{square1-mf},
}
$$
by the composed homomorphisms below
$$
\xymatrix@C=12mm{
\figins{0}{0.4}{square1-mf}
\ar[r]^{\chi_1\chi_1'} 
\ar@<10pt>@/^1.6pc/[rr]^{\psi_0} &
\figins{0}{0.4}{sqr1acirc}
\ar[r]^{-\varepsilon}
\ar@<6pt>[l]^{\chi_0\chi_0'} &
\figins{0}{0.405}{sqr1a}
\ar@<6pt>[l]^{\imath}
\ar@<10pt>@/^1.6pc/[ll]^{\varphi_0} 
\\
}
\qquad\qquad
\xymatrix@C=12mm{
\figins{0}{0.4}{square1-mf}
\ar[r]^{\overline{\chi}_1\overline{\chi}_1'} 
\ar@<10pt>@/^1.6pc/[rr]^{\psi_1} &
\figins{0}{0.4}{sqr1bcirc}
\ar[r]^{-\varepsilon}
\ar@<6pt>[l]^{\overline{\chi}_0\overline{\chi}_0'} &
\figins{0}{0.405}{sqr1b}
\ar@<6pt>[l]^{\imath}
\ar@<10pt>@/^1.6pc/[ll]^{\varphi_1} 
\\
}.$$
We have that $\psi_0\varphi_0=\id(\hattwoedgesop)$ and 
$\psi_1\varphi_1=\id(\hathtwoedgesop)$. We also have 
$\psi_1\varphi_0=\psi_0\varphi_1=0$ because 
$\Ext(\hattwoedgesop,\hathtwoedgesop)\cong\widehat{\HKR}_{a,b,c}(\unknot)\{4\}$ which is zero in 
$q$-degree zero and so any homomorphism of degree zero between $\hattwoedgesop$ and 
$\hathtwoedgesop$ is homotopic to the zero homomorphism. Since the cohomologies of 
$\hatsquare$ and $\hattwoedgesop\oplus\hathtwoedgesop$ have the same graded dimension 
(see~\cite{KR}) we have that $\psi_0+\psi_1$ and $\varphi_0+\varphi_1$ are homotopy 
inverses of each 
other and that $\varphi_0\psi_0+\varphi_1\psi_1$ is homotopic to the identity in 
$\End(\hatsquare)$.
\end{proof}

%
%
%
%
%
\section{The equivalence functor}\label{mf3:sec:iso}

In this section we first define a functor 
$$\widehat{}\,\,\colon \Qlfoam\to \widehat{\Qlfoam}.$$
 Then we show that this functor is well-defined and an isomorphism 
of categories. Finally we show that this implies that the link 
homology functors $U^\bQ_{a,b,c}$ and $\widehat{\HKR}_{a,b,c}$ from $\Link$ to 
$\Modbg$ are naturally isomorphic.

On objects the functor is defined by $$\Gamma\to\hat\Gamma,$$
as explained in the previous section. 
Let $f\in \Hom_{\Qlfoam}(\Gamma,\Gamma')$. Suffice it to consider the case in which 
$f$ can be given by one singular cobordism, also denoted $f$. 
If $f$ is given by a linear combination of 
singular cobordisms, one can simply extend the following arguments to all terms. 
Slice $f$ up between critical 
points, so that each slice contains one elementary foam, i.e. a zip or unzip, 
a saddle-point cobordism, or a cap or a cup, 
glued horizontally to the identity foam on the rest of the source and target $\slt$-webs. 
For each slice choose compatible 
vertex identifications on the source and target $\slt$-webs, such that in the region where 
both $\slt$-webs are isotopic the vertex 
identifications are the same and in the region where they differ the 
vertex identifications are such that we can apply 
the homomorphism of matrix factorizations $\chi_0,\chi_1,\eta, \imath$ or $\epsilon$. 
This way we get a homomorphism of matrix factorizations for each slice. We can take 
its \,\,$\widehat{}$\,\, equivalence class. Composing all these morphisms gives a 
morphism $\hat{f}$ between $\hat\Gamma_v$ and 
$\hat\Gamma'_v$. For its definition 
we had to choose a representative singular cobordism of the foam $f$, a way to slice 
it up and complete vertex identifications for the source and target of each 
slice. Of course we have to show that  
$\hat{f}\in\Hom_{\widehat{\Qlfoam}}(\hat\Gamma,\hat\Gamma')$ is independent of those 
choices. But to do that we first need a result about the homomorphism of matrix factorizations $\eta$ induced by a saddle-point cobordism. In general $\eta$ is defined up to a sign (see~\cite{KR}) but in the case of $\slt$-webs we have the following
\begin{lem}
The map $\hat\eta$ is well defined for closed $\slt$-webs. 
\end{lem}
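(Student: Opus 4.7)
The plan is to trace the sign ambiguity of $\eta$ to its source and then show that this ambiguity is absorbed by the equivalence relation defining $\widehat{\Qlfoam}$.

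First, I would recall from Subsection~\ref{KR:ssec:cob-mf} that the sign ambiguity in the saddle homomorphism $\eta\colon\hattwoedgesop\to\hathtwoedgesop\brak{1}$ corresponds to a choice between $(\eta_0,\eta_1)$ and $-(\eta_0,\eta_1)$. The ambiguity originates in the labeling of the four marked endpoints and the induced orientation of the newly created singular arc. For open webs there is no canonical way to fix this choice, but for a closed $\slt$-web $\Gamma$ containing the saddle, the four arcs at the saddle are connected through the rest of $\Gamma$ to specific vertices, and the closure provides extra combinatorial data.

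Second, I would analyze the effect of the saddle on the vertex identifications. Let $\Gamma$ and $\Gamma'$ be the closed $\slt$-webs before and after the saddle, and choose compatible vertex identifications $\zeta$ on $\Gamma$ and $\zeta'$ on $\Gamma'$, i.e.\ identifications that agree outside the saddle region. Applying the tensor product of $\eta$ with the identity on the rest of the web yields a homomorphism of matrix factorizations between the chosen representatives $\zeta(\hat\Gamma_v)$ and $\zeta'(\hat\Gamma'_v)$. If $\zeta$ and $\zeta'$ are changed by swapping pairs of vertex identifications, then, by Corollary~\ref{mf3:cor:caniso}, the two induced homomorphisms are related by a canonical isomorphism $\Psi_\sigma$ that is uniquely determined by the permutation $\sigma\in S_k$ relating the two choices.

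Third, and this is the key point, I would show that reversing the sign of $\eta$ is compensated by a change of vertex identification whose associated canonical isomorphism $\Psi_\sigma$ differs by $-1$. Concretely, swapping the two positive (or two negative) vertices that the saddle-region arcs connect to corresponds to an elementary transposition in $S_k$, and by the explicit form of the matrix $\psi$ in the proof of Lemma~\ref{mf3:lem:repsym}, its action on the relevant components of the Koszul factorization contributes exactly a sign. Therefore $\eta$ and $-\eta$ represent the same equivalence class in $\Hom_{\widehat{\Qlfoam}}(\hat\Gamma,\hat\Gamma')$, so $\hat\eta$ is well-defined.

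The main obstacle will be step three: one must verify explicitly that the sign produced by the elementary swap isomorphism $\psi$ of Lemma~\ref{mf3:lem:swapedges}, when sandwiched around the saddle map, matches the sign ambiguity of $\eta$. This requires tracking the sign through the $\bZ/2\bZ$-grading shifts and the explicit matrix form of $\psi$, and checking compatibility with the tensor product conventions for homomorphisms of matrix factorizations. Once this local sign-matching computation is done, the general case follows because any two sign-compatible choices of $\eta$ and of vertex identifications produce the same class in $\widehat{\Qlfoam}$.
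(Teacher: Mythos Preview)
Your approach is genuinely different from the paper's, and the key mechanism you propose does not work.

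The paper argues by induction on the number $v$ of vertices of the closed $\slt$-web. For $v=0$ the webs are unions of circles, $\hat\eta$ is the (co)product in the Frobenius algebra $\bQ[a,b,c][X]/(X^3-aX^2-bX-c)$, and the sign from relabelling the two circles cancels against the sign in the saddle map. For $v>0$ the paper uses an Euler-characteristic count ($6\le 2F_2+F_4$) to show that any closed $\slt$-web has a digon or a square that is \emph{untouched} by the saddle; the MOY isomorphisms of Lemma~\ref{mf3:lem:KhK-mf} applied there commute with $\hat\eta$ and reduce the vertex count, closing the induction.

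Your plan instead tries to absorb the sign of $\eta$ into the vertex-identification equivalence. The problem is that the saddle region $\twoedgesop\to\htwoedgesop$ contains no trivalent vertices at all: the map $\eta$ lives on arc factorizations, while the swap isomorphisms $\Psi_\sigma$ of Lemma~\ref{mf3:lem:swapedges} act on the vertex Koszul factors. Since $\Gamma$ and $\Gamma'$ have the \emph{same} set of vertices, any permutation $\sigma$ you apply acts identically on source and target, so in the commuting square defining $\sim$ the two horizontal isomorphisms coincide on the vertex part and conjugation by $\Psi_\sigma$ cannot convert $\eta$ into $-\eta$ without forcing $\hat\eta=0$. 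Your assertion that $\psi$ ``contributes exactly a sign'' is not supported by the explicit $4\times4$ matrix in Lemma~\ref{mf3:lem:repsym}: $\psi$ is an involution but not $\pm\id$, and it acts on a tensor factor disjoint from the one on which $\eta$ acts. Finally, your argument has no content when $v=0$, precisely the case where the sign ambiguity is most transparent and must be handled directly.
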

\begin{proof}
Let $\Gamma$ and $\Gamma'$ be two closed $\slt$-webs and $\Sigma\colon \Gamma\to\Gamma'$ a 
cobordism which is the identity everywhere except for one saddle-point. By a slight 
abuse of notation, let $\hat\eta$ denote the homomorphism of matrix factorizations 
which corresponds to $\Sigma$. Note that $\Gamma$ and $\Gamma'$ have the same number 
of 
vertices, which we denote by $v$. Our proof that $\hat\eta$ is well-defined proceeds 
by induction on $v$. If $v=0$, then the lemma holds, because $\Gamma$ consists of 
one circle and $\Gamma'$ of two circles, or vice-versa. These circles have no marks 
and are therefore indistinguishable. To each circle we associate the 
complex $\hatunknot$ and $\hat\eta$ corresponds to the product or the coproduct in 
$\bQ[a,b,c][X]/{X^3-aX^2-bX-c}$. Note that as soon as we mark the two circles, they 
will become distinguishable, and a minus-sign creeps in when we switch them. 
However, this minus-sign then cancels against the minus-sign 
showing up in the homomorphism associated to the saddle-point cobordism.  

Let $v>0$. This part of our proof uses some ideas from the proof of Theorem 2.4 in~\cite{JK}. Any $\slt$-web can be seen as lying on a 2-sphere. Let $V,E$ and $F$ denote the 
number of vertices, edges and faces of an $\slt$-web, where a face is a connected component 
of the complement of the $\slt$-web in the 2-sphere. Let $F=\sum_iF_i$, where 
$F_i$ is the number of faces with $i$ edges. Note that we only have faces with an 
even number of edges. It is easy to see that the following 
equations hold:
\begin{align*}
3V &= 2E\\
V-E+F &= 2\\
2E &= \sum_iiF_i.
\end{align*}    
Therefore, we get
$$6=3F-E=2F_2+F_4-F_8-\ldots,$$
which implies 
\begin{equation}
\label{mf3:eq:lb}
6\leq 2F_2+F_4.
\end{equation}
\n This lower bound holds for any $\slt$-web, in particular for $\Gamma$ and $\Gamma'$. 

Note that for $F_2=3$ and $F_4=0$ we have a theta-web, which is the 
intersection of a theta-foam and a plane. Since all edges have 
to have the same orientation at both vertices, there is no way to apply 
a saddle point cobordism to this web. For all other values of $F_2$ and 
$F_4$ there is always a 
digon or a square in $\Gamma$ and $\Gamma'$ 
on which $\hat\eta$ acts as the identity, i.e. which does not get changed by the 
saddle-point in the cobordism to which $\hat\eta$ corresponds. To see how 
this follows from~\eqref{mf3:eq:lb}, just note that one saddle-point cobordism 
never involves more than three squares, one digon and two squares, or 
two digons and one square. Since the MOY-moves 
in Lemma~\ref{mf3:lem:KhK-mf} are all given by isomorphisms which correspond to the 
zip and the 
unzip and the birth and the death of a circle (see the proof of 
Lemma~\ref{mf3:lem:KhK-mf}), this shows that there is 
always a set of MOY-moves which can be applied both to $\Gamma$ and $\Gamma'$ 
whose target $\slt$-webs, say $\Gamma_1$ and $\Gamma'_1$, have less 
than $v$ vertices, and which commute with $\hat\eta$. Here we denote the homomorphism 
of matrix factorizations corresponding to the saddle-point cobordism between 
$\Gamma_1$ and $\Gamma'_1$ by $\hat{\eta}$ again. By induction, 
$\hat\eta\colon\hat{\Gamma}_1\to \hat{\Gamma'}_1$ is well-defined. 
Since the MOY-moves commute with $\hat\eta$, we conclude that 
$\hat\eta\colon\hat\Gamma\to\hat\Gamma'$ is well-defined.  
\end{proof}

\begin{lem}\label{mf3:lem:widehat} 
The functor\,\, $\widehat{}$\,\, is well-defined.  
\end{lem}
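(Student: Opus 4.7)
The plan is to show that $\hat{f}$ does not depend on any of the auxiliary choices made in its definition, and then that the assignment respects the local relations $\ell'$ defining $\Qlfoam$.

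First, I would check independence of the vertex identifications. Within a single slice, changing the complete vertex identification of the source (or target) web amounts to replacing $\zeta(\hat{\Gamma}_v)$ by $\zeta'(\hat{\Gamma}_v)$, and by Proposition~\ref{mf3:prop:ident-vert} this is accomplished by a canonical isomorphism (canonical by Corollary~\ref{mf3:cor:caniso}, via the $S_k$-representation of Lemma~\ref{mf3:lem:repsym}). The equivalence relation defining $\Hom(\hat{\Gamma},\hat{\Lambda})$ is precisely designed so that the class of a homomorphism is unchanged when one pre-/post-composes with such canonical isomorphisms on the source and target. Hence the chosen slice-wise representative $\chi_0,\chi_1,\eta,\imath,\varepsilon$, or the identity homomorphism on the rest of the diagram, descends to a well-defined class.

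Second, I would check independence of the slicing. Two slicings of the same cobordism differ by a finite sequence of local moves: one can either commute two elementary foams which take place in disjoint regions of the diagram, or apply a Morse-type cancellation (e.g.\ a zip immediately followed by an unzip, a birth immediately followed by a saddle, etc.). Commuting two homomorphisms of matrix factorizations that act non-trivially on disjoint tensor factors is automatic since the tensor product is functorial, while the cancellation moves are exactly the ones already used (implicitly) in Subsection~\ref{KR:ssec:cob-mf} and in the proof of Lemma~\ref{mf3:lem:KhK-mf}; in each case the resulting endomorphism is homotopic to the identity by direct computation using the Koszul structure. The fact that $\hat\eta$ associated to a saddle is well-defined (rather than only up to sign) was established in the previous lemma and is what makes this step work.

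Third, I would check invariance under isotopy of the singular cobordism and compatibility with the local relations~\eqref{mf3:eq:3D},~\eqref{mf3:eq:CN},~\eqref{mf3:eq:S},~\eqref{mf3:eq:theta}. Isotopy invariance reduces, via Morse theory with singularities, to the same elementary cancellations handled in step two, together with the compatibility of the zip/unzip with cups, caps and saddles in adjacent slices. The relation~\eqref{mf3:eq:3D} translates to the defining relation $X^3-aX^2-bX-c=0$ in the Jacobi algebra of $p(x)$, hence holds after applying~$\widehat{}\,$\,; relation~\eqref{mf3:eq:S} matches the normalized trace $\varepsilon(x^k)$ on $\hatunknot$ defined in Section~\ref{KR:sec:KR-abc}; relation~\eqref{mf3:eq:theta} is obtained by evaluating the product $\varepsilon(X_1^\alpha X_2^\beta X_3^\delta)$ on the triple-edge factorization $\hat\Upsilon\cong\widehat{\YGraph\,\YGraph}$, using Lemma~\ref{mf3:lem:KR-3edge}; and~\eqref{mf3:eq:CN} is obtained by computing the composition $\varphi_0\beta_0+\varphi_1\beta_1$ (adapted from the Digon Removal data of Lemma~\ref{mf3:lem:KhK-mf}) after cutting a cylinder along its waist.

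The main obstacle I expect is step three, specifically verifying the~\eqref{mf3:eq:CN} relation on the matrix-factorization side. Unlike~\eqref{mf3:eq:3D}, which is a one-variable identity in the Jacobi algebra, and unlike~\eqref{mf3:eq:S} and~\eqref{mf3:eq:theta}, which are direct trace computations, the cylinder-cutting relation involves decomposing the identity endomorphism of $\hatunknot$ in terms of a basis and its dual with respect to $\varepsilon$, and then reading the result back as a sum over configurations of dots placed on both sides of a cut; getting the coefficients $-1,-1,-1,a,a,b$ to match with the prefactor~$4$ of the new normalization requires a careful bookkeeping of the scalars introduced when passing from $\lfoam$ to $\Qlfoam$ via the functor $\Xi$. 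Once this is done, combining steps one through three shows that $\hat f$ only depends on the class of $f$ modulo the relations of $\Qlfoam$, so $\,\widehat{}\,$ is a well-defined additive functor $\Qlfoam\to\widehat{\Qlfoam}$.
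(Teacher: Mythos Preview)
Your first step (independence of the vertex identifications) matches the paper exactly.

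For slicing independence, the paper takes a different route than you propose. Rather than enumerating local Morse moves and checking each one, the paper observes that, by Lemma~\ref{mf3:lem:KhK-mf}, the spaces $\widehat{\Ext}(\emptyset,\Gamma)$ and $\widehat{\Ext}(\Gamma,\emptyset)$ are generated by maps coming from cobordisms with particular slicings. Thus if two slicings of $f$ gave distinct $\hat f\ne\hat f'$, there would exist closures $u,v$ with $\widehat{vfu}\ne\widehat{vf'u}$, reducing the whole question to \emph{closed} cobordisms. For closed cobordisms the non-singular part is governed by a 2d TQFT; one then checks (CN) directly (the paper treats this as easy, not hard) and uses it to cut any closed singular cobordism into spheres and theta-foams.

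The genuine gap in your proposal is at the theta-foam stage. You claim the Morse cancellations needed are ``exactly the ones already used (implicitly) in Subsection~\ref{KR:ssec:cob-mf} and in the proof of Lemma~\ref{mf3:lem:KhK-mf}'', but this is not so: there is a \emph{singular} Morse move on a disc of a theta-foam (Figure~\ref{mf3:fig:SingMorse}) which does not appear in either place. Checking invariance under this move is the technical heart of the proof, and it is precisely where the swap isomorphism $\psi$ of Lemma~\ref{mf3:lem:swapedges} and Corollary~\ref{mf3:cor:swap} enters: the composite $\Phi$ in Figure~\ref{mf3:fig:smorse} involves $\chi_0$, then $\psi$, then $\chi_1$, then $\varepsilon$, and one computes $\Phi=\varepsilon\bigl(4(x_2-x_3)(x_1-x_2)\bigr)=1$ using that $\psi$ acts as multiplication by $-2$. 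Your sketch never invokes $\psi$ at all, and without it you cannot close this case. You have also misidentified the main obstacle: (CN) is routine here (it is just the duality pairing on $\hatunknot$), whereas the singular Morse move is where the real work lies.
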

\begin{proof} The fact that $\hat{f}$ does not depend on the vertex identifications 
follows immediately from Corollary~\ref{mf3:cor:caniso} and the equivalence 
relation $\sim$ on the Hom-spaces in $\widehat{\Qlfoam}$.

Next we prove that $\hat{f}$ does not depend on the way we have sliced it up. 
By Lemma~\ref{mf3:lem:KhK-mf} we know that, for any closed $\slt$-web $\Gamma$, 
the class $\hat\Gamma$ is homotopy equivalent to a direct sum of terms of the form 
$\hatunknot^k$. Note that $\widehat{\Ext}(\emptyset,\unknot)$ is generated by 
$X^s\imath$, for 
$0\leq s\leq 2$, and that all maps in the proof of Lemma~\ref{mf3:lem:KhK-mf} 
are induced by cobordisms with a particular slicing. 
This shows that $\widehat{\Ext}(\emptyset,\Gamma)$ 
is generated by maps of the form $\hat{u}$, where $u$ is a cobordism between 
$\emptyset$ and $\Gamma$ with a particular slicing. A similar result holds for 
$\widehat{\Ext}(\Gamma,\emptyset)$. Now let $f$ and $f'$ be given by the 
same cobordism between $\Gamma$ and $\Lambda$ but with different slicings. If 
$\hat{f}\ne\hat{f'}$, then, by the previous arguments, there exist maps  
$\hat{u}$ and $\hat{v}$, where $u\colon \emptyset\to \Gamma$ and 
$v\colon\Lambda\to\emptyset$ are cobordisms with particular slicings, 
such that $\widehat{vfu}\ne\widehat{vf'u}$. 
This reduces the question of independence of 
slicing to the case of closed cobordisms. Note that we already know that 
\,\,$\widehat{}$\,\, is well-defined on the parts that do not involve 
singular circles, because it is the generalization of a 2d TQFT. 
It is therefore easy to see that \,\,$\widehat{}$\,\, respects the 
relation (CN). Thus we can cut up any closed singular cobordism near the 
singular circles to obtain a linear combination of closed singular 
cobordisms isotopic to spheres and theta-foams. The spheres do not have 
singular circles, 
so \,\,$\widehat{}$\,\, is well-defined on them and it is easy to check 
that it respects the relation (S). 

Finally, for theta-foams we do have to 
check something. There is one basic Morse move that can be applied to 
one of the discs of a theta-foam, which we show in 
Figure~\ref{mf3:fig:SingMorse}. We have to show that \,\,$\widehat{}$\,\, 
is invariant under this Morse move. 

\begin{figure}[ht!]
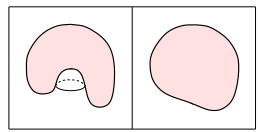

\centering
\figins{0}{0.65}{mdisk}
\caption{Singular Morse move}
\label{mf3:fig:SingMorse}
\end{figure}

\n In other words, we have to show that the composite homomorphism $\Phi$ in 
Figure~\ref{mf3:fig:smorse} is homotopic to the identity. It suffices to do 
the computation on the homology. 
\begin{figure}[h!]
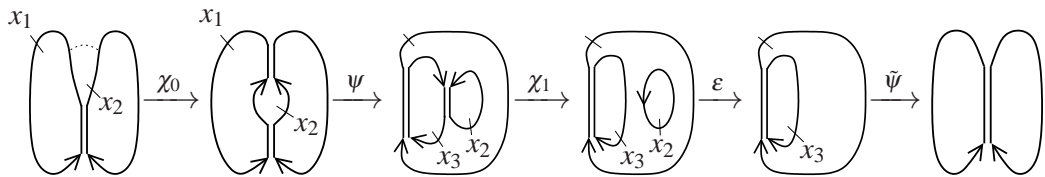

\centering
\labellist
\small\hair 2pt
\pinlabel $x_1$ at  -3 82
\pinlabel $x_2$ at  44 37
\pinlabel $x_1$ at  96 83
\pinlabel $x_2$ at 145 26
\pinlabel $x_2$ at 234 15
\pinlabel $x_3$ at 219 09
\pinlabel $x_2$ at 329 15
\pinlabel $x_3$ at 314 09
\pinlabel $x_3$ at 407 12
\endlabellist
\figs{0.73}{morse1}\raisebox{26 pt}{$\xra{\ \chi_0\ }$}
\figs{0.745}{morse2}\raisebox{26 pt}{$\xra{\ \psi\ }$}
\figs{0.67}{morse3}\raisebox{26 pt}{$\xra{\ \chi_1\ }$}
\figs{0.67}{morse4}\raisebox{26 pt}{$\xra{\ \varepsilon\ }$}
\figs{0.67}{morse5}\raisebox{26 pt}{$\xra{\ \tilde{\psi}\ }$}
\figs{0.73}{morse0}
\caption[Homomorphism $\Phi$]{Homomorphism $\Phi$. To avoid cluttering only some marks are shown}
\label{mf3:fig:smorse}
\end{figure}
First we note that the theta $\slt$-web has 
homology only in $\bZ/2\bZ$-degree $0$. From the remark at the end of  
the definition of the maps $\chi_0$ and $\chi_1$ in Section~\ref{KR:sec:KR-abc} it follows that $\chi_0$ is equivalent to  
multiplication by $-2(x_1-x_2)$ and $\chi_1$ to multiplication by 
$x_2-x_3$, where we used the fact that $\psi$ has $\bZ/2\bZ$-degree $1$. 
From Corollary~\ref{mf3:cor:swap} we have that $\psi$ is equivalent to 
multiplication by $-2$ and from the definition of vertex identification it 
is immediate that $\tilde{\psi}$ is the identity. Therefore we have that
$$\Phi=\varepsilon\bigl(4(x_2-x_3)(x_1-x_2)\bigr)=1.$$
It is also easy to check that \,\,$\widehat{}$\,\, respects the relation 
($\Theta$). 

Note that the arguments above also show that, for an open foam $f$, 
we have $\hat{f}=0$ if $u_1fu_2=0$ for all singular cobordisms 
$u_1\colon \emptyset\to \Gamma_v$ and $u_2\colon\Gamma'_v\to\emptyset$. 
This proves that \,\,$\widehat{}$\,\, is well-defined on foams, which are 
equivalence classes of singular cobordisms. 
\end{proof}

\begin{cor}\label{mf3:cor:widehat}
\,\,$\widehat{}$\,\, is an isomorphism of categories.  
\end{cor}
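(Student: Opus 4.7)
The plan is to establish that $\widehat{\phantom{a}}$ is a bijection on objects and on Hom-spaces. Bijectivity on objects is essentially by construction of $\widehat{\Qlfoam}$: its objects are equivalence classes of matrix factorizations indexed by complete vertex identifications of $\slt$-webs, so distinct $\slt$-webs give distinct classes, and $\Gamma\mapsto\hat\Gamma$ is a bijection.

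For \emph{fullness}, I would use the MOY decompositions on both sides to reduce the computation of Hom-spaces to the case of disjoint unions of unknots. On the $\Qlfoam$ side this is Lemma~\ref{u3:lem:KhK} and on the $\widehat{\Qlfoam}$ side Lemma~\ref{mf3:lem:KhK-mf}. Crucially, the intertwining maps realizing the decompositions in Lemma~\ref{mf3:lem:KhK-mf} are built explicitly from $\hat{g}$ where $g$ is one of the elementary foams used in Lemma~\ref{u3:lem:KhK} (zips, unzips, saddles, cups, caps), so the two decompositions are compatible under $\widehat{\phantom{a}}$. It then suffices to verify fullness on $\Hom(\emptyset,\unknot^k)$ for each $k$, where both Hom-spaces are free over $\bQ[a,b,c]$ on dotted cup cobordisms: on the $\Qlfoam$ side by the (3D) relation, and on the matrix factorization side by the identification $\widehat{\Ext}(\emptyset,\unknot)\cong \bQ[a,b,c][x]/(x^3-ax^2-bx-c)$ with generating set $\{x^s\imath : 0\leq s\leq 2\}$ recalled in the proof of Lemma~\ref{mf3:lem:widehat}.

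For \emph{faithfulness}, suppose $\hat f=0$ for a foam $f\colon\Gamma\to\Gamma'$. By the remark at the end of the proof of Lemma~\ref{mf3:lem:widehat}, $\widehat{u_2fu_1}=0$ for every pair of singular cobordisms $u_1\colon\emptyset\to\Gamma$ and $u_2\colon\Gamma'\to\emptyset$. Each $u_2fu_1$ is a closed foam, so its class lies in $\End_{\widehat{\Qlfoam}}(\emptyset)\cong\bQ[a,b,c]$, and I would check that $\widehat{\phantom{a}}$, restricted to closed foams, agrees with the evaluation $C$ used to define $\Qlfoam$. Granted this, $C(u_2fu_1)=0$ for all $u_1,u_2$, which by the defining quotient of $\Qlfoam$ by the relations $\ell'$ forces $f=0$.

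The main obstacle will be establishing the compatibility of closed foam evaluation: one must verify that $\widehat{\phantom{a}}$ sends the normalized sphere and theta-foam values prescribed by the (S) and ($\Theta$) relations to the corresponding Frobenius traces on the matrix factorization side (ultimately the traces on $\hy_{U(3)}(\mathbb{CP}^2,\bZ)\otimes\bQ$ and on $\hy_{U(3)}(Fl_3,\bZ)\otimes\bQ$). This was precisely the reason for the rescaling fixed in Section~\ref{mf3:sec:foam}, so the check reduces, after using the (CN) reduction to express any closed foam as a combination of disjoint spheres and theta-foams, to matching constants on these basic closed foams — a finite case-by-case computation already implicit in the $\Theta$-computation carried out in the proof of Lemma~\ref{mf3:lem:widehat}.
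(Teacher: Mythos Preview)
Your proposal is correct and follows essentially the same approach as the paper: bijectivity on objects is by construction, and bijectivity on Hom-spaces is reduced via the compatible Khovanov--Kuperberg decompositions (Lemmas~\ref{u3:lem:KhK} and~\ref{mf3:lem:KhK-mf}) to the case of unknots, exactly as the paper's terse proof intends when it cites ``Lemma~\ref{mf3:lem:KhK-mf} and the proof of Lemma~\ref{mf3:lem:widehat}''. Your ``main obstacle'' (agreement of $\widehat{\phantom{a}}$ with $C$ on closed foams) is not really an obstacle: this is precisely what was verified in the proof of Lemma~\ref{mf3:lem:widehat} when checking that $\widehat{\phantom{a}}$ respects (CN), (S) and ($\Theta$), so your faithfulness argument goes through directly without further work.
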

\begin{proof} On objects \,\,$\widehat{}$\,\, is clearly a bijection. On morphisms it 
is also a bijection by Lemma~\ref{mf3:lem:KhK-mf} and the proof of Lemma~\ref{mf3:lem:widehat}.
\end{proof}

\begin{thm}
The projective functors $U^\bQ_{a,b,c}$ and $\widehat{\HKR}_{a,b,c}$ from $\Link$ to 
$\Modbg$ are naturally isomorphic. 
\end{thm}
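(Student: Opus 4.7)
The plan is to use the isomorphism of categories $\widehat{}\,\colon \Qlfoam\to \widehat{\Qlfoam}$ from Corollary~\ref{mf3:cor:widehat} to transport the foam complex $\brak{D}$ to the matrix-factorization complex $\widehat{\KR}_{a,b,c}(D)$. For a link diagram $D$, both complexes are indexed by the hypercube of flattenings: the vertices of $\brak{D}$ are $\slt$-webs $\Gamma$, while the vertices of $\widehat{\KR}_{a,b,c}(D)$ are equivalence classes $\hat{\Gamma}$ of KR-matrix factorizations. The functor $\widehat{}$ sends $\Gamma \mapsto \hat{\Gamma}$ on objects, and by Lemma~\ref{mf3:lem:fatedge} the class $\hat{\Gamma}$ is represented (after identifying the vertex variables across each double edge) by the usual KR-matrix factorization built out of arc and dumbell factorizations. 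Hence the chain objects of $\widehat{\brak{D}}$ agree vertex-by-vertex with those of $\widehat{\KR}_{a,b,c}(D)$, up to an overall shift in the $\bZ/2\bZ$-grading that is fixed by the convention adopted just before Figure~\ref{mf3:fig:g-equiv}.

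Next I would check that the horizontal differentials match under $\widehat{}$. The edges of the cube in $\brak{D}$ are decorated by zip and unzip foams (with the sign rule from Subsection~\ref{KR:ssec:cmplx}), and the very definition of $\widehat{}$ on morphisms sends a zip to $\chi_0$ and an unzip to $\chi_1$, which are precisely the edge maps of $\KR_{a,b,c}(D)$. This identifies $\widehat{\brak{D}}$ with $\widehat{\KR}_{a,b,c}(D)$ as complexes in $\widehat{\Qlfoam}$. Applying the tautological functor $C=\Hom_{\Qlfoam}(\emptyset,-)$ on the foam side and $\widehat{\hy}(-)=\widehat{\Ext}(\emptyset,-)$ on the matrix-factorization side, the isomorphism
$$
C(\Gamma)=\Hom_{\Qlfoam}(\emptyset,\Gamma)\;\xrightarrow{\;\widehat{}\;}\;
\Hom_{\widehat{\Qlfoam}}(\hat\emptyset,\hat\Gamma)=\widehat{\hy}(\hat\Gamma)
$$
furnished by Corollary~\ref{mf3:cor:widehat} is natural in $\Gamma$, so it extends to an isomorphism of bicomplexes. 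Taking homology yields the desired natural isomorphism $U^\bQ_{a,b,c}(L)\cong \widehat{\HKR}_{a,b,c}(L)$ of bigraded $\bQ[a,b,c]$-modules.

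For functoriality, recall that link cobordisms are generated, after choice of a movie, by Reidemeister cobordisms and by the elementary Morse moves (birth, death, saddle). On the foam side these induce concrete chain maps built from cups, caps, saddles, zips and unzips and from the explicit Reidemeister homotopies of Subsection~\ref{u3:ssec:thm-inv}. On the matrix-factorization side the corresponding chain maps (and homotopies proving Reidemeister invariance) are assembled from $\imath$, $\varepsilon$, $\eta$, $\chi_0$ and $\chi_1$ as recalled in Subsection~\ref{KR:ssec:cob-mf} and Section~\ref{KR:sec:KR-abc}. Since $\widehat{}$ sends each elementary foam to the correspondingly named homomorphism of matrix factorizations, and since the well-definedness of $\widehat{\eta}$ was already established in the previous lemma, the chain maps induced by $u\colon L\to L'$ on the two sides agree under the isomorphism constructed above. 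Both theories are only projective functors, so equality up to $\bQ$-scalars is all that is required and this is exactly what the identification of elementary pieces guarantees.

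The main obstacle I foresee is purely bookkeeping: keeping track of the overall sign, $\bZ/2\bZ$-shift and normalization conventions so that the two theories match on the nose as \emph{projective} functors. This is precisely why Section~\ref{mf3:sec:foam} introduced the renormalized relations $\ell'$ over $\bQ$ (rescaling cups, caps, zips and saddles by factors of $1/2$), so that the coefficients produced by $\chi_0$, $\chi_1$, $\imath$, $\varepsilon$ and $\eta$ in Section~\ref{KR:sec:KR-abc} agree with those prescribed by the renormalized foam relations. With that normalization in place, no further rescaling is needed and the natural transformation above is canonical up to the sign ambiguity inherent in both projective functors.
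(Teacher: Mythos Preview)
Your proposal is correct and follows essentially the same approach as the paper: use the isomorphism of categories $\widehat{\ }$ from Corollary~\ref{mf3:cor:widehat} (together with Lemma~\ref{mf3:lem:widehat}) to identify the chain groups $C^i_{\Qlfoam}(D)\cong\widehat{\KR}^i_{a,b,c}(D)$, observe that the differentials match because zips and unzips are sent to $\chi_0$ and $\chi_1$, and deduce naturality from the fact that every elementary link cobordism is induced by an elementary foam whose image under $\widehat{\ }$ is the corresponding KR homomorphism. Your write-up is in fact a bit more explicit than the paper's (you spell out the tautological-functor step and the normalization bookkeeping), but the argument is the same.
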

\begin{proof}
Let $D$ be a diagram of $L$, $C_{\Qlfoam}(D)$ the complex for $D$ constructed with 
foams in Section~\ref{mf3:sec:foam} and $\widehat{KR}_{a,b,c}(D)$ the complex constructed with 
equivalence classes of matrix factorizations in Section~\ref{mf3:sec:sl3-mf}.
From Lemma~\ref{mf3:lem:widehat} and Corollary~\ref{mf3:cor:widehat} it follows that for 
all $i$ we have isomorphisms of graded $\bQ[a,b,c]$-modules 
$C^i_{\Qlfoam}(D)\cong \widehat{KR}^i_{a,b,c}(D)$ where $i$ is 
the homological degree. By a slight abuse of notation we denote these isomorphisms 
by $\,\,\widehat\,\,$ too. The differentials in $\widehat{KR}_{a,b,c}(D)$ are 
induced by $\chi_0$ and $\chi_1$, which are exactly the maps that we associated to 
the zip and the unzip. This shows that $\,\,\widehat{}\,\,$ commutes with the 
differentials in both complexes and therefore that it defines an isomorphism of 
complexes.  

The naturality of the isomorphism between the two functors follows from 
Corollary~\ref{mf3:cor:widehat} and the fact that all elementary link cobordisms 
are induced by the elementary foams and their respective images w.r.t.  
$\,\,\widehat\,\,$. 
\end{proof}

%
%
\chapter{$\sln$-link homology ($N\geq 4$) using foams and the Kapustin-Li formula}
\label{chap:foamN}
%
%

In this chapter we use foams, as in~\cite{bar-natancob, khovanovsl3, mackaay-vaz}, for an 
almost completely combinatorial topological construction of 
a rational link homology categorifying the 
$\sln$ link 
polynomial for $N\geq 4$. To evaluate closed foams we use the KL formula adapted to foams by Khovanov and Rozansky~\cite{KR-LG}. Our theory is functorial under link cobordisms up to scalars. We show that for any link our homology is isomorphic to KR homology of~\cite{KR} and described in Section~\ref{KR:sec:KR}.

In Section~\ref{slN:sec:partial flags} we recall some 
basic facts about Schur polynomials and the cohomology of partial flag 
varieties. In Section~\ref{slN:sec:pre-foam} we define pre-foams and their 
grading. In Section~\ref{slN:sec:KL} we explain the KL formula for 
evaluating closed pre-foams and compute the spheres and the theta-foams. 
In Section~\ref{slN:sec:foamN} we derive a set of basic relations in the 
category $\foam$, which is the quotient of the category of pre-foams by 
the kernel of the KL evaluation. In Section~\ref{slN:sec:invariance} 
we show that our link homology complex is homotopy invariant under the 
Reidemeister moves. In Section~\ref{slN:sec:functoriality} we show that our link 
homology complex extends to a link homology functor. In 
Section~\ref{slN:sec:taut-functor} we show that our link homology, obtained from 
our link homology complex using the tautological functor, 
categorifies the $\sln$-link polynomial and that it is isomorphic to the Khovanov-Rozansky link homology. In Section~\ref{slN:sec:2m-torus} we conjecture that the theory is integral. This conjecture is supported by the set of local relations introduced in Section~\ref{slN:sec:foamN}, which are defined over $\bZ$. Finally, using some techniques of web and complex simplification, we compute the conjectured integral $\sln$-homology of $(2,m)$-torus links and show that it has torsion of order $N$.
%
%
%
%
\section{Schur polynomials and the cohomology of partial flag varieties}
\label{slN:sec:partial flags}
In this section we recall some basic facts about Schur polynomials and 
the cohomology of partial flag varieties which we need in the rest of 
this thesis.  

\subsection{Schur polynomials}
\label{slN:sec:Schur}
A nice basis for homogeneous symmetric polynomials is given by the Schur 
polynomials. If $\lambda=(\lambda_1,\ldots,\lambda_k)$ is a partition such that $\lambda_1\ge\ldots\ge \lambda_k\ge 0$, then the Schur polynomial $\pi_{\lambda}(x_1,\ldots,x_k)$ is given by the following expression:
\begin{equation}
\pi_{\lambda}(x_1,\ldots,x_k)=\frac{\det\bigl(x_i^{\lambda_j+k-j}\bigr)}{\Delta},
\label{slN:eq:sur}
\end{equation}
where $\Delta=\prod_{i<j}(x_i-x_j)$, and by $\det(x_i^{\lambda_j+k-j})$, we have denoted the determinant of the $k\times k$ matrix whose $(i,j)$ entry is equal to $x_i^{\lambda_j+k-j}$. Note that the elementary symmetric polynomials 
are given by $\pi_{1,0,0,\ldots,0}, \pi_{1,1,0,\ldots,0},\ldots,  
\pi_{1,1,1,\ldots,1}$. There are multiplication rules for the Schur polynomials which show that 
any $\pi_{\lambda_1,\lambda_2,\ldots,\lambda_k}$ can be expressed in terms of the 
elementary symmetric polynomials. 

If we do not specify the variables of the Schur polynomial $\pi_{\lambda}$, we will assume that these are exactly $x_1,\ldots,x_k$, with $k$ being the length of $\lambda$, i.e. $$\pi_{\lambda_1,\ldots,\lambda_k}:=\pi_{\lambda_1,\ldots,\lambda_k}(x_1,\ldots,x_k).$$ 

In this thesis we only use Schur polynomials of two and 
three variables. In the case of two variables, the Schur polynomials are 
indexed by pairs of nonnegative integers $(i,j)$, such that $i\ge j$, and~\eqref{slN:eq:sur} becomes
$$\pi_{i,j}=\sum_{\ell =j}^i{x_1^\ell x_2^{i+j-\ell}}.$$
Directly from \emph{Pieri's formula} we obtain the following multiplication rule for the Schur polynomials in two variables:
\begin{equation}
\pi_{i,j}\pi_{a,b}=\sum {\pi_{x,y}},
\label{slN:eq:mnoz2}
\end{equation}
where the sum on the r.h.s. is over all indices $x$ and $y$ such that $x+y=i+j+a+b$ and $a+i\ge x\ge \max(a+j,b+i)$. Note that this implies $\min(a+j,b+i)\ge y\ge b+j$. Also, we shall write $\pi_{x,y}\in\pi_{i,j}\pi_{a,b}$ if $\pi_{x,y}$ belongs to the sum on the r.h.s. of~\eqref{slN:eq:mnoz2}. Hence, we have that $\pi_{x,x}\in\pi_{i,j}\pi_{a,b}$ iff $a+j=b+i=x$ and $\pi_{x+1,x}\in\pi_{i,j}\pi_{a,b}$ iff $a+j=x+1$, $b+i=x$ or $a+j=x$, $b+i=x+1$.

We shall need the following combinatorial result which expresses the 
Schur polynomial in three variables as a combination of Schur polynomials 
of two variables.
For $i\ge j\ge k \ge 0$, and the triple $(a,b,c)$ of nonnegative integers, we define
$$(a,b,c)\sqsubset(i,j,k),$$
if $a+b+c=i+j+k$, $i\ge a \ge j$ and $j\ge b \ge k$. We note that this implies that $i\ge c\ge k$, and hence $\max\{a,b,c\}\le i$.
\begin{lem}\label{slN:lem1}
$$\pi_{i,j,k}(x_1,x_2,x_3)=\sum_{(a,b,c)\sqsubset(i,j,k)}{\pi_{a,b}(x_1,x_2)x_3^c}.$$
\end{lem}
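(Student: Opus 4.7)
The plan is to prove the identity combinatorially, using the definition of Schur polynomials as generating functions for semistandard Young tableaux (SSYTs):
$$\pi_\lambda(x_1,\ldots,x_n) = \sum_T x_1^{m_1(T)}\cdots x_n^{m_n(T)},$$
where $T$ ranges over SSYTs of shape $\lambda$ with entries in $\{1,\ldots,n\}$ and $m_\ell(T)$ counts the $\ell$'s in $T$. I would sum over SSYTs of shape $(i,j,k)$ with entries in $\{1,2,3\}$, grouped by where the $3$'s sit.

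First I would establish the structural constraints forced on such a tableau $T$. Column-strict increase together with the bound on entries forces: row $3$ consists entirely of $3$'s (any entry $<3$ there would need a strictly smaller entry above, eventually below $1$); row $2$ contains no $1$'s (same reason); and no two $3$'s can share a column. Let $a$ be the number of non-$3$ entries in row $1$ and $b$ the number of non-$3$ entries in row $2$, so that row~$1$ has its $i-a$ threes in the rightmost columns and row $2$ has its $j-b$ threes in the rightmost columns, while row $3$ has $k$ threes in the leftmost columns.

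Next I would check that the horizontal-strip condition on the $3$'s is equivalent to $i\ge a\ge j$ and $j\ge b\ge k$. The potentially colliding columns are $\{1,\dots,k\}$ (row~$3$), $\{b+1,\dots,j\}$ (row~$2$), and $\{a+1,\dots,i\}$ (row~$1$). Disjointness of the first two gives $b\ge k$; of the last two, $a\ge j$; and of the first and third, $a\ge k$, which follows from $a\ge j\ge k$. The containment conditions $a\le i$ and $b\le j$ are automatic. Setting $c=(i-a)+(j-b)+k=i+j+k-a-b$ counts the total number of $3$'s, and one checks $(a,b,c)\sqsubset(i,j,k)$. Conversely, every such triple arises from some $T$.

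Finally, having fixed the positions of the $3$'s, the remaining cells form exactly the Young diagram of shape $(a,b)$ filled with $1$'s and $2$'s in SSYT fashion. Their weighted sum is by definition $\pi_{a,b}(x_1,x_2)$, and the contribution of the $3$'s is $x_3^c$. Summing over all admissible $(a,b,c)$ yields the desired identity. The main obstacle is the bookkeeping in the second paragraph: one must translate the SSYT conditions cleanly into the column-disjointness inequalities, and verify that the three pairwise disjointness conditions collapse to the two stated bounds $i\ge a\ge j$ and $j\ge b\ge k$; once this is done the rest is immediate.
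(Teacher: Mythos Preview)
Your proof is correct, and it is genuinely different from the paper's argument. The paper works directly from the bialternant definition~\eqref{slN:eq:sur}: it expands the $3\times 3$ determinant by subtracting the last row from the first two, factors out $(x_1-x_3)(x_2-x_3)$, expands the resulting $2\times 2$ determinant into a double sum, shows by an antisymmetry cancellation that the portion of the sum with $0\le m\le j-k$ vanishes, and then reindexes what remains to obtain the stated formula. Your argument instead uses the semistandard tableau interpretation and is a special case of the branching rule $s_\lambda(x_1,\ldots,x_n)=\sum_\mu s_\mu(x_1,\ldots,x_{n-1})\,x_n^{|\lambda/\mu|}$ over horizontal strips $\lambda/\mu$: you identify the horizontal-strip condition for $\lambda=(i,j,k)$ and $\mu=(a,b)$ with the inequalities defining $\sqsubset$, which is exactly right.

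What each approach buys: the paper's computation is self-contained given its chosen definition of Schur polynomials and requires no outside facts. Your argument is cleaner and immediately generalizes to any number of variables, but since the paper defines $\pi_\lambda$ via the determinant, you should note (or cite) the equivalence of the determinantal and tableau definitions; this is standard, but without it your proof is formally about a different object than the one in the statement.
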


\begin{proof}
From the definition of the Schur polynomial, we have
$$
\pi_{i,j,k}(x_1,x_2,x_3)={\frac{(x_1x_2x_3)^k}{(x_1-x_2)(x_1-x_3)(x_2-x_3)}}
\det
\begin{pmatrix}
x_1^{i-k+2} & x_1^{j-k+1} & 1 \\
x_2^{i-k+2} & x_2^{j-k+1} & 1 \\
x_3^{i-k+2} & x_3^{j-k+1} & 1
\end{pmatrix}
.$$
After subtracting the last row from the first and the second one of the last determinant, we obtain
$$
\pi_{i,j,k}={\frac{(x_1x_2x_3)^k}{(x_1-x_2)(x_1-x_3)(x_2-x_3)}}
\det
\begin{pmatrix}
x_1^{i-k+2}-x_3^{i-k+2} & x_1^{j-k+1}-x_3^{j-k+1} \\
x_2^{i-k+2}-x_3^{i-k+2} & x_2^{j-k+1}-x_3^{j-k+1} 
\end{pmatrix}
,$$
and so
$$\pi_{i,j,k}={\frac{(x_1x_2x_3)^k}{x_1-x_2}}
\det
\begin{pmatrix}
\sum_{m=0}^{i-k+1} x_1^{m}x_3^{i-k+1-m} & \sum_{n=0}^{j-k} x_1^{n}x_3^{j-k-n}\\
\sum_{m=0}^{i-k+1} x_2^{m}x_3^{i-k+1-m} & \sum_{n=0}^{j-k} x_2^{n}x_3^{j-k+n}
\end{pmatrix}
.$$
Finally, after expanding the last determinant we obtain
\begin{equation}
\pi_{i,j,k}=\frac{(x_1x_2x_3)^k}{x_1-x_2}\sum_{m=0}^{i-k+1}\sum_{n=0}^{j-k}{(x_1^mx_2^n-x_1^nx_2^m)x_3^{i+j-2k+1-m-n}}.
\label{slN:eq:pf1}
\end{equation}
We split the last double sum into two: the first one when 
$m$ goes from $0$ to $j-k$, denoted by $S_1$, and the other 
one when $m$ goes from $j-k+1$ to $i-k+1$, denoted by $S_2$. 
To show that $S_1=0$, we split the double sum further into 
three parts: when $m<n$, $m=n$ and $m>n$. Obviously, each 
summand with $m=n$ is equal to $0$, while the summands of the sum for $m<n$ are exactly the opposite of the summands of the sum for $m>n$. Thus, by replacing only $S_2$ instead of the double sum in~\eqref{slN:eq:pf1} and after rescaling the indices $a=m+k-1$, $b=n+k$, we get
\begin{align*}
\pi_{i,j,k} &= \frac{(x_1x_2x_3)^k}{x_1-x_2}\sum_{m=j-k+1}^{i-k+1}\sum_{n=0}^{j-k}{(x_1^mx_2^n-x_1^nx_2^m)x_3^{i+j-2k+1-m-n}} \\
&= \sum_{a=j}^i\sum_{b=k}^j{\pi_{a,b}x_3^{i+j+k-a-b}}=\sum_{(a,b,c)\sqsubset(i,j,k)}\pi_{a,b}x_3^c,
\end{align*}
as wanted.  
\end{proof}

Of course there is a multiplication rule for three-variable Schur 
polynomials which is compatible with~\eqref{slN:eq:mnoz2} and the lemma above, 
but we do not want to discuss it here. For details see~\cite{Fulton-Harris}.

\subsection{The cohomology of partial flag varieties}
\label{slN:ssec:hflag}
In this chapter the rational cohomology rings of partial flag varieties 
play an essential role. The partial flag variety $Fl_{d_1,d_2,\ldots,d_l}$, 
for $1\le d_1<d_2<\ldots<d_l=N$, is defined by 
$$Fl_{d_1,d_2,\ldots,d_l}=\{V_{d_1}\subset V_{d_2}\subset\ldots\subset 
V_{d_l}=\bC^N|\dim(V_i)=i\}.$$
A special case is $Fl_{k,N}$, the Grassmannian 
variety of all $k$-planes in $\bC^N$, also denoted $\cG_{k,N}$. The dimension of the partial 
flag variety is given by
$$\dim Fl_{d_1,d_2,\ldots,d_l}=N^2-\sum_{i=1}^{l-1}(d_{i+1}-d_{i})^2-d_1^2.$$
The rational cohomology rings of the partial flag varieties are well known 
and we only recall those facts that we need in this thesis. 
\begin{lem}
$\hy(\cG_{k,N})$ is isomorphic to the vector space generated by 
all $\pi_{i_1,i_2,\ldots,i_k}$ modulo the relations    
\begin{equation}
\pi_{N-k+1,0,\ldots,0}=0,\quad\pi_{N-k+2,0,\ldots,0}=0,\quad \ldots\quad,\
 \pi_{N,0,\ldots,0}=0,
\label{slN:eq:gras}
\end{equation}
where there are exactly $k-1$ zeros in the multi-indices of the Schur 
polynomials. 
\end{lem}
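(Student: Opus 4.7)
My plan is to prove this by identifying $\hy(\cG_{k,N})$ with its classical presentation in terms of Chern classes of the tautological bundles, and then translating into the Schur polynomial language used in the statement. The key observation is that a one-row Schur polynomial $\pi_{m,0,\ldots,0}(x_1,\ldots,x_k)$ with $k-1$ trailing zeros is exactly the complete homogeneous symmetric polynomial $h_m(x_1,\ldots,x_k)$, so the relations in the statement are the classical relations $h_{N-k+1}=\cdots=h_N=0$.

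First I would recall the tautological short exact sequence of bundles on $\cG_{k,N}$,
\begin{equation*}
0\to S\to \underline{\bC^N}\to Q\to 0,
\end{equation*}
where $S$ is the tautological rank $k$ subbundle and $Q$ the rank $N-k$ quotient. The Whitney sum formula gives $c(S)c(Q)=1$ in $\hy(\cG_{k,N})$. Write the Chern roots of the dual $S^*$ as $x_1,\ldots,x_k$, so the Chern classes of $S^*$ are the elementary symmetric functions $e_j(x_1,\ldots,x_k)$. Since $c(S^*)=1/c(Q^*)$ and the formal inverse of a polynomial in the $e_j$'s produces the $h_m$'s, one gets $c_m(Q^*)=(-1)^m h_m(x_1,\ldots,x_k)$ up to sign, i.e.\ the degree $m$ component of $1/c(S^*)$ is (up to sign) $h_m(x_1,\ldots,x_k)=\pi_{m,0,\ldots,0}$. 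Since $\rk Q = N-k$, the Chern classes $c_m(Q^*)$ vanish for $m>N-k$, which gives exactly the stated relations $\pi_{N-k+j,0,\ldots,0}=0$ for $j=1,\ldots,k$.

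Next I would appeal to the classical Borel presentation (or equivalently a direct argument using the Schubert cell decomposition of $\cG_{k,N}$) which states that the ring homomorphism from the symmetric polynomials in $x_1,\ldots,x_k$ to $\hy(\cG_{k,N})$ sending $e_j$ to $c_j(S^*)$ is surjective with kernel precisely the ideal generated by $h_{N-k+1},\ldots,h_N$. Surjectivity follows because the Schubert classes are Schur polynomials $\pi_\lambda(x_1,\ldots,x_k)$ with $\lambda$ fitting inside a $k\times(N-k)$ rectangle; these form an additive basis by the Schubert cell decomposition, which exhibits the cells as indexed by such partitions. To identify the kernel, one can compare graded dimensions: the dimension of $\hy(\cG_{k,N})$ equals $\binom{N}{k}$, the number of partitions in a $k\times(N-k)$ rectangle, which matches the dimension of the quotient by $(h_{N-k+1},\ldots,h_N)$ since this is a regular sequence in the symmetric polynomial ring and one can verify the dimension count by a straightforward induction or by Poincar\'e series.

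The expected main obstacle is the passage from the presentation in terms of elementary symmetric functions and relations $c_m(Q^*)=0$ to the Schur polynomial formulation; concretely, one must rigorously justify that the Schur polynomials $\pi_\lambda$ with $\lambda\not\subset k\times(N-k)$ are all expressible via the relations $\pi_{N-k+j,0,\ldots,0}=0$. This amounts to a Pieri-type straightening argument: repeated application of the Pieri rule (and the multiplication rule of Section~\ref{slN:sec:Schur}) shows that any $\pi_\lambda$ with $\lambda_1>N-k$ lies in the ideal generated by $h_{N-k+1},\ldots,h_N$. Together with the dimension count above, this proves the lemma.
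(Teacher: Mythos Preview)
The paper does not actually prove this lemma: it is stated as a recall of well-known facts about the cohomology of Grassmannians in Section~\ref{slN:sec:partial flags}, followed immediately by corollaries with no intervening proof. Your argument is the standard one (tautological exact sequence, Whitney sum, identification of one-row Schur polynomials with complete homogeneous symmetric polynomials, then the Borel presentation together with a dimension count via Schubert cells), and it is correct.
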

\n A consequence of the multiplication rules for Schur polynomials is 
that
\begin{cor} The Schur polynomials $\pi_{i_1,i_2,\ldots,i_k}$, 
for $N-k\geq i_1\geq i_2\geq \ldots\geq i_k\geq 0$, form a basis 
of $\hy(\cG_{k,N})$
\end{cor}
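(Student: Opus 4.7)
The plan is to combine the preceding lemma with classical manipulations of Schur polynomials to establish both spanning and linear independence separately.

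First I would show spanning. Denote $h_j := \pi_{j,0,\ldots,0}(x_1,\ldots,x_k)$, so that $h_j$ is the complete homogeneous symmetric polynomial of degree $j$ in $k$ variables, and the relations \eqref{slN:eq:gras} say exactly $h_{N-k+1} = \cdots = h_N = 0$ in $\hy(\cG_{k,N})$. My first step would be to propagate these relations to all higher degrees: using the Newton-type identity
\begin{equation*}
h_j = e_1 h_{j-1} - e_2 h_{j-2} + \cdots + (-1)^{k-1} e_k h_{j-k}, \qquad j \geq 1,
\end{equation*}
(which follows from the generating-function identity $\sum_{i \geq 0} h_i t^i \cdot \sum_{i=0}^k (-1)^i e_i t^i = 1$ in $k$ variables), an easy induction on $j$ starting from the base case $j = N+1$ shows that $h_j = 0$ in $\hy(\cG_{k,N})$ for every $j \geq N-k+1$. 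Then I would invoke the Jacobi--Trudi formula
\begin{equation*}
\pi_{\lambda_1,\ldots,\lambda_k} = \det\bigl( h_{\lambda_i - i + j} \bigr)_{1 \leq i,j \leq k}
\end{equation*}
(with the convention $h_m = 0$ for $m < 0$ and $h_0 = 1$), to conclude: whenever $\lambda_1 \geq N-k+1$, the entire first row of this determinant has indices $\lambda_1, \lambda_1+1, \ldots, \lambda_1 + k - 1 \geq N-k+1$, so every entry of that row vanishes in $\hy(\cG_{k,N})$ and hence $\pi_\lambda = 0$. Since every Schur polynomial in $k$ variables is automatically indexed by a partition of length $\leq k$, this shows that the classes $\pi_{i_1,\ldots,i_k}$ with $N-k \geq i_1 \geq \cdots \geq i_k \geq 0$ span $\hy(\cG_{k,N})$.

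Next I would establish linear independence by a dimension count. The number of partitions fitting inside the $k \times (N-k)$ rectangle equals $\binom{N}{k}$, which is the classical rank of $\hy(\cG_{k,N})$ (for instance via the Schubert cell decomposition, where the Schur polynomial $\pi_\lambda$ represents the Poincar\'e dual of the Schubert cycle indexed by $\lambda$). Combined with the spanning established above, any $\bQ$-linear relation among these $\binom{N}{k}$ classes would force the total dimension to drop, contradicting the known rank. Hence the proposed family is a basis.

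The main obstacle is really just organizing the reduction step cleanly: one must check that the relations \eqref{slN:eq:gras}, which involve only $h_{N-k+1},\ldots,h_N$, actually kill \emph{all} $\pi_\lambda$ with $\lambda_1 > N-k$, and this is where the Newton recursion plus Jacobi--Trudi is essential. Once that is in place, the dimension count is immediate, and an alternative to citing the rank would be to exhibit a $\bQ$-linear map from the span to $\bQ^{\binom{N}{k}}$ (e.g.\ using the standard pairing $\pi_\lambda \mapsto \int_{\cG_{k,N}} \pi_\lambda \cdot \pi_{\lambda^c}$ with dual partition $\lambda^c$ in the box) that witnesses independence directly; I would mention this as a fallback if the topological input is to be avoided.
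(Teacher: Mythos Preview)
Your argument is correct. The paper itself gives no proof: it simply records the corollary as ``a consequence of the multiplication rules for Schur polynomials'', with Pieri's formula having just been recalled, and leaves the verification to the reader.

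Your route is a genuine alternative to what the paper gestures at. The paper's implicit argument would use Pieri's rule to see how the ideal generated by $h_{N-k+1},\ldots,h_N$ propagates through products with Schur polynomials, whereas you instead use the Newton recursion to kill all $h_j$ with $j>N-k$ and then invoke Jacobi--Trudi to exhibit each forbidden $\pi_\lambda$ as a determinant with a vanishing row. Both are standard symmetric-function manipulations; yours has the advantage of being entirely self-contained and making the spanning step completely explicit (one line of row-expansion rather than an inductive Pieri argument). Your handling of linear independence by dimension count against the known rank $\binom{N}{k}$ is exactly what the paper records immediately after the corollary, and your fallback via the pairing is essentially the trace form $\varepsilon$ the paper introduces in equation~\eqref{slN:eq:trag}.
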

\n Thus, the dimension of $\hy(\cG_{k,N})$ is $\binom{N}{k}$, and up to a 
degree shift, 
its graded dimension is $\qbin{N}{k}$.

Another consequence of the multiplication rules is that 
\begin{cor} The Schur polynomials $\pi_{1,0,0,\ldots,0}, 
\pi_{1,1,0,\ldots,0},\ldots,\pi_{1,1,1,\ldots,1}$ (the elementary 
symmetric polynomials) generate $\hy(\cG_{k,N})$ as a ring. 
\end{cor}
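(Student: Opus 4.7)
The plan is to reduce the statement to a purely combinatorial identity expressing every Schur polynomial as a polynomial in the elementary symmetric polynomials. Since the previous corollary tells us that the Schur polynomials $\pi_{i_1,\ldots,i_k}$ with $N-k\ge i_1\ge\cdots\ge i_k\ge 0$ form a vector space basis of $\hy(\cG_{k,N})$, it is enough to show that each such $\pi_\lambda$ lies in the subring generated by $\pi_{1,0,\ldots,0},\pi_{1,1,0,\ldots,0},\ldots,\pi_{1,1,\ldots,1}$.

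First I would invoke the dual Jacobi--Trudi identity: for any partition $\lambda$ with conjugate partition $\lambda'=(\lambda'_1,\ldots,\lambda'_{\lambda_1})$, one has
$$\pi_\lambda \;=\; \det\bigl(\pi_{1^{\lambda'_i - i + j}}\bigr)_{1\le i,j\le \lambda_1},$$
with the convention $\pi_{1^0}=1$ and $\pi_{1^m}=0$ for $m<0$. Each entry of this matrix is an elementary symmetric polynomial (i.e.\ one of the generators in our list, since $\lambda'_i\le k$ when $\lambda$ fits in a $k\times(N-k)$ rectangle), so expanding the determinant realises $\pi_\lambda$ as an explicit integral polynomial in the $\pi_{1^j}$. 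Passing to the quotient by the relations~\eqref{slN:eq:gras} then shows that $\pi_\lambda\in\bQ[\pi_{1^1},\pi_{1^2},\ldots,\pi_{1^k}]\subseteq\hy(\cG_{k,N})$, which is what we want.

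As an alternative, in case one prefers to avoid quoting Jacobi--Trudi, the same conclusion can be obtained by an induction using Pieri's formula. The base case $\lambda=(0,\ldots,0)$ is trivial. For the inductive step one writes any non-zero $\lambda$ in the $k\times(N-k)$ rectangle as the result of adding a single vertical strip to a strictly smaller partition $\mu$ (for instance, take the last non-empty column of $\lambda$): Pieri's rule then gives
$$\pi_\mu\cdot \pi_{1^j} \;=\; \pi_\lambda + \sum_{\nu\neq\lambda}\pi_\nu,$$
where each $\nu$ appearing on the right is strictly smaller than $\lambda$ in the dominance (or reverse-lexicographic) order. Solving for $\pi_\lambda$ and applying the induction hypothesis to $\pi_\mu$ and to each $\pi_\nu$ shows that $\pi_\lambda$ belongs to the subring generated by the elementary symmetric polynomials.

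There is essentially no obstacle here: the statement is a formal consequence of the combinatorics of symmetric functions and the fact, already recorded in the previous corollary, that Schur polynomials indexed by partitions in the $k\times(N-k)$ rectangle span $\hy(\cG_{k,N})$. The only point that deserves care is checking that the relations~\eqref{slN:eq:gras} are respected, but this is automatic since both approaches produce the expression for $\pi_\lambda$ already in the symmetric polynomial ring before passing to the quotient.
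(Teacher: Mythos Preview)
Your proposal is correct and in the same spirit as the paper's treatment: the paper does not give a proof but simply records the corollary as ``another consequence of the multiplication rules,'' having already remarked in Section~\ref{slN:sec:Schur} that such rules express any $\pi_{\lambda}$ in terms of the elementary symmetric polynomials. Your two arguments (dual Jacobi--Trudi, or induction via Pieri) are exactly the standard ways to make that remark precise, so you are supplying the details the paper omits rather than taking a different route.
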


Furthermore, we can introduce a non-degenerate trace form on 
$\hy(\cG_{k,N})$ 
by giving its values on the basis elements 
\begin{equation}
\varepsilon(\pi_{\lambda})=
\begin{cases}
(-1)^{\lfloor\frac{k}{2}\rfloor}, & \lambda=(N-k,\ldots,N-k)\\
0, & \text{else} 
\end{cases}
.\label{slN:eq:trag}
\end{equation}
This makes $\hy(\cG_{k,N})$ into a commutative Frobenius algebra. 
One can compute the basis dual to $\{\pi_{\lambda}\}$ in 
$\hy(\cG_{k,N})$, 
with respect to $\epsilon$. It is given by    
\begin{equation}
\label{slN:eq:db}
\widehat{\pi}_{\lambda_1,\ldots,\lambda_k}=(-1)^{\lfloor\frac{k}{2}\rfloor}
\pi_{N-k-\lambda_k,\ldots,N-k-\lambda_1}.
\end{equation}

We can also express the cohomology rings of the partial 
flag varieties $Fl_{1,2,N}$ and $Fl_{2,3,N}$ in terms of Schur polynomials. 
Indeed, we have 
\begin{equation}\label{slN:eq:hflag}
\begin{split}
\hy(Fl_{1,2,N}) &=
\quotient{\bQ[x_1,x_2]}{ (\pi_{N-1,0},\pi_{N,0}) }, \\
\hy(Fl_{2,3,N}) &=
\quotient{\bQ[x_1+x_2,x_1x_2,x_3]}{ (\pi_{N-2,0,0},
\pi_{N-1,0,0},\pi_{N,0,0}) }.
\end{split}
\end{equation}

The natural projection map $p_1:Fl_{1,2,N}\to\cG_{2,N}$ induces 
\begin{equation}\label{slN:eq:proj1}
p^*_1:\hy(\cG_{2,N})\to \hy(Fl_{1,2,N}),
\end{equation}
which is just the inclusion of the polynomial rings. Analogously, 
the natural 
projection map $p_2:Fl_{2,3,N}\to\cG_{3,N}$, 
induces 
\begin{equation}\label{slN:eq:proj2}
p^*_2:\hy(\cG_{3,N})\to \hy(Fl_{2,3,N}),
\end{equation}
which is also given by the inclusion of the polynomial rings.

%
%
%
\section{Pre-foams}
\label{slN:sec:pre-foam}

In this section we begin to define the foams we will work with. 
The philosophy behind these foams will be explained in 
Section~\ref{slN:sec:KL}. 
To categorify the $\sln$ link polynomial we need singular 
cobordisms with two types of singularities. The basic examples are given in 
Figure~\ref{slN:fig:elemfoams}. These foams are composed of three types of facets: simple, double and 
triple facets. The double facets are coloured and the triple facets are marked 
to show the difference.
\begin{figure}[h!]
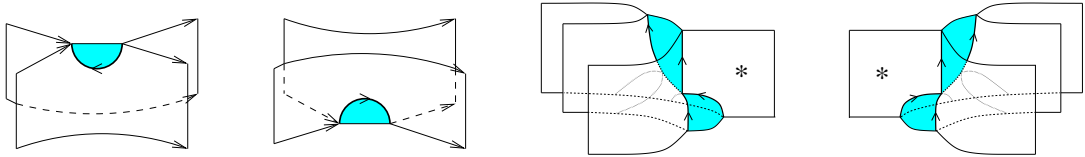

\medskip
\centering
\figins{0}{0.7}{ssaddle} \qquad
\figins{0}{0.7}{ssaddle_ud} \qquad
\figins{-2}{0.8}{glue-left} \qquad
\figins{-2}{0.8}{glue-right}
\caption{Some elementary foams}
\label{slN:fig:elemfoams}
\end{figure}
Intersecting such a foam with a plane results in a web, as long as the plane avoids the 
singularities where six facets meet, such as on the right in Figure~\ref{slN:fig:elemfoams}. 

We adapt the definition of a world-sheet foam given in~\cite{Rozansky} to our setting.

\begin{defn}
\label{slN:defn:pre-foam}
Let $\sk$ be a finite closed oriented $4$-valent graph, 
which may contain disjoint circles. We assume 
that all edges of $\sk$ are oriented.  
A cycle in $\sk$ is defined to be a circle or a closed sequence of edges which form a 
piece-wise linear circle. 
Let $\Sigma$ be a compact orientable possibly disconnected surface, 
whose connected components are white, coloured or marked, also denoted by simple, double or 
triple. Each component can have a boundary consisting of several disjoint circles and can have 
additional decorations which we discuss below.     
A closed \emph{pre-foam} $u$ is the identification space $\Sigma/\sk$ obtained by glueing 
boundary circles of $\Sigma$ to cycles in $\sk$ such that every edge and circle in $\sk$ is glued 
to exactly three boundary circles of $\Sigma$ and such that for any point $p\in \sk$:    
\begin{enumerate}
\item if $p$ is an interior point of an edge, then $p$ has a neighborhood homeomorphic to the 
letter Y times an interval with exactly one of the facets being double, and at most one of them 
being triple. For an example see 
Figure~\ref{slN:fig:elemfoams};
\item if $p$ is a vertex of $\sk$, then it has a neighborhood as shown on the r.h.s. in 
Figure~\ref{slN:fig:elemfoams}. 
\end{enumerate}
We call $\sk$ the \emph{singular graph}, its edges and vertices \emph{singular arcs} and 
\emph{singular vertices}, and the connected components of $u - \sk$ the \emph{facets}.

Furthermore the facets can be decorated with dots. A simple facet can only have black 
dots ($\bdot$), a double facet can also have white dots ($\wdot$), and a triple facet besides 
black and white dots can have double dots ($\bwdot$). Dots can move freely on a facet 
but are not allowed to cross singular arcs. 
See Figure~\ref{slN:fig:simpl-2pol} for examples of pre-foams. 
\end{defn}

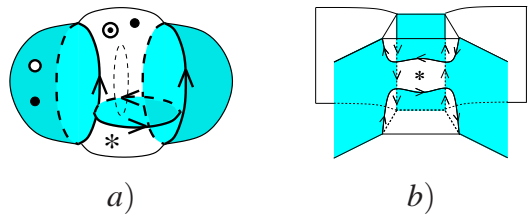
\begin{figure}[h!]
$$\xymatrix@R=1mm{
 \figins{0}{0.8}{poly} &
 \figins{-34}{0.8}{sq_rem_b2} \\
a) & b)
}$$
\caption[A closed and an open pre-foam]{a) A pre-foam.\ b) An open pre-foam}
\label{slN:fig:simpl-2pol}
\end{figure}

Note that the cycles to which the boundaries of the simple and the triple facets are glued are 
always oriented, whereas the ones to which the boundaries of the double facets are glued are not. 
Note also that there are two types of singular vertices. Given a singular vertex $v$, 
there are precisely two singular edges which meet at $v$ and bound a triple facet: one oriented 
toward $v$, denoted $e_1$, and one oriented away from $v$, denoted $e_2$. 
If we use the ``left hand rule'', then the cyclic ordering of the facets 
incident to $e_1$ and $e_2$ is either $(3,2,1)$ and $(3,1,2)$ respectively, or the other way 
around. We say that $v$ is of type I in the first case and of type II in the second case (see Figure~\ref{slN:fig:vertextype}).
\begin{figure}[h!]
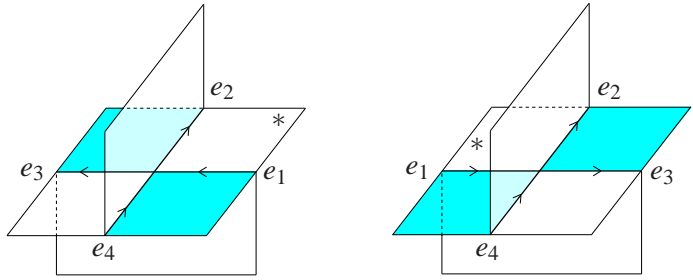

\labellist
\small\hair 2pt
\pinlabel $e_1$ at 772 290
\pinlabel $e_2$ at 620 535
\pinlabel $e_3$ at 75 300
\pinlabel $e_4$ at 280 70
\pinlabel $*$ at 780 450
\pinlabel $e_1$ at 1180 304
\pinlabel $e_3$ at 1882 290
\pinlabel $e_2$ at 1730 535
\pinlabel $e_4$ at 1380 70
\pinlabel $*$ at 1350 380
\endlabellist
\centering
\figs{0.13}{vertextype}
\caption{Singular vertices of type I and type II}
\label{slN:fig:vertextype}
\end{figure}
When we go around a triple facet we see that there have to 
be as many singular vertices of 
type I as there are of type II for the cyclic orderings of the facets to match up. This shows 
that for a closed pre-foam the number of singular vertices of type I is equal to the number 
of singular vertices of type II.     

We can intersect a pre-foam $u$ generically by a plane $W$ in order to get a web, 
as long as the plane avoids the vertices of $\sk$. 
The orientation of $\sk$ determines the orientation of the simple edges of the web 
according to the convention in Figure~\ref{slN:fig:orientations}.

\begin{figure}[h!]
$$\xymatrix@R=1mm{
\figins{0}{0.85}{orientation-in} & 
\figins{0}{0.85}{orientation-3in} &
\figins{0}{0.85}{orientation-3out2} \\
\figins{0}{0.85}{orientation-out}  & 
\figins{0}{0.85}{orientation-3out} &
\figins{0}{0.85}{orientation-3in2}
}$$
\caption{Orientations near a singular arc}
\label{slN:fig:orientations}
\end{figure}
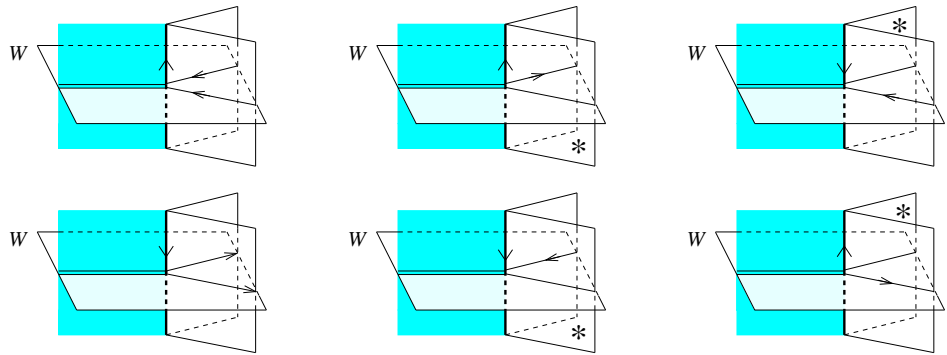

Suppose that for all but a finite number of values $i\in ]0,1[$, 
the plane $W\times {i}$ intersects $u$ generically. Suppose also that 
$W\times 0$ and $W\times 1$ intersect $u$ generically and outside the vertices 
of $\sk$. We call $W\times I\cap u$ an \emph{open} pre-foam.   
Interpreted as morphisms we read open pre-foams from bottom to top, and their 
composition consists of placing one pre-foam on top of the other, as long as their boundaries 
are isotopic and the orientations of the simple edges coincide.

\begin{defn} Let $\PF$ be the category whose objects are 
closed webs and whose morphisms are $\bQ$-linear combinations of 
isotopy classes of pre-foams with the obvious identity pre-foams and 
composition rule.  
\end{defn}


We now define the $q$-degree of a pre-foam. 
Let $u$ be a pre-foam, $u_1$, $u_2$ and $u_3$ the disjoint union of its 
simple and double and marked facets respectively and 
$\sk(u)$ its singular graph. Define the partial $q$-gradings of $u$ as 
\begin{align*}
q_i(u)    &= \chi(u_i)-\frac{1}{2}\chi(\partial u_i\cap\partial u), 
\qquad i=1,2,3 \\
q_{\sk}(u) &= \chi(\sk(u))-\frac{1}{2}\chi(\partial \sk(u)).
\end{align*}
where $\chi$ is the Euler characteristic and $\partial$ denotes the boundary.

\begin{defn}
Let $u$ be a pre-foam with $d_\bdot$ dots of type $\bdot$, $d_\wdot$ dots of type 
$\wdot$ and $d_\bwdot$ dots of type $\bwdot$. The $q$-grading of $u$ is given 
by
\begin{equation*}
q(u)= -\sum_{i=1}^{3}i(N-i)q_i(u) - 2(N-2)q_{\sk}(u) + 
2d_\bdot + 4d_\wdot +6d_\bwdot.
\end{equation*}
\end{defn}

The following result is a direct consequence of the definitions. 
\begin{lem}
$q(u)$ is additive under the glueing of pre-foams.
\end{lem}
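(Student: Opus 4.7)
The plan is to reduce additivity of $q$ to the additivity of each of the four constituent pieces $q_1,q_2,q_3,q_{\sk}$ (the dot counts $d_\bdot,d_\wdot,d_\bwdot$ are trivially additive under composition, since no dots lie on the boundary webs where glueing takes place). The weighted combination in the definition of $q(u)$ then gives additivity of $q$ immediately.

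So suppose $u\colon W_1\to W_2$ and $v\colon W_2\to W_3$ are open pre-foams glued along a common boundary web $W_2=\Gamma$. The first step is to observe that the type-$i$ facet surface of the composite decomposes as $(u\circ v)_i=u_i\cup_{\Gamma_i}v_i$, where $\Gamma_i$ denotes the union of simple/double/triple edges of $\Gamma$ according as $i=1,2,3$. Since $u_i\cap v_i=\Gamma_i$, inclusion--exclusion for Euler characteristic gives
$$
\chi\bigl((u\circ v)_i\bigr)=\chi(u_i)+\chi(v_i)-\chi(\Gamma_i).
$$
On the other hand, the outward boundary of the composite is $\partial(u\circ v)=W_1\sqcup W_3$, whereas $\partial u=W_1\sqcup\Gamma$ and $\partial v=\Gamma\sqcup W_3$. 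Intersecting with the type-$i$ facets and again using additivity of Euler characteristic on disjoint unions,
$$
\chi\bigl(\partial(u\circ v)_i\cap\partial(u\circ v)\bigr)=\chi(\partial u_i\cap\partial u)+\chi(\partial v_i\cap\partial v)-2\chi(\Gamma_i).
$$
Combining these and substituting into the definition of $q_i$, the $\chi(\Gamma_i)$ contributions cancel (coefficient $1$ from the first equation, coefficient $2\cdot\tfrac12$ from the second), so $q_i(u\circ v)=q_i(u)+q_i(v)$.

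The same scheme handles $q_{\sk}$: the singular graph of the composite is obtained by glueing $\sk(u)$ and $\sk(v)$ along the set $V(\Gamma)$ of trivalent vertices of $\Gamma$ (these are exactly the points where the singular arcs of $u$ and of $v$ meet the common boundary). Thus $\sk(u\circ v)=\sk(u)\cup_{V(\Gamma)}\sk(v)$ with $\sk(u)\cap\sk(v)=V(\Gamma)$, so
$$
\chi(\sk(u\circ v))=\chi(\sk(u))+\chi(\sk(v))-|V(\Gamma)|,
$$
and $\partial\sk(u\circ v)=V(W_1)\sqcup V(W_3)$ gives analogously
$$
\chi(\partial\sk(u\circ v))=\chi(\partial\sk(u))+\chi(\partial\sk(v))-2|V(\Gamma)|.
$$
Again the $V(\Gamma)$ contributions cancel in the definition of $q_{\sk}$, yielding $q_{\sk}(u\circ v)=q_{\sk}(u)+q_{\sk}(v)$.

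There is no real obstacle; the main point to be careful about is the bookkeeping at the interface $\Gamma$, making sure that (i) the facets of $u$ and $v$ meet exactly along the correspondingly coloured edges of $\Gamma$ and the singular arcs meet exactly at the vertices of $\Gamma$, and (ii) the coefficient $\tfrac12$ in front of the boundary term is the precise factor needed for the overlap contributions to cancel. Both follow from the local structure of a pre-foam along its horizontal slices as spelled out in Definition~\ref{slN:defn:pre-foam}.
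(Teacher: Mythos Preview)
Your argument is correct. The paper itself does not give a proof of this lemma at all; it simply states that the result ``is a direct consequence of the definitions'' and moves on. Your inclusion--exclusion computation, showing that each partial grading $q_i$ and $q_{\sk}$ is separately additive because the $\chi(\Gamma_i)$ (respectively $|V(\Gamma)|$) correction from the glued interface cancels between the bulk term and the $\tfrac12$-weighted boundary term, is precisely the routine verification the paper is alluding to.
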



\section{The Kapustin-Li formula and the evaluation of closed pre-foams}
\label{slN:sec:KL}
Let us briefly recall the philosophy behind the pre-foams. Losely speaking, to each closed 
pre-foam should correspond an element in the cohomology ring of a configuration space of planes 
in some big $\bC^M$. The singular graph imposes certain conditions on those planes. The 
evaluation of a pre-foam should correspond to the evaluation of the corresponding element in 
the cohomology ring. Of course one would need to find a consistent way of choosing the volume forms 
on all of those configuration spaces for this to work. However, one encounters a difficult 
technical problem when working out the details of this philosophy. Without explaining all the 
details, we can say that the problem can only be solved by figuring out what to associate to 
the singular vertices. Ideally we 
would like to find a combinatorial solution to this problem, but so far it has eluded us. That 
is the reason why we are forced to use the KL formula.

We denote a simple facet with $i$ dots by 
$$\figins{-8}{0.3}{plan-i}.$$  
Recall that $\pi_{k,m}$ can be expressed in terms of 
$\pi_{1,0}$ and $\pi_{1,1}$. In the philosophy explained above, the latter should correspond to 
$\bdot$ and $\wdot$ on a double facet respectively. We can then define  
$$\figins{-8}{0.3}{dplan-km}$$
as being the linear combination of dotted double facets corresponding to the expression of 
$\pi_{k,m}$ in terms of $\pi_{1,0}$ and $\pi_{1,1}$. 
Analogously we expressed $\pi_{p,q,r}$ in terms of $\pi_{1,0,0}$, $\pi_{1,1,0}$ and $\pi_{1,1,1}$ 
(see Section~\ref{slN:sec:partial flags}).  
The latter correspond to $\bdot$, $\wdot$ and $\bwdot$ on a triple facet respectively, so 
we can make sense of   
$$\figins{-8}{0.3}{plan-pqr}.$$
Our dot conventions and the results in 
Proposition~\ref{slN:prop:principal rels1} 
will allow us to use decorated facets in exactly the same way as we did Schur 
polynomials in the cohomology rings of partial flag varieties.

In the sequel, we shall give a definition of the KL formula for the evaluation of pre-foams and state some of its basic properties. The KL formula was introduced by A.~Kapustin and Y.~Li~\cite{KL} to generalize Vafa's work~\cite{vafa} in the context of the evaluation of 2-dimensional TQFTs to the case of smooth surfaces with boundary. It was later extended to the case of pre-foams by M.~Khovanov and L.~Rozansky in~\cite{KR-LG}, who interpreted singular arcs as boundary conditions as in~\cite{KL}. Khovanov and Rozansky adapted the KL formula to a general sort of foam. In this thesis we have to specify the input data which allows us to use it for the evaluation of our pre-foams. The normalization is ours and is used to obtain integral relations.

\subsection{The general framework}
Let $u=\Sigma/s_{\gamma}$ be a closed pre-foam with singular graph $s_{\gamma}$ and without any dots on it. Let $F$ denote an arbitrary $i$-facet, $i\in\{1,2,3\}$, with a $1$-facet being a simple facet, a $2$-facet being a double facet and a $3$-facet being a triple facet.

Each $i$-facet can be decorated with dots, which correspond to generators of the rational cohomology ring of the Grassmannian $\cG_{i,N}$, i.e. $\hy(\cG_{i,N},\bQ)$. Alternatively, we can associate to every $i$-facet $F$, $i$ variables $x^F_1\ldots,x^F_i$, with $\deg x^F_i=2i$, and the potential $W(x^F_1,\ldots,x^F_i)$, which is the polynomial defined such that
$$W(\sigma_1,\ldots,\sigma_i)=y_1^{N+1}+\ldots+y_i^{N+1},$$
where $\sigma_j$ is the $j$-th elementary symmetric polynomial in the variables $y_1,\ldots,y_i$. The Jacobi algebra $J_W$$$J_W=\quotient{\bQ[x^F_1,\ldots,x^F_i]}{ ( \partial_iW )},$$ 
where $\partial_iW$ denote the ideal generated by the partial derivatives of $W$, is isomorphic to the rational cohomology ring of the Grassmannian $\cG_{i,N}$. Note that up to a multiple the top degree nonvanishing element in this Jacobi algebra is $\pi_{N-i,\ldots,N-i}$ (multiindex of length $i$), i.e. the polynomial 
in variables $x^F_1,\ldots,x^F_i$ which gives $\pi_{N-i,\ldots,N-i}$ after replacing the variable $x^F_j$ by 
$\pi_{1,\ldots,1,0,\ldots,0}$ with exactly $j$ $1$'s, $1\le j\le i$ (see also 
Subsection~\ref{slN:sec:Schur}). We define the trace 
(volume) form $\epsilon$ on $\hy(\cG_{i,N},\bQ)$ by giving it on the basis of the Schur polynomials: 
$$\epsilon(\pi_{j_1,\ldots,j_i})=
\begin{cases}(-1)^{\lfloor\frac{i}{2}\rfloor}&\text{if}\quad (j_1,\ldots,j_i)=(N-i,\ldots,N-i)\\
0&\text{else}
\end{cases}.
$$

The KL formula associates to $u$ an 
element in the product of the cohomology rings of the Jacobi 
algebras $J$, over all the facets in the pre-foam. 
Alternatively, we can see this element as a polynomial, $KL_u\in J$, in 
all the variables associated to the facets. Now, let us put some dots on 
$u$. Recall that a dot corresponds to an elementary symmetric polynomial. 
So a linear combination of dots on $u$ is equivalent to a polynomial, $f$, 
in the variables of the dotted facets. Let $\varepsilon$ denote the product of the trace forms $\varepsilon_{J_i}$ over all facets of $u$.
The value of this dotted pre-foam we define to be
\begin{equation}\label{slN:eq:ev}
\KL{u}:=\epsilon\biggl(\prod_{F}\dfrac{\det(\partial_i\partial_j W_F)^{g(F)}}{(N+1)^{g'(F)}}
KL_u\, f\biggr).
\end{equation}
\n The product is over all facets $F$ and $W_F$ is the potential 
associated to $F$. For any $i$-facet $F$, $i=1,2,3$, the symbol $g(F)$ 
denotes the genus of $F$ and $g'(F)=ig(F)$. If $u$ is a closed surface without singularities we define $KL_u=1$ and $\KL{\ }$ reduces to an extension to colored closed surfaces of the formula introduced by Vafa in~\cite{vafa}.
The \emph{Vafa factor}
$$\prod_{F}\dfrac{\det(\partial_i\partial_j W_F)^{g(F)}}{(N+1)^{g'(F)}}$$
computes the contribution of the handles in the facets of $u$.

Having explained the general idea, we are left with defining the element 
$KL_u$ for a dotless pre-foam. For that we have to explain Khovanov and 
Rozansky's extension of the KL formula to pre-foams~\cite{KR-LG}, 
which uses the theory of matrix factorizations.

\subsection{Decoration of pre-foams}
\label{slN:ssec:decoration}

As we said, to each facet we associate certain variables (depending on the 
type of facet), a potential and the corresponding Jacobi algebra. 
If the variables associated to a facet $F$ are $x_1,\ldots,x_i$, then we 
define $R_F=\bQ[x_1,\ldots,x_i]$. It is immediate that the KL formula gives zero if the argument of $\varepsilon$ in Equation~\ref{slN:eq:ev} contains an element of $\partial_iW_F$: for any $Q\in\bigotimes\limits_{F}R_F$ we have that
\begin{equation}
\varepsilon(Q\partial_iW_F)=0.
\end{equation}

Now we consider the edges. To each edge we associate a matrix factorization whose potential is equal to the signed sum of the potentials of the facets that are glued along this edge. We define it to be a certain tensor product of Koszul factorizations. In the cases we are interested in there are always three facets glued along an edge, with two possibilities: either two simple facets and one double facet, or one simple, one double and one triple facet. 
\begin{figure}[h!]
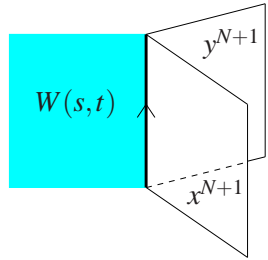

\medskip
\labellist
\small\hair 2pt
\pinlabel $x^{N+1}$ at 134 43
\pinlabel $y^{N+1}$ at 144 139
\pinlabel $W(s,t)$ at 44 100
\endlabellist
\centering
\figs{0.58}{orientation-MF}
\caption{Singular edge of type $(1,1,2)$}
\label{slN:fig:orientation-MF1}
\end{figure}
In the first case, we denote the variables of the two simple facets by 
$x$ and $y$ and take the potentials to be $x^{N+1}$ and $y^{N+1}$ respectively, according to the convention in Figure~\ref{slN:fig:orientation-MF1}. To 
the double facet we associate the variables $s$ and $t$ and the potential 
$W(s,t)$. To the edge we associate the matrix factorization 
which is the tensor product of Koszul factorizations given by
\begin{equation}\label{slN:eq:MF1}
MF_1= 
\begin{Bmatrix}
A', & x+y-s \\
B', & xy-t
\end{Bmatrix},
\end{equation}
where $A'$ and $B'$ are given by
\begin{align*}
A'&= \frac{W(x+y,xy)-W(s,xy)}{x+y-s},\\
B'&= \frac{W(s,xy)-W(s,t)}{xy-t}. 
\end{align*}
Note that $(x+y-s)A'+(xy-t)B'=x^{N+1}+y^{N+1}-W(s,t)$.

In the second case, the variable of the simple facet is $x$ and 
the potential is $x^{N+1}$, the variables of 
the double facet are $s$ and $t$ and the potential is $W(s,t)$, and the 
variables of the triple face are $p$, $q$ and $r$ and the potential is 
$W(p,q,r)$. 
\begin{figure}[h!]
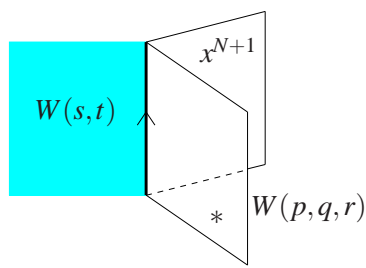

\medskip
\labellist
\small\hair 2pt
\pinlabel $x^{N+1}$ at 142 140
\pinlabel $W(s,t)$ at 44 100
\pinlabel $W(p,q,r)$ at 194 38
\pinlabel $*$ at 134 32
\endlabellist
\centering
\figs{0.58}{orientation-MF}
\caption{Singular edge of type $(1,2,3)$}
\label{slN:fig:orientation-MF2}
\end{figure}

\n Define 
the polynomials
\begin{align}
A &= \frac{W(x+s,xs+t,xt)-W(p,xs+t,xt)}{x+s-p},\label{slN:eq:MF21}\\
B &= \frac{W(p,xs+t,xt)-W(p,q,xt)}{xs+t-q},\\
C &= \frac{W(p,q,xt)-W(p,q,r)}{xt-r},
\end{align}
so that 
$$(x+s-p)A+(xs+t-q)B+(xt-r)C=x^{N+1}+W(s,t)-W(p,q,r).$$ 
To such an edge we associate the matrix factorization given by the 
following tensor product of Koszul factorizations:
\begin{equation}\label{slN:eq:MF2}
MF_2=
\begin{Bmatrix}
A, & x+s-p  \\
B, & xs+t-q \\
C, & xt-r
\end{Bmatrix}.
\end{equation}

\medskip

In both cases, if the edges have the opposite orientation we associate the 
 matrix factorizations $(MF_1)_\bullet$ and $(MF_2)_\bullet$ respectively.

\medskip

Next we explain what we associate to a singular vertex. First of all,
for each vertex $v$, we define its local graph $\gamma_v$ to be 
the intersection of a small sphere centered at $v$ with the pre-foam. 
Then the vertices of $\gamma_v$ correspond to the edges of $u$ that are 
incident to $v$, to which we had associated matrix factorizations. 
In this thesis all local graphs $\gamma_v$ are in fact tetrahedrons. However, recall 
that there are two types of vertices (see the remarks below Definition~\ref{slN:defn:pre-foam}). 
Label the six facets that are incident to a vertex $v$ by the numbers 
$1,2,3,4,5$ and $6$. Furthermore, 
denote the edge along which are glued the facets $i$, $j$ and $k$ by 
$(ijk)$. Denote the matrix factorization associated to the edge 
$(ijk)$ by $M_{ijk}$, if the edge points toward $v$, and by 
$(M_{ijk})_\bullet$, if the edge points away from $v$. Note that $M_{ijk}$ and 
$(M_{ijk})_\bullet$ are both defined over $R_i\otimes R_j \otimes R_k$. 

Now we can take the tensor product of these four matrix factorizations, 
over the polynomial rings of the facets of the pre-foam, that correspond to the 
vertices of $\gamma_v$. This way we obtain the matrix factorization $M_v$, 
whose potential is equal to $0$, and so it is a 
2-complex and we can take its homology.

To each vertex $v$ we associate an 
element $O_v\in \hy_{\mf{}}(M_v)$. More precisely, if $v$ is of type I, then
\begin{equation}\label{slN:eq:vertex}
\begin{split}
\hy_{\mf{}}(M_v) &\cong 
\Ext\left(
MF_1(x,y,s_1,t_1)\otimes_{s_1,t_1} MF_2(z,s_1,t_1,p,q,r)\right. , \\
&\quad\qquad\ \left. 
MF_1(y,z,s_2,t_2)\otimes_{s_2,t_2} MF_2(x,s_2,t_2,p,q,r)\right).
\end{split}
\end{equation}

If $v$ is of type II, then 
\begin{equation}\label{slN:eq:vertex2}
\begin{split}
\hy_{\mf{}}(M_v) &\cong  
\Ext\left(
MF_1(y,z,s_2,t_2)\otimes_{s_2,t_2} MF_2(x,s_2,t_2,p,q,r)\right. ,\\
&\quad\qquad\ \left. 
MF_1(x,y,s_1,t_1)\otimes_{s_1,t_1} MF_2(z,s_1,t_1,p,q,r)\right).
\end{split}
\end{equation}
Both isomorphisms hold up to a global shift in $q$. Note that 
$$
MF_1(x,y,s_1,t_1)\otimes_{s_1,t_1} MF_2(z,s_1,t_1,p,q,r) \simeq
MF_1(y,z,s_2,t_2)\otimes_{s_2,t_2} MF_2(x,s_2,t_2,p,q,r),
$$
because both tensor products are homotopy equivalent to the factorization
$$
\begin{Bmatrix}
*, & x+y+z-p \\
*, & xy+xz+yz-q \\
*, & xyz-r
\end{Bmatrix}.
$$
We have not specified the l.h.s. of the latter Koszul matrix, because of Lemma~\ref{KR:lem:regseq-iso}. If $v$ is of type I, we take $O_v$ to be the cohomology class of 
a fixed degree $0$ homotopy equivalence
$$
w_v\colon MF_1(x,y,s_1,t_1)\otimes_{s_1,t_1} MF_2(z,s_1,t_1,p,q,r)\to
MF_1(y,z,s_2,t_2)\otimes_{s_2,t_2} MF_2(x,s_2,t_2,p,q,r).
$$
The choice of $O_v$ is unique up to a scalar, because the graded dimension 
of the $\Ext$-group in \eqref{slN:eq:vertex} is equal to 
$$q^{3N-6}\qdim(\hy(M_v))=q^{3N-6}[N][N-1][N-2]=1+q(\ldots),$$
where $(\ldots)$ is a polynomial in $q$. Note that $M_v$ is homotopy equivalent to 
the matrix factorization which corresponds to the closure of $\Upsilon$ in~\cite{KR}, which 
allows one to compute the graded dimension above using the results in the latter paper.
If $v$ is of type II, we take $O_v$ to be the cohomology class of the homotopy 
inverse of $w_v$. Note that a particular choice of $w_v$ fixes $O_v$ for 
both types of vertices and that the value of the KL formula for a closed 
pre-foam does not depend on that choice because there are as many singular vertices of type I 
as there are of type II (see the remarks below Definition~\ref{slN:defn:pre-foam}). 
We do not know an explicit formula for $O_v$. Although such a formula would be very interesting 
to have, we do not need it for the purposes of this thesis.

\subsection{The KL derivative and the evaluation of closed pre-foams}

From the definition, every boundary component of each facet $F$ is 
either a circle or a cyclicly ordered finite sequence
of edges, such that the beginning of the next edge corresponds to the end 
of the previous
edge. For every boundary component choose an edge $e$ and denote the 
differential of the matrix factorization associated to this edge by $D_e$.   
Let $R_F=\bQ[x_1,\dots, x_k]$.
The KL derivative of $D_e$ in the 
variables $x_1,\ldots,x_k$ associated to the facet $F$, is an element 
from $\End(M)\cong M\otimes M_\bullet$, given by:
\begin{equation}\label{slN:eq:tw}
O_{F,e}=\partial D\hat{_e} =\frac{1}{k!}\sum_{\sigma\in S_k} 
(\sgn{\sigma}){\partial_{\sigma(1)
}D_{e} \partial_{\sigma(2)}D_{e}\ldots\partial_{\sigma(k)}D_{e}},
\end{equation}
where $S_k$ is the symmetric group on $k$ letters, 
and $\partial_i D$ is the partial derivative of $D$ with respect to 
the variable $x_i$. For all the other edges $e'$ in the boundary of $F$ we take $O_{F,e'}$ to be the identity. Denote the set of facets whose boundary contains $e$ by $F(e)$.
For every edge define $O_e\in\End(M)$ as the composite
$$O_e=\prod\limits_{F\in F(e)}O_{F,e}.$$
The order of the factors in $O_e$ is irrelevant as we will prove it in Lemma~\ref{slN:lem:operorder}.

Let $\cV$ and $\cE$ be the sets of all vertices and all edges of the singular graph $\sk$ of a pre-foam $u$. Denote the matrix factorization associated to an edge $e$ by $M_e$ ($M_e=MF_1$ if $e$ is of type $(1,1,2)$ and $M_e=MF_2$ if $e$ is of type $(1,2,3)$). Recall that the factorization $M_v$ associated to a singular vertex is the tensor product of the matrix factorizations associated to the edges that are incident to $v$. Consider the factorization $M_{\sk}$ given by the tensor product
\begin{equation}
M_{\sk}=\biggl(\bigotimes\limits_{v\in\cV}M_v\biggr)\otimes
\biggl(\bigotimes\limits_{e\in\cE}\ M_e\otimes(M_e)_\bullet\biggr).
\end{equation}
From the definition of $M_v$ we see that we can group all the factorizations involved in pairs of mutually dual factorizations: for every edge $e$ we can pair $M_e$ coming from $M_e\otimes(M_e)_\bullet$ with $(M_e)_\bullet$ coming from $M_v$ and $(M_e)_\bullet$ from $M_e\otimes(M_e)_\bullet$ can be paired with $M_e$ coming from $M_v$. Using super-contraction on each pair we get a map
$$
\phi_\gamma\colon M_{\sk}\to\bQ[\mathbf{x}_u],
$$
where $\mathbf{x}_u$ is the set of variables associated to all the facets of $u$.
\begin{defn}\label{slN:def:KL}
$
KL_u=\phi_\gamma
\biggl( 
\bigl(\bigotimes\limits_{v\in\cV}O_v\bigr)\otimes
\bigl(\bigotimes\limits_{e\in\cE}O_e\bigr)
\biggr).
$
\end{defn}

Note that the $O_e$ and $O_v$ can be seen as tensors with indices associated to the facets that meet at $e$ and $v$ respectively. So we can super-contract all the tensor factors $O_e$ and $O_v$, with respect to a particular facet $F$, along a cycle that bounds $F$. From Definition~\ref{slN:def:KL}  we see that if we do this for all boundary components of all facets we also get $KL_u$.

\begin{lem}\label{slN:lem:operorder}
$KL_u$ does not depend on the order of the factors in $O_e$.
\end{lem}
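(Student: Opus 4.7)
The plan is to exploit that the three factors $O_{F,e}$ with $F\in F(e)$ involve \emph{disjoint} sets of partial derivatives (one set per facet), and to use the identity $D_e^2 = W$ to control their super-commutators.

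First I would carry out a parity count. The differential $D_e$ has $\bZ/2\bZ$-degree $1$, so each $\partial_i D_e$ also has degree $1$, and if facet $F$ carries $k_F$ variables then $O_{F,e}$ has $\bZ/2\bZ$-degree $k_F \pmod 2$. For an edge of type $(1,1,2)$ the parities $(k_F \bmod 2)$ are $(1,1,0)$, and for an edge of type $(1,2,3)$ they are $(1,0,1)$. In both cases exactly two of the three $O_{F,e}$ are odd, so the only nontrivial reordering issue is swapping those two odd factors; swapping either with the even one commutes in the ordinary sense.

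Next I would show that the two odd $O_{F,e}$ super-commute in $\End(M_e)$ up to a super-boundary, i.e.\ that $[O_{F,e},O_{F',e}]_s$ is of the form $[D_e,h]_s$ for some $h\in\End(M_e)$. The key input is obtained by differentiating $D_e^2 = W$ twice: for $x_i$ associated to $F$ and $y_j$ associated to $F'$ one gets
\[
\partial_i D_e\,\partial_j D_e + \partial_j D_e\,\partial_i D_e \;=\; \partial_i\partial_j W - [D_e,\partial_i\partial_j D_e]_s .
\]
Since $F$ and $F'$ share no variables the mixed partial $\partial_i\partial_j W$ vanishes, so $\partial_i D_e$ and $\partial_j D_e$ anticommute modulo $[D_e,-]_s$. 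Expanding $O_{F,e}$ and $O_{F',e}$ as the antisymmetrised products from~\eqref{slN:eq:tw} and moving every $\partial_i D_e$ past every $\partial_j D_e$ by the above identity yields $[O_{F,e},O_{F',e}]_s = [D_e,H]_s$ for some $H\in\End(M_e)$, as claimed.

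Finally I would use the way $KL_u$ is assembled to conclude. By Definition~\ref{slN:def:KL}, $KL_u$ is obtained by plugging $\bigotimes_v O_v \otimes \bigotimes_e O_e$ into the total super-contraction map $\phi_\gamma$. As recalled in Section~\ref{KR:sec:matfac} and in the text immediately preceding the lemma, the super-contraction of $M_e$ with $(M_e)_\bullet$ on each edge is a supertrace-type pairing, which vanishes on both super-commutators $[A,B]_s$ and on super-boundaries $[D_e,H]_s$ (the latter because the pairing is $D_e$-invariant and $\phi_\gamma$ factors through the homology of the differential of $M_{\sk}$). Combining these two facts with the super-commutation established in the previous paragraph, I get
\[
\phi_\gamma\Bigl(\bigotimes_v O_v \otimes \bigotimes_{e'\neq e} O_{e'} \otimes (O_{F,e}O_{F',e} - \varepsilon\, O_{F',e}O_{F,e})\Bigr)=0,
\]
where $\varepsilon = (-1)^{\deg O_{F,e}\deg O_{F',e}}$ records the super-sign. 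Since $\varepsilon = 1$ when at least one of $F,F'$ is even, and the supertrace sign absorbs the remaining $\varepsilon = -1$ when both are odd, all reorderings give the same value of $KL_u$. The main obstacle I anticipate is bookkeeping: verifying cleanly that swapping \emph{inside} the antisymmetrised sums defining $O_{F,e},O_{F',e}$ only produces terms of the form $[D_e,\cdot]_s$ and not uncontrolled polynomial remainders, which ultimately reduces to the vanishing $\partial_i\partial_j W = 0$ for variables from disjoint facets.
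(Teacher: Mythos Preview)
Your core argument is exactly the paper's: since the edge potential $W_e$ is a signed sum of facet potentials in \emph{disjoint} sets of variables, one has $\partial_i\partial'_jW_e=0$ whenever $x_i$ belongs to $F$ and $x'_j$ to $F'\neq F$; differentiating $D_e^2=W_e$ twice then yields $[\partial_iD_e,\partial'_jD_e]_s=-[D_e,\partial_i\partial'_jD_e]_s$, a coboundary which is killed by the super-contraction. The paper's proof consists of precisely these two sentences and concludes that the KL derivatives with respect to different facets super-commute.

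Everything you add beyond this --- the parity census and the last paragraph about signs --- is not in the paper, and the final step is not justified. The assertion that ``the supertrace sign absorbs the remaining $\varepsilon=-1$ when both are odd'' is wrong as stated: the identity $\str(AB)=(-1)^{|A||B|}\str(BA)$ is a \emph{cyclic} property of the full argument of the supertrace, not a device for cancelling the Koszul sign produced by transposing two internal factors inside the longer product $O_e=O_{F_1,e}O_{F_2,e}O_{F_3,e}$. The paper makes no such claim and does not need one: it is content to establish that the $O_{F,e}$ super-commute (up to coboundary), which is the sense in which ``the order of the factors in $O_e$ is irrelevant.'' Drop the parity discussion and the sign-absorption sentence and your argument matches the paper's.
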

\begin{proof}
Let $e$ be an edge in the boundary of facets $F$ and $F'$. Since the potential $W_e$ is a sum of the individual potentials associated to the facets that are glued along $e$, each depending on its own set of variables, we have $\partial_i\partial_j'W_e=0$. Therefore, applying $\partial_i\partial'_j$ to both sides of the relation $D_e^2=W_e$ gives
$$[\partial_iD_e,\partial'_jD_e]_s=-[D_e,\partial_i\partial'_jD_e]_s,$$
and the term on the r.h.s. is annihilated after the super-contraction because it is a coboundary. This means that the KL derivatives of $D$ w.r.t. different facets super-commute.
\end{proof}

\begin{lem}
$KL_u$ does not depend on the choice of the preferred edges.
\end{lem}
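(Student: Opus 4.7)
The plan is to reduce the statement to a purely local assertion at a single singular vertex and then invoke the vanishing of super-traces on super-commutators.

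First I would observe that any two choices of preferred edges on the boundary cycles of the facets of $u$ are connected by a sequence of elementary moves, each of which changes the preferred edge on one boundary component of one facet $F$ from one edge $e_1$ to an adjacent edge $e_2$. Here $e_1$ and $e_2$ meet at a common singular vertex $v$. Under such a move, the only local factors of $KL_u$ that change are $O_{F,e_1}$ (which goes from $\partial D_{e_1}\hat{}_F$ to $\mathrm{id}$) and $O_{F,e_2}$ (which goes from $\mathrm{id}$ to $\partial D_{e_2}\hat{}_F$); all other tensor factors, including $O_{F',e_1}$, $O_{F'',e_2}$ for the other facets $F',F''$ bounding $e_1,e_2$, and the vertex operator $O_v$, are untouched. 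Writing out $KL_u$ as a partial super-trace of the tensor product factorization $M_v$ against the rest of the diagram, the required identity reduces to
\begin{equation*}
\mathrm{str}_{M_v}\!\bigl( (\partial D_{e_1}\hat{}_F\otimes \mathrm{id})\cdot Q\cdot O_v\bigr)
=
\mathrm{str}_{M_v}\!\bigl( (\mathrm{id}\otimes \partial D_{e_2}\hat{}_F)\cdot Q\cdot O_v\bigr),
\end{equation*}
where $Q$ collects all the operators on the remaining facets at $v$ and the super-trace is taken over the variables of $M_v$.

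The key step is then the local identity
\begin{equation*}
\partial D_{e_1}\hat{}_F\otimes \mathrm{id}\ -\ \mathrm{id}\otimes \partial D_{e_2}\hat{}_F \;=\; [D_v,h]_s
\end{equation*}
for a suitable $h\in\mathrm{End}(M_v)$, where $D_v$ is the total twisted differential of $M_v$. To produce $h$ I would use that the variables of $F$ enter the potentials $W_{e_1}$ and $W_{e_2}$ with opposite signs (since the facet $F$ bounds each of $e_1,e_2$ once), so that if $x_1,\ldots,x_k$ are the variables of $F$ and we set
\begin{equation*}
h=\frac{1}{k!}\sum_{j=1}^{k}(-1)^{j-1}\sum_{\sigma\in S_k}\sgn(\sigma)\,\partial_{\sigma(1)}D_{e_1}\cdots\partial_{\sigma(j-1)}D_{e_1}\,\partial_{\sigma(j)}\cdots\partial_{\sigma(k)}D_{e_2},
\end{equation*}
then a direct computation using $D_{e_i}^2=W_{e_i}$ and the fact that $[\partial_i D_{e_a},\partial_j D_{e_b}]_s=-[D_{e_a},\partial_i\partial_j D_{e_b}]_s$ (as in Lemma~\ref{slN:lem:operorder}) expresses the difference above as $[D_{e_1}+D_{e_2},h]_s$. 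Since $[D_v,O_v]_s=0$ (because $O_v$ is a homotopy equivalence of matrix factorizations, hence a cocycle), multiplying by $Q\cdot O_v$ and taking the partial super-trace yields a super-commutator, which vanishes.

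The main obstacle is the verification of the super-commutator identity: one has to keep careful track of the signs coming from antisymmetrization, from the $\bZ/2\bZ$-grading on $M_v$, and from the fact that $D_v$ contains pieces coming from all three edges meeting at $v$, not just from $e_1$ and $e_2$; the contributions from the third edge at $v$ commute with both $\partial D_{e_1}\hat{}_F$ and $\partial D_{e_2}\hat{}_F$ (they involve disjoint sets of variables from those of $F$), so they drop out of the computation, but confirming this cleanly requires unpacking the explicit Koszul forms~\eqref{slN:eq:MF1} and~\eqref{slN:eq:MF2}. Once the local identity is established, iteration over the elementary moves finishes the proof.
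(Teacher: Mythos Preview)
Your overall strategy—reduce to an elementary move at a single vertex $v$ and show that the difference vanishes under the partial supertrace because it is a coboundary—is the right one, and it matches the paper's approach. However, the heart of your argument, the identity
\[
\partial D_{e_1}\hat{}_F\otimes\mathrm{id}\ -\ \mathrm{id}\otimes\partial D_{e_2}\hat{}_F\;=\;[D_v,h]_s
\]
with your explicit $h$, cannot hold. The left-hand side is a product of $k$ odd operators (where $k$ is the number of variables of $F$) and hence has $\bZ/2\bZ$-degree $k$, while your $h$ is likewise a product of $k$ odd operators, so $[D_v,h]_s$ has degree $k+1$. The two sides live in different parities. More conceptually, $\partial D_{e_1}\hat{}_F$ and $\partial D_{e_2}\hat{}_F$ act on \emph{different} tensor factors of $M_v$; no homotopy built solely out of the $\partial D_{e_i}$'s can transport one to the other. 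The operator $O_v$ is precisely what intertwines $M_{e_1}$ and $M_{e_2}$, so the homotopy must involve $O_v$.

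The paper obtains the needed identity by differentiating the cocycle condition $[D,O_v]_s=0$ with respect to the variables $x_1,\dots,x_k$ of $F$. Since only $D_{e_1}$ and $D_{e_2}$ among the edge differentials at $v$ depend on these variables, one gets $\partial_i D=\partial_i D_{e_1}+\partial_i D_{e_2}$ and hence
\[
[\partial_i D_{e_1}+\partial_i D_{e_2},\,O_v]_s \;=\; -[D,\partial_i O_v]_s.
\]
Interpreting $O_v$ as a map $M_{e_1}\otimes M_{e'_1}\to M_{e_2}\otimes M_{e'_2}$ (so that, e.g., $\partial_i D_{e_1}\circ O_v$ is non-composable and hence zero), this reads
\[
\partial_i D_{e_2}\,O_v \;-\; (\pm)\,O_v\,\partial_i D_{e_1}\;=\;-[D,\partial_i O_v]_s,
\]
and iterating over all $k$ variables yields $\partial D\hat{}_{e_2,F}\,O_v\equiv O_v\,\partial D\hat{}_{e_1,F}$ modulo a $D$-coboundary. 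Since $\str$ annihilates coboundaries, this gives exactly
\[
\str_{W_F}\bigl(\partial D\hat{}_{e_1}\,O_{v_1}\cdots O_{v_k}\bigr)=\str_{W_F}\bigl(O_{v_1}\,\partial D\hat{}_{e_2}\,O_{v_2}\cdots O_{v_k}\bigr).
\]
The homotopy is $\partial_i O_v$ (and its iterates), not a combination of $\partial D_{e_i}$'s. The paper also treats separately the case where $F$ is a double facet, for which the boundary cycle is not oriented and one of the edge factorizations appears dualised; your proposal does not address this distinction.
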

\begin{proof}
It suffices to prove the claim for only one facet $F$ with one boundary component. Label the edges that bound $F$ by $e_1$, \ldots, $e_k$ and take $e_1$ as the preferred edge of $F$. Suppose first that $F$ is a simple or a triple facet, so that its boundary consists of an oriented cycle of $\sk$. Suppose also that $e_i$ is oriented from $v_i$ to $v_{i+1}$. Since $[O_{F,e},O_{F',e}]_s=0$ for every $F'\neq F$ we can assume that $O_{e_1}=O_{F,e_1}$ without loss of generality. The contribution to $KL_u$ of the facet $F$ is given by
$$
\str_{W_F}\bigl( \partial D\hat{_{e_1}}O_{v_1}O_{v_2}\dots O_{v_k}\bigr),
$$
where $\str_{W_F}$ is the partial supertrace w.r.t. the indices associated to $F$.

The relevant part of a small neighborhood of the vertex $v_1$ is depicted in Figure~\ref{slN:fig:vertexfacet}, where only the facet $F$ is shown.
\begin{figure}[h!]
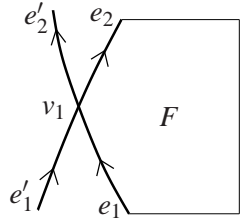

\labellist
\small\hair 2pt
\pinlabel $e_1$ at 112 10
\pinlabel $e_2$ at 100 300
\pinlabel $e'_2$ at 2 300
\pinlabel $e'_1$ at -20 30
\pinlabel $v_1$ at 28 160
\pinlabel $F$ at 200 150
\endlabellist
\centering
\figs{0.25}{vertexfacet}
\caption{Singular vertex}
\label{slN:fig:vertexfacet}
\end{figure}
From Equation~\eqref{slN:eq:vertex} it follows that $O_v$ can be seen as a homomorphism from $M_e(e_1)\otimes M_e(e'_1)$ to $M_e(e_2)\otimes M_e(e'_2)$, where $(e_i)$ denotes the variables associated to the facets that are glued along $e$. Therefore we have that $[D,O_v]_s=0$, where $D=D_{e_1}+D_{e'_1}+D_{e_2}+D_{e'_2}$ and we are using the convention that the composite of two non-composable homomorphisms is zero. Note that $\partial_iD=\partial_iD_{e_1}+\partial_iD_{e_2}$ since $e'_1$ and $e'_2$ are not variables associated to $F$. Therefore $[D,O_v]_s=0$ implies
\begin{equation}
\label{slN:eq:Dcomm}
[\partial_iD,O_v]_s = -[D,\partial_iO_v]_s
\end{equation}
by partial differentiation w.r.t. a variable of $F$. This implies
$$
\str_{W_F}\bigl( \partial D\hat{_{e_1}}O_{v_1}O_{v_2}\dots O_{v_k}\bigr)
=
\str_{W_F}\bigl( O_{v_1}\partial D\hat{_{e_2}}O_{v_2}\dots O_{v_k}\bigr),
$$
since terms involving the r.h.s. of Equation~\eqref{slN:eq:Dcomm} get killed by $\str$.

Now suppose that $F$ is a double facet. The boundary of $F$ is not an oriented cycle in $\sk$. Suppose a small neighborhood of $v$ has a part as depicted in Figure~\ref{slN:fig:vertexfacet2}.
\begin{figure}[h!]
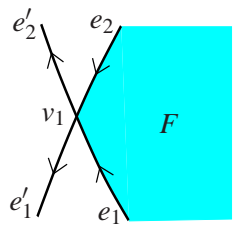

\labellist
\small\hair 2pt
\pinlabel $e_1$ at 112 10
\pinlabel $e_2$ at 100 300
\pinlabel $e'_2$ at -15 300
\pinlabel $e'_1$ at -20 30
\pinlabel $v_1$ at 28 160
\pinlabel $F$ at 200 150
\endlabellist
\centering
\figs{0.25}{vertexfacet2}
\caption{Double facet near a singular vertex}
\label{slN:fig:vertexfacet2}
\end{figure}
In this case $O_v$ can be seen as a homomorphism from $M_e(e_1)\otimes M_e(e'_1)_\bullet$ to $M_e(e_2)\otimes M_e(e'_2)_\bullet$, so that $D$ and $O_v$ super-commute, where $D=D_{e_1}+(D_{e'_1})_\bullet + (D_{e_2})_\bullet + D_{e'_2}$. Taking a partial derivative of both sides of the relation $[D,O_v]_s$ relative to a variable associated to $F$ we obtain that 
$$
\str_{W_F}\bigl( \partial D\hat{_{e_1}}O_{v_1}O_{v_2}\dots O_{v_k}\bigr)
=
\str_{W_F}\bigl( O_{v_1}\partial (D\hat{_{e_2}})_\bullet O_{v_2}\dots O_{v_k}\bigr),
$$
which proves the claim.
\end{proof}

\subsection{Some computations}
\label{slN:ssec:compKL}
In this subsection we compute the KL evaluation of some closed pre-foams that will be used in the sequel.

\subsubsection{Spheres}

The values of dotted spheres are easy to compute. Note that for any sphere with dots $f$ 
the KL formula gives 
$$\epsilon(f).$$ 
Therefore for a simple sphere we get $1$ if $f=x^{N-1}$, for a double 
sphere we get 
$-1$ if $f=\pi_{N-2,N-2}$ and for a triple sphere we get $-1$ if 
$f=\pi_{N-3,N-3,N-3}$. 

Note that the evaluation of spheres corresponds to the trace on the cohomology of the Grassmannian $\hy(\cG_{i,N})$ for $i=1,2,3$ in Equation~\eqref{slN:eq:db}.

\subsubsection{Dot conversion and dot migration}

The pictures related to the dot conversion and dot migration relations can be found in Proposition~\ref{slN:prop:principal rels1}. Since $KL_u$ takes values in the tensor product of the 
Jacobi algebras of the potentials associated to the facets of $u$, we see that 
for a simple facet we have $x^N=0$, for a double facet $\pi_{i,j}=0$ if $i\geq N-1$, and 
for a triple facet $\pi_{p,q,r}=0$ if $p\geq N-2$. We call these the 
\emph{dot conversion relations}. The dot conversion relations are related to the relations defining the cohomology ring of the Grassmannian $\cG_{k,N}$ for $k=1,2,3$ in Equation~\eqref{slN:eq:gras}.

To each edge along which two simple facets with variables $x$ and $y$ and one double facet with the variables $s$ and $t$ are glued, we 
associated the matrix factorization $MF_1$ with 
entries $x+y-s$ and $xy-t$. Therefore $\Ext(MF_1,MF_1)$ 
is a module over $R/ ( x+y-s,xy-t ) $. Hence, 
we obtain the \emph{dot migration relations} along this edge. Analogously, to the other type of singular edge along 
which are glued a simple facet with variable $x$, 
a double facet with variable $s$ and $t$, and a triple 
facet with variables $p$, $q$ and $r$,
we associated the matrix factorization $MF_2$. Note that $\Ext(MF_2,MF_2)$ is a module over 
$R/ ( x+s-p, xs+t-q,xt-r ) $, which gives us the \emph{dot migration relations} along this edge.
The dot migration relations are related to the relations in the cohomology ring of the partial flag varieties $Fl_{1,2,N}$ and $Fl_{2,3,N}$ in Equation~\eqref{slN:eq:hflag} under the projection maps in Equations~\eqref{slN:eq:proj1} and~\eqref{slN:eq:proj2}.

\subsubsection{The $(1,1,2)$-theta pre-foam}

Recall that $W(s,t)$ is the polynomial such that 
$W(x+y,xy)=x^{N+1}+y^{N+1}$. More precisely, 
we have
\begin{equation*}
W(s,t)=\sum_{i+2j=N+1}a_{ij}s^it^j,
\end{equation*}
with $a_{N+1,0}=1$, $a_{N+1-2j,j}=\frac{(-1)^j}{j}(N+1)\binom{N-j}{j-1}$, for $2\le 
2j\le N+1$, and $a_{ij}=0$ otherwise. In particular $a_{N-1,1}=-(N+1)$. We have
\begin{align*}
W'_1(s,t) &= \sum_{i+2j=N+1}ia_{ij}s^{i-1}t^j,\\
W'_2(s,t) &= \sum_{i+2j=N+1}ja_{ij}s^it^{j-1}.
\end{align*}
 
By $W'_1(s,t)$ and $W'_2(s,t)$, we denote the partial derivatives of $W(s,t)$ with respect to the first and the second variable, respectively.

\begin{figure}[h!]
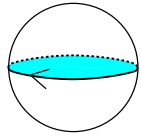

\centering
\figs{0.25}{theta-pre}
\caption{A dotless (1,1,2)-theta pre-foam}
\label{slN:fig:theta112}
\end{figure}

Consider the $(1,1,2)$-theta pre-foam of Figure~\ref{slN:fig:theta112}. According to the conventions of Subsection~\ref{slN:ssec:decoration} we have variables $x$ and $y$ on the lower and upper simple facets respectively, and the variables $s$ and $t$ on the double facet. To the singular circle we assign the  
matrix factorization
\begin{equation*}
MF_{1}=
\begin{Bmatrix}
A', & x+y-s  \\
B', & xy-t
\end{Bmatrix}.
\end{equation*}

Recall that  
\begin{align}
A' &= \frac{W(x+y,xy)-W(s,xy)}{x+y-s}, \label{slN:eq:Adef}\\
B' &= \frac{W(s,xy)-W(s,t)}{xy-t}. \label{slN:eq:Bdef}
\end{align}
Hence, the differential of this matrix factorization is 
given by the following 4 by 4 matrix:
\begin{equation*}
D=
\begin{pmatrix}
  0 & D_1 \\
D_0 & 0
\end{pmatrix},
\end{equation*}
where
\begin{equation*}
D_0=
\begin{pmatrix}
A', & xy-t \\
B', & s-x-y
\end{pmatrix}
,\quad
D_1=
\begin{pmatrix}
x+y-s, & xy-t \\
B',    & -A'
\end{pmatrix}.
\end{equation*}

\n Note that we are using a convention for tensor products of matrix factorizations that is different from the one in~\cite{mackaay-stosic-vaz}. The KL formula assigns the polynomial, $KL_{\Theta_1}(x,y,s,t)$, which is given by the supertrace of the 
twisted differential of $D$
\begin{equation*}
KL_{\Theta_1}=\str\left(\partial_x D\partial_y D \frac{1}{2}(\partial_s D\partial_t D-\partial_t D \partial_s D)\right).
\end{equation*}
Straightforward computation gives
\begin{equation}\label{slN:eq:KL1}
KL_{\Theta_1}=-B'_s(A'_x-A'_y)-(A'_x+A'_s)(B'_y+xB'_t)+(A'_y+A'_s)(B'_x+yB'_t),
\end{equation}
where by $A'_i$ and $B'_i$ we have denoted the partial derivatives with respect to the variable $i$.
From the definitions~\eqref{slN:eq:Adef} and~\eqref{slN:eq:Bdef} we have 
\begin{align*}
A'_x-A'_y &= (y-x)\frac{W'_2(x+y,xy)-W'_2(s,xy)}{x+y-s},\\
A'_x+A'_s &= \frac{W'_1(x+y,xy)-W'_1(s,xy)+y(W'_2(x+y,xy)-W'_2(s,xy))}{x+y-s},\\
A'_y+A'_s &= \frac{W'_1(x+y,xy)-W'_1(s,xy)+x(W'_2(x+y,xy)-W'_2(s,xy))}{x+y-s},\\
B'_s &= \frac{W'_1(s,xy)-W'_1(s,t)}{xy-t},\\
B'_x+yB'_t &= y\frac{W'_2(s,xy)-W'_2(s,t)}{xy-t},\\
B'_y+xB'_t &= x\frac{W'_2(s,xy)-W'_2(s,t)}{xy-t}.
\end{align*}
After substituting this back into~\eqref{slN:eq:KL1}, we obtain
\begin{equation}\label{slN:eq:eq1}
KL_{\Theta_1}=(x-y)\det\begin{pmatrix}
\alpha &\beta\\
\gamma & \delta 
\end{pmatrix},
\end{equation}  
where
\begin{align*}
\alpha &= \frac{W'_1(x+y,xy)-W'_1(s,xy)}{x+y-s},\\
\beta &= \frac{W'_2(x+y,xy)-W'_2(s,xy)}{x+y-s},\\
\gamma &= \frac{W'_1(s,xy)-W'_1(s,t)}{xy-t},\\
\delta &= \frac{W'_2(s,xy)-W'_2(s,t)}{xy-t}.
\end{align*}
From this formula we see that $KL_{\Theta_1}$ is homogeneous of degree $4N-6$ (remember that 
$\deg x=\deg y=\deg s=2$ and $\deg\,t=4$).

Since the evaluation is in the product of the Grassmannians corresponding to the three disks, i.e. in the ring
$\bQ[x]/(x^N) \times \bQ[y]/(y^N) \times \bQ[s,t] / ( W'_1(s,t),W'_2(s,t) ) $, we have 
$x^N=y^N=0=W'_1(s,t)=W'_2(s,t)$. Also, we can express the monomials in $s$ and $t$ as linear 
combinations of the Schur polynomials $\pi_{k,l}$ (writing $s=\pi_{1,0}$ and $t=\pi_{1,1})$), and 
we have $W'_1(s,t)=(N+1)\pi_{N,0}$ and $W'_2(s,t)=-(N+1)\pi_{N-1,0}$. Hence, we can write 
$KL_{\Theta_1}$ as
$$KL_{\Theta_1}=(x-y)\sum_{N-2\ge k\ge l\ge 0} {\pi_{k,l} p_{kl}(x,y)},$$
with $p_{kl}$ being a polynomial in $x$ and $y$. We want to determine which combinations of dots 
on the simple facets give rise to non-zero evaluations, so our aim is to compute the coefficient of $\pi_{N-2,N-2}$ in the sum on the r.h.s. of the above equation 
(i.e. in the determinant in~\eqref{slN:eq:eq1}). For degree reasons, this coefficient is of degree zero, 
and so we shall only compute the parts of $\alpha$, $\beta$, $\gamma$ and $\delta$ which do not 
contain $x$ and $y$. We shall denote these parts by putting 
a bar over the Greek letters. Thus we have
\begin{align*}
\bar{\alpha} &= (N+1)s^{N-1},\\
\bar{\beta} &= -(N+1)s^{N-2},\\
\bar{\gamma} &= \sum_{i+2j=N+1,\,j\ge 1}ia_{ij}s^{i-1}t^{j-1},\\
\bar{\delta} &= \sum_{i+2j=N+1,\,j\ge 2}ja_{ij}s^it^{j-2}.\\
\end{align*}
Note that we have 
$$t\bar{\gamma}+(N+1)s^N=W'_1(s,t),$$
and 
$$t\bar{\delta}-(N+1)s^{N-1}=W'_2(s,t),$$
and so in the cohomology ring of the Grassmannian $\cG_{2,N}$, we have $t\bar{\gamma}=-(N+1)s^N$ and $t\bar{\delta}=(N+1)s^{N-1}$.
On the other hand, by using $s=\pi_{1,0}$ and $t=\pi_{1,1}$, we obtain that in 
$\hy(\cG_{2,N})\cong\bQ[s,t]/ ( \pi_{N-1,0},\pi_{N,0} ) $, the following holds:
$$s^{N-2}=\pi_{N-2,0}+tq(s,t),$$
for some polynomial $q$, and so
$$s^{N-1}=s^{N-2}s=\pi_{N-1,0}+\pi_{N-2,1}+stq(s,t)=t(\pi_{N-3,0}+sq(s,t)).$$
Thus, we have
\begin{align}
\det
\begin{pmatrix}
\bar{\alpha} &\bar{\beta}\\
\bar{\gamma} &\bar{\delta} 
\end{pmatrix} &= 
(N+1)(\pi_{N-3,0}+sq(s,t))t\bar{\delta} +(N+1)\pi_{N-2,0}\bar{\gamma} 
+(N+1)q(s,t)t\bar{\gamma} \notag \\
&= (N+1)^2(\pi_{N-3,0}+sq(s,t))s^{N-1} + (N+1)\pi_{N-2,0}\bar{\gamma} 
- (N+1)^2 q(s,t) s^N \notag \\
&= (N+1)^2\pi_{N-3,0}s^{N-1}+ (N+1)\pi_{N-2,0}\bar{\gamma}. \label{slN:eq:eq2}
\end{align}
Since 
$$\bar{\gamma}=(N-1)a_{N-1,1}s^{N-2} + t r(s,t)$$
holds in the cohomology ring of the Grassmannian $\cG_{2,N}$ 
for some polynomial $r(s,t)$, we have
$$\pi_{N-2,0}\bar{\gamma}=\pi_{N-2,0}(N-1)a_{N-1,1}s^{N-2}=
-\pi_{N-2,0}(N-1)(N+1)s^{N-2}.$$
Also, we have that for every $k\ge 2$,
$$s^k= \pi_{k,0}+(k-1)\pi_{k-1,1}+t^2 w(s,t),$$
for some polynomial $w$. Replacing this in~\eqref{slN:eq:eq2} and bearing in mind that $\pi_{i,j}=0$, for $i\ge N-1$, we get
\begin{align*}
\det
\begin{pmatrix}
\bar{\alpha} &\bar{\beta}\\
\bar{\gamma} &\bar{\delta} 
\end{pmatrix}
&=  (N+1)^2 s^{N-2} (\pi_{N-2,0}+\pi_{N-3,1}-(N-1)\pi_{N-2,0}) \\
&=  (N+1)^2 (\pi_{N-2,0}+(N-3)\pi_{N-3,1}+\pi_{2,2}w(s,t)) (\pi_{N-3,1}-(N-2)\pi_{N-2,0}) \\
&=  -(N+1)^2 \pi_{N-2,N-2}.  
\end{align*}
Hence, we have
$$KL_{\Theta_1}=(N+1)^2 (y-x) \pi_{N-2,N-2} + 
\sum_{\substack{N-2\ge k \ge l\ge 0 \\ N-2>l} }
c_{i,j,k,l} \pi_{k,l} x^i y^j.$$
Recall that in the 
product of the Grassmannians corresponding to the three 
disks, i.e. in the ring 
$\bQ[x]/(x^N) \times \bQ[y]/(y^N) \times \bQ[s,t]/ ( \pi_{N-1,0},\pi_{N,0} )$, we have 
$$\epsilon(x^{N-1}y^{N-1}\pi_{N-2,N-2})=-1.$$
Therefore the only monomials $f$ in $x$ and $y$ such that 
$\KL{KL_{\Theta_1}f}\ne 0$ 
are $f_1=x^{N-1}y^{N-2}$ and $f_2=x^{N-2}y^{N-1}$, and  
$\KL{KL_{\Theta_1}f_1}=-(N+1)^2$ and 
$\KL{KL_{\Theta_1}f_2}=(N+1)^2$. Thus, we have that the value of 
the theta pre-foam with unlabelled 2-facet is nonzero only when the 
upper 1-facet has $N-2$ dots and the lower one has $N-1$ 
dots (and has the value $(N+1)^2$) and when the 
upper 1-facet has $N-1$ dots and the lower one has $N-2$ dots 
(and has the value $-(N+1)^2$). The evaluation of this theta foam with 
other labellings can be obtained from the result above by dot migration.

Up to normalization the KL evaluation of the $(1,1,2)$-theta pre-foam corresponds to the trace on the cohomology ring of the partial flag variety $Fl_{1,2,N}$  in Equation~\eqref{slN:eq:hflag} given by $\varepsilon(x_1^{N-2}x_2^{N-1})=1$, and where $x_1$ and $x_2$ correspond to the dots in the upper and lower facet respectively.

\subsubsection{The $(1,2,3)$-theta pre-foam}

For the theta pre-foam in Figure~\ref{slN:fig:theta123} the method is the same as in the previous case, just the computations are more complicated.
\begin{figure}[h!]
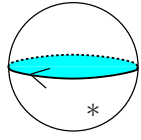

\labellist
\small\hair 2pt
\pinlabel $*$ at 130 32
\endlabellist
\centering
\figs{0.25}{theta-pre}
\caption{A dotless (1,2,3)-theta pre-foam}
\label{slN:fig:theta123}
\end{figure}
In this case, we have one 1-facet, to which we associate the 
variable $x$, one 2-facet, with variables $s$ and $t$ and 
the 3-facet with variables $p$, $q$ and $r$. Recall that the 
polynomial $W(p,q,r)$ is such that 
$W(a+b+c,ab+bc+ac,abc)=a^{N+1}+b^{N+1}+c^{N+1}$. We denote 
by $W'_i(p,q,r)$, $i=1,2,3$, the partial derivative of $W$ 
with respect to $i$-th variable. Also, let $A$, $B$ and $C$ 
be the polynomials given by
\begin{align}
A &= \frac{W(x+s,xs+t,xt)-W(p,xs+t,xt)}{x+s-p},\label{slN:eq:1}\\
B &= \frac{W(p,xs+t,xt)-W(p,q,xt)}{xs+t-q},\\
C &= \frac{W(p,q,xt)-W(p,q,r)}{xt-r}.\label{slN:eq:3}
\end{align}

To the singular circle of this theta pre-foam, we associated 
the matrix factorization 
(see Equations~\eqref{slN:eq:MF21}-\eqref{slN:eq:MF2}):
$$MF_2=
\begin{Bmatrix}
A, &  x+s-p  \\
B, &  xs+t-q \\
C, &  xt-r
\end{Bmatrix}.$$
The differential of this matrix factorization is the 8 by 8 matrix
\begin{equation*}
D=
\begin{pmatrix}
  0 & D_1 \\
D_0 & 0
\end{pmatrix}
,
\end{equation*}
where
\begin{equation*}
D_0=
\left(\begin{array}{c|c}
  d_0   & (xt-r)\id_2  \\
\hline
C\id_2  & -d_1 
\end{array}
\right),
\quad\quad
D_1=\left(\begin{array}{c|c}
    d_1 & (xt-r)\id_2 \\
\hline
C\id_2  & -d_0 \\
\end{array}
\right).
\end{equation*}
Here $d_0$ and $d_1$ are the differentials of the matrix factorization
$$
\begin{Bmatrix}
A, &  x+s-p  \\
B, & xs+t-q
\end{Bmatrix},$$
i.e.
$$d_0=
\begin{pmatrix}
 A & xs+t-q  \\
 B & p-x-s
\end{pmatrix},\quad\quad
d_1=
\begin{pmatrix}
x+s-p & xs+t-q \\
 B    &  -A
\end{pmatrix}.$$
The KL formula assigns to this theta pre-foam the polynomial 
$KL_{\Theta_2}(x,s,t,p,q,r)$ 
given as the supertrace of the twisted differential of $D$, i.e.
$$KL_{\Theta_2}=\str\left(\partial_x D\frac{1}{2}(\partial_s D\partial_t D-\partial_t D \partial_s D)
\partial_3D\hat{\ }\right),$$
where
\begin{align*}
\partial_3D\hat{\ } &= \frac{1}{3!}\left(\partial_p D\partial_qD 
\partial_rD-\partial_p D\partial_rD \partial_qD+\partial_q D\partial_rD \partial_pD\right. \\
& \qquad\quad\left.-\partial_q D\partial_pD \partial_rD+\partial_r D\partial_pD \partial_qD-\partial_r D\partial_qD \partial_pD\right).
\end{align*}
After straightforward computations and some grouping, we obtain
\begin{align*}
KL_{\Theta_2}
&= (A_p+A_s)\bigl[(B_t+B_q)(C_x+tC_r)-(B_x+sB_q)(C_t+xC_r)-(B_x-sB_t)C_q\bigr] \\
& \quad +\ (A_p+A_x)\bigl[(B_s+xB_q)(C_t+xC_r)+(B_s-xB_t)C_q\bigr] \\
& \quad +\ (A_x-A_s)\bigl[B_p(C_t+xC_r)-(B_t+B_q)C_p+B_pC_q\bigr] \\
& \quad -\ A_t\bigl[((B_s+xB_q)+B_p)(C_x+tC_r)+((B_s+xB_q) \\ 
& \quad -\ (B_x+sB_q))C_p+((sB_s-xB_x)+(s-x)B_p)C_q\bigr].
\end{align*}
In order to simplify this expression, we introduce the following polynomials
\begin{align*}
a_{1i} &= \frac{W'_i(x+s,xs+t,xt)-W'_i(p,xs+t,xt)}{x+s-p},\quad i=1,2,3, \\
a_{2i} &= \frac{W'_i(p,xs+t,xt)-W'_i(p,q,xt)}{xs+t-q},\qquad\qquad i=1,2,3, \\
a_{3i} &= \frac{W'_i(p,q,xt)-W'_i(p,q,r)}{xt-r},\qquad\qquad\qquad i=1,2,3.
\end{align*}

\medskip

Then from~\eqref{slN:eq:1}-\eqref{slN:eq:3}, we have
\begin{align*}
A_x+A_p&=a_{11}+sa_{12}+ta_{13}, &  A_p+A_s&=a_{11}+xa_{12}, \\
A_x-A_s&=(s-x)a_{12}+ta_{13}, & A_t&=a_{12}+xa_{13}, \\
B_p&=a_{21}, & B_s-xB_t&=-x^2a_{23}, \displaybreak[0] \\
sB_s-xB_x&=xta_{23}, & B_x-sB_t&=(t-sx)a_{23}, \\
B_t+B_q&=a_{22}+xa_{23}, & B_x+sB_q&=sa_{22}+ta_{23}, B_s+xB_q=xa_{22}, \\
C_p&=a_{31}, & C_q&=a_{32}, \\
 C_x+tC_r&=ta_{33}, & C_t+xC_r&=xa_{33}.
\end{align*}

\medskip

Using this $KL_{\Theta_2}$ becomes
\begin{equation}
\label{slN:eq:detheta123}
KL_{\Theta_2}=(t-sx+x^2)\det
\begin{pmatrix}
a_{11} & a_{12} & a_{13} \\
a_{21} & a_{22} & a_{23} \\
a_{31} & a_{32} & a_{33}
\end{pmatrix} .
\end{equation}

Now the last part follows analogously as in the case of the 
$(1,1,2)$-theta pre-foam. For degree reasons  
the coefficient of $\pi_{N-3,N-3,N-3}$ in the latter 
determinant is of degree zero, and one can obtain that it is 
equal to $(N+1)^3$. Thus, the coefficient of $\pi_{N-3,N-3,N-3}$ in $KL_{\Theta_2}$ is 
$(N+1)^3(t-sx+x^2)$ from which we obtain the value of the theta pre-foam when the 3-facet is 
undotted. For example, we see that 
$$\varepsilon\bigl(KL_{\Theta_2}\pi_{1,1}(s,t)^{N-3}x^{N-1}\bigr)=(N+1)^3.$$
It is then easy to obtain the values when the 3-facet is 
labelled by $\pi_{N-3,N-3,N-3}(p,q,r)$ using dot migration.  
The example above implies that 
$$\varepsilon\bigl(KL_{\Theta_2}\pi_{N-3,N-3,N-3}(p,q,r)x^2\bigr)=(N+1)^3.$$

Up to normalization the KL evaluation of the $(1,2,3)$-theta pre-foam corresponds to the trace on the cohomology ring of the partial flag variety $Fl_{2,3,N}$  in Equation~\eqref{slN:eq:hflag} given by $\varepsilon(x^2\pi_{N-3,N-3,N-3})=1$,  where $\pi_{N-3,N-3,N-3}$ correspond to a linear combination of dots in the triple facet and $x$ corresponds to a dot in the upper simple facet (see Section~\ref{slN:sec:partial flags}).

\medskip

For $N=3$, using the explicit formula for $W(p,q,r)$ we see that the determinant~\eqref{slN:eq:detheta123} is zero, which means that the $(1,2,3)$-theta pre-foams would evaluate to zero, independently of the dots they may have. That is why we restrict the construction in this chapter to the case of $N\geq 4$.

\subsection{Normalization}
\label{slN:sec:norm}

It will be convenient to normalize the KL evaluation. Let $u$ be a closed pre-foam 
with graph $\Gamma$. Note that $\Gamma$ has two types of edges: the ones incident to two 
simple facets and one double facet and the ones incident to one simple, one double and one triple facet. 
Edges of the same type form cycles in $\Gamma$. Let $e_{112}(u)$ be the total number of cycles in $\Gamma$ 
with edges of the first type and $e_{123}(u)$ the total number of cycles with edges of the second type. 
We normalize the KL formula by dividing $KL_u$ by 
$$(N+1)^{2e_{112}+3e_{123}}.$$
In the sequel we only use this normalized KL 
evaluation keeping the same notation $\KL{u}$.
Note that with this normalization the KL-evaluation in the examples above always gives 
$0,-1$ or $1$.

\subsection{The glueing property}
\label{slN:sec:glueing}

We now consider the glueing property of the KL formula, which is an important property of TQFT's.

Suppose that $u$ is a pre-foam with boundary $\Gamma$. We decorate the facets, singular arcs and singular vertices of $u$ as in Subsection~\ref{slN:ssec:decoration}. Recall that the orientations of the singular arcs of $u$ induce an orientation of $\Gamma$ (see Figure~\ref{slN:fig:orientations}). To each vertex $\nu$ of $\Gamma$ we associate the matrix factorization which is the matrix factorization associated to the singular arc of $u$ that is bounded by $\nu$. To each circle in $\Gamma$ we associate the Jacobi algebra of the corresponding facet in $\bZ/2\bZ$-degree $i\pmod 2$, where $i=1,2,3$. Then define the matrix factorization $M_\Gamma$ as the tensor product of all the matrix factorizations of its vertices as given above and Jacobi algebras $J_i$ in $\bZ/2\bZ$-degree $i\pmod 2$ for all (if any) circles in $\Gamma$. The tensor product is taken over suitable rings so that $M_\Gamma$ is a free  module over $R$ of finite rank, where $R$ is the polynomial ring with rational coefficients in the variables of the facets of $u$ that are bounded by $\Gamma$. The factorization $M_\Gamma$ has potential zero, since for every edge $e$ of $\Gamma$ the individual potential $W_e$ appears twice in $W_\Gamma$ (one for each vertex bounding $e$) with opposite signs.
 The homology 
\begin{equation}\label{slN:eq:extg}
\hy_{\mf{}}(M_\Gamma)\cong\Ext(R,M_\Gamma)
\end{equation}
is finite-dimensional and coincides with the one in~\cite{KR} after using Lemma~\ref{KR:lem:exvar} to exclude the variables associated to all double and triple edges of $\Gamma$. 

Let $u$ be an open pre-foam whose boundary consists of two parts $\Gamma_1$ and $\Gamma_2$, and denote by $M_1$ and $M_2$ the matrix factorizations associated to $\Gamma_1$ and $\Gamma_2$ respectively. We say that $F$ is an \emph{interior facet} of $u$ if $\partial F\cap \partial u=\emptyset$. Restricting $KL_u$ to the interior facets of $u$ and doing the same to $\varepsilon$ in Equation~\eqref{slN:eq:ev} we see that the KL formula associates to $u$ an element of $\Ext(M_1,M_2)$. 

If $u'$ is another pre-foam whose boundary consists of $\Gamma_2$ and $\Gamma_3$, then it 
corresponds to an element of $\Ext(M_2,M_3)$, while the element associated to the pre-foam $uu'$, which is obtained by glueing the pre-foams $u$ and $u'$ along $\Gamma_2$, is equal to the composite of the elements associated to $u$ and $u'$. 

On the other hand, we can also see $u$ as a morphism from the empty web 
to its boundary $\Gamma=\Gamma_2\sqcup\Gamma_1^*$, where $\Gamma_1^*$ is equal to $\Gamma_1$ but with the opposite orientation. In that case, the KL formula 
associates to it an element from 
$$\Ext\bigl(R,M_{\Gamma_2}\otimes \bigl(M_{\Gamma_1}\bigr)_\bullet\bigr)\cong \hy_{\mf{}}(\Gamma).$$ 
Both ways of applying the KL formula are equivalent up to a global 
$q$-shift by corollary 6 in \cite{KR}.

In the case of a pre-foam $u$ with corners, i.e. a pre-foam with two horizontal boundary components 
$\Gamma_1$ and $\Gamma_2$ which are connected by vertical edges, one has to ``pinch'' the 
vertical edges. This way one can consider $u$ to be a morphism from the empty set to 
$\Gamma_2\cup_{\nu}\Gamma_1^*$, where $\cup_\nu$ means that the webs 
are glued at their vertices. The same observations as above hold, except that 
$M_{\Gamma_2}\otimes\bigl(M_{\Gamma_1}\bigr)_\bullet$ is now the tensor product over the polynomial ring 
in the variables associated to the horizontal edges with corners.

The KL formula also has a general property that will be useful later. The KL formula defines a duality pairing between $\Hom_{\foam}(\emptyset,\Gamma)$ and $\Hom_{\foam}(\Gamma,\emptyset)$ as
\begin{equation}\label{slN:eq:dualpairing}
(a,a')=\KL{a'a},
\end{equation}
for $a\in\Hom_{\foam}(\emptyset,\Gamma)$ and $a'\in\Hom_{\foam}(\Gamma,\emptyset)$.
From the duality pairing it follows that 
$$\Hom_{\foam}(\emptyset,\Gamma^*)=\Hom_{\foam}(\Gamma,\emptyset).$$

The duality pairing also defines a canonical element 
$$\psi_{\Gamma,\Gamma^*}\in\Hom_{\foam}(\emptyset,\Gamma^*)\otimes\Hom_{\foam}(\emptyset,\Gamma)$$ 
by
$$
(\psi_{\Gamma,\Gamma^*},a\otimes a') = (a,a')
$$
Introducing a basis $\{a_i\}$ of $\Hom_{\foam}(\emptyset,\Gamma)$ and its dual basis $\{a_j^*\}$ of $\Hom_{\foam}(\Gamma,\emptyset)$ we have
$$
\psi_{\Gamma,\Gamma^*}=\sum_ja_j\otimes a_j^*.
$$

Suppose that a closed pre-foam $u$ contains two points $p_1$ and $p_2$ such that intersecting $u$ with disjoint spheres centered in $p_1$ and $p_2$ result in two webs $\Gamma_1$ and $\Gamma_2$ and that $\Gamma_2 = \Gamma_1^*$. If we remove the parts inside those spheres from $u$ and glue the boundary components $\Gamma_1$ and $\Gamma_2$ onto each other we obtain a new closed pre-foam $u'$ and the KL evaluations of $u$ and $u'$ are related by (see~\cite{KR-LG})
\begin{equation}\label{slN:eq:CN-KL}
\KL{u'}=\KL{\psi_{\Gamma_1,\Gamma^*_1}u}=\sum_j\KL{a_j^* u a_j}.
\end{equation}

%
%
%
%
%
%
%
%
\section{The category $\foam$}
\label{slN:sec:foamN}
Recall that $\KL{u}$ denotes the KL evaluation of a closed 
pre-foam $u$.  
\begin{defn}
\label{slN:defn:foam}
The category $\foam$ is the quotient of the category $\PF$ by the 
kernel of $\KL{\ }$, i.e. by the following identifications: for any webs $\Gamma$, 
$\Gamma'$ 
and finite sets $f_i\in\Hom_{\PF}(\Gamma,\Gamma')$ 
and $c_i\in\bQ$ we impose the relations 
$$\sum\limits_{i}c_if_i=0\quad \Leftrightarrow\quad
\sum\limits_{i}c_i\KL{g'f_ig}=0,$$ 
for all 
$g\in\Hom_{\PF}(\emptyset,\Gamma)$ and  
$g'\in\Hom_{\PF}(\Gamma',\emptyset)$. The 
morphisms of $\foam$ are called \emph{foams}. 
\end{defn}

In the next two propositions we prove the ``principal'' relations in $\foam$. 
All other relations that we need are consequences of these and will be proved in 
subsequent lemmas and corollaries.     

\begin{prop}
\label{slN:prop:principal rels1}
The following identities hold in $\foam$: 

 
(The \emph{dot conversion} relations) 
\begin{align*}
\figins{-8}{0.3}{plan-i} &= 0\quad\text{if}\quad i\geq N
\\[1.5ex]
\figins{-8}{0.3}{dplan-km} &= 0\quad\text{if}\quad k\geq N-1
\\[1.5ex]
\figins{-8}{0.3}{plan-pqr} &= 0\quad\text{if}\quad p\geq N-2
\end{align*}


\bigskip

(The \emph{dot migration} relations)
\begin{align*}
\figins{-14}{0.45}{pdots_b1}\ &=  \quad
\figins{-14}{0.45}{pdots_1} \ + \
\figins{-14}{0.45}{pdots_2}
\\[1ex]
\figins{-14}{0.45}{pdots_w1}\ &=  
\quad
\figins{-14}{0.45}{pdots_12}
\\[2ex]
\figins{-14}{0.45}{pdots-star-b} \ \ \ &=  \
\figins{-14}{0.45}{pdots-star-db} \ + \
\figins{-14}{0.45}{pdots_4} 
\\[1ex]
\figins{-14}{0.45}{pdots-star-w} \ \ \ &=  \
\figins{-14}{0.45}{pdotsd_w1} \ + \
\figins{-14}{0.45}{pdots-star-bdb}
\\[1ex]
\figins{-14}{0.45}{pdots-star-bw} \ \ \ &=  \
\figins{-14}{0.45}{pdots-wb}
\end{align*}

\bigskip

(The \emph{cutting neck} relations\footnote{The notation $\widehat{(i,j)}$, $\widehat{(i,j,k)}$ was explained in Subsection~\ref{slN:ssec:hflag}.}) 
$$
\figins{-27}{0.8}{cylinder}=
\sum\limits_{i=0}^{N-1}
\figins{-27}{0.8}{cneck-i}
\qquad\text{\emph{(CN$_1$)}}
$$
$$   
\figins{-28}{0.8}{dcylinder}=-
\sum\limits_{0\leq j\leq i\leq N-2}
\figins{-28}{0.8}{dcneck-ij}
\qquad\text{\emph{(CN$_2$)}}
\qquad\qquad
\figins{-28}{0.8}{cylinder-star}=
-\sum\limits_{0\leq k\leq j\leq i\leq N-3}
\figins{-28}{0.8}{cneck-ijk}
\qquad\text{\emph{(CN$_*$)}}
$$

\bigskip

(The \emph{sphere} relations)
$$
\figins{-10}{0.35}{sph-i}=
\begin{cases}1, & i=N-1 \\ 0, & \text{else}\end{cases}
\qquad 
\text{\emph{(S$_1$)}} \qquad\quad
\figins{-10}{0.35}{dsph-ij}=
\begin{cases}-1, & i=j=N-2 \\ 0, & \text{else}\end{cases}
\qquad 
\text{\emph{(S$_2$)}}
$$

$$
\figins{-10}{0.35}{sph-star-ijk}=
\begin{cases}-1, & i=j=k=N-3 \\ 0, & \text{else}.\end{cases}
\qquad
\text{\emph{(S$_*$)}}
$$

\bigskip

(The \emph{$\Theta$-foam} relations)
$$
\figins{-10}{0.35}{theta21} = 1  = - \
\figins{-10}{0.35}{theta12} \quad
 (\ThetaGraph)
\qquad \ \text{and} \ \qquad
\figins{-16}{0.42}{theta-star-23} = 1  = -  
\figins{-10}{0.42}{theta-star-32}
 \quad
 (\ThetaGraph_*).$$
Inverting the orientation of the singular circle of $(\ThetaGraph_*)$ inverts the sign of the 
corresponding foam. A theta-foam with dots on the double facet can be transformed into a 
theta-foam with dots only on the other two facets, using the dot migration relations.


\bigskip

(The \emph{Matveev-Piergalini} relation) 
\begin{equation}
\figins{-24}{0.8}{sh_move1} =
\figins{-24}{0.8}{sh_move2},\quad
\figins{-24}{0.8}{sh_move1-star} =
\figins{-24}{0.8}{sh_move2-star}.
\tag{MP}\label{slN:eq:MP}
\end{equation}
\end{prop}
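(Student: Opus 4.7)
\medskip

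\noindent\textbf{Proof plan.}
By Definition~\ref{slN:defn:foam}, each claimed identity $\sum_i c_i f_i = 0$ is equivalent to the statement that $\sum_i c_i \KL{g' f_i g} = 0$ for every pair of closures $g,g'$. Since the KL formula respects glueing (Subsection~\ref{slN:sec:glueing}), it suffices in each case to verify the identity locally: concretely, to show that after restricting $KL$ to the facets and singular arcs involved, both sides produce the same element of $\bigotimes_F J_{W_F}$ (or of the appropriate $\Ext$--module, when the local picture has boundary). So the whole proposition reduces to a finite list of computations in the Jacobi algebras $J_{W_F}\cong H(\mathcal{G}_{i,N})$ and in the $\Ext$--groups of the edge and vertex factorizations $MF_1,MF_2,M_v$ introduced in Subsection~\ref{slN:ssec:decoration}.

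The \emph{dot conversion} relations are then immediate: on an $i$--facet the KL output lies in $J_{W_F}\cong H(\mathcal{G}_{i,N})$, and the three listed vanishings are precisely the defining relations~\eqref{slN:eq:gras} of $H(\mathcal{G}_{1,N}), H(\mathcal{G}_{2,N}), H(\mathcal{G}_{3,N})$. The \emph{dot migration} relations follow from the linear entries of the Koszul factorizations attached to singular edges: since $(x+y-s)$ and $(xy-t)$ are null--homotopic endomorphisms of $MF_1$, multiplication by $x+y$ and by $xy$ agree with multiplication by $s$ and by $t$ in $\Ext(MF_1,MF_1)$, giving the first two migration formulas; the three linear entries $(x+s-p),(xs+t-q),(xt-r)$ of $MF_2$ produce, in exactly the same way, the remaining migration formulas across a $(1,2,3)$--edge, which correspond to the inclusions $H(\mathcal{G}_{i,N})\hookrightarrow H(Fl_{1,2,N})$ and $H(\mathcal{G}_{3,N})\hookrightarrow H(Fl_{2,3,N})$ of~\eqref{slN:eq:proj1}--\eqref{slN:eq:proj2}. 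The \emph{sphere} and \emph{theta--foam} relations were essentially already computed in Subsection~\ref{slN:ssec:compKL}: the sphere relations are just the definition of the trace $\varepsilon$ on $H(\mathcal{G}_{i,N})$, and for the theta--foams the dominant--coefficient calculations carried out there give, after the normalization fixed in Section~\ref{slN:sec:norm}, exactly the values $\pm 1$ asserted.

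The \emph{cutting neck} relations are an application of the glueing formula~\eqref{slN:eq:CN-KL}: for a simple, double or triple tube, cutting replaces the tube by the canonical element $\psi_{\Gamma,\Gamma^*}=\sum_j a_j\otimes a_j^*$ where $\Gamma$ is the corresponding circle. The bases $\{\pi_\lambda\}$ of $H(\mathcal{G}_{i,N})$ together with the dual bases $\{\widehat{\pi}_\lambda\}$ computed in~\eqref{slN:eq:db} write $\psi_{\Gamma,\Gamma^*}$ as precisely the sums that appear in (CN$_1$), (CN$_2$), (CN$_*$); the signs $(-1)^{\lfloor i/2\rfloor}$ coming from~\eqref{slN:eq:trag} account for the minus signs in (CN$_2$) and (CN$_*$). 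Together with the dot--conversion and dot--migration relations, this also confirms that the Schur--polynomial notation $\widehat{(i,j)},\widehat{(i,j,k)}$ used on the capping disks is well defined on $\foam$.

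The real obstacle is the \emph{Matveev--Piergalini} relation. The two sides are foams with the same boundary web $\Gamma$, so by the duality pairing~\eqref{slN:eq:dualpairing} both represent degree~$0$ elements of the same $\Ext$--group between the factorizations attached to the two singular vertices involved. The plan is to show that this $\Ext$--module is one--dimensional in $q$--degree $0$, using exactly the graded--dimension computation $q^{3N-6}[N][N-1][N-2]=1+q(\ldots)$ that was used in Subsection~\ref{slN:ssec:decoration} to normalize the vertex classes $O_v$; this forces the two sides of (MP) to be proportional. Fixing the scalar then amounts to evaluating both sides on one convenient closure — for instance, capping off with simple/double/triple disks decorated with dual basis elements so that the resulting closed pre--foam reduces, via the already--proved cutting neck, dot migration and theta relations, to a computable $\pm 1$. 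The second case of (MP) (with the $*$ edge) is handled in the same way, noting that inverting the orientation of the triple singular arc inverts the sign of the theta evaluation so that the coefficient matches on both sides. This last normalization step is the only place where one has to be careful, since it depends on the (unknown, but consistent) choices of representatives $w_v$ made in Subsection~\ref{slN:ssec:decoration}; fortunately the same choices appear on both sides of (MP), so the ambiguity cancels.
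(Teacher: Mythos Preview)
Your treatment of the dot conversion, dot migration, sphere, theta and cutting neck relations is essentially the paper's: all of these were already computed in Subsection~\ref{slN:ssec:compKL} (with the normalization of Section~\ref{slN:sec:norm}), and the cutting neck relations are exactly the instance of~\eqref{slN:eq:CN-KL} with the dual bases~\eqref{slN:eq:db}. That part is fine.

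Your argument for~\eqref{slN:eq:MP} is where you diverge from the paper, and it is more complicated than it needs to be. The paper does not argue by a dimension count plus a closure evaluation. It simply observes that on one side of~\eqref{slN:eq:MP} there are exactly two singular vertices, and they are of \emph{opposite} types (one of type~I and one of type~II). By the very definition made in Subsection~\ref{slN:ssec:decoration}, the class $O_v$ attached to a type~II vertex is the homotopy inverse of the class attached to the corresponding type~I vertex. So in the KL formula the contributions of the two vertices compose to the identity, and the glueing property of Subsection~\ref{slN:sec:glueing} gives the relation immediately, with the scalar equal to~$1$ automatically. Your parenthetical remark that ``the same choices appear on both sides of (MP), so the ambiguity cancels'' is in fact the whole argument; once you notice this, there is no proportionality constant left to fix and no closure evaluation to perform.

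Your proposed route is not wrong in spirit, but it has two soft spots. First, the graded--dimension computation $q^{3N-6}[N][N-1][N-2]=1+q(\ldots)$ you invoke is the dimension of the $\Ext$--group at a \emph{single} singular vertex, not of the Hom--space between the source and target webs of the MP foams; you would have to say more carefully what space you are claiming is one--dimensional in degree~$0$. Second, you never actually carry out the promised closure computation that pins down the scalar. Both issues disappear once you use the paper's observation that the two vertex operators are mutual inverses by construction.
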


\begin{proof}
The dot conversion and migration relations, the sphere relations, 
the theta foam relations have already been proved in Section~\ref{slN:sec:KL}. 

The cutting neck relations are special cases of Equation~\eqref{slN:eq:CN-KL} where $a_j$ and $a_j^*$ can be read off from Equations~\eqref{slN:eq:db}.

The Matveev-Piergalini~\eqref{slN:eq:MP} relation is an immediate consequence of the choice of input 
for the singular vertices. Note that in this relation there are always two singular 
vertices of different type. The elements in the $\Ext$-groups associated to those two 
types of singular vertices are inverses of each other, which implies exactly the~\eqref{slN:eq:MP} relation by the glueing properties explained in Subsection~\ref{slN:sec:glueing}.  
\end{proof}

The following identities are a consequence of the dot and the theta relations. 

\begin{lem} 
\label{slN:lem:theta-pqrkli}
$$
\figins{-17}{0.55}{theta-pqrkli}=
\begin{cases}
  -1&\text{if}\quad(p,q,r)=
(N-3-i,N-2-k,N-2-j)\\
-1&\text{if}\quad(p,q,r)=(N-3-k, N-3-j,N-1-i)\\
1&\text{if}\quad (p,q,r)=(N-3-k,N-2-i,N-2-j)\\
0&\text{else}
\end{cases}
$$
Note that the first three cases only make sense if 
\begin{align*}
&N-2\geq j\geq k\geq i+1
\geq 1\\
&N-1\geq i\geq j+2\geq k+2\geq 2\\
&N-2\geq j\geq i\geq k+1\geq 1
\end{align*}
respectively.  
\end{lem}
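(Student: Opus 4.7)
The plan is to reduce the evaluation to the three non-trivial base cases of theta foams with an undotted triple facet, via dot migration on the $(1,2,3)$ singular arc.

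\emph{Step 1 (Triple-facet dot migration).} I would apply the dot migration relations from Proposition~\ref{slN:prop:principal rels1} on the singular arc of the $(1,2,3)$-theta foam. Combined with Lemma~\ref{slN:lem1}, these relations express the Schur polynomial $\pi_{p,q,r}(x_1,x_2,x_3)$ on the triple facet in terms of Schur polynomials in $(x_1,x_2)$ on the double facet and powers of $x_3$ on the simple facet. This yields the reduction formula
\[
\langle\Theta(i,\pi_{j,k},\pi_{p,q,r})\rangle \;=\; \sum_{(a,b,c)\sqsubset(p,q,r)} \langle\Theta(i+c,\ \pi_{j,k}\cdot\pi_{a,b},\ 0)\rangle,
\]
where the product $\pi_{j,k}\cdot\pi_{a,b}$ is evaluated in $H(G_{2,N})$ using Pieri's rule (Equation~\eqref{slN:eq:mnoz2}).

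\emph{Step 2 (Base evaluations).} The evaluation $\langle\Theta(I,\pi_{J,K},0)\rangle$ of a theta foam with an undotted triple facet can be computed from the explicit formula for $KL_{\Theta_2}$ given in Subsection~\ref{slN:ssec:compKL}: the coefficient of $\pi_{N-3,N-3,N-3}(p,q,r)$ in the determinant in~\eqref{slN:eq:detheta123} is $(N+1)^3$, and after the normalization of Section~\ref{slN:sec:norm} one identifies the value with the Frobenius trace on $H(Fl_{2,3,N})$ normalized by $\varepsilon(x^2\pi_{N-3,N-3,N-3})=1$. Using the fibration $Fl_{2,3,N}\to G_{2,N}$ with fiber $\bP^{N-3}$, and the reductions $x^{N-2}\equiv -\sum_{l=0}^{N-3}\pi_{N-2-l,0}\,x^l$ and $x^{N-1}\equiv \pi_{1,1}\,x^{N-3}+(\text{lower})$ deduced from $h_{N-2}=h_{N-1}=0$, one shows that $\langle\Theta(I,\pi_{J,K},0)\rangle$ is non-zero exactly for the three top-degree configurations
\[
(I,J,K) \in \{(N-3,N-2,N-2),\ (N-2,N-2,N-3),\ (N-1,N-3,N-3)\},
\]
each yielding $\pm 1$ with signs fixed by the trace.

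\emph{Step 3 (Case matching).} For each of the three stated forms of $(p,q,r)$, I would carry out the combinatorial analysis of $(a,b,c)\sqsubset(p,q,r)$ combined with the Pieri constraint for $\pi_{j,k}\cdot\pi_{a,b}$ to contain one of $\pi_{N-2,N-2}$, $\pi_{N-2,N-3}$, $\pi_{N-3,N-3}$. A direct check shows that in each of Cases 1, 2, 3 exactly one triple $(a,b,c)$ contributes: for Case 1, $(a,b,c)=(N-2-k,N-2-j,N-3-i)$, producing the base value indexed by $(N-3,N-2,N-2)$; for Case 2, $(a,b,c)=(N-3-k,N-3-j,N-1-i)$, producing the base value indexed by $(N-1,N-3,N-3)$; for Case 3, $(a,b,c)=(N-3-k,N-2-j,N-2-i)$, producing the base value indexed by $(N-2,N-2,N-3)$. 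In all three cases the corresponding Pieri coefficient equals $1$ and the matching of signs with Step 2 produces the stated values $-1,-1,+1$. For any $(p,q,r)$ not of one of the three forms, the constraints $p\geq a\geq q$, $q\geq b\geq r$ combined with the Pieri inequalities $a+k,b+j\in\{N-3,N-2\}$ and $a+j\geq X$ are incompatible, so every summand vanishes and $\langle\Theta\rangle=0$.

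The main obstacle is the bookkeeping in Step 3: verifying that the range conditions $(a,b,c)\sqsubset(p,q,r)$ interact with the Pieri constraints precisely so as to isolate a unique contributing triple in each of the three cases, and to exhaust every other configuration. Sign tracking, both through the normalization of the KL evaluation and through the identification of the Frobenius trace on $H(Fl_{2,3,N})$, must be done with care to reproduce the pattern $(-1,-1,+1)$ exactly.
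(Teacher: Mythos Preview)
Your approach is essentially the same as the paper's: both reduce via dot migration and Lemma~\ref{slN:lem1} to the three base evaluations $\Theta(\pi_{0,0,0},\pi_{J,K},I)$ with undotted triple facet, and then match combinatorially. The paper also obtains the three base values you list (for $(I,J,K)$ equal to $(N-3,N-2,N-2)$, $(N-2,N-2,N-3)$, $(N-1,N-3,N-3)$), deriving them from the $(\Theta_*)$ relation in Proposition~\ref{slN:prop:principal rels1} together with dot migration rather than directly from the Frobenius trace on $\hy(Fl_{2,3,N})$; your presentation via the fibration $Fl_{2,3,N}\to\cG_{2,N}$ is equivalent.

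The one organizational difference is that the paper, \emph{before} expanding $\pi_{p,q,r}$ by Lemma~\ref{slN:lem1}, first migrates a factor $(x_1x_2x_3)^{\min(i,k)}$ onto the triple facet, using $\pi_{j,k}=\pi_{i,i}\pi_{j-i,k-i}$ when $i\le k$ (and the analogous factorization when $k<i$). This reduces the problem to computing either $\Theta(\pi_{x,y,z},\pi_{w,u},0)$ or $\Theta(\pi_{x,y,z},\pi_{w,0},u)$. After this reduction the range constraint $c\le x\le N-3$ (respectively the Pieri constraints) immediately rules out two of the three base cases, so uniqueness of the surviving summand --- and the ``else'' case --- becomes a one-line check. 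Your direct expansion works too (the single contributing triples you identify in Cases 1--3 are correct), but the bookkeeping you flag as the main obstacle is genuinely heavier without this preliminary simplification, particularly for establishing that no base case is hit in the ``else'' case.
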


\begin{proof}
We denote the value of a theta foam by 
$\Theta(\pi_{p,q,r},\pi_{j,k},i)$. Since the $q$-degree of a non-decorated theta foam is 
equal to $-(N-1)-2(N-2)-3(N-3)=-(6N-14)$, we can have nonzero values of 
$\Theta(\pi_{p,q,r},\pi_{j,k},i)$ only if $p+q+r+j+k+i=3N-7$. Thus, if the 3-facet is not 
decorated, i.e. $p=q=r=0$, we have only four possibilities for the triple $(j,k,i)$ -- 
namely $(N-2,N-2,N-3)$, $(N-2,N-3,N-2)$, $(N-2,N-4,N-1)$ and 
$(N-3,N-3,N-1)$.
By Proposition~\ref{slN:prop:principal rels1} we have 
$$\Theta(\pi_{0,0,0},\pi_{N-2,N-2},N-3)=-1.$$
However by dot migration, Lemma~\ref{slN:lem1} and the fact that 
$\pi_{p,q,r}=0$ if $p\ge N-2$, we have
\begin{align*}
0 = \Theta(\pi_{N-2,N-2,N-3},\pi_{0,0},0) &= \Theta(\pi_{0,0,0},
\pi_{N-2,N-2},N-3)+\Theta(\pi_{0,0,0},\pi_{N-2,N-3},N-2),\\
0 =\Theta(\pi_{N-1,N-3,N-3},\pi_{0,0},0) &= \Theta(\pi_{0,0,0},
\pi_{N-2,N-3},N-2)+\Theta(\pi_{0,0,0},\pi_{N-3,N-3},N-1),\\
0 = \Theta(\pi_{N-1,N-2,N-4},\pi_{0,0},0) &= \Theta(\pi_{0,0,0},
\pi_{N-2,N-2},N-3)+\Theta(\pi_{0,0,0},\pi_{N-2,N-3},N-2) \\
 & \quad +\Theta(\pi_{0,0,0},\pi_{N-2,N-4},N-1).
\end{align*}
Thus, the only nonzero values of the theta foams, when the 3-facet is nondecorated are
\begin{gather*}
\Theta(\pi_{0,0,0},\pi_{N-2,N-2},N-3)=\Theta(\pi_{0,0,0},
\pi_{N-3,N-3},N-1)=-1, \\
\Theta(\pi_{0,0,0},\pi_{N-2,N-3},N-2)=+1.
\end{gather*}

Now we calculate the values of the general theta foam. Suppose first that $i\le k$. 
Then we have 
\begin{equation}
\Theta(\pi_{p,q,r},\pi_{j,k},i)=\Theta(\pi_{p,q,r},\pi_{i,i}\pi_{j-i,k-i},i)=
\Theta(\pi_{p+i,q+i,r+i},\pi_{j-i,k-i},0),
\label{slN:eq:th1}
\end{equation}
by dot migration. In order to calculate $\Theta(\pi_{x,y,z},\pi_{w,u},0)$
for $N-3\ge x\ge y\ge z \ge 0$ and $N-2 \ge w\ge u\ge 0$, we use Lemma~\ref{slN:lem1}. By 
dot migration we have
\begin{equation}
\Theta(\pi_{x,y,z},\pi_{w,u},0)=\sum_{(a,b,c)\sqsubset(x,y,z)}\Theta(\pi_{0,0,0},\pi_{w,u}\pi_{a,b},c).
\label{slN:eq:f1}
\end{equation}
Since $c\le p\le N-3$, a summand on the r.h.s. of~\eqref{slN:eq:f1} can be nonzero only 
for $c=N-3$ and $a$ and $b$ such that $\pi_{N-2,N-2}\in \pi_{w,u}\pi_{a,b}$, i.e. $a=N-2-u$ and $b=N-2-w$. Hence the value of~\eqref{slN:eq:f1} is equal to $-1$ if 
\begin{equation}
(N-2-u,N-2-w,N-3)\sqsubset(x,y,z),
\label{slN:eq:f2}
\end{equation}
and $0$ otherwise. Finally,~\eqref{slN:eq:f2} is equivalent to $x+y+z+w+u=3N-7$, $x\ge N-2-u\ge y$, $y\ge N-2-w\ge z$ and $x\ge N-3 \ge z$, and so we must have $u>0$ and
\begin{align*}
x &= N-3,\\
y &= N-2-u,\\
z &= N-2-w.
\end{align*}
Going back to~\eqref{slN:eq:f1}, we have that the value of theta is equal to $0$ if $l=i$, and in the case $l>i$ it is nonzero (and equal to $-1$) iff
\begin{align*}
p &= N-3-i,\\
q &= N-2-k,\\
r &= N-2-j,
\end{align*}
which gives the first family.

Suppose now that $k<i$. As in~\eqref{slN:eq:th1} we have
\begin{equation}
\Theta(\pi_{p,q,r},\pi_{j,k},i)=\Theta(\pi_{p+k,q+k,r+k},\pi_{j-k,0},i-k).
\label{slN:eq:th2}
\end{equation}
Hence, we now concentrate on $\Theta(\pi_{x,y,z},\pi_{w,0},u)$
for $N-3\ge x\ge y\ge z \ge 0$, $N-2 \ge w\ge 0$ and $N-1\ge u\ge 1$. Again, by using Lemma~\ref{slN:lem1} we have
\begin{equation}
\Theta(\pi_{x,y,z},\pi_{w,0},0)=\sum_{(a,b,c)\sqsubset(x,y,z)}\Theta(\pi_{0,0,0},\pi_{w,0}\pi_{a,b},u+c).
\label{slN:eq:f3}
\end{equation}
Since $a\le N-3$, we cannot have $\pi_{N-2,N-2}\in \pi_{w,0}\pi_{a,b}$ and we can have 
$\pi_{N-2,N-3}\in \pi_{w,0}\pi_{a,b}$ iff $a=N-3$ and $b=N-2-w$. In this case we have a nonzero 
summand (equal to $1$) iff $c=N-2-u$. Finally $\pi_{N-3,N-3}\in \pi_{w,0}\pi_{a,b}$ iff 
$a=N-3$ and $b=N-3-w$. In this case we have a nonzero summand (equal to $-1$) iff $c=N-1-u$. 
Thus we have a summand on the r.h.s. of~\eqref{slN:eq:f3} equal to $+1$ iff 
\begin{equation}
(N-3,N-2-w,N-2-u)\sqsubset (x,y,z),
\label{slN:eq:rel1}
\end{equation} 
and a summand equal to $-1$ iff 
\begin{equation}
(N-3,N-3-w,N-1-u)\sqsubset (x,y,z).
\label{slN:eq:rel2}
\end{equation} 
Note that in both above cases we must have $x+y+z+w+u=3N-7$, $x=N-3$ and $u\ge 1$. Finally, 
the value of the r.h.s of~\eqref{slN:eq:f3} will be nonzero iff exactly one of~\eqref{slN:eq:rel1} and~\eqref{slN:eq:rel2} holds.

In order to find the value of the sum on r.h.s. of~\eqref{slN:eq:f3}, we split the rest of the proof in three cases according to the relation between $w$ and $u$.

If $w\ge u$,~\eqref{slN:eq:rel1} is equivalent to $y\ge N-2-w$, $z\le N-2-w$, while~\eqref{slN:eq:rel2} is 
equivalent to $y\ge N-3-w$, $z\le N-3-w$ and $u\ge 2$. Now, we can see that the sum is nonzero 
and equal to $1$ iff $z=N-2-w$ and so $y=N-2-u$. Returning to~\eqref{slN:eq:th2}, we have that 
the value of $\Theta(\pi_{p,q,r},\pi_{j,k},i)$ is equal to $1$ for 
\begin{align*}
p &= N-3-k,\\
q &= N-2-i,\\
r &= N-2-j,
\end{align*}
for $N-2\ge j \ge i \ge k+1 \ge 1$, which is our third family.

If $w\le u-2$,~\eqref{slN:eq:rel1} is equivalent to $y\ge N-2-w$, $z\le N-2-u$ and $u\le N-2$ while~\eqref{slN:eq:rel2} is equivalent to $y\ge N-3-w$, $z\le N-1-u$. Hence, in this case we have that the total sum is nonzero and equal to $-1$ iff $y=N-3-w$ and $z=N-1-u$, which by returning to~\eqref{slN:eq:th2} gives that the value of  $\Theta(\pi_{p,q,r},\pi_{j,k},i)$ is equal to $-1$ for
\begin{align*}
p &= N-3-k,\\
q &= N-3-j,\\
r &= N-1-i,
\end{align*}
for $N-3\ge i-2\ge j \ge k \ge 0$, which is our second family.

Finally, if $u=w+1$~\eqref{slN:eq:rel1} becomes equivalent to $y\ge N-2-w$ and $z\le N-3-w$, 
while~\eqref{slN:eq:rel2} becomes $y\ge N-3-w$ and $z\le N-3-w$. Thus, in order to have a nonzero sum, 
we must have $y=N-3-w$. But in that case, because of the fixed total sum of indices, we 
would have $z=N-1-u=N-2-w>N-3-w$, which contradicts~\eqref{slN:eq:rel2}. Hence, in this case, 
the total value of the theta foam is $0$. 
\end{proof}
As a direct consequence of the previous theorem, we have
\begin{cor}
\label{slN:cor:theta-pqrkli}
For fixed values of $j$, $k$ and $i$, if $j\ne i-1$ and $k\ne i$, there is exactly one triple $(p,q,r)$ such that the value of $\Theta(\pi_{p,q,r},\pi_{j,k},i)$ is nonzero. Also, if $j=i-1$ or 
$k=i$, the value of $\Theta(\pi_{p,q,r},\pi_{j,k},i)$ is equal to $0$ for every triple $(p,q,r)$. 
Hence, for fixed $i$, there are $\binom{n-1}{2}$ 5-tuples $(p,q,r,j,k)$ such that 
$\Theta(\pi_{p,q,r},\pi_{j,k},i)$ is nonzero.

Conversely, for fixed $p$, $q$ and $r$, there always exist three different triples $(j,k,i)$ 
(one from each family), such that $\Theta(\pi_{p,q,r},\pi_{j,k},i)$ is nonzero.

Finally, for all $p$, $q$, $r$, $j$, $k$ and $i$, we have
$$\Theta(\pi_{p,q,r},\pi_{j,k},i)=\Theta(\hat{\pi}_{p,q,r},\hat{\pi}_{j,k},N-1-i).$$
\end{cor}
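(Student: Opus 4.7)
All three assertions are formal consequences of the enumeration of nonzero thetas in Lemma~\ref{slN:lem:theta-pqrkli}, so the plan is to read off each statement from the three families listed there, without returning to the KL computation.

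For the first assertion, I would fix $(j,k,i)$ with $N-2\geq j\geq k\geq 0$ and translate the conditions of the three families into conditions comparing $j$ and $k$ with $i$. Family~1 requires $k\geq i+1$; family~2 requires $j\leq i-2$; family~3 requires $j\geq i$ and $k\leq i-1$. Since $j\geq k$, these three conditions are mutually exclusive, and together with the two gap conditions $k=i$ and $j=i-1$ they exhaust all possibilities. Thus exactly one family applies when neither gap condition holds, giving a unique $(p,q,r)$, whereas each gap condition forces all three families to fail. For the counting, I would start with $\binom{N}{2}$ pairs $(j,k)$ with $0\leq k\leq j\leq N-2$ and subtract the $i$ pairs with $j=i-1$ and the $N-1-i$ pairs with $k=i$ (these overlap only when $k=i=j+1$, which is impossible), leaving $\binom{N}{2}-(N-1)=\binom{N-1}{2}$ nonzero 5-tuples.

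For the second assertion, I would simply invert the formulas of Lemma~\ref{slN:lem:theta-pqrkli}. Given $(p,q,r)$ with $N-3\geq p\geq q\geq r\geq 0$, family~1 yields $(j,k,i)=(N-2-r,N-2-q,N-3-p)$, family~2 yields $(N-3-q,N-3-p,N-1-r)$, and family~3 yields $(N-2-r,N-3-p,N-2-q)$. A routine check using $p\geq q\geq r$ and $p\leq N-3$ verifies that each of these lies in the range required by its respective family, giving three distinct nonzero triples.

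For the duality assertion, I would use the explicit form of the dual basis in Equation~\eqref{slN:eq:db}: $\hat\pi_{j,k}=-\pi_{N-2-k,N-2-j}$ and $\hat\pi_{p,q,r}=-\pi_{N-3-r,N-3-q,N-3-p}$, so that the two minus signs cancel in $\Theta(\hat\pi_{p,q,r},\hat\pi_{j,k},N-1-i)$. The proof then reduces to checking that the involution
\[
(p,q,r,j,k,i)\longmapsto(N-3-r,N-3-q,N-3-p,N-2-k,N-2-j,N-1-i)
\]
permutes the three families while preserving the value $\pm 1$. A direct substitution shows that family~1 and family~2 are swapped (both of value $-1$) and family~3 is sent to itself (value $+1$); the range conditions transform correctly in each case. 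The main obstacle here is purely bookkeeping: keeping track of the swap of indices in the dual Schur polynomial and making sure the inequalities defining each family's range are respected under the involution. Once the three substitutions are carried out, the identity is immediate.
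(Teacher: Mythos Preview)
Your proposal is correct and follows exactly the approach the paper intends: the corollary is stated without proof as a direct consequence of Lemma~\ref{slN:lem:theta-pqrkli}, and you have carried out precisely the case analysis, counting, inversion of the family formulas, and verification of the duality involution that this entails. Your bookkeeping is accurate in each step, including the mutual exclusivity of the three families, the $\binom{N}{2}-(N-1)=\binom{N-1}{2}$ count, and the swap of families~1 and~2 (with family~3 fixed) under the involution.
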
 
The following relations are an immediate consequence of 
Lemma~\ref{slN:lem:theta-pqrkli}, 
Corollary~\ref{slN:cor:theta-pqrkli} and (CN$_i$), $i=1,2,*$.
\begin{cor} 
\label{slN:cor:bubble-star}

\begin{equation}
\label{slN:eq:bubble-pqri}
\figins{-18}{0.6}{bubble-pqri}=
\begin{cases}
-\figins{-17}{0.4}{dplan-qr}   &\text{if}\quad p=N-3-i\\ 
-\figins{-17}{0.4}{dplan-ppqq} &\text{if}\quad r=N-1-i\\ 
 \figins{-17}{0.4}{dplan-ppr}  &\text{if}\quad q=N-2-i\\
 \qquad 0                      &\text{else}
\end{cases}
\end{equation}

\bigskip

\begin{equation}
\label{slN:eq:bubble-ikm-star}
\figins{-18}{0.6}{bubble-ikm-star}=
\begin{cases}
-\figins{-15}{0.4}{plan-kmi} & \text{if}\quad N-2\geq k\geq m\geq i+1\geq 1\  \\
-\figins{-15}{0.4}{plan-ikm} & \text{if}\quad N-1\geq i\geq k+2\geq m+2\geq 2\ \\ 
\ \ \ 
\figins{-15}{0.4}{plan-kim} & \text{if}\quad N-2\geq k\geq i\geq m+1\geq 1\ \\ 
\qquad 0 &\text{else}
\end{cases}
\end{equation}

\begin{equation}
\label{slN:eq:bubble-pqrkm-star}
\figins{-18}{0.64}{bubble-pqrkm-star}=
\begin{cases}\ \ \
- \figins{-13}{0.35}{plan-ppp} & \text{if}\quad q= N-2-m,\,\, r=N-2-k\  \\ \ \ \
- \figins{-13}{0.35}{plan-r} & \text{if}\quad p=N-3-m,\,\, q=N-3-k\ \\ 
\figins{-13}{0.35}{plan-qq} & \text{if}\quad p=N-3-m,\,\, r=N-2-k \\ 
\qquad 0 &\text{else}
\end{cases}
\end{equation}
\end{cor}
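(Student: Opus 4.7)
The plan is to evaluate each of the three bubbles by first cutting its ``waist'' with the appropriate cutting-neck relation and then reading off which terms survive using the theta-foam evaluations already established. In each case the bubble is a local configuration which, after the cut, becomes a linear combination of products (theta-foam) $\times$ (dotted planar facet), and the theta formulas from Lemma~\ref{slN:lem:theta-pqrkli} and Corollary~\ref{slN:cor:theta-pqrkli} determine which summands are nonzero.

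First I would treat the bubble in~\eqref{slN:eq:bubble-pqri}. Here the bubble has a triple facet carrying the dot pattern $\pi_{p,q,r}$, a double facet, and a simple facet carrying $i$ dots. Apply the double-facet cutting neck relation (CN$_2$) along the double annulus of the bubble. This expresses the bubble as
\[
\sum_{0\leq b\leq a\leq N-2} \Theta(\pi_{p,q,r},\widehat{\pi_{a,b}},i)\ \figins{-17}{0.4}{dplan-ab},
\]
where $\widehat{\pi_{a,b}}$ is the dual basis element from~\eqref{slN:eq:db} and the double facet of the original bubble is replaced by a (closed-off) double facet carrying $\pi_{a,b}$. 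By Lemma~\ref{slN:lem:theta-pqrkli} and Corollary~\ref{slN:cor:theta-pqrkli}, for fixed $i$ the theta coefficient $\Theta(\pi_{p,q,r},\widehat{\pi_{a,b}},i)$ is nonzero for at most one pair $(a,b)$, and the three possible nonvanishing cases correspond precisely to the three sub-cases $p=N-3-i$, $r=N-1-i$, and $q=N-2-i$ listed in~\eqref{slN:eq:bubble-pqri}. Substituting the explicit signs from Lemma~\ref{slN:lem:theta-pqrkli} (and using~\eqref{slN:eq:db} to convert $\widehat{\pi_{a,b}}$ back to an ordinary dot pattern on the double facet) yields the stated formula.

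The bubble in~\eqref{slN:eq:bubble-ikm-star} is handled the same way, but now the cut is made along the \emph{triple} facet using (CN$_*$). This produces
\[
-\sum_{0\leq c\leq b\leq a\leq N-3} \Theta(\widehat{\pi_{a,b,c}},\pi_{k,m},i)\ \figins{-15}{0.4}{plan-abc},
\]
and again Lemma~\ref{slN:lem:theta-pqrkli} identifies the at most one triple $(a,b,c)$ for each $(k,m,i)$ that gives a nonzero theta. The three surviving families from the lemma produce the three cases of~\eqref{slN:eq:bubble-ikm-star}, and duality under $\pi_{a,b,c}\leftrightarrow\widehat{\pi_{a,b,c}}$ (which is the last identity in Corollary~\ref{slN:cor:theta-pqrkli}) converts the indices back to the form shown on the right-hand side.

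For~\eqref{slN:eq:bubble-pqrkm-star}, the bubble is a simple-facet disk whose cylinder can be cut by (CN$_1$), giving
\[
\sum_{i=0}^{N-1}\Theta(\pi_{p,q,r},\pi_{k,m},N-1-i)\ \figins{-13}{0.35}{plan-i}.
\]
This time the theta evaluation fixes the remaining index $i$ uniquely in terms of the prescribed $(p,q,r,k,m)$. The three families of Lemma~\ref{slN:lem:theta-pqrkli}, expressed via the constraints $p=N-3-k$, $q=N-2-m$, etc., match the three sub-cases of~\eqref{slN:eq:bubble-pqrkm-star}, and the accompanying signs are exactly those in the lemma.

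The only genuinely non-routine step, and the one I would expect to require a bit of care, is bookkeeping: matching the labels $(p,q,r,k,m,i)$ against the three disjoint ``windows'' of nonvanishing in Lemma~\ref{slN:lem:theta-pqrkli}, keeping track of signs introduced by~\eqref{slN:eq:db} and by the orientation conventions for the singular circle of the theta-foam, and verifying that the three cases in each bubble formula cover precisely the nonvanishing windows and are mutually exclusive. Once this is done, no further computation is needed: each corollary is a direct transcription of a theta evaluation through a single cutting-neck substitution.
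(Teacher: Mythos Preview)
Your proposal is correct and follows essentially the same approach as the paper, which states that these relations are ``an immediate consequence of Lemma~\ref{slN:lem:theta-pqrkli}, Corollary~\ref{slN:cor:theta-pqrkli} and (CN$_i$), $i=1,2,*$.'' Your identification of which cutting-neck relation to apply to each bubble (CN$_2$ for~\eqref{slN:eq:bubble-pqri}, CN$_*$ for~\eqref{slN:eq:bubble-ikm-star}, CN$_1$ for~\eqref{slN:eq:bubble-pqrkm-star}) and the subsequent matching with the theta-foam evaluations is exactly the intended argument.
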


\begin{lem}
\label{slN:lem:theta-ijkl}
$$\figins{-17}{0.55}{theta-mijkk}=
\begin{cases}
-1&\text{if}\quad m+j=N-1=i+k+1\\
+1&\text{if}\quad j+k=N-1=i+m+1, 
\end{cases}$$
\end{lem}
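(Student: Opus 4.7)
The plan is to mimic the strategy of the proof of Lemma~\ref{slN:lem:theta-pqrkli}: reduce to a base evaluation of $(1,1,2)$-theta foams with dots only on the simple facets, then do a careful combinatorial bookkeeping.

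First I would perform a $q$-degree count. A closed $(1,1,2)$-theta foam has three disk facets glued along a circle; the two simple facets contribute $-(N-1)$ each, the double facet contributes $-2(N-2)$, and the singular circle contributes $0$ (since $\chi(S^1)=0$), so the undecorated $q$-degree is $-(4N-6)$. Each dot adds $2$ to the $q$-degree (a black or white dot on any facet is degree $2$ or $4$ respectively, but combined in Schur polynomials they contribute $2(j+k)$ total on the double facet). Setting $q=0$ forces the single degree condition $m+i+j+k = 2N-3$, which is satisfied in both families of the statement.

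Next I would apply the dot migration relations of Proposition~\ref{slN:prop:principal rels1}. Under migration along the two singular arcs of type $(1,1,2)$, the double-facet variables satisfy $s = x_1 + x_2$ and $t = x_1 x_2$, where $x_1, x_2$ are the dot variables on the two adjacent simple facets. Hence the decoration $\pi_{j,k}$ on the double facet becomes the two-variable Schur polynomial $\pi_{j,k}(x_1,x_2) = \sum_{l=k}^{j} x_1^l x_2^{j+k-l}$ on the simple facets. This rewrites the evaluation as a sum of base $(1,1,2)$-theta foams, each carrying only monomial dots on the simple facets.

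Third, I would invoke the base evaluations computed explicitly in Subsection~\ref{slN:ssec:compKL}: after the normalization of Subsection~\ref{slN:sec:norm}, the theta foam with no dots on the double facet evaluates to $\pm 1$ precisely when the simple-facet dot counts form the multiset $\{N-1, N-2\}$, with opposite signs according to which simple facet carries $N-1$ dots. Matching these base values against the summands of the Schur expansion, and enforcing the summation range $l \in [k,j]$ together with the degree condition $m+i+j+k = 2N-3$, one identifies the indices of the unique surviving terms. The two families in the statement correspond exactly to the two basic dot-distributions $(N-1,N-2)$ and $(N-2,N-1)$ on the simple facets after migration, and the signs match accordingly.

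The main obstacle will be the combinatorial bookkeeping. The two potentially nonzero base contributions arise at $l = N-1-m$ and $l = N-2-m$, and I must verify that in each of the two families of conditions $(m+j=N-1,\ i+k=N-2)$ and $(j+k=N-1,\ i+m=N-2)$ precisely one of these indices lies in $[k,j]$, so that cancellation is avoided and the evaluation equals exactly $-1$ or $+1$ as stated. Outside of these families, either both candidates lie in $[k,j]$ (and cancel) or neither does (and the sum is empty), yielding $0$ in either case.
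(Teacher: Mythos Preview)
Your approach is the same as the paper's: expand the Schur polynomial on the double facet via dot migration into a sum of theta foams with dots only on the simple facets, evaluate each term with the base $(\ThetaGraph)$ relation, and observe that the two potentially nonzero contributions sit at consecutive indices with opposite signs, so they cancel unless one falls outside the summation range.

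However, you have the facet labels wrong, and this makes your endpoint analysis fail as written. In the foam of the lemma the double facet carries $\pi_{i,j}$ (with $i\ge j$) and the two simple facets carry $m$ and $k$ dots, not $m$ and $i$. With your assignment of $\pi_{j,k}$ to the double facet and $m,i$ to the simple facets, family~1 ($m+j=N-1$, $i+k=N-2$) breaks: both candidate indices $l=j$ and $l=j-1$ lie in $[k,j]$ whenever $j>k$, and would cancel rather than yield $-1$. With the correct labels the paper's sum runs over $\alpha\in[0,i-j]$ (from $\pi_{i,j}=\sum_\alpha x_1^{i-\alpha}x_2^{j+\alpha}$); the $-1$ contribution sits at $\alpha=m+i-(N-1)$ and the $+1$ at $\alpha=m+i-(N-2)=\alpha+1$. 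Family~1 is exactly the case where the $-1$ lands at the top endpoint $\alpha=i-j$ (so the $+1$ at $i-j+1$ is out of range), and family~2 is exactly the case where the $+1$ lands at the bottom endpoint $\alpha=0$ (so the $-1$ at $-1$ is out of range). Once you fix the labelling, your plan goes through verbatim.
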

\begin{proof} By the dot conversion formulas, 
we get 
$$
\figins{-17}{0.55}{theta-mijkk}=\sum\limits_{\alpha =0}^{i-j}
\figins{-17}{0.55}{theta-miakja}.$$
By $(\ThetaGraph)$ we have
$$\figins{-17}{0.55}{theta-miakja}=
\begin{cases}
-1&\text{if}\quad m+i-(N-1)=\alpha=N-2-(k+j)\\
+1&\text{if}\quad m+i-(N-2)=\alpha=N-1-(k+j)\\
0&\text{else}.
\end{cases}
$$
We see that, in the sum above, the summands for two consecutive values of $\alpha$ will cancel 
unless one of them is zero and the other is not. We see that the total sum is equal to $-1$ 
if the first non-zero summand is at $\alpha=i-j$ and $+1$ if the last non-zero summand 
is at $\alpha=0$.   
\end{proof}

The following bubble-identities are an immediate consequence of Lemma~\ref{slN:lem:theta-ijkl} and 
(CN$_1$) and (CN$_2$).

\begin{cor}
\label{slN:cor:bubble-12}
\begin{equation}
\label{slN:eq:bubble-ij}
\figins{-20}{0.6}{bubble-ij}=
\begin{cases}
\quad -\figins{-13}{0.35}{dplan-iij} & \text{if}\quad i>j\geq 0 \\
\figins{-13}{0.35}{dplan-jji} & \text{if}\quad j>i\geq 0 \\
\qquad 0 & \text{if}\quad i=j
\end{cases} 
\end{equation}

\begin{equation}
\label{slN:eq:bubble-ikj}
\figins{-18}{0.6}{bubble-ikj}=
\begin{cases}
-   \figins{-8}{0.3}{plan-kk} & \text{if }\  i+j=N-1 \\ \\
   \figins{-8}{0.3}{plan-j} & \text{if }\ i+k=N-2   \\ \\
 \quad\     0 & \text{ else }
\end{cases}
\end{equation}
\end{cor}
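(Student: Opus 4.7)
The plan is to reduce each bubble identity to an evaluation of theta-foams via a cutting-neck relation, and then apply Lemma~\ref{slN:lem:theta-ijkl}. Both bubbles in the statement are closed off at top and bottom by a single edge (simple in the first case, double in the second), so after cutting the neck along that edge we obtain a sum of dotted edges times closed theta-foams whose values are known.

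For the first identity~\eqref{slN:eq:bubble-ij}, the outer strand of the bubble is a simple edge; apply (CN$_1$) to a horizontal disc transverse to that outer edge, just above the bubble. This rewrites the bubble as
\[
\sum_{\ell=0}^{N-1} \;\;\theta(\ell;i,j)\cdot \figins{-13}{0.35}{dplan-iij}_{\text{(dots shifted)}},
\]
where $\theta(\ell;i,j)$ denotes the value of the closed $(1,1,2)$-theta foam with $\ell$ dots on one simple facet, $i$ dots on the other simple facet and $(j,0)$ dots on the double facet, and where the result of the cutting neck is a simple edge carrying $N-1-\ell$ dots. By Lemma~\ref{slN:lem:theta-ijkl}, $\theta(\ell;i,j)$ is nonzero only in the two cases $\ell+i=N-1=j+1$ or $\ell+j=N-1=i+1$ (after appropriately matching the roles of $(m,i,j,k)$ there), and the corresponding signs are $+1$ and $-1$. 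Substituting the unique surviving value of $\ell$ in each case yields, respectively, $-\pi_{i,i,j}$ when $i>j$ and $+\pi_{j,j,i}$ when $j>i$, and cancellation when $i=j$. This is precisely~\eqref{slN:eq:bubble-ij}.

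For the second identity~\eqref{slN:eq:bubble-ikj}, the outer strand is a double edge, so we instead apply (CN$_2$) to a horizontal transverse disc just above the bubble, producing
\[
-\sum_{0\le b\le a\le N-2}\;\theta(a,b;i,k,j)\cdot\figins{-8}{0.3}{plan-j}_{\text{(dots shifted)}},
\]
with the residual double facet decorated by $\widehat{(a,b)}$. The coefficient is the value of a closed $(1,1,2)$-theta foam whose two simple facets carry $i$ and $j$ dots and whose double facet carries the Schur-polynomial decoration $\pi_{k,0}\cdot\pi_{a,b}$. Expanding this product by the Pieri rule reduces each summand to the form computed in Lemma~\ref{slN:lem:theta-ijkl}, and the two cases there correspond exactly to the two alternatives $i+j=N-1$ and $i+k=N-2$ in~\eqref{slN:eq:bubble-ikj}. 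All other choices of $(a,b)$ give a product in which no Schur summand has the index pattern required by Lemma~\ref{slN:lem:theta-ijkl}, hence vanish.

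The only step that requires care is the bookkeeping of the Schur expansions and of the dot-migration identities from Proposition~\ref{slN:prop:principal rels1}, needed both to match the output of the cutting-neck relations with the conventions used for dots on the surviving edge and to identify the surviving $(a,b)$ in the second identity. This is purely combinatorial and follows the same pattern as the proof of Lemma~\ref{slN:lem:theta-pqrkli}, so no genuine obstacle is expected.
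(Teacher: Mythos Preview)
Your approach is exactly the paper's: the corollary is stated as an immediate consequence of Lemma~\ref{slN:lem:theta-ijkl} together with (CN$_1$) and (CN$_2$), and cutting the neck along the outer tube reduces each bubble to a sum of dotted facets weighted by $(1,1,2)$-theta evaluations.

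However, you have the geometry of the two cases swapped. In~\eqref{slN:eq:bubble-ij} the ambient facet is \emph{double} (the right-hand side is a double plane decorated by a Schur polynomial), so the relevant cutting-neck relation is (CN$_2$), not (CN$_1$). Conversely, in~\eqref{slN:eq:bubble-ikj} the ambient facet is \emph{simple} (the right-hand side is a simple plane), so one uses (CN$_1$). With this correction your argument goes through: cutting with (CN$_2$) in the first case produces a sum over $\pi_{a,b}$ on the upper cap and $\widehat{\pi_{a,b}}$ on the lower, and the resulting theta-foam is nonzero, by Lemma~\ref{slN:lem:theta-ijkl}, for exactly one pair $(a,b)$ in each of the cases $i>j$ and $j>i$; cutting with (CN$_1$) in the second case produces a sum over $\ell$, and again a unique term survives in each of the two listed cases. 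No Pieri expansion is actually needed once the roles of (CN$_1$) and (CN$_2$) are corrected.
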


The following identities follow easily from (CN$_1$), (CN$_2$),  
Lemma~\ref{slN:lem:theta-pqrkli}, Lemma~\ref{slN:lem:theta-ijkl} and their corollaries.

\begin{cor}
\label{slN:cor:tubes}

\begin{gather}
\figins{-22}{0.6}{3cyls1} + \sum_{a+b+c=N-2} 
\figins{-22}{0.6}{3cyls2} =
\figins{-22}{0.6}{3cyls3}
\tag{3C}\label{slN:eq:3C}
\\[2ex]
\figins{-8}{0.3}{remdisk} \ =  
\figins{-8}{0.3}{cupdcap-h} \ - \ 
\figins{-8}{0.3}{cupcapd-h}
\tag{RD$_1$}\label{slN:eq:RD1}
\\[2ex]
\figins{-8}{0.3}{remdisk2} \ =  
\figins{-8}{0.3}{cup2-dcap-h} \ - \ 
\figins{-8}{0.3}{cup1-dcapb-h} \ + \ 
\figins{-8}{0.3}{cup-dcapw-h}
\tag{RD$_2$}\label{slN:eq:RD2}
\\[2ex]
\figins{-8}{0.3}{fatcyl} \ = - \ 
\figins{-8}{0.3}{cupcap-h}
\tag{FC}\label{slN:eq:FC}
\end{gather}
\end{cor}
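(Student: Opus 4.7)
My plan is to prove each of the four identities by applying a suitable cutting neck relation along a circle internal to the foam and then collapsing the resulting bubbles using the bubble identities from Corollaries~\ref{slN:cor:bubble-star} and~\ref{slN:cor:bubble-12}. In every case the left-hand side contains an obvious ``tube'' along which one can cut, and once cut the resulting pieces are either products of dotted sphere factors or theta-like configurations whose evaluations are already computed above. The main point is bookkeeping: one has to check that after summing the bubble evaluations only the claimed terms survive.

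For (FC), the plan is to cut the double facet of $\figins{-8}{0.3}{fatcyl}$ along a horizontal circle using (CN$_2$), which produces the sum $-\sum_{0\leq j\leq i\leq N-2}\widehat{\pi}_{i,j}\otimes\pi_{i,j}$ decorating the two resulting double facets. On one side each summand becomes a bubble whose evaluation by~\eqref{slN:eq:bubble-ikj} (taking $k=0$) forces $i+j=N-1$ or $i=N-2$, collapsing the sum to the single term $-\figins{-8}{0.3}{cupcap-h}$. The identity (RD$_1$) is analogous: cut the simple disk via (CN$_1$), and then use~\eqref{slN:eq:bubble-ij} to see that only two summands survive, corresponding to the two terms on the right. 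Identity (RD$_2$) is proved in exactly the same spirit, but now the cut produces a theta configuration that is evaluated by~\eqref{slN:eq:bubble-pqri}, which has \emph{three} non-vanishing cases, matching the three terms on the right-hand side.

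The relation (3C) is the deepest of the four and will be the main obstacle. Here the plan is to cut all three tubes simultaneously and compare both sides as elements of $\Hom_{\foam}$ between the same source and target webs. Concretely, I would cut each side along the horizontal ``waist'' using (CN$_2$), expanding both sides as sums indexed by pairs $(i,j)$ with $0\leq j\leq i\leq N-2$; by duality (see~\eqref{slN:eq:dualpairing} and the paragraph introducing $\psi_{\Gamma,\Gamma^*}$) two morphisms are equal in $\foam$ iff they pair identically with every test foam, so it suffices to verify equality after capping off with arbitrary decorations on each facet. After capping off, both sides reduce to linear combinations of closed foams whose KL evaluations are products of theta-foam values computed in Lemma~\ref{slN:lem:theta-pqrkli} and Lemma~\ref{slN:lem:theta-ijkl}. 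The hard part will be checking the resulting combinatorial identity between products of Schur-polynomial traces: one has to exploit the duality formula~\eqref{slN:eq:db} for $\hat\pi_\lambda$ and the additivity $\widehat{\pi_{i,j}\pi_{a,b}}=\sum\hat\pi_{x,y}$ coming from~\eqref{slN:eq:mnoz2} to see that the three-cylinder sum on the left differs from the right by exactly the ``reordering'' term $\figins{-22}{0.6}{3cyls1}$, so that after bringing it to the other side the identity follows.

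Throughout, I would package the arguments using the observation that cutting-neck plus bubble-removal gives, for each local neighborhood, a \emph{resolution of the identity} in the relevant Hom-space of $\foam$; once this is set up cleanly, the four relations become instances of expanding the identity foam on an internal disk and applying the sphere/theta evaluations. The computational heart of the proof is therefore entirely contained in Corollaries~\ref{slN:cor:bubble-star}--\ref{slN:cor:bubble-12}, and the remaining work is combinatorial verification.
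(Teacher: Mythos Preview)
Your approach is essentially the paper's: the corollary is stated with no detailed proof, only the remark that the identities ``follow easily from (CN$_1$), (CN$_2$), Lemma~\ref{slN:lem:theta-pqrkli}, Lemma~\ref{slN:lem:theta-ijkl} and their corollaries,'' i.e.\ exactly the cutting-neck-plus-bubble strategy you describe for (RD$_1$), (RD$_2$) and (FC).

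Your treatment of (3C) is more elaborate than needed, though. You propose cutting all three tubes, expanding both sides over (CN$_2$), and then verifying equality by pairing with an arbitrary capping via the duality~\eqref{slN:eq:dualpairing}. That certainly works, but it is unnecessary: a single application of (CN$_2$) to the double tube on one side already reduces (3C) to a sum of terms each containing a simple--double bubble, and those bubbles are evaluated directly by Equation~\eqref{slN:eq:bubble-ij} (or~\eqref{slN:eq:bubble-ikj}); the sum over $a+b+c=N-2$ then falls out of the dot-migration/conversion bookkeeping, with no need to invoke test foams or the pairing. In other words, (3C) is no harder than the other three once you pick the right circle to cut; the duality detour you sketch is a correct but heavier route to the same destination.
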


\bigskip

Note that by the results above we are able to compute 
$\KL{u}$ combinatorially for any 
closed foam $u$ whose singular graph has no vertices, 
simply by using the cutting neck relations near all singular 
circles and evaluating the resulting spheres and theta 
foams. If the singular graph of $u$ has vertices, then we 
do not know if our relations are sufficient to evaluate $u$. 
We conjecture that they are sufficient, and that therefore 
our theory is strictly combinatorial, but we do not have 
a complete proof.  

\begin{prop}
\label{slN:prop:principal rels2}
The following identities hold in $\foam$: 

\medskip
(The \emph{digon removal} relations)
\begin{align}
\figins{-20}{0.6}{digonfid}
&=
\figins{-20}{0.6}{dig_rem1}-
\figins{-20}{0.6}{dig_rem2}
\tag{DR$_1$}\label{slN:eq:DR1}
\\[1.5ex]
\figins{-27}{0.7}{digonfid_b} 
&= 
\sum_{a+b+c=N-2}\
\figins{-27}{0.7}{dig_rem_b} \ = \sum_{i=0}^{N-2}\
\figins{-27}{0.7}{dig_rem_b-i}
\tag{DR$_2$}\label{slN:eq:DR2}
\\[1.5ex]
\figins{-30}{0.7}{digonfid_b-star1} 
&=\
-\figins{-30}{0.7}{dig_rem_b1-star}+
\figins{-30}{0.7}{dig_rem_b2-star}-
\figins{-30}{0.7}{dig_rem_b3-star}
\tag{DR$_{3_1}$}\label{slN:eq:DR31}
\\[1.5ex]
\figins{-30}{0.7}{digonfid_b-star2} 
&= 
- \sum_{0\leq j\leq i\leq N-3}\
\figins{-30}{0.7}{dig_rem-32}
\tag{DR$_{3_2}$}\label{slN:eq:DR32}
\\[1.5ex]
\figins{-20}{0.6}{digonfid-star}
&=
- \sum_{i=0}^{N-3} 
\figins{-20}{0.6}{dig_rem33}
\tag{DR$_{3_3}$}\label{slN:eq:DR33}
\end{align}

\bigskip

(The \emph{first square removal} relation)
\begin{equation}
\figins{-31}{0.9}{sqface_id}= 
-\;\figins{-31}{0.9}{sq_rem2}\quad + \sum_{a+b+c+d=N-3}\
\figins{-31}{0.9}{sq_rem1}
\tag{SqR$_1$}\label{slN:eq:SqR1}
\end{equation}

\end{prop}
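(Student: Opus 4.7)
The plan is to derive each of the five digon removals and the square removal by the same strategy: apply an appropriate cutting-neck relation to the identity foam on the left-hand side, then collapse the resulting sum via the theta-evaluations of Lemma~\ref{slN:lem:theta-pqrkli} and Lemma~\ref{slN:lem:theta-ijkl} together with their bubble corollaries (Corollaries~\ref{slN:cor:bubble-star} and~\ref{slN:cor:bubble-12}). In each case the range of summation on the right-hand side is controlled not by the cutting-neck sum itself, but by which theta-foam residues survive bubble evaluation.

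For (DR$_1$) I would slice the central double facet of the digon by a horizontal plane and apply (CN$_2$), producing a double sum over $0 \leq j \leq i \leq N-2$. Each summand is a pair of singular-edge bubbles of the shape governed by~\eqref{slN:eq:bubble-ij}; only $(i,j) \in \{(1,0),(0,0)\}$ yields a nonzero product, and the sign in~\eqref{slN:eq:bubble-ij} produces the claimed difference. For (DR$_2$) the same template applies after cutting the simple facet with (CN$_1$): the surviving terms are exactly those indexed by~\eqref{slN:eq:bubble-ikj}, and rewriting the resulting dotted configurations via the Schur expansion of Section~\ref{slN:sec:partial flags} yields both forms of the right-hand side ($a+b+c = N-2$ versus the single-index $i$ form). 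The three relations (DR$_{3_1}$), (DR$_{3_2}$), (DR$_{3_3}$) follow the same plan with (CN$_*$) in place of (CN$_1$), (CN$_2$); the bubble identities~\eqref{slN:eq:bubble-pqri},~\eqref{slN:eq:bubble-ikm-star} and~\eqref{slN:eq:bubble-pqrkm-star} prescribe exactly which multi-indices appear. The alternating signs in (DR$_{3_1}$) and the range $0 \leq j \leq i \leq N-3$ in (DR$_{3_2}$) arise automatically from these three branching cases.

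For (SqR$_1$) the plan is to cut a horizontal cylinder through the middle of the identity foam on the square web and apply (CN$_2$). After the cut, each summand factors through one of the two web resolutions of the square; the MP relations allow us to slide the resulting singular vertices past each other, and dot migration then moves all the dots onto the four simple facets appearing on the right-hand side. The terms factoring through the ``horizontal'' resolution yield the single foam with a minus sign, while those factoring through the ``vertical'' resolution collapse, via repeated application of the two-variable Schur multiplication rule~\eqref{slN:eq:mnoz2}, into the four-index sum $a+b+c+d = N-3$.

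The main obstacle is (SqR$_1$): turning the double index $(i,j)$ from the cutting-neck sum on a double facet into the four-index sum on simple facets requires careful Schur-polynomial bookkeeping, and it is not transparent that every spurious theta-contribution cancels. The cleanest route is arguably indirect: both sides define elements of $\Hom_{\foam}(\Gamma,\Gamma)$ for the square web $\Gamma$, and by the duality pairing~\eqref{slN:eq:dualpairing} it suffices to check that they agree after composition with a generating family of morphisms $\Gamma \to \emptyset$. Using the digon removals already proved to reduce any such test morphism to a combination of spheres and theta-foams, this check amounts to the finite list of evaluations already tabulated in Lemma~\ref{slN:lem:theta-pqrkli} and its corollary. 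I expect the verification to be long but entirely mechanical once this reduction is in place.
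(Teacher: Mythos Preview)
Your direct strategy of applying cutting-neck relations to the identity foams has a genuine gap: the relations (CN$_1$), (CN$_2$), (CN$_*$) apply to local \emph{annular} pieces of a facet, but the identity foam $\Gamma\times I$ on any of the digon or square webs contains no such piece. Every facet of $\Gamma\times I$ is a rectangle, bounded on two opposite sides by copies of an edge of $\Gamma$ and on the remaining sides by singular arcs or corner edges. In particular, for (DR$_1$) the web has double edges at the top and bottom and two \emph{simple} edges forming the digon, so the identity foam has no ``central double facet'' at all to which (CN$_2$) could be applied; the double facets are the two peripheral rectangles. The same obstruction arises for (DR$_2$)--(DR$_{3_3}$) and (SqR$_1$). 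Likewise the bubble identities of Corollaries~\ref{slN:cor:bubble-star} and~\ref{slN:cor:bubble-12} evaluate foams containing a closed singular circle, which the identity foams do not have. So the mechanism you describe for collapsing the sum---``each summand is a pair of singular-edge bubbles''---never gets started.

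The paper's argument is essentially your fallback, carried out with care. One regards each foam in a relation as an element of $\Hom_{\foam}(\emptyset,\Gamma)$, where $\Gamma$ is the total boundary web (top and bottom copies with vertical edges pinched). The core of the proof is the construction, for each relation, of explicit mutually dual bases $\{u_i\}$, $\{u_i^*\}$ of $\Hom_{\foam}(\emptyset,\Gamma)$ whose cardinality matches $\dim\hy(\Gamma)$ as computed in~\cite{KR}; these basis foams are built from dotted singular cups, so that every pairing $\KL{u_i\widehat{u_j^*}}$ and every test evaluation $\KL{(\text{l.h.s.})\,\widehat{u_i}}$, $\KL{(\text{r.h.s.})\,\widehat{u_i}}$ is a closed foam with vertex-free singular graph, hence computable from Proposition~\ref{slN:prop:principal rels1}, Lemmas~\ref{slN:lem:theta-pqrkli} and~\ref{slN:lem:theta-ijkl}, and Corollaries~\ref{slN:cor:bubble-star}--\ref{slN:cor:tubes}. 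Note that you need closures on \emph{both} sides, not just morphisms $\Gamma\to\emptyset$; the paper handles this by passing to the full boundary at once. For (SqR$_1$) the basis requires four topologically distinct shape-classes $u,v,w,s$ coming from the upper or lower halves of the two terms on the right-hand side, which is where the real work lies.
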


\begin{proof}
We first explain the idea of the proof. Naively one could try to consider 
all closures of the foams in a relation and compare their KL-evaluations. However, in practice 
we are not able to compute the KL-evaluations of arbitrary closed foams 
with singular vertices. Therefore we use a different strategy. We consider any foam in the 
proposition as a foam from $\emptyset$ to its boundary, rather than as a foam between one part 
of its boundary to another part. If $u$ is such a foam whose 
boundary is a closed web $\Gamma$, then by the properties explained in Section~\ref{slN:sec:KL} 
the KL-formula associates to $u$ an element in $\hy_{\mf{}}(\Gamma)$. By Definition~\ref{slN:defn:foam} and by the 
glueing properties of the KL-formula, as explained in Section~\ref{slN:sec:KL}, the induced 
linear map 
$\KLket{\ }\colon \Hom_{\foam}(\emptyset,\Gamma)\to \hy(\Gamma)$ is injective. 
The KL-formula also defines an inner product on $\Hom_{\foam}(\emptyset,\Gamma)$ by 
$$(u,v)\mapsto\KL{u\hat{v}}.$$
By $\hat{v}$ we mean the foam in $\Hom_{\foam}(\Gamma,\emptyset)$ obtained by rotating 
$v$ along the axis which corresponds to the $y$-axis (i.e. the horizontal axis parallel to the plane of the paper) in the original picture in this 
proposition. By the 
results in~\cite{KR} we know 
the dimension of $\hy(\Gamma)$. Suppose it is equal to $m$ 
and that we can find two sets of elements $u_i$ and $u_i^*$ in $\Hom_{\foam}(\emptyset,\Gamma)$, 
$i=1,2,\ldots,m$, such that 
$$\KL{u_i\widehat{u_j^*}} = \delta_{i,j}.$$ 
Then $\{u_i\}$ and $\{u_i^*\}$ are mutually dual bases of 
$\Hom_{\foam}(\emptyset,\Gamma)$ and 
$\KLket{\ }$ is an isomorphism. Therefore, two elements 
$f,g\in\Hom_{\foam}(\emptyset,\Gamma)$ are equal if and only if 
$$\KL{f\widehat{u_i}} =
\KL{g\widehat{u_i}},$$ 
for all $i=1,2,\ldots,m$ (alternatively one can use the $u_i^*$ 
of course). In practice this only helps if the l.h.s. and the r.h.s. of 
these $m$ equations can be computed, e.g. if $f,g$ and the $u_i$ are all foams with singular graphs without vertices. 
Fortunately that is the case for all the relations in this proposition.     
 
Let us now prove~\eqref{slN:eq:DR1} in detail. Note that the boundary of any of the 
foams in~\eqref{slN:eq:DR1}, 
denoted $\Gamma$, is homeomorphic to the web 
$$\figins{-17}{0.5}{digons1web}.$$ 
Recall that the dimension of 
$\hy(\Gamma)$ is equal to $2N(N-1)$ (see~\cite{KR}). For $0\leq i,j\leq 1$ 
and $0\leq m\leq k\leq N-2$, let $u_{i,j;(k,m)}$ 
denote the following foam 
$$\figins{-23}{0.8}{empty2d1-ijkm}.$$
Let 
$$u_{i,j;(k,m)}^*=u_{1-j,1-i;(N-2-m,N-2-k)}.$$ 
From Equation~\eqref{slN:eq:bubble-ij} and the sphere relation (S$_2$) 
it easily follows that $\KL{ u_{i,j;(k,m)}\widehat{u_{r,s;(t,v)}^*}} =
\delta_{i,r}\delta_{j,s}\delta_{k,t}\delta_{m,v}$, where $\delta$ denotes the 
Kronecker delta. Note that there are exactly $2N(N-1)$ quadruples 
$(i,j;(k,l))$ 
which satisfy the conditions. Therefore the $u_{i,j;(k,m)}$ define a basis of 
$\hy(\Gamma)$ and 
the $u_{i,j;(k,m)}^*$ define its dual basis. In order to prove~\eqref{slN:eq:DR1} all we 
need to do next is check that 
$$\KL{(\text{l.h.s. of~\eqref{slN:eq:DR1}}) \widehat{u_{i,j;(k,m)}}} =
\KL{(\text{r.h.s. of~\eqref{slN:eq:DR1}}) \widehat{u_{i,j;(k,m)}}},$$ for all $i,j$ and $(k,m)$. This again follows easily from 
Equation~\eqref{slN:eq:bubble-ij} and the sphere relation (S$_2$).     

The other digon removal relations are proved in the same way. We do not repeat the  
whole argument again for each digon removal relation, 
but will only give the relevant mutually dual bases. 
For~\eqref{slN:eq:DR2}, note that $\Gamma$ is equal to 
$$\figins{-17}{0.5}{digons2web}.$$ 
Let $u_{i,k,m}$ denote the foam
$$\figins{-23}{0.8}{empty2d2-imk}$$
for 
$0\leq i,m\leq N-2$ and $0\leq k\leq N-1$. The dual basis is defined by 
$$u_{i,k,m}^*=-u_{N-2-m,N-1-k,N-2-i},$$ 
for the same range of indices. Note that 
there are $N(N-1)^2$ possible indices, which corresponds exactly to the dimension of 
$\hy(\Gamma)$.

For~\eqref{slN:eq:DR31}, the web $\Gamma$ is equal to 
$$\figins{-17}{0.5}{digons3web}.$$
let $u_{i;(k,m);(p,q,r)}$ denote the foam 
$$\figins{-23}{0.8}{empty2d3-star-ipqrkm}$$
for 
$0\leq i\leq 2$, $0\leq m\leq k\leq 1$ and $0\leq r\leq q\leq p\leq N-3$. The dual 
basis is given by 
$$u_{i;(k,m);(p,q,r)}^*=u_{2-(k+m);(1-\lfloor \frac{i}{2}\rfloor,
1-\lceil \frac{i}{2}\rceil);(N-3-r,N-3-q,N-3-p)},$$ 
for $0\leq t\leq s\leq 1$, $0\leq m\leq k\leq 1$ and $0\leq r\leq q\leq p\leq N-3$. 
Note that there are $3^2\binom{N}{3}$ possible indices, which corresponds exactly to 
the dimension of $\hy(\Gamma)$. 

For ~\eqref{slN:eq:DR32}, take $\Gamma$ to be 
$$\figins{-17}{0.5}{digons2-starweb}.$$
Let 
$u_{i;(k,m);(s,t)}$ denote the foam 
$$\figins{-23}{0.8}{empty2d2-star-ikmst}$$
for 
$0\leq i\leq N-1$, $0\leq m\leq k\leq N-3$ and $0\leq t\leq s\leq N-3$. The 
dual basis is given by  
$$u_{i;(k,m);(s,t)}^*=u_{N-1-i;(N-3-t,N-3-s);(N-3-m,N-3-k)},$$
for the same range of indices. Note that there are $N\binom{N-1}{2}^2$ indices, which 
corresponds exactly to the dimension of $\hy(\Gamma)$.

For~\eqref{slN:eq:DR33}, take $\Gamma$ to be 
$$\figins{-17}{0.5}{digons1-starweb}.$$ 
Let $u_{i,j;(k,m)}$ denote the foam 
$$\figins{-23}{0.8}{empty2d1-star-ikmj00}$$
for 
$0\leq i,j\leq N-3$ and $0\leq m\leq k\leq N-2$. Define 
$$u_{i,j;(k,m)}^*=u_{N-3-j,N-3-i;(N-2-m,N-2-k)},$$ 
for the same range of indices. Note that there are $(N-2)^2\binom{N}{2}$ indices, which 
corresponds exactly to the dimension of $\hy(\Gamma)$.

For~\eqref{slN:eq:SqR1}, the relevant web $\Gamma$ is equal to 
$$\figins{-20}{0.6}{sq1web}\ .$$ 
By the results in~\cite{KR} we know that the dimension of $\hy(\Gamma)$ is equal to $N^2+2N(N-2)+N^2(N-2)^2$. 
The proof of this relation is very similar, except that it is slightly harder to 
find the mutually dual bases in $\hy(\Gamma)$. The problem is that the 
two terms on the right-hand side of~\eqref{slN:eq:SqR1} are topologically distinct. Therefore 
we get four different types of basis elements, which are glueings of the upper or lower 
half of the first term and the upper or lower half of the second term. For 
$0\leq i,j\leq N-1$, let $u_{i,j}$ 
denote the foam 
$$\figins{0}{0.8}{empty2sq1}$$
with the top simple facet labelled by $i$ and the bottom one by $j$.
Take 
$$u_{i,j}^*=u_{N-1-j,N-1-i}.$$ 
Note that 
$$\KL{u_{i,j}\widehat{u_{k,m}^*}} =
\delta_{i,k}\delta_{j,m}$$ by the~\eqref{slN:eq:FC} relation in 
Corollary~\ref{slN:cor:tubes} and the Sphere 
Relation (S$_1$). 

For $0\leq i\leq N-1$ and $0\leq k\leq N-3$, let $v'_{i,k}$ denote the foam 
$$\figins{0}{0.8}{empty2sq3}$$
with the simple square on the r.h.s. labelled by $i$ and the other simple facet by $k$. Note 
that the latter is only one facet indeed, because it has a saddle-point in the middle where the dotted lines meet.  
For the same range of indices, we define $w'_{i,k}$ by 
$$\figins{0}{0.8}{empty2sq4}$$
with the simple square on the l.h.s. labelled by $i$ and the other simple facet by $k$.
The basis elements are now defined by 
$$v_{i,k}=\sum_{a+b+c=N-3-k}v'_{c,a+b+i}$$
and   
$$w_{i,j}=\sum_{a+b+c=N-3-j}w'_{c,a+b+i}.$$
The respective duals are defined by  
$$v_{i,k}^*=w'_{k,N-1-i}\quad\text{and}\quad w_{i,k}^*=v'_{k,N-1-i}.$$ 

We show that 
$$\KL{v_{i,j}\widehat{v_{k,m}^*}} = \delta_{i,k}\delta_{j,m}=
\KL{v_{i,j}\widehat{v_{k,m}^*}}$$
holds. First apply the ~\eqref{slN:eq:FC} relation of Corollary~\ref{slN:cor:tubes}. 
Then apply~\eqref{slN:eq:RD1} of the same corollary twice and finally use the sphere relation (S$_1$).  

For $0\leq i,j\leq N-1$ and $0\leq k,m\leq N-3$, let $s'_{i,j,k,m}$ denote the foam 
$$\figins{0}{0.8}{empty2sq2}$$
with the simple squares labelled by $k$ and $m$, from left to right respectively, and 
the other two simple facets by $i$ and $j$, from front to back respectively. 
The basis elements are defined by 
$$s_{i,j,k,m}=\sum_{\substack{a+b+c=N-3-k \\ d+e+f=N-3-m}}
s'_{c,f,i+a+d,j+b+e}.$$ For the same range of indices, the dual elements of this shape 
are given by 
$$s_{i,j,k,m}^*=s'_{m,k,N-1-i,N-1-j}.$$

From~\eqref{slN:eq:RD1} of Corollary~\ref{slN:cor:tubes}, applied twice, and the sphere relation (S$_1$) 
it follows that  
$$\KL{s_{a,b,c,d}\widehat{s_{i,j,k,m}^*}} = \delta_{a,i}\delta_{b,j}
\delta_{c,k}\delta_{d,m}$$ holds. 

It is also easy to see that the inner product $\KL{\ }$ of a basis element and a dual 
basis element of distinct shapes, i.e. indicated by different letters above, gives zero. 
For example, consider 
$$\KL{u_{i,j} \widehat{v_{k,m}^*}}$$ for any valid choice 
of $i,j,k,m$. At the place where the two different shapes are glued, 
$$u_{i,j}\widehat{v_{k,m}^*}$$ 
contains a simple saddle with a simple-double bubble. 
By Equation~\eqref{slN:eq:bubble-ikj} that bubble kills 
$$\KL{u_{i,j} \widehat{v_{k,m}^*}},$$ 
because $m\leq N-3$. The same argument holds for 
the other cases. This shows that $\{u,v,w,s\}$ and $\{u^*,v^*,w^*,s^*\}$ 
form dual bases of $\hy(\Gamma)$, because the number of possible indices 
equals $N^2+2N(N-2)+N^2(N-2)^2$.

In order to prove~\eqref{slN:eq:SqR1} one now has to compute the inner product of the 
l.h.s. and the r.h.s. with any basis element of $\hy(\Gamma)$ and show that they are equal. We 
leave this to the reader, since the arguments one has to use are the same as we used above. 
\end{proof}

\begin{cor}{(The \emph{second square removal} relation)}
\begin{equation}
\figins{-34}{0.8}{sqface_b_id}=
-\;\figins{-34}{0.8}{sq_rem_b1}-
\figins{-34}{0.8}{sq_rem_b2}
\tag{SqR$_2$}\label{slN:eq:SqR2}
\end{equation}
\end{cor}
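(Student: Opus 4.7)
Since the statement is labelled as a corollary of Proposition~\ref{slN:prop:principal rels2}, the natural strategy is to derive (SqR$_2$) from (SqR$_1$) together with the digon removal relations, rather than to set up a fresh dual-basis computation. The plan is to pick one of the four double edges of the square in (SqR$_2$) and present the web containing that double edge as the digon appearing on the right hand side of (DR$_2$). This expresses the identity foam of the LHS of (SqR$_2$) as an appropriate sum of foams whose middle region is a square of only simple edges to which (SqR$_1$) applies directly.

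More precisely, I would first apply the digon removal relation (DR$_2$) to replace one double edge of the square by the sum $\sum_{i=0}^{N-2}$ of a simple edge carrying a digon bubble labelled by a pair of dots summing to $N-2$. After this replacement, the middle face of the foam has the shape of a square built from four simple facets, plus some ``dotted digon'' decorations coming from the (DR$_2$) sum. The relation (SqR$_1$) is then directly applicable to this square and converts the identity foam into the two canonical families of foams appearing on the RHS of (SqR$_1$): one obtained by cutting the square vertically, the other by cutting it horizontally and summing over dot distributions.

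Next, for each term produced by (SqR$_1$), I would reverse the digon removal step by reabsorbing the simple edge with its digon back into a double edge, using (DR$_1$) or (DR$_2$) in the reverse direction. Along the way several bubbles, theta-foams, and spheres will appear; these can be evaluated using the sphere relations (S$_1$), (S$_2$), the theta relations (\ThetaGraph), (\ThetaGraph$_*$), and the bubble identities of Corollary~\ref{slN:cor:bubble-star} and Corollary~\ref{slN:cor:bubble-12}. The key algebraic bookkeeping is the same sort of Schur-polynomial identity (combined with Pieri's rule, Lemma~\ref{slN:lem1}) that governs those bubble evaluations: the sums $\sum_{a+b+c=N-3}$ and the signs appearing on the RHS of (SqR$_1$) are designed to collapse exactly to the two terms on the RHS of (SqR$_2$), up to the overall minus sign.

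The main obstacle in carrying this out will be the bookkeeping of dots: reversing the digon removal produces linear combinations of dotted configurations, and one must match these against the Schur-polynomial sum in (SqR$_1$) to verify that all extra terms cancel or coalesce into precisely the two foams in the RHS of (SqR$_2$). If this combinatorial reduction turns out to be unwieldy, the safety net is to fall back on the dual-basis method used in the proof of (SqR$_1$): the boundary web $\Gamma$ of (SqR$_2$) is a closed web whose KR homology has dimension known from~\cite{KR}, and one can construct mutually dual bases of $\Hom_{\foam}(\emptyset,\Gamma)$ by analogues of the $u,v,w,s$-foams used there, then verify the identity by pairing both sides of (SqR$_2$) against each basis element and applying the sphere, theta, and bubble evaluations.
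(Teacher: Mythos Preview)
Your strategy misidentifies both where to apply \eqref{slN:eq:SqR1} and which auxiliary relations are needed. The web underlying the left-hand side of \eqref{slN:eq:SqR2} is fixed; you cannot use \eqref{slN:eq:DR2} to ``replace a double edge by a simple edge with a digon decoration'' because the foam relations act on foams between fixed webs, not on the boundary webs themselves. More seriously, the second term on the right-hand side of \eqref{slN:eq:SqR2} carries a \emph{triple} facet, and nothing in your reduction via \eqref{slN:eq:DR1}/\eqref{slN:eq:DR2} and the simple-double bubble identities will ever produce one. That term has to come from somewhere, and your sketch gives no mechanism for it.

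The paper's argument runs in the opposite direction: it \emph{starts} from the second (triple-facet) term on the right-hand side, locates inside it a simple-double square tube perpendicular to the triple facet, and applies \eqref{slN:eq:SqR1} to that tube. The two pieces of \eqref{slN:eq:SqR1} are then simplified using the triple-facet digon removals \eqref{slN:eq:DR32} and \eqref{slN:eq:DR33}, the Matveev--Piergalini relation \eqref{slN:eq:MP}, and the bubble identity \eqref{slN:eq:bubble-pqri}; one piece collapses to minus the first right-hand term, the other to the left-hand side. None of \eqref{slN:eq:DR32}, \eqref{slN:eq:DR33}, or \eqref{slN:eq:MP} appear in your outline, yet they are exactly the tools that handle the singular vertices and triple facets that make \eqref{slN:eq:SqR2} different from \eqref{slN:eq:SqR1}. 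Your fallback to a dual-basis computation would work in principle, but then the statement would no longer be a corollary.
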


\begin{proof} 
Apply the Relation~\eqref{slN:eq:SqR1} to the simple-double square tube perpendicular to 
the triple facet of the second term on the r.h.s. of~\eqref{slN:eq:SqR2}. The simple-double square tube is identified with a circle in Figure~\ref{slN:fig:sqr2circ}. 
\begin{figure}[h!]
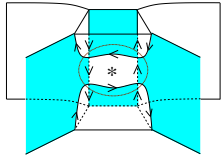

\centering
\figins{0}{0.8}{sq_rem_b2-circmark}
\caption{A simple-double square tube around a triple facet} 
\label{slN:fig:sqr2circ}
\end{figure}
The first term 
on the r.h.s. of~\eqref{slN:eq:SqR1} yields minus the first term on the r.h.s. of~\eqref{slN:eq:SqR2} after applying 
the relations~\eqref{slN:eq:DR32},~\eqref{slN:eq:MP} and the Bubble Relation \eqref{slN:eq:bubble-pqri}.  The second term 
on the r.h.s. of~\eqref{slN:eq:SqR1}, i.e. the whole sum, yields the l.h.s. of~\eqref{slN:eq:SqR2} after applying 
the relations ~\eqref{slN:eq:DR33},~\eqref{slN:eq:MP} and the Bubble Relation~\eqref{slN:eq:bubble-pqri}. Note that 
the signs come out right because in both cases we get two bubbles with opposite orientations.  
\end{proof}

\section{Invariance under the Reidemeister moves}
\label{slN:sec:invariance}

The proof of invariance under the Reidemeister moves follows in the same way as in Chapter~\ref{chap:univ3}. Let $\kom(\foam)$ and $\kom_{/h}(\foam)$ denote the category of complexes in $\foam$ 
and the same category modulo homotopies respectively.
As in Chapters~\ref{chap:KR} and~\ref{chap:univ3} we can take all different flattenings of $D$ to obtain 
an object in $\kom(\foam)$ which we call $\brak{D}$. The construction is indicated in Figure~\ref{slN:fig:geom-cplx}.
\begin{figure}[h]
\begin{align*}
\brak{\undercrossing} &=  0 \lra \underline{\brak{\orsmoothing}} 
     \stackrel{\figins{0}{0.2}{ssaddle}}{\longrightarrow} \brak{\DoubleEdge} \lra 0 \\
\brak{\overcrossing} &=  0 \lra \brak{\DoubleEdge}   
     \stackrel{\figins{0}{0.2}{ssaddle_ud}}{\longrightarrow} \underline{\brak{\orsmoothing}} \lra 0  
\end{align*}
\caption[Complex associated to a crossing]{Complex associated to a crossing. Underlined terms correspond to homological degree zero}
\label{slN:fig:geom-cplx}
\end{figure}


\begin{thm}\label{slN:thm:inv}
The bracket $\brak{\;}$ is invariant in $\kom_{/h}(\foam)$ under the Reidemeister moves.
\end{thm}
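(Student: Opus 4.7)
The plan is to adapt the diagrammatic strategy used in the proof of Theorem~\ref{u3:thm-inv} to the $\sln$ setting. For each Reidemeister move I will exhibit explicit chain maps between $\brak{D}$ and $\brak{D'}$ together with an explicit foam homotopy witnessing the homotopy equivalence. All the building blocks have been set up: the digon removal relations~\eqref{slN:eq:DR1},~\eqref{slN:eq:DR2} and the two square removal relations~\eqref{slN:eq:SqR1},~\eqref{slN:eq:SqR2} of Proposition~\ref{slN:prop:principal rels2} play the role of the decompositions \emph{(DR)} and \emph{(SqR)} used in Chapter~\ref{chap:univ3}, while the sphere relations (S$_1$), (S$_2$), the bubble identities of Corollaries~\ref{slN:cor:bubble-star} and~\ref{slN:cor:bubble-12}, the tube identities~\eqref{slN:eq:3C},~\eqref{slN:eq:RD1},~\eqref{slN:eq:RD2},~\eqref{slN:eq:FC}, and the Matveev-Piergalini relation~\eqref{slN:eq:MP} supply the combinatorial moves needed to verify that the proposed maps are indeed chain maps and that the proposed homotopies do their job.

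First I would dispose of Reidemeister I. The complex $\brak{\figins{-5}{0.2}{lkink}}$ has length two, and I would define the degree zero chain map $f^0$ from $\brak{D'}$ to $\brak{D}$ as a sum of dotted cup-like foams indexed by the decomposition coming from~\eqref{slN:eq:DR2}, together with correction terms involving the coefficients of $W_v$ (i.e.\ powers of the parameters), exactly as in Figure~\ref{u3:fig:RI_Invariance} but with $N$ terms and the new coefficients dictated by the dot conversion relation $\figins{-5}{0.2}{plan-i}=0$ for $i\geq N$. The return map $g^0$ is the natural simple-facet projection, and the homotopy $h$ is built from the zip with an attached dotted disk. Verifying $g^0f^0=\id$, $f^0g^0+hd=\id$, and $dh=\id$ reduces to the sphere relation (S$_1$), the digon removal relation~\eqref{slN:eq:DR2} (read backwards), and the bubble identity~\eqref{slN:eq:bubble-ij}.

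For Reidemeister IIa the intermediate web contains a simple-double digon, and I would use~\eqref{slN:eq:DR2} directly: the chain map $f$ collapsing the digon and the chain map $g$ reintroducing it, each with the appropriate sign and a sum of dots, give mutually inverse maps on the middle term, and a standard saddle-with-dots foam $h$ furnishes the homotopy to the identity on the outer terms. The arguments are checked term by term using~\eqref{slN:eq:FC} and the sphere relations, exactly parallel to the Reidemeister IIa argument in Subsection~\ref{u3:ssec:thm-inv}. Reidemeister IIb is similar but uses the square removal relation~\eqref{slN:eq:SqR1}: the decomposition provided there matches the complex of one of the two flattenings of the central crossing pair, which lets me produce $f$, $g$ and a homotopy $h$ whose identities follow from~\eqref{slN:eq:DR2},~\eqref{slN:eq:RD1},~\eqref{slN:eq:RD2},~\eqref{slN:eq:FC}, and the sphere relations.

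For Reidemeister III I would follow the ``two-sided reduction'' strategy already used in Theorem~\ref{u3:thm-inv}: build a third complex $\brak{Q}$ obtained by decomposing the central double edge/triple edge region via~\eqref{slN:eq:SqR2} (which involves the triple facet and is new compared to the $\slt$ case), and show that both $\brak{D}$ and $\brak{D'}$ are homotopy equivalent to $\brak{Q}$. The MP relation~\eqref{slN:eq:MP} is what lets the triple-edge summand slide across the diagram, while the digon removal relations~\eqref{slN:eq:DR31},~\eqref{slN:eq:DR32},~\eqref{slN:eq:DR33} handle the residual digons that appear when one simplifies the intermediate webs. A symmetry across a horizontal axis then gives the other side, completing the argument.

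The main obstacle will be Reidemeister III: the combinatorics of the triple-edge square removal~\eqref{slN:eq:SqR2} together with the variety of digon removals~\eqref{slN:eq:DR31}--\eqref{slN:eq:DR33} produces a much larger bookkeeping problem than in the $\slt$ case, and one has to check, vertex by vertex, that the signs coming from the Koszul conventions for the vertex matrix factorizations are compatible with the signs dictated by the hypercube of resolutions. Once that is done, the remaining verifications are all reductions of closed foams to spheres and theta-foams via the cutting neck relations (CN$_1$), (CN$_2$), (CN$_*$), which are purely local and algorithmic.
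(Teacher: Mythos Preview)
Your strategy is correct and matches the paper's approach: explicit foam chain maps and homotopies for each Reidemeister move, parallel to Theorem~\ref{u3:thm-inv}. Two points need correction and the rest can be streamlined.

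First, there are no deformation parameters in this chapter: the potential is the undeformed $x^{N+1}$, so the ``correction terms involving the coefficients of $W_v$'' you propose for $f^0$ in Reidemeister~I do not exist. The map $f^0$ is simply a sum of $N$ dotted cups, $h$ is a sum over $a+b+c=N-2$, and the verifications use (S$_1$), dot migration together with $\pi_{N-1,0}=0$ on the double facet, \eqref{slN:eq:DR2}, \eqref{slN:eq:RD1} and (CN$_1$). For Reidemeister~IIa the relevant digon relation is \eqref{slN:eq:DR1}, not \eqref{slN:eq:DR2}; the checks use only isotopies and the bubble identity~\eqref{slN:eq:bubble-ij}. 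Reidemeister~IIb is as you describe but also needs \eqref{slN:eq:3C}.

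Second, your concern about ``Koszul conventions for the vertex matrix factorizations'' in Reidemeister~III is misplaced: once the relations of Propositions~\ref{slN:prop:principal rels1} and~\ref{slN:prop:principal rels2} are in hand, the invariance proof lives entirely in $\foam$ and matrix factorizations play no further role. The paper's Reidemeister~III argument follows your outline---reduce both sides to intermediate complexes $\brak{Q}$ and $\brak{Q'}$---but does not invoke \eqref{slN:eq:SqR2} or \eqref{slN:eq:DR31}--\eqref{slN:eq:DR33} explicitly. The one genuinely new step compared to $\slt$ is that $\brak{Q}$ and $\brak{Q'}$ are not equal on the nose but only isomorphic: in homological degree~$0$ the isomorphism is the foam with two singular vertices, and \eqref{slN:eq:MP} is precisely what makes it invertible.
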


\begin{proof}
\n\emph{Reidemeister I}: 
Consider diagrams $D$ and $D'$ that differ in a circular region, as in the 
figure below.
$$D=\figins{-13}{0.4}{lkink}\qquad
D'=\figins{-13}{0.4}{ReidI-1}\,$$
We give the homotopy between complexes $\brak{D}$ and $\brak{D'}$ in 
Figure~\ref{slN:fig:RI_Invariance}~\footnote{We thank Christian 
Blanchet for spotting a mistake in a previous version of this diagram.}.
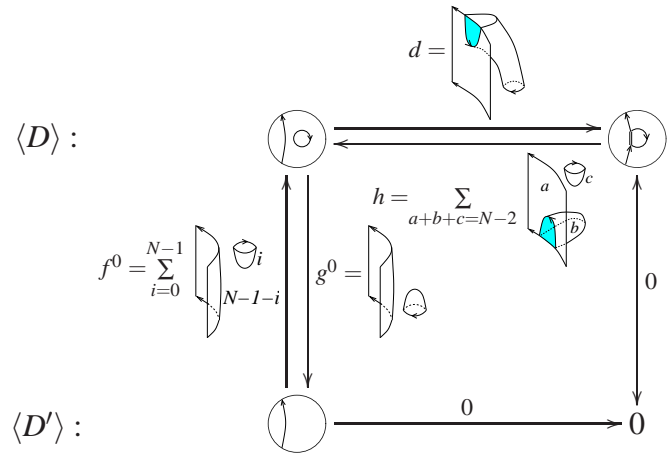
\begin{figure}[h!]
$$\xymatrix@R=32mm{
  \brak{D}:
\\
  \brak{D'}:
}
\xymatrix@C=35mm@R=28mm{
 \figins{0}{0.3}{ReidI-2}
  \ar@<4pt>[d]^{
        g^0 \;=\; \figins{-22}{0.6}{ReidI-g} }
  \ar@<4pt>[r]^{
        d \;=\; \figins{-24}{0.6}{ReidI-d} } &
  \figins{0}{0.3}{ReidI-3} \ar@<2pt>[l]^{
     \quad h \;= \sum\limits_{a+b+c=N-2}\ \figins{-24}{0.6}{ReidI-h1} } \\
 \figins{0}{0.3}{ReidI-1}\ar[r]^0 
  \ar@<4pt>[u]^{
        f^0 \;= \sum\limits_{i=0}^{N-1}\ \figins{-24}{0.6}{ReidI-f} } &
  0 \ar@{<->}[u]_0 
}$$
\caption{Invariance under \emph{Reidemeister I}} 
\label{slN:fig:RI_Invariance}
\end{figure} 
\n  By the Sphere Relation (S$_1$), we get $g^0f^0=Id_{\brak{D'}^0}$.
To see that $df^0=0$ holds, one can use dot migration to get a new labelling of the same foam 
with the double facet labelled by $\pi_{N-1,0}$, which kills the foam by the dot conversion 
relations. The equality $dh=Id_{\brak{D}^1}$ follows from~\eqref{slN:eq:DR2}. To show that 
$f^0g^0+hd=Id_{\brak{D}^0}$, apply~\eqref{slN:eq:RD1} to $hd$ and then cancel all terms which appear 
twice with opposite signs. What is left is the sum of $N$ terms which is equal to 
$Id_{\brak{D}^0}$ by (CN$_1$). Therefore $\brak{D'}$  is homotopy-equivalent to $\brak{D}$.

\bigskip
\n\emph{Reidemeister IIa}:
Consider diagrams $D$ and $D'$ that differ in a circular region, as
in the figure below.
$$
D=\figins{-13}{0.4}{DReidIIa}\qquad
D'=\figins{-13}{0.4}{twoedges}$$
We only sketch the arguments that the diagram in 
Figure~\ref{slN:fig:RIIaInvariance} defines a homotopy 
equivalence between the complexes $\brak{D}$ and 
$\brak{D'}$: 
\begin{figure}[ht!]
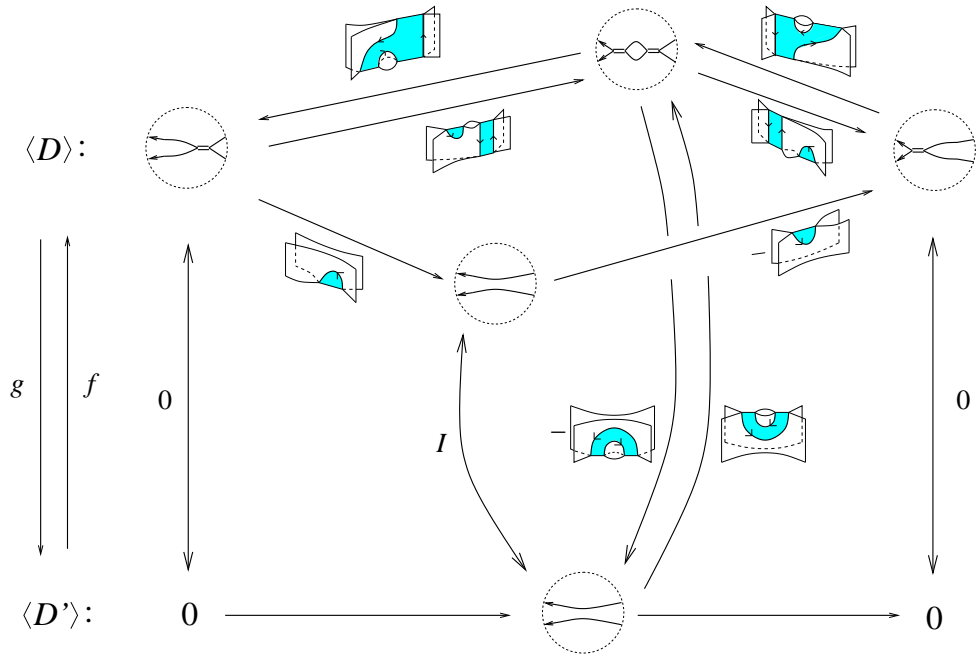

\centering
\figwins{0}{5.0}{ReidIIa}
\caption{Invariance under \emph{Reidemeister IIa}} 
\label{slN:fig:RIIaInvariance}
\end{figure}

\begin{itemize}
\item $g$ and $f$ are morphisms of complexes (use only isotopies); 
\item $g^1f^1=Id_{\brak{D'}^1}$  (uses equation \eqref{slN:eq:bubble-ij}); 
\item $f^0g^0+hd=Id_{\brak{D}^0}$ and $f^2g^2+dh=Id_{\brak{D}^2}$  (use isotopies);
\item $f^1g^1+dh+hd=Id_{\brak{D}^1}$  (use~\eqref{slN:eq:DR1}).
\end{itemize}

\bigskip
\n\emph{Reidemeister IIb}:
Consider diagrams $D$ and $D'$ that differ only in a circular region, as 
in the figure below.
$$
D=\figins{-13}{0.4}{DReidIIb}\qquad
D'=\figins{-13}{0.4}{twoedgesop}$$
Again, we sketch the arguments that the diagram in 
Figure~\ref{slN:fig:RIIbInvariance} defines a 
homotopy equivalence between the complexes $\brak{D}$ and $\brak{D'}$:
\begin{figure}[h!]
\centering
\figwins{0}{5.4}{ReidIIb}
\caption{Invariance under \emph{Reidemeister IIb}} 
\label{slN:fig:RIIbInvariance}
\end{figure}
\begin{itemize}
\item $g$ and $f$ are morphisms of complexes (use isotopies and~\eqref{slN:eq:DR2}); 
\item $g^1f^1=Id_{\brak{D'}^1}$  (use~\eqref{slN:eq:FC} and (S$_1$)); 
\item $f^0g^0+hd=Id_{\brak{D}^0}$ and $f^2g^2+dh=Id_{\brak{D}^2}$ (use~\eqref{slN:eq:RD1} and~\eqref{slN:eq:DR2});
\item $f^1g^1+dh+hd=Id_{\brak{D}^1}$ (use~\eqref{slN:eq:DR2},~\eqref{slN:eq:RD1},~\eqref{slN:eq:3C} and~\eqref{slN:eq:SqR1}.
\end{itemize}

\bigskip
\n\emph{Reidemeister III}:
Consider diagrams $D$ and $D'$ that differ only in a circular region, as
in the figure below.
$$
D =\figins{-13}{0.4}{D1ReidIII}\qquad
D'=\figins{-13}{0.4}{D2ReidIII}$$
\begin{figure}[ht!]
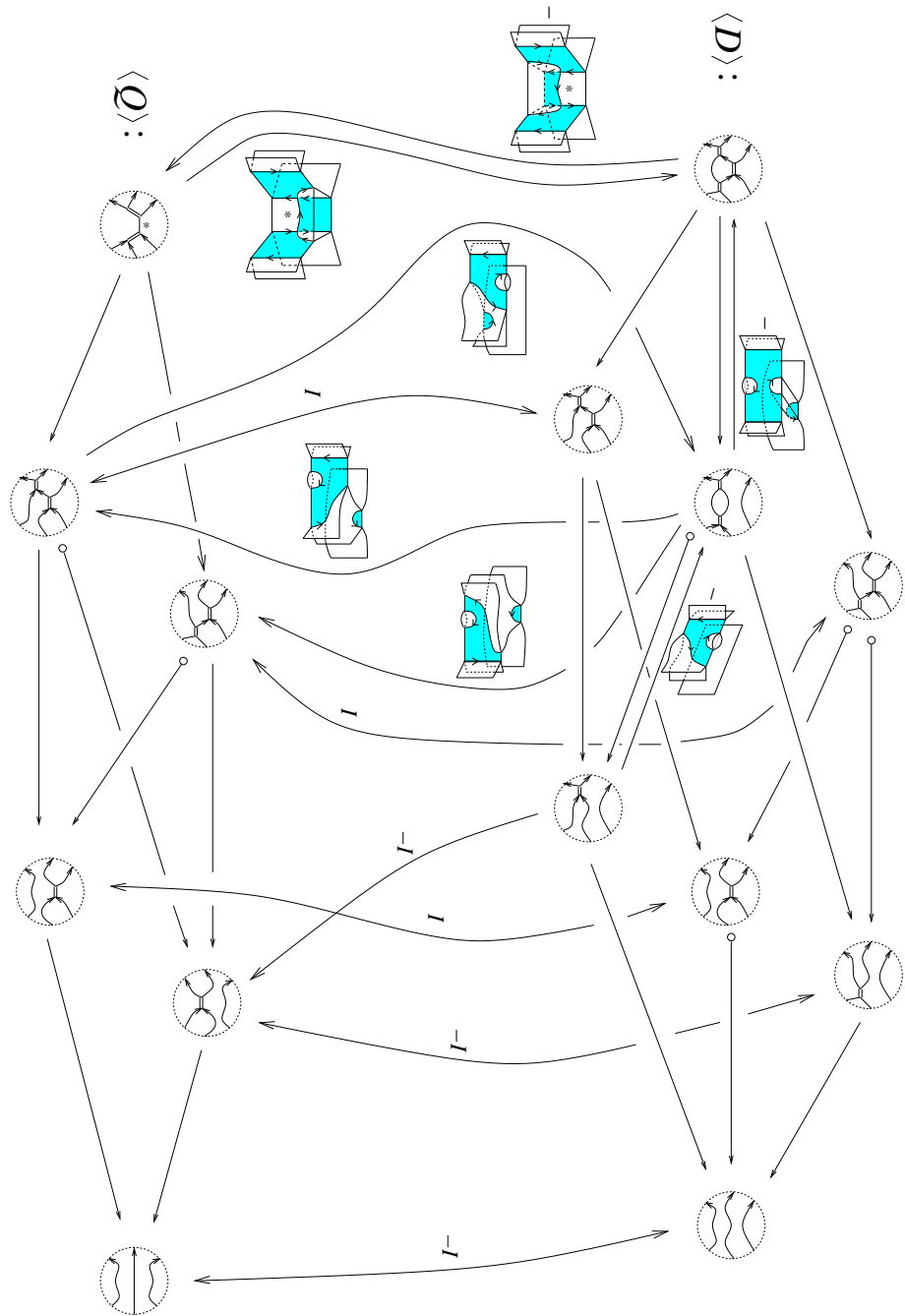

\centering
\figwins{0}{4.7}{ReidIII} 
\caption[Invariance under \emph{Reidemeister III}]{Invariance under \emph{Reidemeister III}. A circle attached to the tail of an arrow indicates that the corresponding morphism has a minus sign.}
\label{slN:fig:RIIIInvariance}
\end{figure}
In order
to prove that $\brak{D'}$ is homotopy equivalent to $\brak{D}$ 
we show that the latter is homotopy equivalent to a third complex denoted 
$\brak{Q}$ in Figure~\ref{slN:fig:RIIIInvariance}. The 
differential in $\brak{Q}$ in homological 
degree $0$ is defined by 
$$\figins{-10}{0.6}{diffQ}$$
for one summand and a similar foam for the other summand.   
By applying a symmetry relative to a horizontal axis crossing each diagram 
in $\brak{D}$ and $\brak{Q}$ we obtain a homotopy equivalence between 
$\brak{D'}$ and $\brak{Q'}$. It is easy to see that $\brak{Q}$ and 
$\brak{Q'}$ are isomorphic. In homological degree $0$ the isomorphism 
is given by the obvious foam with two singular vertices. In the other 
degrees the isomorphism is given by the identity (in degrees $1$ and $2$ 
one has to swap the two terms of course). The fact that this defines an 
isomorphism follows immediately from the~\eqref{slN:eq:MP} relation. We conclude 
that $\brak{D}$ and $\brak{D'}$ are homotopy equivalent.    
\end{proof}

From Theorem~\ref{slN:thm:inv} we see that we can use any diagram $D$ of $L$ to obtain the 
invariant in $\kom_{/h}(\foam)$ which justifies the notation $\brak{L}$ for $\brak{D}$.

\section{Functoriality}
\label{slN:sec:functoriality}
As in Chapter~\ref{chap:univ3}, the proof of the functoriality of our homology follows the same line of reasoning as in~\cite{bar-natancob} and~\cite{mackaay-vaz}.
As in those papers, it is clear that the construction and the results of the previous sections can be extended to the category of tangles, following a similar approach using open webs and foams with corners. A foam with corners should be considered as living inside a cylinder, as in~\cite{bar-natancob}, such that the intersection with the cylinder is just a disjoint set of vertical edges.  

The degree formula can be extended to the category of open webs and foams with corners by
\begin{defn}
Let $u$ be a foam with $d_\bdot$ dots of type $\bdot$, $d_\wdot$ dots of type 
$\wdot$ and $d_\bwdot$ dots of type $\bwdot$. Let $b_i$ be the number of vertical edges 
of type $i$ of the boundary of $u$. The $q$-grading of $u$ is given 
by
\begin{equation}
q(u)= -\sum_{i=1}^3 i(N-i) q_i(u) - 2(N-2)q_{\sk}(u) + 
\dfrac{1}{2}\sum_{i=1}^3 i(N-i)b_i+ 2d_\bdot + 4d_\wdot +6d_\bwdot.
\end{equation}
\end{defn}

Note that the KL formula also induces a grading on foams with corners, because 
for any foam $u$ between two (open) webs $\Gamma_1$ and $\Gamma_2$, it gives an element 
in the graded vector space $\Ext(M_1,M_2)$, 
where $M_i$ is the matrix factorization associated to $\Gamma_i$ in~\cite{KR}, for $i=1,2$. 
Recall that the $\Ext$ groups have a $\bZ/2\bZ\times\bZ$-grading. For foams there is 
no $\bZ/2\bZ$-grading, but the $\bZ$-grading survives.  

\begin{lem} For any foam $u$, the KL grading of $u$ is equal to $q(u)$.
\end{lem}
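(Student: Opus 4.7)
The plan is to reduce the claim to a finite list of elementary building blocks by exploiting additivity of both grading functions, and then to verify the equality case-by-case on the generators.

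First, I would establish that both gradings are additive under vertical composition of foams (stacking) and under horizontal composition (disjoint union). Additivity of $q(u)$ is essentially the content of the earlier lemma in Section~\ref{slN:sec:pre-foam}, extended to the open case by noting that when two foams $u_1\colon \Gamma_1\to\Gamma_2$ and $u_2\colon \Gamma_2\to\Gamma_3$ are glued along $\Gamma_2$, the contributions $\frac{1}{2}\sum_i i(N-i)b_i$ coming from the edges of $\Gamma_2$ appear once for $u_1$ (top boundary) and once for $u_2$ (bottom boundary), while these edges become interior in $u_1\circ u_2$ and the Euler-characteristic terms fit together exactly. Additivity of the KL grading follows because composition in the category of matrix factorizations, respectively tensor product over the appropriate edge rings, is a degree-preserving operation, as reviewed in Section~\ref{slN:sec:glueing}.

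Second, I would check that the addition of a single dot of type $\bdot$, $\wdot$, or $\bwdot$ on a facet contributes $2$, $4$, or $6$ respectively to each grading. For $q(u)$ this is the definition. For the KL grading, adding a dot corresponds to multiplication by the generator $\pi_{1,0,\dots,0}$, $\pi_{1,1,0,\dots,0}$, or $\pi_{1,1,1,0,\dots,0}$ on the Jacobi algebra of the facet, which has degree $2$, $4$, or $6$ as explained in Section~\ref{slN:sec:Schur}. It therefore suffices to prove the equality for \emph{dotless} foams.

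Third, I would reduce to checking a short list of elementary dotless pieces out of which every foam can be built by composition and disjoint union: identity cylinders on webs, cups and caps on simple, double, and triple circles, the simple saddle, the zip and unzip of Figure~\ref{slN:fig:elemfoams}, and the two vertex foams (of type I and type II). For each of these I would compute $q(u)$ directly from the definition and compare it with the degree of the associated matrix-factorization datum in Section~\ref{slN:ssec:decoration}. For instance, a simple cup contributes $-(N-1)$ to $q(u)$, which matches the degree of the unit map $\bQ\to\bQ[x]/(x^N)\{1-N\}$; the zip $\chi_0$ has $q(u)=1$, matching the degree of the Koszul homomorphism; the simple saddle gives $q(u)=N-1$, matching the degree of $\eta$; analogous computations handle the double and triple cups and caps via the trace forms in Equation~\eqref{slN:eq:trag}.

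The main obstacle will be the vertex foams. Recall from Section~\ref{slN:ssec:decoration} that the element $O_v$ is characterized only up to scalar as a generator of a one-dimensional piece of $\Ext$, namely the degree-zero part of $\Ext(M_v^{\text{in}},M_v^{\text{out}})$ whose graded dimension is $q^{3N-6}[N][N-1][N-2]=1+q(\cdots)$. I would need to check that the corresponding contribution to $q(u)$ from a small closed neighborhood of $v$ (with its six boundary arcs producing boundary terms $\tfrac{1}{2}\sum_i i(N-i)b_i$ to be subtracted so as to extract the intrinsic weight of the vertex) is indeed zero, so that $O_v$ lands in the correct degree. This is a genuinely local calculation on the tetrahedral graph $\gamma_v$, using $b_1=b_2=2$ and $b_3=1$ around the vertex, together with the Euler-characteristic contributions of the four facets meeting at $v$ and of the four singular arcs emanating from it; a careful count shows both sides vanish. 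Once this vertex computation is done, additivity together with the elementary-case verifications yield the theorem for every foam.
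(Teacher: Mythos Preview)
Your overall strategy coincides with the paper's: both gradings are additive under horizontal and vertical glueing and agree on dots, so it suffices to check equality on a finite list of generators. Where you diverge from the paper is in \emph{how} the generators are handled.

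You propose direct verification: compute $q(u)$ from the Euler-characteristic formula and compare with the degree of the explicit matrix-factorization map ($\imath$, $\varepsilon$, $\chi_0$, $\chi_1$, $\eta$, $O_v$). The paper instead argues indirectly via the local relations. The point is that both gradings are preserved by all the relations proved in Propositions~\ref{slN:prop:principal rels1} and~\ref{slN:prop:principal rels2}, so any foam appearing in one of those relations must have the same grading as the other foams in that relation. Since identities have degree~$0$ and dots have known degree, this pins down the degrees of the singular cups and caps from~\eqref{slN:eq:DR1}, the singular saddles from~\eqref{slN:eq:RD1} and~\eqref{slN:eq:RD2}, and the vertex foams from~\eqref{slN:eq:MP}, without ever computing an Euler characteristic or an explicit $\Ext$-degree. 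Your route works too, but the paper's is shorter and sidesteps the delicate local count around the singular vertex that you flag as the main obstacle.
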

\begin{proof}
Both gradings are additive under horizontal and vertical glueing and 
are preserved by the 
relations in $\foam$. Also the degrees of the dots are the same in both 
gradings. 
Therefore it is enough to establish the equality between the gradings 
for the foams which generate $\foam$. For any foam without a singular graph the gradings are 
obviously equal, so let us concentrate on the singular cups and caps, the singular saddle point 
cobordisms and the cobordisms with one singular vertex in 
Figure~\ref{slN:fig:elemfoams}. To compute the degree of the singular cups and caps, for both gradings, one can use the digon removal 
relations. For example, let us consider the singular cup 
$$\figins{-10}{0.4}{scup}.$$ 
Any grading that preserves relation~\eqref{slN:eq:DR1} has to attribute the value of 
$-1$ to that foam, because the foam on the l.h.s. of~\eqref{slN:eq:DR1} has degree $0$, being an identity, 
and the dot on the r.h.s. has degree $2$. Similarly one can compute the degrees of the other 
singular cups and caps. To compute the degree of the singular saddle-point cobordisms, one can 
use the removing disc relations~\eqref{slN:eq:RD1} and~\eqref{slN:eq:RD2}. For example, the saddle-point cobordism in 
Figure~\ref{slN:fig:elemfoams} 
has to have degree $1$. Finally, using the~\eqref{slN:eq:MP} relation 
one shows 
that both foams on the r.h.s. in Figure~\ref{slN:fig:elemfoams} 
have to have degree $0$.          
\end{proof}

\begin{cor}
For any closed foam $u$ we have that $\KL{u}$ is zero if $q(u)\neq 0$.
\end{cor}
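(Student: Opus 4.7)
The plan is to derive this directly from the preceding lemma identifying the KL-grading with the $q$-grading, combined with the target of the KL map for closed foams.

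First I would recall, from the glueing properties in Section~\ref{slN:sec:glueing}, that a closed foam $u$ is a morphism from the empty web to itself, and so the Kapustin-Li formula associates to $u$ an element of $\Ext(M_\emptyset,M_\emptyset)$. Since $M_\emptyset$ is trivial (the empty web bounds no facets, edges, or vertices), this $\Ext$ group is canonically identified with $\bQ$, concentrated in $q$-degree zero. Equivalently, after all partial super-traces and the final application of $\varepsilon$ in formula~\eqref{slN:eq:ev}, $\KL{u}$ is a scalar in $\bQ$.

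Next I would apply the previous lemma, which shows that the KL-grading of $u$ coincides with $q(u)$, to conclude that $\KL{u}$ is homogeneous of $q$-degree equal to $q(u)$. Combining this with the previous step, $\KL{u}$ is a homogeneous element of $\bQ$ of degree $q(u)$. Since $\bQ$ has no nonzero homogeneous elements outside degree zero, we must have $\KL{u}=0$ whenever $q(u)\neq 0$.

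There is no real obstacle here: the content of the argument is entirely in the preceding lemma, and the corollary is just the specialisation to closed foams, for which the target of the KL map collapses to $\bQ$. The only point that deserves a sentence of justification is why the target is $\bQ$ in degree zero, and this follows from the definition of the KL formula on closed pre-foams given at the start of Section~\ref{slN:sec:KL} together with the glueing discussion of Section~\ref{slN:sec:glueing}.
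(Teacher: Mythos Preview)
Your argument is correct and matches the paper's intent: the paper states this as an immediate corollary of the preceding lemma identifying the KL-grading with $q(u)$, and gives no separate proof. You have simply spelled out the obvious deduction, namely that for a closed foam the KL map lands in $\bQ$ (degree zero), so a homogeneous element of nonzero degree must vanish.
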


As in~\cite{mackaay-vaz} we have the following lemma, which 
is the analogue of Lemma 8.6 in~\cite{bar-natancob}:

\begin{lem}
For a crossingless tangle diagram $T$ we have that
$\Hom_{\foam}(T,T)$ is zero in negative degrees and $\bQ$ in degree zero.
\end{lem}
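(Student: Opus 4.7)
The plan is to follow the strategy used in Chapter~\ref{chap:univ3} for the $\slt$ case, adapting it to the $\sln$ setting for $N\geq 4$. First I would show that any foam $f\colon T\to T$ with $T$ crossingless admits, via repeated application of the cutting-neck relations (CN$_1$), (CN$_2$), (CN$_*$) of Proposition~\ref{slN:prop:principal rels1}, a reduction to a $\bQ$-linear combination of terms, each of which is the disjoint union of a dotted identity foam on $T$ together with some closed foam. Since $T$ has no vertices, the identity foam $\mathrm{id}_T$ consists only of simple disks, so every such dotted identity is a disjoint union of dotted simple disks; its $q$-degree equals twice the total number of dots, hence is non-negative and vanishes iff it is undotted.

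Next I would combine this decomposition with the Corollary immediately preceding the lemma, which asserts that a closed foam $u$ with $q(u)\neq 0$ satisfies $\KL{u}=0$ and hence vanishes as a morphism in $\Hom_\foam(\emptyset,\emptyset)\cong\bQ$; only closed foams of $q$-degree zero can contribute nontrivially, and these contribute rational scalars. If $q(f)<0$, in every term of the reduction the identity-part has $q$-degree $\geq 0$, forcing the closed-foam factor to have $q$-degree $<0$ and therefore to vanish, so $f=0$. If $q(f)=0$, the identity part must be undotted and the closed factor is a rational number, yielding $f\in\bQ\cdot\mathrm{id}_T$.

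The hard part will be justifying the cutting reduction in the presence of singular vertices, which were absent in the $\slt$ case. For $N\geq 4$ the singular graph of $f$ may contain six-valent vertices, whereas the cutting-neck relations as stated apply cleanly only to singularity-free tubes on a single facet. I would address this by first using the Matveev--Piergalini relation~\eqref{slN:eq:MP} together with the digon and square removal relations~\eqref{slN:eq:DR1},~\eqref{slN:eq:DR2},~\eqref{slN:eq:SqR1},~\eqref{slN:eq:SqR2} to cancel singular vertices in pairs and to localise any surviving singular structure inside closed sub-foams, onto which cutting-neck can then be applied to separate them from the identity part. As a fallback, I could bypass the combinatorial reduction entirely by invoking the glueing property of Section~\ref{slN:sec:glueing}: it produces an inclusion $\Hom_\foam(T,T)\hookrightarrow \Ext(\hat T,\hat T)$ where $\hat T$ is the tensor product of the arc matrix factorizations of~\cite{KR} associated to $T$, and the graded dimension of this $\Ext$-module is known to vanish in negative degrees and to be one-dimensional in degree zero.
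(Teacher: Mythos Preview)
Your first two paragraphs are essentially the paper's proof, and they are correct. The paper drags each boundary circle of $u$ slightly into the interior and applies (CN$_1$) there; this yields a $\bQ$-linear combination of terms, each a dotted identity on $T$ times a closed foam. Since $T$ is a genuine crossingless tangle (only simple arcs and circles, no trivalent vertices), the identity has non-negative degree; the Corollary on closed foams then finishes both claims exactly as you describe. Only (CN$_1$) is needed, not (CN$_2$) or (CN$_*$).

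The ``hard part'' you identify is not actually an obstacle, and this is where you overcomplicate things. The circles to which you apply (CN$_1$) lie on the simple collar near $\partial u$, which carries no singularities since $T$ has none. After cutting, every singular vertex of $u$ lands inside the \emph{closed} foam factor, and this closed foam is simply evaluated by $\KL{\ }$, which is defined via the Kapustin--Li formula and handles singular vertices by construction. You never need to simplify the interior combinatorially with~\eqref{slN:eq:MP} or the digon/square removals; the whole point of working modulo the kernel of $\KL{\ }$ is that closed foams become scalars automatically. Your matrix-factorization fallback via the glueing property would also work, and is a legitimate alternative route, but it is unnecessary here.
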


\begin{proof}
Let $T$ be a crossingless tangle diagram and $u\in\Hom_{\foam}(T,T)$.   
Recall that $u$ can be considered to be in a cylinder with vertical edges 
intersecting the latter. The boundary of $u$ consists of a disjoint union 
of circles (topologically speaking). By dragging these 
circles slightly into the interior of $u$ one gets a disjoint union of circles in 
the interior of $u$. Apply relation (CN$_1$) to each of these circles. We 
get a linear combination of terms in $\Hom_{\foam}(T,T)$ each of which 
is the disjoint union of the identity on $T$, possibly decorated with dots, 
and a closed foam, which can be evaluated by $\KL{\ }$. Note 
that the identity of $T$ with any number of dots has always 
non-negative degree. Therefore, if $u$ has negative degree, the closed 
foams above have negative degree as well and evaluate to zero. This shows 
the first claim in the lemma. If $u$ has degree $0$, the only terms 
which survive after evaluating the closed foams have degree $0$ as well and 
are therefore a multiple of the identity on $T$. This proves the second 
claim in the lemma. 
\end{proof}

The proofs of Lemmas~8.7-8.9 in~\cite{bar-natancob} are ``identical''. 
The proofs of Theorem 4 and Theorem 5 follow the same reasoning but 
have to be adapted as in~\cite{mackaay-vaz}. One has to use the 
homotopies of our Section~\ref{slN:sec:invariance} instead of the homotopies 
used in~\cite{bar-natancob}. 
Without giving further details, we state the 
main result. Let $\kom_{/\bQ^*h}(\foam)$ denote the category 
$\kom_{/h}(\foam)$ 
modded out by $\bQ^*$, the invertible rational numbers. Then

\begin{prop}
\label{slN:prop:func}
$\brak{\;}$ defines a functor $\Link\to 
\kom_{/\bQ^*h}(\foam )$.
\end{prop}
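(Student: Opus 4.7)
The plan is to mimic Bar-Natan's proof of functoriality in~\cite{bar-natancob} in its adaptation to the $\slt$-setting carried out in~\cite{mackaay-vaz}, using the dictionary between $\slt$-foams and our $\sln$-foams supplied by Section~\ref{slN:sec:invariance}. The starting point is to extend the construction of $\brak{\,\cdot\,}$ from closed link diagrams to tangle diagrams in a disk, with foams in a cylinder whose intersection with the lateral boundary is a disjoint union of vertical edges, so that the horizontal composition of tangles is matched by horizontal glueing of foams. With the modified degree formula given just after Proposition~\ref{slN:prop:func}'s setup, this composition is additive on $q$-degrees, and the same homotopies constructed in Section~\ref{slN:sec:invariance} localize to prove Reidemeister invariance in the tangle setting (since those homotopies are entirely supported in a ball).

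Next, one fixes a Carter--Saito movie presentation of link cobordisms: a finite list of elementary movies (Reidemeister births/deaths, reversed Reidemeister moves, and the $15$ Carter--Saito movie moves MM1--MM15) generates the morphisms of $\Link$. To each elementary Reidemeister clip I assign the homotopy equivalence built explicitly in Section~\ref{slN:sec:invariance}; to each Morse clip (birth, death, saddle) I assign the tautological cup, cap or saddle foam in $\foam$. Composing these yields a morphism in $\kom(\foam)$ associated to any movie, and the task is to show that any two movies related by a Carter--Saito move give chain maps that agree up to a global nonzero rational scalar and up to homotopy. This is the only place where the $\bQ^*$ ambiguity is introduced.

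The verification of the movie moves splits into two types. For the ``easy'' movie moves --- those in which both sides, after slicing, involve only local identities on crossingless tangles, composed with isotopies --- one checks equality by the same diagrammatic manipulations used in \cite{bar-natancob, mackaay-vaz}, now invoking the local relations of Propositions~\ref{slN:prop:principal rels1} and~\ref{slN:prop:principal rels2} in place of the $\slt$-relations. For the ``hard'' movie moves (essentially MM6, MM8, MM9, MM10 and their mirrors, where one faces a possibly nontrivial endomorphism of a crossingless tangle $T$ in homological degree $0$ and $q$-degree $0$) one applies the key input: by the two lemmas stated just after the degree formula, $\Hom_{\foam}(T,T)$ vanishes in negative $q$-degree and equals $\bQ$ in $q$-degree $0$. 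Consequently the two chain maps produced by the two sides of such a movie move must, in homological degree $0$, be equal scalar multiples of the identity on each summand; by connectedness of the movie (and the ability to propagate the scalar across the finitely many homological degrees where something nontrivial happens) they agree up to a single global $\bQ^*$-scalar. The neck-cutting relation (CN$_1$) and the sphere relation (S$_1$) are what allow one to reduce any degree-zero endomorphism of $T$ to such a scalar multiple of the identity, exactly as in Lemma~8.6 of~\cite{bar-natancob}.

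The anticipated main obstacle is the bookkeeping for the hard movie moves, where one must in principle verify that the scalar produced on each side is nonzero: otherwise the assignment would factor through $0$ and fail to be a functor even projectively. Here the argument of~\cite{bar-natancob,mackaay-vaz} goes through because the elementary foams (cup, cap, zip, unzip, saddle) are all nonzero in $\foam$, which is ensured by the nondegeneracy of the pairing~\eqref{slN:eq:dualpairing} established through the explicit dual bases exhibited in the proofs of Propositions~\ref{slN:prop:principal rels1}--\ref{slN:prop:principal rels2}; combined with Lemma~\ref{slN:lem:theta-pqrkli} and the cutting-neck relations this implies that the scalars appearing at each stage of the movie-move verification are units of $\bQ$. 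Once all Carter--Saito moves are checked up to $\bQ^*$, the assignment of morphisms is well-defined on isotopy classes of cobordisms and respects vertical composition on the nose, giving the desired projective functor $\Link \to \kom_{/\bQ^*h}(\foam)$.
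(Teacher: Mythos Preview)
Your proposal is correct and follows the same approach the paper takes: the paper itself gives no detailed proof here, simply stating that the proofs of Lemmas~8.7--8.9 in~\cite{bar-natancob} are ``identical'' and that Theorems~4 and~5 there carry over after replacing Bar-Natan's homotopies with those of Section~\ref{slN:sec:invariance}, referring to~\cite{mackaay-vaz} for the adaptation. You have essentially spelled out the strategy the paper only sketches, and your identification of the key input---the lemma that $\Hom_{\foam}(T,T)$ vanishes in negative degree and is $\bQ$ in degree zero, forcing the movie-move maps to be scalar multiples of the identity---matches the paper's intent exactly.
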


\section{The $\ \sln$-link homology}
\label{slN:sec:taut-functor}

\begin{defn}
Let $\Gamma$, $\Gamma'$ be closed webs and $f\in\Hom_{\foam }(\Gamma,\Gamma')$. Define a functor 
$\mathcal{F}$ between the categories $\foam$ and the category $\V$ 
of $\bZ$-graded rational vector spaces and ${\bZ}$-graded linear maps as
\begin{enumerate}
\item
$\cF(\Gamma)=\Hom_{\foam}(\emptyset,\Gamma),$
\item $\cF(f)$ is the $\bQ$-linear map 
$\cF(f)\colon\Hom_{\foam }(\emptyset,\Gamma)\to\Hom_{\foam }(\emptyset,\Gamma')$ given by composition.
\end{enumerate} 
\end{defn}

\medskip

Note that $\cF$ is a tensor functor and that the degree of $\cF(f)$ equals $q(f)$. Note also that 
$\cF(\unknot)\cong \hy(\cp{N-1})\{-N+1\}$ and 
$\cF(\figins{-2.5}{0.16}{dble-circ})\cong \hy(\cG_{2,N})\{-2N+4\}$.

The following are a categorified version of the relations in Figure~\ref{KR:fig:moy}.

\begin{lem}[MOY decomposition]\label{slN:lem:moy-F}
We have the following decompositions under the functor $\cF$:
\begin{enumerate}
\item \label{slN:eq:decomp-dr1}
$\cF\left(
\figins{-8}{0.3}{digon-up}\right)\cong \cF\left(
\figins{-8}{0.3}{dbedge-up}\right)\left\{-1\right\}\bigoplus \ \cF\left(
\figins{-8}{0.3}{dbedge-up}\right)\left\{1\right\}$.
\vspace{1em}
\item \label{slN:eq:decomp-dr2}
$\cF\left(
\figins{-8}{0.3}{dbedge-dig}\right)\cong\bigoplus\limits_{i=0}^{N-2}\cF\left(
\figins{-8}{0.3}{edge-up}\right)\left\{2-N+2i\right\}$.
\vspace{1em}
\item \label{slN:eq:decomp-sqr1}
$\cF\left(
\figins{-8}{0.3}{square}\right)\cong\cF\left(
\figins{-8}{0.3}{twoedges-lr}\right)\bigoplus\left(\bigoplus\limits_{i=0}^{N-3} \ \cF\left(
\figins{-8}{0.3}{twoedges-ud}\right)\left\{3-N+2i\right\}\right)$.
\vspace{1em}
\item \label{slN:eq:decomp-sqr2}
$\cF\left(\figins{-20}{0.65}{moy5-1}\right) \bigoplus
\cF\left(\figins{-20}{0.65}{moy5-21}\right)\cong 
\cF\left(\figins{-20}{0.65}{moy5-2}\right)\bigoplus
\cF\left(\figins{-20}{0.65}{moy5-11}\right)
$.
\end{enumerate}
\end{lem}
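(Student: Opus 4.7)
\medskip

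\noindent\textbf{Proof proposal.} The plan is to follow the same template used in the $\slt$ setting (Lemma~\ref{u3:lem:KhK} and Lemma~\ref{mf3:lem:KhK-mf}), constructing explicit foams between the two sides of each decomposition and verifying the required identities via the local relations derived in Propositions~\ref{slN:prop:principal rels1} and~\ref{slN:prop:principal rels2}. Concretely, for each decomposition I will exhibit, in $\foam$, grading-preserving morphisms $\varphi_i$ from the summands on the right into the left-hand web and $\psi_i$ in the opposite direction (built out of zips, unzips, cups, caps and dotted identity sheets), and then check the two identities
\begin{equation*}
\psi_j \circ \varphi_i \;=\; \delta_{ij}\,\id,
\qquad
\sum_i \varphi_i \circ \psi_i \;=\; \id,
\end{equation*}
after which applying the functor $\cF$ yields the stated direct sum decomposition in $\V$.

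For part~\eqref{slN:eq:decomp-dr1}, I take for $\varphi_i$ the foam that caps off the digon with an unzip followed by a dotted identity (with $i=0,1$ dots), and for $\psi_i$ the mirror construction with complementary dots. The orthogonality $\psi_j\varphi_i = \delta_{ij}\,\id$ is a short computation using the bubble identity~\eqref{slN:eq:bubble-ij} of Corollary~\ref{slN:cor:bubble-12} together with the sphere relations (S$_1$), while $\sum_i \varphi_i\psi_i = \id$ is precisely the digon removal relation~\eqref{slN:eq:DR1}. Part~\eqref{slN:eq:decomp-dr2} works in the same way: there are now $N-1$ maps $\varphi_i,\psi_i$ indexed by dots on a simple facet, orthogonality is the other case of~\eqref{slN:eq:bubble-ij}, and completeness is~\eqref{slN:eq:DR2}.

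Part~\eqref{slN:eq:decomp-sqr1} splits into one ``topologically different'' summand (with no shift) and $N-2$ summands of the other topological type. The two kinds of inclusion/projection foams are built from cups/caps together with appropriate dots; orthogonality between summands of the same topological type uses (S$_1$) and~\eqref{slN:eq:FC} of Corollary~\ref{slN:cor:tubes}, while orthogonality between the two topological types follows because the composition contains a simple-double bubble that vanishes by~\eqref{slN:eq:bubble-ikj}. Completeness is then exactly the first square removal~\eqref{slN:eq:SqR1}. Part~\eqref{slN:eq:decomp-sqr2} is slightly different in form: rather than directly exhibiting a single decomposition, I will build isomorphisms between each side and a common third web, using the two identities that follow from~\eqref{slN:eq:SqR2} (together with~\eqref{slN:eq:MP} and the triple-bubble identity~\eqref{slN:eq:bubble-ikm-star}), just as the relation $\eqref{KR:eq:moy4}$ in Chapter~\ref{chap:KR} is obtained from a pair of triple-edge decompositions.

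The main obstacle I anticipate is bookkeeping rather than conceptual. In parts~\eqref{slN:eq:decomp-sqr1} and~\eqref{slN:eq:decomp-sqr2} several summands of different topological shapes coexist, and one must carefully match $q$-degrees, pick compatible dot decorations, and keep track of signs produced by the singular-vertex relations. The key technical point that makes everything work is the presence of a large family of orthogonality relations (the bubble identities of Corollaries~\ref{slN:cor:bubble-star} and~\ref{slN:cor:bubble-12}) together with the fact that the completeness identities on the category side are precisely the removal relations of Proposition~\ref{slN:prop:principal rels2}, which have already been established by the duality-pairing argument via the KL formula. Once the maps are written down explicitly, each verification reduces to a finite calculation with dotted spheres, theta-foams, and bubbles, all of which have been evaluated in Section~\ref{slN:sec:KL} and Proposition~\ref{slN:prop:principal rels1}.
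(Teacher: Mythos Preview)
Your proposal is correct and follows essentially the same approach as the paper: for each decomposition one writes down explicit foam maps (singular cups/caps with suitable dot decorations), checks orthogonality via the bubble identities of Corollaries~\ref{slN:cor:bubble-star} and~\ref{slN:cor:bubble-12}, and obtains completeness from the digon and square removal relations \eqref{slN:eq:DR1}, \eqref{slN:eq:DR2}, \eqref{slN:eq:SqR1}, \eqref{slN:eq:SqR2}; for part~\eqref{slN:eq:decomp-sqr2} the paper likewise proves a pair of auxiliary decompositions (one for each side) sharing a common triple-edge summand identified via~\eqref{slN:eq:MP}, exactly as you outline. One small bookkeeping remark: your general template declares $\varphi_i$ to be the inclusions and $\psi_i$ the projections, but in your description of part~\eqref{slN:eq:decomp-dr1} you have $\varphi_i$ ``capping off the digon'' (a projection); just be consistent when you write it out, and note that in part~\eqref{slN:eq:decomp-dr2} the inclusions $\psi_i$ in the paper are themselves sums of dotted cups (not single foams), which is what makes the orthogonality check via \eqref{slN:eq:RD1} and (S$_1$) come out cleanly.
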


\begin{proof}
\emph{\eqref{slN:eq:decomp-dr1}}: Define grading-preserving maps 
\begin{gather*}
\varphi_0\colon\cF\left(
\figins{-8}{0.3}{digon-up}\right)\left\{1\right\} 
\to \cF\left(\figins{-8}{0.3}{dbedge-up}\right)
\qquad
\varphi_1\colon\cF\left(
\figins{-8}{0.3}{digon-up}\right)\left\{-1\right\}  
\to \cF\left(\figins{-8}{0.3}{dbedge-up}\right)
\\[2.0ex]
\psi_0\colon\cF\left(
\figins{-8}{0.3}{dbedge-up}\right) \to
\cF\left(
\figins{-8}{0.3}{digon-up}\right)\left\{1\right\} 
\qquad
\psi_1\colon \cF\left(
\figins{-8}{0.3}{dbedge-up}\right) \to
\cF\left(
\figins{-8}{0.3}{digon-up}\right)\left\{-1\right\} 
\end{gather*}
as
$$
\varphi_0=  \cF\left(\figins{-10}{0.3}{scap}\right),\ \ \ 
\varphi_1=\cF\left(\figins{-10}{0.3}{scapd} \right),\ \ \
\psi_0=\cF\left(\figins{-6}{0.3}{scupd}\right),\ \ \
\psi_1=-\cF\left( \figins{-6}{0.3}{scup}\right).
$$
The bubble identities imply that $\varphi_i\psi_j=\delta_{i,j}$ (for $i,j=0,1$) and from the~\eqref{slN:eq:DR1} relation it follows that $\psi_0\varphi_0+\psi_1\varphi_1$ is the identity map in $\cF\left(\figwhins{-3.5}{0.17}{0.5}{digon-l}\right)$.

\n\emph{\eqref{slN:eq:decomp-dr2}}: Define grading-preserving maps 
$$\varphi_i\colon\cF\left(
\figins{-8}{0.3}{dbedge-dig}\right)\left\{N-2-2i\right\}
\to \cF\left(\figins{-8}{0.3}{edge-up}\right),
\qquad 
\psi_i\colon\cF\left(
\figins{-8}{0.3}{edge-up}\right)
\to\cF\left(
\figins{-8}{0.3}{dbedge-dig}\right)\left\{N-2-2i\right\},
$$
for $0\leq i\leq N-2$, as
$$
\varphi_i=  \cF\left(\figins{-24}{0.65}{scap_b-N2i}\right),\qquad
\psi_i=\sum\limits_{j=0}^{i}\cF\left(\figins{-22}{0.65}{scup_b-iij}\right).
$$

We have $\varphi_i\psi_k=\delta_{i,k}$ and $\sum\limits_{i=0}^{N-2}\psi_i\varphi_i=\id\left(\cF\left(\figins{-4}{0.2}{dbedge-dig-l} \right)\right)$. The first assertion is straightforward and can be checked using the~\eqref{slN:eq:RD1} and (S$_1$) relations and the second is immediate from the~\eqref{slN:eq:DR2} relation, which can be written as  

$$
\figins{-32}{0.8}{digonfid_b} = \sum_{i=0}^{N-2}\sum_{j=0}^{i}\
\figins{-32}{0.8}{dig_rem_b-ij}
\rlap{\hspace{15ex}\text{(DR$_2$)}}
$$

\bigskip

\n\emph{\eqref{slN:eq:decomp-sqr1}}: Define grading-preserving maps 
$$\varphi_i\colon\cF\left(
\figins{-8}{0.3}{square}\right)\left\{N-3+2i\right\}
\to \cF\left(\figins{-8}{0.3}{twoedges-ud}\right),
\quad 
\psi_i\colon\cF\left(
\figins{-8}{0.3}{twoedges-ud}\right)
\to\cF\left(
\figins{-8}{0.3}{square}\right)\left\{N-3+2i\right\},
$$
for $0\leq i\leq N-3$, and
$$\rho\colon\cF\left(
\figins{-8}{0.3}{square}\right)
\to \cF\left(\figins{-8}{0.3}{twoedges-lr}\right),
\qquad 
\tau\colon\cF\left(
\figins{-8}{0.3}{twoedges-lr}\right)
\to\cF\left(
\figins{-8}{0.3}{square}\right),
$$
as
\begin{align*}
\varphi_i &=  \cF\left(\figins{-22}{0.7}{hcylhalf-bot-i}\right)
&\psi_i &=\sum\limits_{a+b+c=N-3-i}\cF\left(\figins{-22}{0.7}{hcylhalf-top-abc}\right)
\\
\rho &= \cF\left(\figins{-22}{0.7}{hdcylhalf-bot}\right)
&\tau &= -\cF\left(\figins{-22}{0.7}{hdcylhalf-top}\right)
\end{align*}

Checking that $\varphi_i\psi_k=\delta_{i,k}$ for $0\leq i,k\leq N-3 $, 
$\varphi_i\tau=0$ and $\rho\psi_i=0$, for $0\leq i\leq N-3$, and 
$\rho\tau=1$ is left to the reader. From the~\eqref{slN:eq:SqR1} relation it 
follows that 
$\tau\rho+\sum\limits_{i=0}^{N-3}\psi_i\varphi_i=\id\left(\cF\left(\figins{-4.5}{0.2}{square} 
\right)\right)$.

\n\emph{Direct Sum Decomposition~\eqref{slN:eq:decomp-sqr2}}: We prove direct decomposition~\eqref{slN:eq:decomp-sqr2} showing that

$$\xymatrix@R=1mm{
\cF\left(\figins{-20}{0.65}{moy5-1}\right)\cong
\cF\left(\figins{-20}{0.65}{moy5-11}\right)\bigoplus
\cF\left(\figins{-20}{0.65}{moy5-12}\right) &
\cF\left(\figins{-20}{0.65}{moy5-2}\right)\cong
\cF\left(\figins{-20}{0.65}{moy5-21}\right)\bigoplus
\cF\left(\figins{-20}{0.65}{moy5-22}\right). \\
a) & b) 
}$$
Note that this suffices because the last term on the r.h.s. of a) is 
isomorphic to the last term on the r.h.s. of b) by the~\eqref{slN:eq:MP} relation. 

To prove $a)$ we define grading-preserving maps 
\begin{align*}
\varphi_0\colon\cF\left(
\figins{-15}{0.5}{moy5-1}\right)\to 
\cF\left(
\figins{-15}{0.5}{moy5-11}\right)
\qquad 
\varphi_1\colon\cF\left(
\figins{-15}{0.5}{moy5-1}\right)
\to\cF\left(
\figins{-15}{0.5}{moy5-22}\right)
\\[2.0ex]
\psi_0\colon\cF\left(
\figins{-15}{0.5}{moy5-11}\right)\to 
\cF\left(
\figins{-15}{0.5}{moy5-1}\right)
\qquad 
\psi_1\colon\cF\left(
\figins{-15}{0.5}{moy5-22}\right)
\to\cF\left(
\figins{-15}{0.5}{moy5-1}\right)
\end{align*}
by
\begin{align*}
\varphi_0 &= -\cF\left(\figins{-19}{0.6}{hcylhalf_b}\right)
&\varphi_1 &= -\cF\left(\figins{-19}{0.6}{hdighalf_b}\right) 
\\[2ex]
\psi_0 &= \cF\left(\figins{-19}{0.6}{hcylhalf_t}\right)
&\psi_1 &= \cF\left(\figins{-19}{0.6}{hdighalf_t}\right)
\end{align*}

We have that $\varphi_i\psi_j=\delta_{i,j}$ for $i,j = 0,1 $ (we leave the details to the reader). From the~\eqref{slN:eq:SqR2} relation it follows that $$\psi_0\varphi_0+\psi_1\varphi_1=
\id\left(\cF\left(\figins{-15}{0.5}{moy5-1} \right)\right).$$ Applying a symmetry to all diagrams 
in decomposition $a)$ gives us decomposition $b)$.
\end{proof}

In order to relate our construction to the $\sln$ polynomial we need to introduce shifts. We denote by $\{n\}$ an upward shift in the $q$-grading by $n$ and by $[m]$ an upward shift in the homological grading by $m$. 
\begin{defn}\label{slN:def:gradedcomplx}
Let $\brak{L}_i$ denote the $i$-th homological degree of the complex $\brak{L}$. We define the 
$i$-th homological degree of the complex $\cF(L)$ to be  
$$\cF_i(L)=\cF\brak{L}_i[-n_-]\{ (N-1)n_+ - Nn_-  + i \},$$ 
where $n_+$ and $n_-$ denote the number of positive and negative crossings in the diagram used to 
calculate $\brak{L}$.
\end{defn}

We now have a homology functor $\Link\to\V$ which 
we still call $\cF$. Definition~\ref{slN:def:gradedcomplx}, 
Theorem~\ref{slN:thm:inv} and Lemma~\ref{slN:lem:moy-F} imply that
\begin{thm}
For a link $L$ the graded Euler characteristic of 
$\hy\left(\cF(L)\right)$ equals $P_N(L)$, the $\sln$ polynomial of $L$. 
\end{thm}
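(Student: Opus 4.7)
The plan is to reduce the statement to a direct comparison between the categorified and decategorified state sums, using Lemma~\ref{slN:lem:moy-F} to match the two sides term by term.

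First, I would unpack the definition of $P_N(L)$ from Section~\ref{KR:sec:slN}: given a diagram $D$ of $L$ with $n_+$ positive and $n_-$ negative crossings,
\[
P_N(L)=(-1)^{n_-}q^{(N-1)n_+-Nn_-}\sum_{i}q^{|i|}P_N(\Gamma_i),
\]
where the sum runs over all complete flattenings and $P_N(\Gamma)$ is defined recursively by the MOY web moves of Figure~\ref{KR:fig:moy}. Next, I would compute $\qdim \cF(\Gamma)$ for any closed web $\Gamma$. The MOY decomposition of Lemma~\ref{slN:lem:moy-F} shows that $\qdim\cF$ satisfies exactly the four MOY relations (modulo the easily checked base cases $\qdim\cF(\unknot)=[N]$ and $\qdim\cF(\figins{-2.5}{0.16}{dble-circ})=\qbin{N}{2}$, which follow from $\cF(\unknot)\cong\hy(\cp{N-1})\{-N+1\}$ and $\cF(\figins{-2.5}{0.16}{dble-circ})\cong\hy(\cG_{2,N})\{-2N+4\}$ together with the standard graded dimensions of these cohomology rings computed in Subsection~\ref{slN:ssec:hflag}). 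By the uniqueness part of the MOY calculus, this forces $\qdim\cF(\Gamma)=P_N(\Gamma)$ for every closed web $\Gamma$.

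Second, I would compute the graded Euler characteristic of $\hy(\cF(L))$ from the complex $\cF(L)$ directly. Since each differential of $\cF(L)$ is a grading-preserving linear map between graded vector spaces, the graded Euler characteristic of the homology equals that of the complex:
\[
\chi_q\bigl(\hy(\cF(L))\bigr)=\sum_{i}(-1)^{i}\qdim \cF_i(L).
\]
By Definition~\ref{slN:def:gradedcomplx},
\[
\qdim \cF_i(L)=q^{(N-1)n_+-Nn_-+i}\cdot \qdim\cF\brak{L}_{i+n_-},
\]
and $\cF\brak{L}_j=\bigoplus_{|k|=j}\cF(\Gamma_k)$ where the sum is over complete flattenings of $D$ of height $j$. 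Reindexing $j=i+n_-$ and substituting the identity $\qdim\cF(\Gamma_k)=P_N(\Gamma_k)$ proved in the previous step yields
\[
\chi_q\bigl(\hy(\cF(L))\bigr)=(-1)^{n_-}q^{(N-1)n_+-Nn_-}\sum_{k}q^{|k|}P_N(\Gamma_k),
\]
which is precisely $P_N(L)$.

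The only non-routine step is the identification $\qdim\cF(\Gamma)=P_N(\Gamma)$. The mild obstacle is verifying that the direct sum decompositions of Lemma~\ref{slN:lem:moy-F}, which are stated as isomorphisms of graded vector spaces, really do translate into the numerical MOY moves of Figure~\ref{KR:fig:moy} after taking graded dimensions; this is immediate once one checks the $q$-shifts match, and that the MOY calculus suffices to reduce any closed web to a $\bZ[q,q^{-1}]$-linear combination of disjoint unions of circles (which it does, by~\cite{MOY}). Everything else is a formal consequence of gradedness of the differentials and bookkeeping with the homological and quantum shifts in Definition~\ref{slN:def:gradedcomplx}.
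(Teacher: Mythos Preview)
Your proposal is correct and follows exactly the approach the paper intends: the paper's own proof is the single sentence ``Definition~\ref{slN:def:gradedcomplx}, Theorem~\ref{slN:thm:inv} and Lemma~\ref{slN:lem:moy-F} imply that\ldots'', and you have simply unpacked this into the obvious two steps (first matching $\qdim\cF(\Gamma)$ with $P_N(\Gamma)$ via the MOY decompositions, then reading off the Euler characteristic of the shifted hypercube). The only ingredient the paper lists that you do not invoke explicitly is Theorem~\ref{slN:thm:inv}, but that is there only to justify writing $\cF(L)$ rather than $\cF(D)$; the actual computation, as you do it, proceeds diagram by diagram and matches $P_N(D)$.
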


The MOY-relations are also the last bit that we need in 
order to show the following theorem. 
\begin{thm} For any link $L$, the bigraded complex $\cF(L)$ is 
isomorphic to the Khovanov-Rozansky complex $\KR(L)$ in~\cite{KR}.  
\end{thm}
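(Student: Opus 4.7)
The plan is to build an isomorphism of bigraded complexes level-wise on the hypercube of flattenings, and then check that the horizontal differentials match. Since both $\mathcal{F}(L)$ and $\mathrm{KR}(L)$ are obtained from the same cube of resolved webs $\{\Gamma_{i_1,\dots,i_c}\}$, with identical shifts by Definition~\ref{slN:def:gradedcomplx} and the conventions in Subsection~\ref{KR:ssec:cmplx}, it suffices to exhibit, for every closed web $\Gamma$ appearing in the cube, a natural graded isomorphism $\Phi_\Gamma\colon \mathcal{F}(\Gamma)\to\mathrm{H}_{\mathrm{KR}}(\Gamma)$ which intertwines the edge maps of the two cubes.

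First I would construct $\Phi_\Gamma$ using the Kapustin--Li formula itself. By the glueing property discussed in Section~\ref{slN:sec:glueing}, any foam $u\colon\emptyset\to\Gamma$ yields, via $\KL{u}$ restricted to interior facets, an element of $\Ext(R,M_\Gamma)\cong \mathrm{H}_{\mathrm{KR}}(\Gamma)$. This assignment descends to $\hom_{\foam}(\emptyset,\Gamma)=\mathcal{F}(\Gamma)$ by the very definition of $\foam$ (Definition~\ref{slN:defn:foam}), and the duality pairing \eqref{slN:eq:dualpairing} together with the non-degeneracy of KL on $\foam$ shows that $\Phi_\Gamma$ is injective. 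Surjectivity (and preservation of the $q$-grading) will be obtained by a dimension count, which is the main bookkeeping step: the graded dimension of $\mathrm{H}_{\mathrm{KR}}(\Gamma)$ is the MOY polynomial $P_N(\Gamma)$ by~\cite{KR}, while the graded dimension of $\mathcal{F}(\Gamma)$ is computed recursively from Lemma~\ref{slN:lem:moy-F}, which gives exactly the categorified MOY relations of Lemma~\ref{KR:lem:KR-moy}. Both theories agree on the unknot ($\mathcal{F}(\unknot)\cong\bQ[x]/(x^N)\{-N+1\}\cong \mathrm{H}_{\mathrm{KR}}(\unknot)$, cf. Section~\ref{KR:sec:KR}), so by induction on the number of vertices of $\Gamma$, applying MOY decompositions in parallel on both sides, the graded dimensions coincide, and $\Phi_\Gamma$ becomes a grading-preserving isomorphism.

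Next I would verify naturality in the horizontal direction: I must show that the maps $\cF(\figins{-2}{0.14}{shortzip})$ and $\cF(\figins{-2}{0.14}{shortzip}^{\,*})$ associated to the zip and unzip foams correspond under $\Phi$ to the KR edge maps $\chi_0$ and $\chi_1$ of Subsection~\ref{KR:ssec:diffs}. This is essentially true by construction: Subsection~\ref{slN:ssec:decoration} assigns to the singular arc of a zip/unzip precisely the matrix factorization $MF_1$ whose canonical endomorphism (multiplication by $(x-s,\ldots)$) realises $\chi_0$ and $\chi_1$. Making this precise amounts to tracing $\KL{\,\cdot\,}$ through a single saddle slice and comparing with the explicit pair of matrices defining $\chi_0,\chi_1$; I expect this is an unpacking of definitions, modulo keeping track of the normalisation from Section~\ref{slN:sec:norm}. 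Signs coming from the edge orderings $(-1)^{s(j)}$ are identical in the two conventions (Subsection~\ref{KR:ssec:cmplx} and Section~\ref{slN:sec:invariance}), so they do not interfere.

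The main obstacle I anticipate is the surjectivity/dimension argument for $\Phi_\Gamma$: although Lemma~\ref{slN:lem:moy-F} gives the same splittings as Lemma~\ref{KR:lem:KR-moy}, one must check that $\Phi$ is \emph{compatible} with these splittings, i.e.\ that the idempotents realised by the foams $\varphi_i,\psi_i,\rho,\tau,\ldots$ go, under KL, to the idempotents realised by the corresponding matrix factorization homotopies (cups, caps, saddles and the triple-edge projections). Assuming this compatibility (which can be checked on generators using the explicit Jacobi-algebra models from Section~\ref{slN:sec:partial flags} and the computations of Subsection~\ref{slN:ssec:compKL}), one obtains a natural isomorphism of cubes, hence of chain complexes, which together with Definition~\ref{slN:def:gradedcomplx} yields the desired isomorphism $\mathcal{F}(L)\cong \mathrm{KR}(L)$ of bigraded complexes.
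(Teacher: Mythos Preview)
The first half of your proposal---constructing $\Phi_\Gamma$ via the KL formula, deducing injectivity from the definition of $\foam$, and promoting it to a graded isomorphism by a MOY dimension count---is exactly the paper's argument. Your anticipated obstacle (compatibility of $\Phi$ with the MOY idempotents) is a red herring: once $\Phi_\Gamma$ is injective and both sides have the same graded dimension, bijectivity is automatic. The MOY relations only need to hold \emph{separately} on each side (Lemma~\ref{slN:lem:moy-F} here, Lemma~\ref{KR:lem:KR-moy} on the KR side) to force equal dimensions; no comparison of splittings is required.

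Where you diverge is in the differential step. You want to verify directly that the KL element of the zip foam equals $\chi_0$ by an explicit matrix comparison. The paper avoids this computation entirely with a one-line dimension argument: if $\Gamma_1,\Gamma_2$ are the source and target of the zip and $\Gamma$ is the theta web obtained by pinching, then $\Ext(\Gamma_1,\Gamma_2)$ has graded dimension $q^{2N-2}\qdim(\Gamma)=q+q^2(\ldots)$, so its degree-$1$ piece is one-dimensional. Hence the foam zip and $\chi_0$ are automatically proportional, say by $\lambda$; relation~\eqref{slN:eq:RD1} then forces the unzip scalar to be $\lambda^{-1}$; and a global ``twist equivalence'' rescaling on the hypercube (multiplying targets by $\lambda^{\pm 1}$ along each arrow) absorbs any $\lambda\neq 1$ into an isomorphism of complexes.

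Your explicit approach is not wrong---the zip has no singular vertex, so the KL element is computable---but you would still only determine $\chi_0$ up to a scalar depending on normalisation choices, and would then need the same twist-equivalence fix. The paper's route is shorter and normalisation-independent.
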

\begin{proof} The map $\KLket{\ }$ 
defines a grading preserving linear injection 
$\cF(\Gamma)\to \KR(\Gamma)$, for 
any web $\Gamma$. Lemma~\ref{slN:lem:moy-F} implies that the 
graded dimensions of $\cF(\Gamma)$ and 
$\KR(\Gamma)$ are equal, so $\KLket{\ }$ is 
a grading preserving linear isomorphism, for any web $\Gamma$.   

To prove the theorem we would have to show that 
$\KLket{\ }$ commutes 
with the differentials. We call  
$$
\figins{0}{0.7}{ssaddle} \qquad
$$
the \emph{zip} and 
$$
\figins{0}{0.7}{ssaddle_ud} \qquad
$$
the \emph{unzip}. Note that both the zip and the unzip 
have $q$-degree $1$. Let $\Gamma_1$ be the source web of the zip and 
$\Gamma_2$ its target web, and let $\Gamma$ be the theta web, which is 
the total boundary of the zip where the vertical edges have been pinched. 
The graded dimension of 
$\Ext(\Gamma_1,\Gamma_2)$ is equal to 
$$q^{2N-2}\qdim(\Gamma)=q+q^2(\ldots),$$
where $(\ldots)$ is a polynomial in $q$. Therefore the differentials in the 
two complexes 
commute up to a scalar. By the removing disc relation~\eqref{slN:eq:RD1} we see that if the ``zips'' 
commute up to $\lambda$, then the ``unzips'' commute up to $\lambda^{-1}$. If $\lambda\ne 1$, 
we have to modify our map between the two complexes slightly, in order to get an honest morphism of complexes. We 
use Khovanov's idea of ``twist equivalence'' in~\cite{khovanovfrob}. For a given 
link consider the hypercube of resolutions. If an arrow in the hypercube 
corresponds to a zip, multiply $\KLket{(\text{target})}$ by $\lambda$, where 
target means the target of the arrow. 
If it corresponds to an unzip, multiply $\KLket{(\text{target})}$ by $\lambda^{-1}$.
This is well-defined, because all squares in the hypercube (anti-)commute. By definition this 
new map commutes with the differentials and therefore proves that the two complexes are 
isomorphic.   
\end{proof}

We conjecture that the above isomorphism actually extends to link cobordisms, 
giving a projective natural isomorphism between the two projective link 
homology functors. Proving this would require a careful comparison between the 
two functors for all the elementary link cobordisms.

%
%

\section{The homology of the $(2,m)$-torus links and torsion}
\label{slN:sec:2m-torus}
The relations in Proposition~\ref{slN:prop:principal rels1} are defined over $\bZ$. This and the fact that all the homomorphisms considered in Section~\ref{slN:sec:KL} can be defined over $\bZ$ suggests that we have found a theory that is defined over the integers. 

\begin{conj}\label{slN:conj:int-homol}
The $\sln$-link homology defined in the previous sections is integral. 
\end{conj}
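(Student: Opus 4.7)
The plan is to lift the entire construction of the $\sln$-link homology to a category $\foam_{\bZ}$ whose morphisms are $\bZ$-linear combinations of isotopy classes of pre-foams, modulo the relations of Propositions~\ref{slN:prop:principal rels1} and~\ref{slN:prop:principal rels2}. All of these relations are written with integer coefficients, so $\foam_{\bZ}$ is well-defined, and there is a tautological functor $\foam_{\bZ} \otimes_{\bZ} \bQ \to \foam$. Establishing the conjecture amounts to proving that (a) this functor is an equivalence of categories, and (b) the link homology complex $\brak{L}$ and all the homotopies needed for Reidemeister invariance and functoriality can be realised inside $\kom(\foam_{\bZ})$.

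Part (b) should be essentially routine. Inspection of the differentials in Figure~\ref{slN:fig:geom-cplx} and of the homotopy diagrams of Figures~\ref{slN:fig:RI_Invariance}--\ref{slN:fig:RIIIInvariance} shows that every elementary foam appearing there (zip, unzip, saddle, cup, cap, together with the sums of dotted foams coming from~\eqref{slN:eq:DR2} and~\eqref{slN:eq:SqR1}) is introduced with an integer coefficient, so once (a) is known the invariance and functoriality arguments of Sections~\ref{slN:sec:invariance} and~\ref{slN:sec:functoriality} carry over unchanged. The substance of the proof lies in (a). The natural strategy is to exploit the mutually dual bases used in the proof of Proposition~\ref{slN:prop:principal rels2} (the foams $u_{i,j;(k,m)}$, $v_{i,k}$, $w_{i,k}$ and $s_{i,j,k,m}$ paired against their duals): under $\KL{\ }$ these bases are $\{0,\pm 1\}$-valued, which suggests defining, for every closed web $\Gamma$, an integral lattice inside $\cF(\Gamma)$ spanned by the images of $\Hom_{\foam_{\bZ}}(\emptyset,\Gamma)$, and checking that the duality pairing of Section~\ref{slN:sec:glueing} restricts to a perfect $\bZ$-valued pairing.

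The hard part, and the reason this is only a conjecture, is the evaluation of closed foams whose singular graph contains vertices. Every closed-foam computation carried out in this chapter first applies the cutting neck relations \emph{(CN$_1$)}, \emph{(CN$_2$)}, \emph{(CN$_*$)} to reduce to a $\bZ$-linear combination of spheres and theta-foams, which have no singular vertices and are visibly integral. For a foam with singular vertices the input $O_v$ at each vertex is defined only up to a global scalar, and one must choose an integral normalization coherently across the whole foam in a way compatible with the Matveev--Piergalini relation~\eqref{slN:eq:MP}. The main obstacle will be to show that such a coherent integral choice exists; equivalently, that the integer-coefficient relations of Propositions~\ref{slN:prop:principal rels1} and~\ref{slN:prop:principal rels2} already suffice to evaluate any closed foam, so that one never needs to appeal to the rational Kapustin--Li formula.

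A natural line of attack is induction on the number of singular vertices of the closed foam $u$: use cutting necks around small neighbourhoods of each singular arc to split $u$ into simpler pieces, and then apply~\eqref{slN:eq:MP} to cancel vertices in pairs of opposite type, which is always possible because type~I and type~II vertices occur in equal numbers (as noted below Definition~\ref{slN:defn:pre-foam}). Independent evidence for integrality, and thus for this whole programme, is provided by the computation of the next section: the $(2,m)$-torus link homology is shown to contain torsion of order exactly $N$, which over $\bQ$ would be invisible, so the presence of this torsion confirms that the integer structure is dictated by the relations themselves rather than being an artefact of a specific presentation.
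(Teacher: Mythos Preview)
The statement you are addressing is labelled a \emph{Conjecture} in the paper, and the paper provides no proof of it. Immediately after stating it, the author writes only that the integer nature of the relations in Proposition~\ref{slN:prop:principal rels1} ``suggests'' integrality, and then proceeds to \emph{assume} the conjecture in order to carry out the $(2,m)$-torus link computation. There is therefore no proof in the paper to compare your proposal against.

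Your proposal is not a proof either, and you acknowledge this yourself when you write ``the reason this is only a conjecture''. What you have written is a reasonable summary of what a proof would have to establish, and your identification of the main obstacle --- evaluating closed foams whose singular graph has vertices, equivalently showing that the integer-coefficient relations suffice to evaluate every closed foam --- matches exactly the gap the paper flags at the end of Section~\ref{slN:sec:foamN}: ``If the singular graph of $u$ has vertices, then we do not know if our relations are sufficient to evaluate $u$. We conjecture that they are sufficient \ldots\ but we do not have a complete proof.'' Your suggested inductive attack via cutting necks and cancelling type~I/type~II vertex pairs with~\eqref{slN:eq:MP} is natural, but the paper gives no indication that it succeeds, and neither do you. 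So the honest assessment is: your outline agrees with the paper's own understanding of the problem, and neither you nor the paper closes the gap.
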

This means that the KL evaluation of every closed foam is an integer. Assuming Conjecture~\ref{slN:conj:int-homol} we can compute the integral $\sln$ homology for $(2,m)$-torus links. Although the (rational) Khovanov-Rozansky $\sln$ homology for $(2,m)$-torus knots has already been obtained in~\cite{rasmussen2-bridge} (see also~\cite{stosic-thicktknots, stosic-tlinks} for a treatment of torus knots and torus links) since $(2,m)$-torus links are 2-bridge, the results in this section have some interest because we prove that the integral $\sln$ homology has torsion by giving an explicit example. It is well known that the original Khovanov $\mathfrak{sl}(2)$-homology has torsion~\cite{shumakovitch-torsion} of various orders (see~\cite{asaeda-przytycki, bar-natanfast, shumakovitch-torsion} for some examples). We obtain torsion of order $N$ for the $\sln$-homology of $(2,m)$-torus links. We expect there to be examples of other torsion as well.

We start with a result which in some sense is a generalization of the \emph{reduction lemma} of~\cite{bar-natanfast} and can be proved as in~\cite{bar-natanfast}. Let $R$ be a ring.
Consider the following bicomplex $K$ of $R$-modules and $R$-module homomorphisms:
$$\xymatrix@R=3mm@C=6mm{
  \ar[r]  & 
I \ar[ddd]^\eta \ar[rr]^\phi && 
G\ar[rr]^\xi\ar[dd]^{f_2}
  \ar@/_2.4pc/[dddd]_>(0.3){f_1} && 
H\ar[rr]^\zeta\ar[dd]_{g_1}\ar@/^2.4pc/[dddd]^<(0.2){g_2} && 
J\ar[r]\ar[ddd]^\kappa & 
\\ \\
  & & &  b_1 \ar[rr]^\psi\ar'[dr][ddrr]^>(0.6)\gamma 
  & & b_2 \ar[drr]|\hole^<(0.6)\mu & & \\
   \ar[r] & 
C \ar[urr]|\hole^>(0.4)\alpha\ar[drr]^>(0.4)\beta 
  & & \oplus & & \oplus & &  
F \ar[r]& \\
  & & & 
D \ar[rr]^\varepsilon\ar[uurr]^>(.3){\delta}&  & 
E \ar[urr]^<(0.6)\nu & & 
}$$

\begin{lem}\label{slN:lem:red}
  If $\psi$ is an isomorphism then the total complex of the bicomplex $K$ is homotopy equivalent to the total complex of the bicomplex
$$\xymatrix@R=10mm@C=7mm{
  \ar[r]  & 
I \ar[d]^\eta \ar[rr]^\phi && 
G\ar[rr]^\xi\ar[d]^{f_1} && 
H\ar[rr]^\zeta\ar[d]^{g_2-\gamma\psi^{-1}g_1} && 
J\ar[r]\ar[d]^\kappa & \\  \ar[r]  & 
C \ar[rr]^\beta &&  
D \ar[rr]^{\varepsilon -\gamma\psi^{-1}\delta}  && 
E\ar[rr]^\nu && 
F \ar[r] & 
}$$
\end{lem}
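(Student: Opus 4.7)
The plan is to treat this as a Gaussian elimination at the level of the total complex, directly generalizing the reduction lemma of~\cite{bar-natanfast}. First I would write out the total complex $\Tot(K)$ explicitly in the relevant degree range. Group the terms so that one degree contains $b_1\oplus D$ (together with $G$ from the top row), the next contains $b_2\oplus E$ (together with $H$), and write the total differential in block-matrix form. The key block is the map $b_1\oplus D\to b_2\oplus E$, whose matrix representation is
\[
\begin{pmatrix} \psi & \delta \\ \gamma & \varepsilon \end{pmatrix},
\]
with incoming vertical maps $f_2\colon G\to b_1$, $f_1\colon G\to D$ and outgoing vertical maps $g_1\colon H\to b_2$, $g_2\colon H\to E$. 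The hypothesis that $\psi$ is an isomorphism is what allows all the cancellation to take place.

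Next, I would define explicit chain maps $F\colon \Tot(K)\to \Tot(K')$ and $G\colon\Tot(K')\to\Tot(K)$, where $K'$ is the reduced bicomplex in the statement. On the summands $I,J,C,F,G,H$ both maps are the identity; on the $b_1,b_2$ summands the map $F$ is projection to zero composed with the natural identifications, while $G$ is the inclusion into $D$ and $E$ corrected by $-\psi^{-1}\delta$ and $-\psi^{-1}g_1$ respectively, so that the image of $G$ lies in the kernel of the total differential restricted to the $b_1,b_2$ direction. Equivalently, one performs the change of basis $(x,d)\mapsto(x+\psi^{-1}\delta d,d)$ on $b_1\oplus D$ and $(y,e)\mapsto(y,e-\gamma\psi^{-1}y)$ on $b_2\oplus E$; this change of basis makes the block $\begin{pmatrix}\psi & \delta\\ \gamma&\varepsilon\end{pmatrix}$ block-diagonal with blocks $\psi$ and $\varepsilon-\gamma\psi^{-1}\delta$, and, as a side effect on the neighbouring vertical maps, replaces $g_2$ by $g_2-\gamma\psi^{-1}g_1$ while leaving $f_1,\eta,\kappa,\phi,\xi,\zeta$ untouched. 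The contractible summand $b_1\xrightarrow{\psi}b_2$ then splits off, leaving the total complex of the bicomplex described in the statement.

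The main technical obstacle is bookkeeping: one must verify that after the change of basis the induced $F$ and $G$ are genuine chain maps and that $GF-\id$ and $FG-\id$ are nullhomotopic via the explicit homotopy whose only nonzero component is $\psi^{-1}\colon b_2\to b_1$. Each of these identities reduces to an instance of the bicomplex relation $d_{\Tot}^2=0$ applied to $K$, which supplies the compatibilities such as $\psi f_2+g_1\xi=0$ (up to sign), $\gamma f_2+\varepsilon f_1 = g_2\xi+ \text{(other)}$, and the analogous relation involving $\delta$ and the map out of $C$. Once these identities are in hand, the correction terms $\gamma\psi^{-1}\delta$ and $\gamma\psi^{-1}g_1$ appearing in the reduced bicomplex are exactly what is required for everything to commute on the nose, completing the homotopy equivalence.
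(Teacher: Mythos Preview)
Your proposal is correct and is precisely the approach the paper has in mind: the paper does not give a proof at all, but simply states that the lemma ``can be proved as in~\cite{bar-natanfast}'', i.e.\ by Bar-Natan's Gaussian elimination, which is exactly what you outline. Your block-matrix setup, change of basis, and homotopy via $\psi^{-1}$ are the standard ingredients of that argument, so there is nothing to add.
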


\medskip

We now use Lemma~\ref{slN:lem:red} to compute the integral $\sln$ homology of the torus link $T_{2,m}$.
Let $\sigma =\overcrossing$. Recall that $\brak{\overcrossing}=\brak{\orsmoothing}\xra{d}\brak{\DoubleEdge}\{1\}$. Then by~\eqref{slN:eq:DR1} we have an isomorphism
$$\brak{\sigma^2}=
\figins{-33}{1}{sig2}\cong\ 
\figins{-33}{1}{sig2a}
$$
(the notation for the maps $X_i$ is explained in Figure~\ref{slN:fig:edge-lab}) and by Lemma~\ref{slN:lem:red} the last complex is homotopy equivalent to the complex
$$
\figins{-8}{0.3}{sig2g1} \xra{\ d\ }
\figins{-8}{0.3}{sig2g2}\{1\} \xra{\ X_1-X_4\ }
\figins{-8}{0.3}{sig2g2}\{3\}.
$$

Adding another crossing we get the bicomplex
$$
\brak{\sigma^3}\cong
\xymatrix@C=12mm{
\figins{-8}{0.3}{sig2g1}\quad\ \ \ar[r]^{d\ \ }\ar@<-1.6ex>[d]^{d} & 
\figins{-8}{0.3}{dumbelltop}\{1\}\ar[r]^{\ \ X_1-X_4\quad }\ar@<-1.6ex>[d]^{-d} & 
\figins{-8}{0.3}{dumbelltop}\{3\}\ar@<-1.6ex>[d]^{d} \\
\figins{-8}{0.3}{dumbellbot}\{1\}\ar[r]^{\ d \ } & 
\figwhins{-8}{0.3}{0.14}{sig2dig}\{2\}\ar[r]^{\ \ X_1-X_5\quad } &
\figwhins{-8}{0.3}{0.14}{sig2dig}\{4\}
}
$$
with $\ \figins{-4}{0.18}{sig2g1}\ $ in homological degree 0 and where the map $X_5$ is $\figins{-6}{0.26}{mapX3}$. Using~\eqref{slN:eq:DR1} and Equation~\eqref{slN:eq:bubble-ij} we find that this bicomplex is isomorphic to the bicomplex
$$\xymatrix@R=3mm@C=6mm{
\figins{-8}{0.3}{sig2g1}\quad\ \
\ar@<-1.6ex>[ddd]^{d} \ar[rr]^{d} && 
\figins{-8}{0.3}{sig2g2}\{1\} 
\ar[rr]^{X_1-X_4}
\ar@<-1.6ex>[dd]^{-X_4}
\ar@<-1.8ex>@/_2.4pc/[dddd]_>(0.3){-1} && 
\figins{-8}{0.3}{sig2g2}\{3\}
\ar@<-1.6ex>[dd]_{1}
\ar@/^2.4pc/[dddd]^<(0.42){X_4} 
\\ \\  & &  
\figins{-8}{0.3}{sig2g2}\{3\} 
\ar[rr]^{-1}
\ar'[dr][ddrr]^>(-0.6){\mspace{-10.0mu}-X_2}
  & & 
\figins{-8}{0.3}{sig2g2}\{3\} 
\\ 
\figins{-8}{0.3}{sig2g2}\{1\} 
\ar@<0.5ex>[urr]|\hole^>(0.45){X_1}
\ar@<-1.6ex>[drr]^>(0.4){1}
  & & \oplus\ & & \oplus\ 
\\   & & 
\figins{-8}{0.3}{sig2g2}\{1\} 
\ar[rr]^{X_1X_2}
\ar[uurr]^>(.35){X_1}&  & 
\figins{-8}{0.3}{sig2g2}\{5\} 
}$$
which by Lemma~\ref{slN:lem:red} is homotopy equivalent to the bicomplex
$$
\xymatrix@C=12mm{
\figins{-8}{0.3}{sig2g1}\quad\ \ 
\ar[r]^{d\ \ }
\ar@<-1.6ex>[d]^{d} & 
\figins{-8}{0.3}{sig2g2}\{1\}
\ar[r]^{\ \ X_1-X_4\quad }
\ar@<-1.6ex>[d]^{-1} & 
\figins{-8}{0.3}{sig2g2}\{3\}
\ar@<-1.6ex>[d]^{X_4-X_2} 
\\
\figins{-8}{0.3}{sig2g2}\{1\}
\ar[r]^{\ 1 \ } & 
\figwhins{-8}{0.3}{0.14}{sig2g2}\{1\}
\ar[r]^{\ \ 0\quad } &
\figwhins{-8}{0.3}{0.14}{sig2g2}\{5\}
}
$$
\begin{figure}[ht!]
\labellist
\small\hair 2pt
\pinlabel $_1$ at -15 110
\pinlabel $_2$ at  99 110
\pinlabel $_3$ at  99 0
\pinlabel $_4$ at -15 0
\endlabellist
\centering
\figs{0.23}{dbedge}\
\raisebox{-3pt}{
\raisebox{11pt}{\ ,\qquad $X_1\leftrightarrow$}\ \figs{0.22}{mapX1}
\raisebox{11pt}{\ ,\quad $X_4\leftrightarrow$}\ \figs{0.22}{mapX4}
\raisebox{11pt}{\ ,\quad $X_2\leftrightarrow$}\ \figs{0.22}{mapX2}
}
\caption{Edge labelling and notation}
\label{slN:fig:edge-lab}
\end{figure}

\medskip

\n Using the fact that the first map in the second row is an isomorphism we see that
$$
\brak{\sigma^3}
\cong
\figins{-8}{0.3}{sig2g1} \xra{\ d\ }
\figins{-8}{0.3}{sig2g2}\{1\} 
\xra{\ X_1-X_4\ }
\figins{-8}{0.3}{sig2g2}\{3\}
\xra{\ X_4-X_2\ }
\figins{-8}{0.3}{sig2g2}\{5\}
,
$$ 
where $\ \figins{-4}{0.18}{sig2g1}\ $ is in homological degree 0.

\medskip

\n Using induction and Lemma~\ref{slN:lem:red} one can prove that
\begin{lem}
Up to an overall shift of $(N-1)m$,
\begin{align*}
\brak{\sigma^m} &\cong
\figins{-5}{0.2}{orsmoothing}
\xra{\ d\ }
\figins{-5}{0.2}{dbedge}\{1\}
\xra{X_1-X_4}
\figins{-5}{0.2}{dbedge}\{3\}
\xra{X_4-X_2}
\figins{-5}{0.2}{dbedge}\{5\}
\xra{X_1-X_4}
\figins{-5}{0.2}{dbedge}\{7\}
\xra{X_4-X_2}
\cdots \\[2.5ex]   &\quad\ \cdots 
\xra{X_1-X_4}
\figins{-5}{0.2}{dbedge}\{2j-1\}
\xra{X_4-X_2}
\figins{-5}{0.2}{dbedge}\{2j+1\}
\xra{X_1-X_4}
\cdots \\[2.5ex]  &\quad\ \cdots 
\begin{cases}
\xra{X_4-X_2}
\figins{-5}{0.2}{dbedge}\{2m+1\}, & \text{$m$ odd}\\[2ex]
\xra{X_1-X_4}
\figins{-5}{0.2}{dbedge}\{2m-1\}, & \text{$m$ even}\\
\end{cases}
\end{align*}
\n with $\orsmoothing$ in homological degree zero.
\end{lem}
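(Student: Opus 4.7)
The plan is to proceed by induction on $m$, with the base cases $m=1,2,3$ already established in the computations immediately preceding the statement of the lemma. The inductive step will consist of tensoring one further crossing onto the complex for $\sigma^m$, turning the result into a bicomplex, and then simplifying by exactly the same two moves that were used to pass from $\sigma^2$ to $\sigma^3$: a digon removal via~\eqref{slN:eq:DR1}, followed by one application of Lemma~\ref{slN:lem:red} to contract the isomorphism piece that appears.

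In more detail, suppose the claim holds for some $m\geq 3$, and write
\[
\brak{\sigma^{m+1}} \cong \brak{\sigma^m}\otimes \brak{\sigma},
\]
with $\brak{\sigma}=(\orsmoothing \xra{d}\DoubleEdge\{1\})$. Using the inductive description of $\brak{\sigma^m}$, this produces a bicomplex whose top row is the known linear complex for $\sigma^m$ and whose bottom row is obtained from it by glueing a dumbell at the crossing site; the vertical differentials are either the zip $d$ or its composition with the previous edge-difference maps. The only nontrivial new feature occurs at the rightmost end of the bottom row, where a digon appears (entirely analogous to the digon that appeared in the $\sigma^3$ computation). Applying the digon removal relation~\eqref{slN:eq:DR1} splits this term into two copies of the double-edge web with an identity map between them, and the bubble identity~\eqref{slN:eq:bubble-ij} together with the dot-migration relations converts the remaining connecting maps into linear combinations of $X_1,X_2,X_4$.

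At this stage the bicomplex has the exact shape to which Lemma~\ref{slN:lem:red} can be applied: there is an identity isomorphism $\psi=\id$ between two consecutive terms. Contracting this isomorphism collapses the bicomplex to a single row, which by direct bookkeeping is precisely the complex of the lemma with one additional term appended at the right. The new terminal map is determined, as in the $m=2\to 3$ step, by $g_2-\gamma\psi^{-1}g_1$ in Lemma~\ref{slN:lem:red}; depending on the parity of $m$ this evaluates to either $X_1-X_4$ or $X_4-X_2$, which is exactly the alternation asserted in the statement. The overall $q$-shift increases by $N-1$ per new crossing, accounting for the normalization $(N-1)m$.

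The main obstacle will be the careful bookkeeping of signs, dot positions and shifts after digon removal: one must verify that after applying~\eqref{slN:eq:DR1} and identifying the bubble contributions via Corollary~\ref{slN:cor:bubble-12}, the induced map between the two surviving double-edge terms is precisely the edge-difference $X_1-X_4$ (resp.\ $X_4-X_2$), with the correct sign and on the correct facet. This is a purely local computation, identical in spirit to the one already carried out for $m=2\to 3$ in the excerpt, so it reduces to checking one universal local picture and then propagating the conclusion inductively via Lemma~\ref{slN:lem:red}.
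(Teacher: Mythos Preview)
Your approach is exactly the paper's: the paper itself only writes ``Using induction and Lemma~\ref{slN:lem:red} one can prove that\ldots'', and you have correctly identified the inductive mechanism (tensor in one more crossing, apply~\eqref{slN:eq:DR1}, then contract isomorphisms via Lemma~\ref{slN:lem:red}).

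One inaccuracy is worth fixing. In passing from $\sigma^m$ to $\sigma^{m+1}$, a digon does \emph{not} appear only at the rightmost end of the bottom row: it appears at every position of the bottom row lying under a double-edge term of the top row, i.e.\ at $m$ positions. (Already in the displayed $\sigma^3$ computation there are two digons, at shifts $\{2\}$ and $\{4\}$.) Correspondingly, a single application of Lemma~\ref{slN:lem:red} does not suffice; one must iterate the contraction along the row, each cancellation exposing the next identity map. This is precisely the cascade visible in the $\sigma^3$ case, where the paper first applies Lemma~\ref{slN:lem:red} to the $-1$ map and then separately uses that ``the first map in the second row is an isomorphism''. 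Once you adjust your description to allow for these $m$ digons and the iterated use of the reduction lemma, the argument goes through exactly as you wrote it.
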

\bigskip

For the $(2,m)$-torus link we have that $X_1=X_4$ and the complex splits into a direct sum of 
$$
\figins{-6}{0.25}{twocircles}
\xra{d}
\figins{-6}{0.25}{TWeb}\{1\}
$$ 
with $\figins{-5}{0.2}{twocircles}$ in homological degree 0, and copies of 
$$
\figins{-6}{0.25}{TWeb}\{2j-1\}\xra{X_1-X_2}
\figins{-6}{0.25}{TWeb}\{2j+1\},\qquad 
\begin{cases}
2\leq j\leq m-1, & \text{$m$ odd} \\
2\leq j\leq m-2, & \text{$m$ even}
\end{cases}$$ 
with $\figins{-5}{0.2}{TWeb}$ in homological degree $j$, where we are using the notation
$$
X_1\leftrightarrow\figins{-20}{0.6}{mapTX1} \quad
X_2\leftrightarrow\figins{-20}{0.6}{mapTX2}\ .$$

For $m$ even we have an extra summand consisting of the complex 
\begin{equation}
\label{slN:eq:toruslast}
0\to\figins{-6}{0.25}{TWeb}\{2m-1\}\to 0
\end{equation}
with $\figins{-5}{0.2}{TWeb}$ in homological degree $m$.

\begin{cor}
For the $(2,m)$-torus link $T_{2,m}$ we have
\begin{align*}
\hy^0(T_{2,m})  &\cong \bZ[X]/(X^N) \{(N-1)(m-2)\}, \\
\hy^1(T_{2,m})  &=      0, \\
\hy^{2j}(T_{2,m})  &\cong \bZ/(N\bZ) \{(N-1)m+4j\}
         \oplus\biggl(\bigoplus\limits_{i=0}^{N-2}\bZ\{(N-1)m+2(2j-i)-2\}\biggr) \\
\hy^{2j+1}(T_{2,m}) &\cong  \bigoplus\limits_{i=0}^{N-2}\bZ\{(N-1)m+2(2j-i)+2N-2\},\qquad 2j+1\leq m,
\end{align*}
and, if m is even
$$\hy^m(T_{2,m})\cong \hy(Fl_{2,3,N},\bZ)\{(N+1)m-2N+2\}.$$
Note that in even homological degree there is always torsion for $0< 2j < m$.
\end{cor}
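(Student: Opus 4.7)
The plan is to apply the tautological functor $\cF$ to the direct sum decomposition of $\brak{T_{2,m}}$ obtained in the paragraphs preceding the corollary, so that the integral $\sln$-homology of $T_{2,m}$ splits, summand by summand, as the homology of each two-term complex.

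First I would identify $\cF$ on the two relevant building blocks. Circle removal gives $\cF(\text{two circles})\cong \bZ[X_1]/(X_1^N)\otimes_{\bZ}\bZ[X_2]/(X_2^N)$, with the grading shift induced by the two unknot factors. For the closed theta-web $\text{TWeb}$, repeated application of digon removal (\ref{slN:eq:DR1}) together with the theta-foam evaluation of Lemma~\ref{slN:lem:theta-ijkl} identifies $\cF(\text{TWeb})\cong \bZ[X_1,X_2]/(\pi_{N-1,0},\pi_{N,0})=\hy(Fl_{1,2,N},\bZ)$, with dots on the two simple strands corresponding to $X_1$ and $X_2$. Under these identifications $\cF(d)$ is the map induced by the zip cobordism, and after the closure identification $X_1=X_4$ each middle differential in the complex becomes multiplication by $\delta:=X_1-X_2$ on $\hy(Fl_{1,2,N},\bZ)$.

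For the leading summand $\cF(\text{two circles})\to\cF(\text{TWeb})\{1\}$ I would show that $\cF(d)$ is surjective (a count of graded dimensions, compared against the MOY decompositions of Lemma~\ref{slN:lem:moy-F}, pins this down), so its cokernel vanishes and $\hy^1(T_{2,m})=0$, while its kernel is a free $\bZ$-module of rank $N$ isomorphic to $\bZ[X]/(X^N)$ in an appropriate grading; absorbing the overall $\{(N-1)m\}$-shift then yields the claimed form for $\hy^0$. For each middle summand $\cF(\text{TWeb})\{2j-1\}\xra{\delta}\cF(\text{TWeb})\{2j+1\}$ I would compute $\ker\delta$ and $\operatorname{coker}\delta$ on $\hy(Fl_{1,2,N},\bZ)$ separately. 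Setting $X_1=X_2=X$ in the defining relations yields
\[
\operatorname{coker}(\delta)=\bZ[X]/(\pi_{N-1,0}(X,X),\pi_{N,0}(X,X))=\bZ[X]/(NX^{N-1},(N+1)X^N);
\]
since $\gcd(N,N+1)=1$ this simplifies to $\bZ[X]/(X^N,NX^{N-1})$, which splits as $\bigoplus_{i=0}^{N-2}\bZ\cdot X^i\oplus(\bZ/N\bZ)\cdot X^{N-1}$. Tracking grading shifts produces the $\bZ/N\bZ\{(N-1)m+4j\}$ torsion summand in each $\hy^{2j}$ together with the free part of $\hy^{2j+1}$. A parallel computation of $\ker\delta$, obtained by viewing $\hy(Fl_{1,2,N})$ as a free rank-two module over $\hy(\cG_{2,N})$ with basis $\{1,X_1\}$ and analyzing the $2\times 2$ matrix for multiplication by $2X_1-s$, supplies the $N-1$ free generators of the kernel and therefore the free part of $\hy^{2j}$.

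For $m$ even, the isolated summand at homological degree $m$ survives with zero differentials on both sides and contributes directly to $\hy^m$; a careful identification of the web appearing at that extremal position, via the square-removal relations~\eqref{slN:eq:SqR1} and~\eqref{slN:eq:SqR2}, matches $\cF$ at that position with $\hy(Fl_{2,3,N},\bZ)$ in the stated grading. The main obstacle is the integral cokernel computation above: the $\bZ/N\bZ$ torsion only appears because the reductions $\pi_{N-1,0}(X,X)=NX^{N-1}$ and $\pi_{N,0}(X,X)=(N+1)X^N$ carry nontrivial integer coefficients and would collapse upon tensoring with $\bQ$. Working consistently over $\bZ$ (hence the reliance on Conjecture~\ref{slN:conj:int-homol}) and meticulous bookkeeping of grading shifts coming from the zip, from $\delta$, and from the overall $\{(N-1)m\}$-shift are the technical backbone of the argument.
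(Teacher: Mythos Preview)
Your approach differs genuinely from the paper's. The paper never identifies $\cF(\text{TWeb})$ with a ring; it stays inside the foam category, applies (DR$_2$) to decompose each theta web as $\bigoplus_{i=0}^{N-2}\bigcirc\{2-N+2i\}$, and then uses Gaussian elimination (Lemma~\ref{slN:lem:red}) together with delooping to reduce $\text{TWeb}\xra{X_1-X_2}\text{TWeb}\{2\}$ to a single nonzero component $\emptyset\{1\}\xra{\,N\,}\emptyset\{1\}$. Your algebraic route via $\hy(Fl_{1,2,N},\bZ)$ is attractive, and the cokernel computation $\operatorname{coker}(X_1{-}X_2)\cong\bZ[X]/(NX^{N-1},(N{+}1)X^N)=\bZ[X]/(X^N,NX^{N-1})\cong\bZ^{N-1}\oplus\bZ/N\bZ$ is correct and agrees with what the paper finds. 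The diagrammatic method has the advantage that the kernel drops out for free (it is just the surviving $\emptyset$-summands on the source side), whereas your kernel argument is only sketched.

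There are, however, two genuine gaps. First, your placement of the pieces is incoherent: you claim that the cokernel ``produces the $\bZ/N\bZ$ torsion summand in each $\hy^{2j}$ together with the free part of $\hy^{2j+1}$'', but a cokernel lives entirely at one homological degree, namely the target's. Each two-term summand occupies a consecutive pair of degrees; the kernel contributes to the source degree and the whole cokernel to the target degree, and nothing overlaps because adjacent summands are separated by the zero map $X_1-X_4$. Second, your treatment of $\hy^m$ for even $m$ contradicts your own setup: having correctly identified $\cF(\text{TWeb})\cong\hy(Fl_{1,2,N},\bZ)$, you then invoke ``square-removal relations'' to obtain $\hy(Fl_{2,3,N},\bZ)$. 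There are no squares in a theta web, and these two flag varieties have different ranks already for $N=5$. The $Fl_{2,3,N}$ in the statement is almost certainly a misprint for $Fl_{1,2,N}$; you should trust your own identification rather than bend the argument to match it.
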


\begin{proof}
The complex $\figins{-4}{0.18}{twocircles}\xra{\ d\ }\figins{-5}{0.2}{TWeb}\{1\}$ can be simplified using~\eqref{slN:eq:DR2} to remove the digon on the right in the diagram in homological degree 1. We obtain 
$$
\figins{-5}{0.2}{twocircles12}
\xra{\ d\ }
\bigoplus\limits_{i=0}^{N-2}\
\figins{-5}{0.2}{onecircle1}\{3-N-2i\}$$
where
$d\colon\figins{-4}{0.18}{twocircles12}\to
\figins{-4}{0.18}{onecircle1}\{3-N-2i\}$
is the foam
$$
\figins{-17}{0.55}{cyl-cap}\
-\ \
\figins{-17}{0.55}{cylb-cap}\ .
$$

Note that $\ \figins{-4}{0.18}{onecircle2}\ $ decompose into a direct sum of empty webs
$\bigoplus\limits_{j=0}^{N-1}\emptyset\{N-1-2j\}$ with inclusions
$$\psi_j\colon\emptyset\{N-1-2j\}\to\figins{-5}{0.2}{onecircle2},\quad
\psi_j=\figins{-5}{0.2}{cup-j}.$$
This is called \emph{delooping} in~\cite{bar-natanfast}.
Composing $\psi$ with $d$ and using Lemma~\ref{slN:lem:red} once again we get the complex
$$0\to\figins{-4}{0.2}{onecircle1}\{-N+1\}\to 0$$
with $\figins{-4}{0.18}{onecircle1}\{-N+1\}$ in homological degree zero. This gives $H^1(T_{2,m})=0$. Taking the norma\-lization into account we get $\hy^0(T_{2,m})\cong \bZ[X]/X^N \{(N-1)(m-2)\}$.

Consider the complex
$$
\figins{-6}{0.25}{TWeb12}\xra{X_1-X_2}
\figins{-6}{0.25}{TWeb12}\{2\}.
$$
Remove the digon labeled 2 using~\eqref{slN:eq:DR2} and by Lemma~\ref{slN:lem:red} we can simplify the complex to obtain
$$0\to\figins{-4}{0.2}{onecircle1}\{2-N\}
\xra{N\ \figins{-7}{0.35}{cylN-1}\ } \figins{-4}{0.2}{onecircle1}\{N\}\to 0.$$
Using the decomposition of a circle into a direct sum of empty webs we get that the only non\-zero map is $\emptyset\{1\}\xra{\ N\ }\emptyset\{1\}$. This gives the $N$-torsion. The claim for $\hy^{2j}(T_{2,m})$ and $\hy^{2j+1}(T_{2,m})$ follows easily from this observation.

The assertion for $\hy^m(T_{2,m})$ when $m$ is even follows from Equation~\eqref{slN:eq:toruslast}.
\end{proof}

\newpage\


%
%
%
%
%
%
%
\nocite{*}
\bibliographystyle{plain}
\bibliography{biblio}
\addcontentsline{toc}{chapter}{Bibliography}
\end{document}